\documentclass[12pt,twoside]{preprint}
\usepackage{graphicx}
\usepackage{amssymb}
\usepackage{amsmath}
\usepackage{hyperref}
\usepackage{breakurl}
\usepackage{mhenvs}
\usepackage{mhequ}
\usepackage{mhsymb}
\usepackage{booktabs}
\usepackage{tikz}
\usepackage{mathrsfs}
\usepackage{times}
\usepackage{microtype}
\usepackage{comment}
\usepackage{wasysym}
\usepackage{centernot}
\usepackage{leftidx}
\usepackage{accents}
\usepackage{arydshln}
\usepackage{footnote}
\makesavenoteenv{tabular}

\usepackage{atbegshi}
\newcommand\showtimer{%
  \message{^^Jtimer: \the\numexpr\the\pdfelapsedtime*1000/65536\relax}%
  \pdfresettimer}
\AtBeginDocument{\showtimer}
\AtBeginShipout {\showtimer}

\textwidth 15cm
\textheight 23cm
\oddsidemargin 0cm
\evensidemargin 1cm
\topmargin 0cm

\colorlet{symbols}{blue!90!black}
\colorlet{testcolor}{green!60!black}
%\colorlet{darkblue}{blue!60!black}
%\colorlet{darkgreen}{green!60!black}
%\colorlet{symbols}{red}
%\colorlet{testcolor}{red!20}

\def\symbol#1{\textcolor{symbols}{#1}}
\def\1{\mathbf{\symbol{1}}}

\def\enorm#1{| #1|_{\eps}}

\newcommand{\cev}[1]{\reflectbox{\ensuremath{\vec{\reflectbox{\ensuremath{#1}}}}}}

\def\bB{\boldsymbol{B}}
\def\bPsi{\boldsymbol{\Psi}}

%\usetikzlibrary{calc}
%\usetikzlibrary{shapes.misc}
%\usetikzlibrary{shapes.symbols}
%\usetikzlibrary{shapes.geometric}
%\usetikzlibrary{snakes}
\usetikzlibrary{decorations}
\usetikzlibrary{decorations.markings}
\usetikzlibrary{decorations.pathmorphing}

%\usetikzlibrary{external}
%\tikzexternalize[prefix=tikz/]

\def\drawx{\draw[-,solid] (-3pt,-3pt) -- (3pt,3pt);\draw[-,solid] (-3pt,3pt) -- (3pt,-3pt);}

\tikzset{
	root/.style={circle,fill=testcolor,inner sep=0pt, minimum size=2mm},
	dot/.style={circle,fill=black,draw=black,inner sep=0pt,minimum size=0.5mm},
	empt/.style={inner sep=0pt,minimum size=0.5mm},
	int/.style={circle,fill=black,draw=black,inner sep=0pt,minimum size=0.7mm},
	circ/.style={circle,draw=black,inner sep=0pt, minimum size=1mm},
	var/.style={circle,fill=black!10,draw=black,inner sep=0pt, minimum size=3.5mm},
	vab/.style={rectangle,fill=black!10,draw=black,inner sep=0pt, minimum size=3mm},
	dotred/.style={circle,fill=black!50,inner sep=0pt, minimum size=2mm},
	generic/.style={semithick,shorten >=1pt,shorten <=1pt},
	oddfunc/.style={generic, dotted},
	dist/.style={ultra thick,draw=testcolor,shorten >=1pt,shorten <=1pt},
	testfcn/.style={ultra thick,testcolor,shorten >=1pt,shorten <=1pt,<-},
	testfunction/.style={ultra thick,testcolor,shorten >=1pt,shorten <=1pt},
	testfcnx/.style={ultra thick,testcolor,shorten >=1pt,shorten <=1pt,<-,
		postaction={decorate,decoration={markings,mark=at position 0.6 with {\drawx}}}},
	kprime/.style={semithick,shorten >=1pt,shorten <=1pt,densely dashed,->},
	kprimex/.style={semithick,shorten >=1pt,shorten <=1pt,densely dashed,->,
		postaction={decorate,decoration={markings,mark=at position 0.4 with {\drawx}}}},
	kernel/.style={semithick,shorten >=1pt,shorten <=1pt,->,draw=black},
	kernelblue/.style={very thick,shorten >=1pt,shorten <=1pt,->,draw=black,color=blue},
	multx/.style={shorten >=1pt,shorten <=1pt,
		postaction={decorate,decoration={markings,mark=at position 0.5 with {\drawx}}}},
	kernelx/.style={semithick,shorten >=1pt,shorten <=1pt,->,
		postaction={decorate,decoration={markings,mark=at position 0.4 with {\drawx}}}},
	kernel1/.style={->,semithick,shorten >=1pt,shorten <=1pt, postaction={decorate,decoration={markings,mark=at position 0.45 with {\draw[-] (0,-0.1) -- (0,0.1);}}}},
	kernel2/.style={->,semithick,shorten >=1pt,shorten <=1pt,postaction={decorate,decoration={markings,mark=at position 0.45 with {\draw[-] (0.05,-0.1) -- (0.05,0.1);\draw[-] (-0.05,-0.1) -- (-0.05,0.1);}}}},
	kernelBig/.style={semithick,shorten >=1pt,shorten <=1pt,decorate, decoration={zigzag,amplitude=1.5pt,segment length = 3pt,pre length=2pt,post length=2pt}},
	rho/.style={dotted,semithick,shorten >=1pt,shorten <=1pt},
%% above is a dotted line
	renorm/.style={shape=circle,fill=white,inner sep=1pt},
	labl/.style={shape=rectangle,fill=white,inner sep=1pt},
%% Define cumulant nodes:
cumu2n/.style={inner sep=3pt},
cumu2/.style={draw=red!80,fill=red!40},
cumu3/.style={regular polygon, regular polygon sides=3,draw=red!80,rounded corners=3pt,fill=red!40,minimum size=5mm},
cumu4/.style={regular polygon, regular polygon sides=4,draw=red!80,rounded corners=3pt,fill=red!40,minimum size=7mm},
cumu5/.style={regular polygon, regular polygon sides=5,draw=red!80,rounded corners=3pt,fill=red!40,minimum size=7mm},
	xi/.style={rectangle,fill=symbols!10,draw=symbols,inner sep=0.4pt,minimum size=2.3mm},
	xix/.style={crosscircle,fill=symbols!10,draw=symbols,inner sep=0pt,minimum size=1.2mm},
	xib/.style={circle,fill=symbols!10,draw=symbols,inner sep=0pt,minimum size=1.6mm},
	xibx/.style={crosscircle,fill=symbols!10,draw=symbols,inner sep=0pt,minimum size=1.6mm},
	not/.style={circle,fill=symbols,draw=symbols,inner sep=0pt,minimum size=0.5mm},
	zeta/.style={circle,fill=symbols!10,draw=symbols,inner sep=0pt,minimum size=2.5mm},
	cir/.style={circle, draw=black, inner sep=0pt,minimum size=3mm},
	>=stealth,
	}
\makeatletter
\def\DeclareSymbol#1#2#3{\expandafter\gdef\csname MH@symb@#1\endcsname{\tikz[baseline=#2,scale=0.23,draw=symbols]{#3}}\expandafter\gdef\csname MH@symb@#1s\endcsname{\scalebox{0.7}{\tikz[baseline=#2,scale=0.23,draw=symbols]{#3}}}}
\def\<#1>{\csname MH@symb@#1\endcsname}
\makeatother
%
%
%
%%%%%%%%%%%
%% Symbols 
%%%%
%
% Homogeneity = -1-2kappa
%
\DeclareSymbol{Psi1dPsi2}{-2}{
	\draw (-1,0.8) node[zeta] {\tiny $1$} -- (0,-1) node[not] {}
		 -- node[right,near start] {\tiny $\!\!j$} (1,0.8) node[zeta] {\tiny $2$}; 
	%\draw[thick] (0,-0.3) -- (1.2,0.4);
	}
\DeclareSymbol{Psi2dPsi1}{-2}{
	\draw (-1,0.8) node[zeta] {\tiny $2$} -- (0,-1) node[not] {}
		 -- node[right,near start] {\tiny $j$}  (1,0.8) node[zeta] {\tiny $1$}; 
	%\draw[thick] (0,-0.3) -- (1.2,0.4);
	}
\DeclareSymbol{PsikdPsil}{-2}{
	\draw (-1,0.8) node[zeta] {\tiny $k$} -- (0,-1) node[not] {} 
		-- node[right,near start] {\tiny $\!\!j$}  (1,0.8) node[zeta] {\tiny $\ell$}; 
	%\draw[thick] (0,-0.3) -- (1.2,0.4);
	}
\DeclareSymbol{PsildPsik}{-2}{
	\draw (-1,0.8) node[zeta] {\tiny $\ell$} -- (0,-1) node[not] {} 
		-- node[right,near start] {\tiny $\!\!j$}  (1,0.8) node[zeta] {\tiny $k$}; 
	}
\DeclareSymbol{BdPsi}{-2}{
	\draw (-1,0.8) node[xi] {\tiny $j$} -- (0,-1) node[not] {} 
		-- node[right,near start] {\tiny $\!\!j$}  (1,0.8) node[zeta] {\tiny $k$}; 
	%\draw[thick] (0,-0.3) -- (1.2,0.4);
	}
\DeclareSymbol{Bd-pm-Psi}{-2}{
	\draw (-1,0.8) node[xi] {\tiny $j$} -- (0,-1) node[not] {} 
		-- node[right,near start] {\tiny $\!\!\pm j$}  (1,0.8) node[zeta] {\tiny $k$}; 
	}
\DeclareSymbol{BdPsi--1}{-2}{
	\draw (-1,0.8) node[xi] {\tiny $k$} -- (0,-1) node[not] {} 
		-- node[right,near start] {\tiny $\!\!k$}  (1,0.8) node[zeta] {\tiny $\ell$}; 
	}
\DeclareSymbol{BdPsi--2}{-2}{
	\draw (-1,0.8) node[xi] {\tiny $k$} -- (0,-1) node[not] {} 
		-- node[right,near start] {\tiny $\!\! \- k$}  (1,0.8) node[zeta] {\tiny $\ell$}; 
	}
\DeclareSymbol{dBPsi}{-2}{
	\draw (-1,0.8) node[xi] {\tiny $k$}  
	-- node[left,near end] {\tiny $\- k \!\!$} 
	 (0,-1) node[not] {} 
		--  (1,0.8) node[zeta] {\tiny $\ell$}; 
	%\draw[thick] (0,-0.3) -- (1.2,0.4);
	}

% Homogeneity = -1-kappa
% \CI'_j (\zeta_k)
\DeclareSymbol{Ij-zeta-k}{-2}{
	\draw (0,-1) node[not] {} --  node[right] {\tiny $\!\! j$} (0,1) node[zeta] {\tiny $k$}; 
		%\draw[thick] (-0.5,-0.3) -- (0.5,0.4);
	}
\DeclareSymbol{I-pm-j-zeta-k}{-2}{
	\draw (0,-1) node[not] {} --  node[right] {\tiny $\!\! \pm j$} (0,1) node[zeta] {\tiny $k$}; 
	}
% With X
\DeclareSymbol{X-Ij-zeta-k}{-2}{
	%\node[color=symbols] at (-1.3,0.3) {$X$};
	\node at (-1.2,0.3) {$\!\!\!X$};
	\draw (-0.5,-1) node[not] {} --  node[right] {\tiny $\!\! j$} (-0.5,1) node[zeta] {\tiny $k$}; 
		%\draw[thick] (-0.5,-0.3) -- (0.5,0.4);
	}
\DeclareSymbol{X-Ik-zeta-l}{-2}{
	%\node[color=symbols] at (-1.3,0.3) {$X$};
	\node at (-1.2,0.3) {$\!\!\!X$};
	\draw (-0.5,-1) node[not] {} --  node[right] {\tiny $\!\! k$} (-0.5,1) node[zeta] {\tiny $\ell$}; 
		%\draw[thick] (-0.5,-0.3) -- (0.5,0.4);
	}
\DeclareSymbol{X-I-k-zeta-l}{-2}{
	%\node[color=symbols] at (-1.3,0.3) {$X$};
	\node at (-1.2,0.3) {$\!\!\!X$};
	\draw (-0.5,-1) node[not] {} --  node[right] {\tiny $\!\! \-k$} (-0.5,1) node[zeta] {\tiny $\ell$}; 
		%\draw[thick] (-0.5,-0.3) -- (0.5,0.4);
	}
% \CI'_i (\zeta_j)
\DeclareSymbol{Ii-zeta-j}{-2}{
	\draw (0,-1) node[not] {} --  node[right] {\tiny $\!\! i$} (0,1) node[zeta] {\tiny $j$}; 
		%\draw[thick] (-0.5,-0.3) -- (0.5,0.4);
	}
% \CI'_k (\zeta_\ell)
\DeclareSymbol{Ik-zeta-l}{-2}{
	\draw (0,-1) node[not] {} --  node[right] {\tiny $\!\! k$} (0,1) node[zeta] {\tiny $\ell$}; 
		%\draw[thick] (-0.5,-0.3) -- (0.5,0.4);
	}
  %the same with -k
\DeclareSymbol{I-k-zeta-l}{-2}{
	\draw (0,-1) node[not] {} --  node[right] {\tiny $\!\! \-k$} (0,1) node[zeta] {\tiny $\ell$}; 
	}
  %the same with \pm k
\DeclareSymbol{I-pm-k-zeta-l}{-2}{
	\draw (0,-1) node[not] {} --  node[right] {\tiny $\!\! \pm k$} (0,1) node[zeta] {\tiny $\ell$}; 
	}
% \CI'_j (\zeta_\ell)
\DeclareSymbol{Ij-zeta-l}{-2}{
	\draw (0,-1) node[not] {} --  node[right] {\tiny $\!\! j$} (0,1) node[zeta] {\tiny $\ell$}; 
		%\draw[thick] (-0.5,-0.3) -- (0.5,0.4);
	}
%\CI'(\xi_k) with -k
\DeclareSymbol{I-k-xi-k}{-2}{
	\draw (0,-1) node[not] {} --  node[right] {\tiny $\!\! \-k$} (0,1) node[xi] {\tiny $k$}; 
	}
\DeclareSymbol{X-I-k-xi-k}{-2}{
	%\node[color=symbols] at (-1.3,0.3) {$X$};
	\node at (-1.2,0.3) {$\!\!\!X$};
	\draw (-0.5,-1) node[not] {} --  node[right] {\tiny $\!\! \-k$} (-0.5,1) node[xi] {\tiny $k$}; 
		%\draw[thick] (-0.5,-0.3) -- (0.5,0.4);
	}
	
% Homogeneity = -3kappa
%
%\DeclareSymbol{BPsiPsi}{-1}{
%	\draw (-1.2,1) node[zeta] {} -- (0,-1) node[not] {} -- (1.2,1) node[zeta] {}; 
%	\draw (0,-1) node[not] {} -- (0,1) node[xi] {};
%	}
\DeclareSymbol{BjPsikPsik}{-1}{
	\draw (-1.3,0.8) node[zeta] {\tiny $k$} -- (0,-1) node[not] {} -- (1.3,0.8) node[zeta] {\tiny $k$}; 
	\draw (0,-1) node[not] {} -- (0,1) node[xi] {\tiny $j$};
	}
\DeclareSymbol{BBPsi}{-1}{
	\draw (-1.3,0.8) node[xi] {} -- (0,-1) node[not] {} -- (1.3,0.8) node[zeta] {}; 
	\draw (0,-1) node[not] {} -- (0,1) node[xi] {};
	}
\DeclareSymbol{BjBjPsik}{-1}{
	\draw (-1.3,0.8) node[xi] {\tiny $j$} -- (0,-1) node[not] {} -- (1.3,0.8) node[zeta] {\tiny $k$}; 
	\draw (0,-1) node[not] {} -- (0,1) node[xi] {\tiny $j$};
	}
\DeclareSymbol{BkBkPsij}{-1}{
	\draw (-1.3,0.8) node[xi] {\tiny $k$} -- (0,-1) node[not] {} -- (1.3,0.8) node[zeta] {\tiny $j$}; 
	\draw (0,-1) node[not] {} -- (0,1) node[xi] {\tiny $k$};
	}
\DeclareSymbol{dPsi-IBdPsi}{-2}{
	\draw (-1,1.2) node[xi] {\tiny $j$} -- (0,-0.3) node[not] {} 
			-- node[right,near start] {\tiny $\!\!j$}  (1,1.2) node[zeta] {\tiny $\ell$}; 
	\draw (0,-0.3) node[not] {} -- (1,-1.7) node[not] {} 
			-- node[right,near start]  {\tiny $\!\!k$}  (2,-0.3) node[zeta] {\tiny $\ell$};
	}
  %and the same one with signs...
\DeclareSymbol{dPsi-IBd-pm-Psi}{-2}{
	\draw (-1,1.2) node[xi] {\tiny $j$} -- (0,-0.3) node[not] {} 
			-- node[right,near start] {\tiny $\!\!\!\!\pm j$}  (1,1.2) node[zeta] {\tiny $\ell$}; 
	\draw (0,-0.3) node[not] {} -- (1,-1.7) node[not] {} 
			-- node[right,near start]  {\tiny $\!\!k$}  (2,-0.3) node[zeta] {\tiny $\ell$};
	}
\DeclareSymbol{dj-Psi-IBdkPsi}{-2}{
	\draw (-1,1.2) node[xi] {\tiny $k$} -- (0,-0.3) node[not] {} 
			-- node[right,near start] {\tiny $\!\!k$}  (1,1.2) node[zeta] {\tiny $\ell$}; 
	\draw (0,-0.3) node[not] {} -- (1,-1.7) node[not] {} 
			-- node[right,near start]  {\tiny $\!\!j$}  (2,-0.3) node[zeta] {\tiny $\ell$};
	}
\DeclareSymbol{dj-Psi-IBd-kPsi}{-2}{
	\draw (-1,1.2) node[xi] {\tiny $k$} -- (0,-0.3) node[not] {} 
			-- node[right,near start] {\tiny $\!\! \- k$}  (1,1.2) node[zeta] {\tiny $\ell$}; 
	\draw (0,-0.3) node[not] {} -- (1,-1.7) node[not] {} 
			-- node[right,near start]  {\tiny $\!\!j$}  (2,-0.3) node[zeta] {\tiny $\ell$};
	}
\DeclareSymbol{dPsi-IBdPsi-12}{-2}{
	\draw (-1,1.2) node[xi] {\tiny $j$} -- (0,-0.3) node[not] {} -- node[sloped] {\tiny $j$}  (1,1.2) node[zeta] {\tiny $1$}; 
	%\draw[thick] (0,0.2) -- (1,0.6); 
	\draw (0,-0.3) node[not] {} -- (1,-1.7) node[not] {} -- node[sloped]  {\tiny $k$}  (2,-0.3) node[zeta] {\tiny $2$};
	%\draw[thick] (1,-1.2) -- (2,-0.8); 
	}
\DeclareSymbol{dPsi-IBdPsi-21}{-2}{
	\draw (-1,1.2) node[xi] {\tiny $j$} -- (0,-0.3) node[not] {} -- node[sloped] {\tiny $j$}  (1,1.2) node[zeta] {\tiny $2$}; 
	%\draw[thick] (0,0.2) -- (1,0.6); 
	\draw (0,-0.3) node[not] {} -- (1,-1.7) node[not] {} --   node[sloped]  {\tiny $k$}   (2,-0.3) node[zeta] {\tiny $1$};
	%\draw[thick] (1,-1.2) -- (2,-0.8); 
	}
%
% \Psi_\ell (\CI'_k (B_j  \nabla_j Psi_\ell ) )
\DeclareSymbol{Psi-dIBdPsi}{-2}{
	\draw (-1,1.2) node[xi] {\tiny $j$} -- (0,-0.3) node[not] {} 
			-- node[right,near start] {\tiny $\!\!j$}  (1,1.2) node[zeta] {\tiny $\ell$}; 
	%\draw[thick] (0,0.2) -- (1,0.6); 
	\draw (0,-0.3) node[not] {} --  node[left,near end] {\tiny $k\!\!$}   (1,-1.7) node[not] {} 
			-- (2,-0.3) node[zeta] {\tiny $\ell$};
	%\draw[thick] (0,-1.2) -- (1,-0.8); 
	}
  %and the same one with signs...
\DeclareSymbol{Psi-dIBd-pm-Psi}{-2}{
	\draw (-1,1.2) node[xi] {\tiny $j$} -- (0,-0.3) node[not] {} 
			-- node[right,near start] {\tiny $\!\!\!\!\pm j$}  (1,1.2) node[zeta] {\tiny $\ell$}; 
	\draw (0,-0.3) node[not] {} --  node[left,near end] {\tiny $k\!\!$}   (1,-1.7) node[not] {} 
			-- (2,-0.3) node[zeta] {\tiny $\ell$};
	}
% \Psi_\ell (\CI'_j (B_k  \nabla_k Psi_\ell ) )
\DeclareSymbol{Psi-dIj-BkdPsi}{-2}{
	\draw (-1,1.2) node[xi] {\tiny $k$} -- (0,-0.3) node[not] {} 
			-- node[right,near start] {\tiny $\!\!k$}  (1,1.2) node[zeta] {\tiny $\ell$}; 
	%\draw[thick] (0,0.2) -- (1,0.6); 
	\draw (0,-0.3) node[not] {} --  node[left,near end] {\tiny $j\!\!$}   (1,-1.7) node[not] {} 
			-- (2,-0.3) node[zeta] {\tiny $\ell$};
	%\draw[thick] (0,-1.2) -- (1,-0.8); 
	}
\DeclareSymbol{Psi-dIj-Bkd-Psi}{-2}{
	\draw (-1,1.2) node[xi] {\tiny $k$} -- (0,-0.3) node[not] {} 
			-- node[right,near start] {\tiny $\!\! \- k$}  (1,1.2) node[zeta] {\tiny $\ell$}; 
	\draw (0,-0.3) node[not] {} --  node[left,near end] {\tiny $j\!\!$}   (1,-1.7) node[not] {} 
			-- (2,-0.3) node[zeta] {\tiny $\ell$};
	}
\DeclareSymbol{Psi-dIBdPsi-12}{-2}{
	\draw (-1,1.2) node[xi] {\tiny $j$} -- (0,-0.3) node[not] {} -- node[sloped] {\tiny $j$}  (1,1.2) node[zeta] {\tiny $1$}; 
	%\draw[thick] (0,0.2) -- (1,0.6); 
	\draw (0,-0.3) node[not] {} --  node[left] {\tiny $k\!\!\!$}   (1,-1.7) node[not] {} -- (2,-0.3) node[zeta] {\tiny $2$};
	%\draw[thick] (0,-1.2) -- (1,-0.8); 
	}
\DeclareSymbol{Psi-dIBdPsi-21}{-2}{
	\draw (-1,1.2) node[xi] {\tiny $j$} -- (0,-0.3) node[not] {} -- node[sloped] {\tiny $j$}  (1,1.2) node[zeta] {\tiny $2$}; 
	%\draw[thick] (0,0.2) -- (1,0.6); 
	\draw (0,-0.3) node[not] {} --  node[left] {\tiny $k\!\!\!$}   (1,-1.7) node[not] {} -- (2,-0.3) node[zeta] {\tiny $1$};
	%\draw[thick] (0,-1.2) -- (1,-0.8); 
	}
%
% \nabla_j Psi_k \CI( \Psi_i \nabla_j \Psi_\ell )
\DeclareSymbol{dPsi-IPsidPsi}{-2}{
	\draw (-1,1.2) node[zeta] {\tiny $i$} -- (0,-0.3) node[not] {} 
			-- node[right,near start] {\tiny $\!\!j$}   (1,1.2) node[zeta] {\tiny $\ell$}; 
	\draw (0,-0.3) node[not] {} -- (1,-1.7) node[not] {} 
			-- node[right,near start] {\tiny $\!\!j$}   (2,-0.3) node[zeta] {\tiny $k$};
	}
  %and the same one with signs...
 \DeclareSymbol{d-pm-Psi-IPsidPsi}{-2}{
	\draw (-1,1.2) node[zeta] {\tiny $i$} -- (0,-0.3) node[not] {} 
			-- node[right,near start] {\tiny $ \!\! j$}   (1,1.2) node[zeta] {\tiny $\ell$}; 
	\draw (0,-0.3) node[not] {} -- (1,-1.7) node[not] {} 
			-- node[right,near start] {\tiny $\!\!\! \pm j$}   (2,-0.3) node[zeta] {\tiny $k$};
	}
\DeclareSymbol{dkPsil-IPsijdkPsil}{-2}{
	\draw (-1,1.2) node[zeta] {\tiny $j$} -- (0,-0.3) node[not] {} 
			-- node[right,near start] {\tiny $\!\!k$}   (1,1.2) node[zeta] {\tiny $\ell$}; 
	\draw (0,-0.3) node[not] {} -- (1,-1.7) node[not] {} 
			-- node[right,near start] {\tiny $\!\!k$}   (2,-0.3) node[zeta] {\tiny $\ell$};
	}
\DeclareSymbol{d-kPsil-IPsijdkPsil}{-2}{
	\draw (-1,1.2) node[zeta] {\tiny $j$} -- (0,-0.3) node[not] {} 
			-- node[right,near start] {\tiny $\!\!k$}   (1,1.2) node[zeta] {\tiny $\ell$}; 
	\draw (0,-0.3) node[not] {} -- (1,-1.7) node[not] {} 
			-- node[right,near start] {\tiny $\!\!\-k$}   (2,-0.3) node[zeta] {\tiny $\ell$};
	}
\DeclareSymbol{dkPsil-IPsildkPsij}{-2}{
	\draw (-1,1.2) node[zeta] {\tiny $\ell$} -- (0,-0.3) node[not] {} 
			-- node[right,near start] {\tiny $\!\!k$}   (1,1.2) node[zeta] {\tiny $j$}; 
	\draw (0,-0.3) node[not] {} -- (1,-1.7) node[not] {} 
			-- node[right,near start] {\tiny $\!\!k$}   (2,-0.3) node[zeta] {\tiny $\ell$};
	}
\DeclareSymbol{d-kPsil-IPsildkPsij}{-2}{
	\draw (-1,1.2) node[zeta] {\tiny $\ell$} -- (0,-0.3) node[not] {} 
			-- node[right,near start] {\tiny $\!\!k$}   (1,1.2) node[zeta] {\tiny $j$}; 
	\draw (0,-0.3) node[not] {} -- (1,-1.7) node[not] {} 
			-- node[right,near start] {\tiny $\!\!\-k$}   (2,-0.3) node[zeta] {\tiny $\ell$};
	}
\DeclareSymbol{dPsi-IPsidPsi-12}{-2}{
	\draw (-1,1.2) node[zeta] {\tiny $1$} -- (0,-0.3) node[not] {} -- node[sloped] {\tiny $j$}   (1,1.2) node[zeta] {\tiny $2$}; 
	%\draw[thick] (0,0.2) -- (1,0.6); 
	\draw (0,-0.3) node[not] {} -- (1,-1.7) node[not] {} -- node[sloped] {\tiny $j$}   (2,-0.3) node[zeta] {\tiny $k$};
	%\draw[thick] (1,-1.2) -- (2,-0.8); 
	}
\DeclareSymbol{dPsi-IPsidPsi-21}{-2}{
	\draw (-1,1.2) node[zeta] {\tiny $2$} -- (0,-0.3) node[not] {} -- node[sloped] {\tiny $j$}   (1,1.2) node[zeta] {\tiny $1$}; 
	%\draw[thick] (0,0.2) -- (1,0.6); 
	\draw (0,-0.3) node[not] {} -- (1,-1.7) node[not] {} -- node[sloped] {\tiny $j$}   (2,-0.3) node[zeta] {\tiny $k$};
	%\draw[thick] (1,-1.2) -- (2,-0.8); 
	}

\DeclareSymbol{Ped-kI(PjdkPe)}{-2}{
	\draw (-1,1.2) node[zeta] {\tiny $j$} -- (0,-0.3) node[not] {} 
			-- node[right,near start] {\tiny $\!\!k$}   (1,1.2) node[zeta] {\tiny $\ell$}; 
	\draw (0,-0.3) node[not] {} -- 
	node[left,near end] {\tiny $\-k$} 
	(1,-1.7) node[not] {} 
			--   (2,-0.3) node[zeta] {\tiny $\ell$};
	}
\DeclareSymbol{Ped-kI(PedkPj)}{-2}{
	\draw (-1,1.2) node[zeta] {\tiny $\ell$} -- (0,-0.3) node[not] {} 
			-- node[right,near start] {\tiny $\!\!k$}   (1,1.2) node[zeta] {\tiny $j$}; 
	\draw (0,-0.3) node[not] {} --
	node[left,near end] {\tiny $\-k$}
	 (1,-1.7) node[not] {} 
			--    (2,-0.3) node[zeta] {\tiny $\ell$};
	}
		
%
% B_k \CI'_k ( B_\ell \nabla_\ell \Psi_j )
\DeclareSymbol{B-dIBdPsi}{-1.8}{
	\draw (-1,1.2) node[xi] {\tiny $\ell$} -- (0,-0.3) node[not] {} 
			-- node[right,near start] {\tiny $\!\!\ell$} (1,1.2) node[zeta] {\tiny $j$}; 
	%\draw[thick] (0,0.2) -- (1,0.6); 
	\draw (0,-0.3) node[not] {}  -- node[left,near end] {\tiny $k \!\!$} (1,-1.7) node[not] {} 
			-- (2,-0.3) node[xi] {\tiny $k$};
	%\draw[thick] (0,-1.2) -- (1,-0.8); 
	}
   %and the same one with signs...
\DeclareSymbol{B-d-IBdPsi}{-1.8}{
	\draw (-1,1.2) node[xi] {\tiny $\ell$} -- (0,-0.3) node[not] {} 
			-- node[right,near start] {\tiny $\!\!\ell$} (1,1.2) node[zeta] {\tiny $j$}; 
	\draw (0,-0.3) node[not] {}  -- node[left,near end] {\tiny $\-k \!\!$} (1,-1.7) node[not] {} 
			-- (2,-0.3) node[xi] {\tiny $k$};
	}
\DeclareSymbol{B-dIBd-Psi}{-1.8}{
	\draw (-1,1.2) node[xi] {\tiny $\ell$} -- (0,-0.3) node[not] {} 
			-- node[right,near start] {\tiny $\!\!\-\ell$} (1,1.2) node[zeta] {\tiny $j$}; 
	\draw (0,-0.3) node[not] {}  -- node[left,near end] {\tiny $k \!\!$} (1,-1.7) node[not] {} 
			-- (2,-0.3) node[xi] {\tiny $k$};
	}
\DeclareSymbol{B-d-IBd-Psi}{-1.8}{
	\draw (-1,1.2) node[xi] {\tiny $\ell$} -- (0,-0.3) node[not] {} 
			-- node[right,near start] {\tiny $\!\!\-\ell$} (1,1.2) node[zeta] {\tiny $j$}; 
	\draw (0,-0.3) node[not] {}  -- node[left,near end] {\tiny $\-k \!\!$} (1,-1.7) node[not] {} 
			-- (2,-0.3) node[xi] {\tiny $k$};
	}
\DeclareSymbol{B-d-sign-IBd-sign-Psi}{-1.8}{
	\draw (-1,1.2) node[xi] {\tiny $\ell$} -- (0,-0.3) node[not] {} 
			-- node[right,near start] {\tiny $\!\!\!\si \ell$} (1,1.2) node[zeta] {\tiny $j$}; 
	\draw (0,-0.3) node[not] {}  -- node[left,near end] {\tiny $\bar\si k \!\!$} (1,-1.7) node[not] {} 
			-- (2,-0.3) node[xi] {\tiny $k$};
	}
\DeclareSymbol{B-d-pm-IBd-pm-Psi}{-1.8}{
	\draw (-1,1.2) node[xi] {\tiny $\ell$} -- (0,-0.3) node[not] {} 
			-- node[right,near start] {\tiny $\!\!\!\!\pm\ell$} (1,1.2) node[zeta] {\tiny $j$}; 
	\draw (0,-0.3) node[not] {}  -- node[left,near end] {\tiny $\pm k \!\!$} (1,-1.7) node[not] {} 
			-- (2,-0.3) node[xi] {\tiny $k$};
	}
%shape
%-k V
%     V j
% \xi_k  \zeta_\ell  \zeta_\ell
\DeclareSymbol{djPeI(d-kBkPe)}{-2}{
	\draw (-1,1.2) node[xi] {\tiny $k$}   
		-- node[left,near end] {\tiny $\- k\!\! $}
		(0,-0.3) node[not] {} 
			--  (1,1.2) node[zeta] {\tiny $\ell$}; 
	\draw (0,-0.3) node[not] {} --   (1,-1.7) node[not] {} 
			-- node[right,near start] {\tiny $j\!\!$}  
			(2,-0.3) node[zeta] {\tiny $\ell$};
	}
%shape
%-k V
%   j V 
% \xi_k  \zeta_\ell  \zeta_\ell
\DeclareSymbol{PedjI(d-kBkPe)}{-2}{
	\draw (-1,1.2) node[xi] {\tiny $k$}  
		-- node[left,near end] {\tiny $\- k\!\! $} 
		(0,-0.3) node[not] {} 
			--  (1,1.2) node[zeta] {\tiny $\ell$}; 
	\draw (0,-0.3) node[not] {} --  node[left,near end] {\tiny $j\!\!$}   (1,-1.7) node[not] {} 
			-- (2,-0.3) node[zeta] {\tiny $\ell$};
	}
%shape
%       V
%  -k j  V 
% \xi_k  \zeta_\ell  \zeta_\ell
\DeclareSymbol{Ped-kdjI(BkPe)}{-2}{
	\draw (-1,1.2) node[xi] {\tiny $k$}  
		-- 
		(0,-0.3) node[not] {} 
			--  (1,1.2) node[zeta] {\tiny $\ell$}; 
	\draw (0,-0.3) node[not] {} --  node[left,near end] {\tiny $-k,j\!\!$}   (1,-1.7) node[not] {} 
			-- (2,-0.3) node[zeta] {\tiny $\ell$};
	}
%shape
%      V
%  -k  V j
% \xi_k  \zeta_\ell  \zeta_\ell
\DeclareSymbol{djPed-kI(BkPe)}{-2}{
	\draw (-1,1.2) node[xi] {\tiny $k$}  
		-- 
		(0,-0.3) node[not] {} 
			--  (1,1.2) node[zeta] {\tiny $\ell$}; 
	\draw (0,-0.3) node[not] {} --  node[left,near end] {\tiny $-k\!\!$}   (1,-1.7) node[not] {} 
			--  node[right,near start] {\tiny $j$} 
			(2,-0.3) node[zeta] {\tiny $\ell$};
	}
%shape
%     V ell
%      V -k
\DeclareSymbol{d-kPI(BdeP)}{-2}{
\draw (-1,1.2) node[xi] {\tiny $\ell$} -- (0,-0.3) node[not] {} 
			-- node[right,near start] {\tiny $\!\!\ell$} (1,1.2) node[zeta] {\tiny $j$}; 
	\draw (0,-0.3) node[not] {}  --  (1,-1.7) node[not] {} 
			--  node[right,near start] {\tiny $\- k$} (2,-0.3) node[xi] {\tiny $k$};
	}
%shape
% -ell V 
%      kV 
\DeclareSymbol{PdkI(d-eBP)}{-2}{
\draw (-1,1.2) node[xi] {\tiny $\ell$} -- 
node[left,near end] {\tiny $\- \ell$} 
(0,-0.3) node[not] {} 
			-- (1,1.2) node[zeta] {\tiny $j$}; 
	\draw (0,-0.3) node[not] {}  -- node[left,near end] {\tiny $ k$} 
	 (1,-1.7) node[not] {} 
			--  (2,-0.3) node[xi] {\tiny $k$};
	}
%shape
%   -ell  V 
%           V -k
\DeclareSymbol{d-kPI(d-eBP)}{-2}{
\draw (-1,1.2) node[xi] {\tiny $\ell$} -- node[left,near end] {\tiny $\-\ell$}
(0,-0.3) node[not] {} 
			--  (1,1.2) node[zeta] {\tiny $j$}; 
	\draw (0,-0.3) node[not] {}  --  (1,-1.7) node[not] {} 
			--  node[right,near start] {\tiny $\- k$} (2,-0.3) node[xi] {\tiny $k$};
	}
%shape
%          V 
%    -ell,kV 
\DeclareSymbol{Pd-edkI(BP)}{-2}{
\draw (-1,1.2) node[xi] {\tiny $\ell$} -- 
(0,-0.3) node[not] {} 
			-- (1,1.2) node[zeta] {\tiny $j$}; 
	\draw (0,-0.3) node[not] {}  -- node[left,near end] {\tiny $-\ell, k \!\!\!$} 
	 (1,-1.7) node[not] {} 
			--  (2,-0.3) node[xi] {\tiny $k$};
	}
%shape
%         V 
%     -ellV -k
\DeclareSymbol{d-kPd-eI(BP)}{-2}{
\draw (-1,1.2) node[xi] {\tiny $\ell$} --
(0,-0.3) node[not] {} 
			--  (1,1.2) node[zeta] {\tiny $j$}; 
	\draw (0,-0.3) node[not] {}  --   node[left,near end] {\tiny $\-\ell$}
	(1,-1.7) node[not] {} 
			--  node[right,near start] {\tiny $\- k$} (2,-0.3) node[xi] {\tiny $k$};
	}

% Homogeneity = -2kappa	
% V shapes
\DeclareSymbol{BPsi}{-2}{
	\draw (-1,0.8) node[xi] {\tiny $k$}  
	-- (0,-1) node[not] {} 
		--  (1,0.8) node[zeta] {\tiny $\ell$}; 
	%\draw[thick] (0,-0.3) -- (1.2,0.4);
	}
	
\DeclareSymbol{PsiPsi}{-2}{
	\draw (-1,0.8) node[zeta] {} -- (0,-0.8) node[not] {} -- (1,0.8) node[zeta] {}; 
	}
\DeclareSymbol{PsikPsik}{-2}{
	\draw (-1,0.8) node[zeta] {\tiny $k$} -- (0,-1) node[not] {} -- (1,0.8) node[zeta] {\tiny $k$}; 
	}
\DeclareSymbol{BjPsik}{-2}{
	\draw (-1,0.8) node[xi] {\tiny $j$} -- (0,-1) node[not] {} -- (1,0.8) node[zeta] {\tiny $k$}; 
	}
\DeclareSymbol{BkPsij}{-2}{
	\draw (-1,0.8) node[xi] {\tiny $k$} -- (0,-1) node[not] {} -- (1,0.8) node[zeta] {\tiny $j$}; 
	}
\DeclareSymbol{BjBj}{-2}{
	\draw (-1,0.8) node[xi] {\tiny $j$} -- (0,-1) node[not] {} -- (1,0.8) node[xi] {\tiny $j$}; 
	}
\DeclareSymbol{BkBk}{-2}{
	\draw (-1,0.8) node[xi] {\tiny $k$} -- (0,-1) node[not] {} -- (1,0.8) node[xi] {\tiny $k$}; 
	}
	
%%
%\DeclareSymbol{dPsi-IdPsi-12}{-2}{
%	\draw (0,-0.3) node[not] {} -- node[sloped] {\tiny $j$}  (1,1.2) node[zeta] {\tiny $1$}; 
%	%\draw[thick] (0,0.2) -- (1,0.6); 
%	\draw (0,-0.3) node[not] {} -- (1,-1.7) node[not] {} -- node[sloped]  {\tiny $k$}  (2,-0.3) node[zeta] {\tiny $2$};
%	%\draw[thick] (1,-1.2) -- (2,-0.8); 
%	}
%\DeclareSymbol{dPsi-IdPsi-21}{-2}{
%	\draw (0,-0.3) node[not] {} -- node[sloped] {\tiny $j$}  (1,1.2) node[zeta] {\tiny $2$}; 
%	%\draw[thick] (0,0.2) -- (1,0.6); 
%	\draw (0,-0.3) node[not] {} -- (1,-1.7) node[not] {} -- node[sloped]  {\tiny $k$}  (2,-0.3) node[zeta] {\tiny $1$};
%	%\draw[thick] (1,-1.2) -- (2,-0.8); 
%	}

% Shape:
%     k /
%      j V
% two leaves are \zeta_\ell
\DeclareSymbol{Psi-dIdPsi}{-2}{
	\draw (0,-0.3) node[not] {} 
			-- node[left] {\tiny $k\!\!$}  (1,1.2) node[zeta] {\tiny $\ell$}; 
	\draw (0,-0.3) node[not] {} --  node[left,near end] {\tiny $j\!\!$}   (1,-1.7) node[not] {} 
			-- (2,-0.3) node[zeta] {\tiny $\ell$}; 
	}
\DeclareSymbol{Psi-dIdPsi}{-2}{
	\draw (0,-0.3) node[not] {} 
			-- node[left] {\tiny $k\!\!$}  (1,1.2) node[zeta] {\tiny $\ell$}; 
	\draw (0,-0.3) node[not] {} --  node[left,near end] {\tiny $j\!\!$}   (1,-1.7) node[not] {} 
			-- (2,-0.3) node[zeta] {\tiny $\ell$}; 
	}
  %and the same one with signs...
\DeclareSymbol{Psi-dId-Psi}{-2}{
	\draw (0,-0.3) node[not] {} 
			-- node[left] {\tiny $\- k\!\!$}  (1,1.2) node[zeta] {\tiny $\ell$}; 
	\draw (0,-0.3) node[not] {} --  node[left,near end] {\tiny $j\!\!$}   (1,-1.7) node[not] {} 
			-- (2,-0.3) node[zeta] {\tiny $\ell$}; 
	}
\DeclareSymbol{Psi-dId-pm-Psi}{-2}{
	\draw (0,-0.3) node[not] {} 
			-- node[left] {\tiny $\pm k\!\!$}  (1,1.2) node[zeta] {\tiny $\ell$}; 
	\draw (0,-0.3) node[not] {} --  node[left,near end] {\tiny $j\!\!$}   (1,-1.7) node[not] {} 
			-- (2,-0.3) node[zeta] {\tiny $\ell$}; 
	}	

% Shape:
%        /
%  -k j V
% two leaves are \zeta_\ell
\DeclareSymbol{Ped-kdeIPe}{-2}{
	\draw (0,-0.3) node[not] {} 
			--  (1,1.2) node[zeta] {\tiny $\ell$}; 
	\draw (0,-0.3) node[not] {} --  node[left,near end] {\tiny $-k,j\!\!$}   (1,-1.7) node[not] {} 
			-- (2,-0.3) node[zeta] {\tiny $\ell$}; 
	}
	
% Shape:
%        /
%  -k j V
% two leaves are \zeta_\ell
	
\DeclareSymbol{Ped-kI(dePe)}{-2}{
	\draw  (0,-0.3) node[not] {} --   (1,1.2) node[zeta] {\tiny $\ell$}; 
	\draw (0,-0.3) node[not] {} -- node[left] {\tiny $ \- k\!\!$}  
	(1,-1.7) node[not] {} --  
		node[right,near start] {\tiny $\!\!j$} (2,-0.3) node[zeta] {\tiny $\ell$};
	}	
	
% Shape:
%  -l   /
%    k  V
% two leaves are \xi_\ell \xi_k
	
\DeclareSymbol{BkdkI(d-eBe)}{-2}{
	\draw  (0,-0.3) node[not] {} --  node[left] {\tiny $ \- \ell\!$}
	 (1,1.2) node[xi] {\tiny $\ell$}; 
	\draw (0,-0.3) node[not] {} --   
	node[left,near end] {\tiny $k\!$}
	(1,-1.7) node[not] {} --  
		 (2,-0.3) node[xi] {\tiny $k$};
	}
% Shape:
%  -l   /
%    l  V
% two leaves are \xi_\ell \xi_\ell
	
\DeclareSymbol{BldlI(d-eBe)}{-2}{
	\draw  (0,-0.3) node[not] {} --  node[left] {\tiny $ \- \ell\!$}
	 (1,1.2) node[xi] {\tiny $\ell$}; 
	\draw (0,-0.3) node[not] {} --   
	node[left,near end] {\tiny $\ell\!$}
	(1,-1.7) node[not] {} --  
		 (2,-0.3) node[xi] {\tiny $\ell$};
	}	

% Shape:
%  -l   /
%      V -k
% two leaves are \xi_\ell \xi_k
\DeclareSymbol{d-kBkI(d-eBe)}{-2}{
	\draw  (0,-0.3) node[not] {} --  node[left] {\tiny $ \- \ell\!$}
	 (1,1.2) node[xi] {\tiny $\ell$}; 
	\draw (0,-0.3) node[not] {} --   
	(1,-1.7) node[not] {} --  node[right,near start] {\tiny $\- k$}
		 (2,-0.3) node[xi] {\tiny $k$};
	}
% Shape:
%  -l   /
%      V -l	
% two leaves are \xi_\ell \xi_\ell
\DeclareSymbol{d-lBlI(d-eBe)}{-2}{
	\draw  (0,-0.3) node[not] {} --  node[left] {\tiny $ \- \ell\!$}
	 (1,1.2) node[xi] {\tiny $\ell$}; 
	\draw (0,-0.3) node[not] {} --   
	(1,-1.7) node[not] {} --  node[right,near start] {\tiny $\- \ell$}
		 (2,-0.3) node[xi] {\tiny $\ell$};
	}

% Shape:
%          /
%   -l k  V 
% two leaves are \xi_\ell \xi_k
	
\DeclareSymbol{Bkd-edkI(Be)}{-2}{
	\draw  (0,-0.3) node[not] {} -- 
	 (1,1.2) node[xi] {\tiny $\ell$}; 
	\draw (0,-0.3) node[not] {} --    node[left,near end] {\tiny $ \- \ell,k\!\!$}
	(1,-1.7) node[not] {} --  
		 (2,-0.3) node[xi] {\tiny $k$};
	}
	
% Shape:
%          /
%   -l l  V 
% two leaves are \xi_\ell \xi_\ell
	
\DeclareSymbol{Bld-edlI(Be)}{-2}{
	\draw  (0,-0.3) node[not] {} -- 
	 (1,1.2) node[xi] {\tiny $\ell$}; 
	\draw (0,-0.3) node[not] {} --    node[left,near end] {\tiny $ \- \ell,\ell\!\!$}
	(1,-1.7) node[not] {} --  
		 (2,-0.3) node[xi] {\tiny $\ell$};
	}
		
% Shape:
%        /
%   -l   V -k
% two leaves are \xi_\ell \xi_k
	
\DeclareSymbol{d-kBkd-eI(Be)}{-2}{
	\draw  (0,-0.3) node[not] {} -- 
	 (1,1.2) node[xi] {\tiny $\ell$}; 
	\draw (0,-0.3) node[not] {} --    node[left] {\tiny $ \- \ell\!$}
	(1,-1.7) node[not] {} --  node[right,near start] {\tiny $\- k$}
		 (2,-0.3) node[xi] {\tiny $k$};
	}
	
% Shape:
%        /
%   -l   V -l
% two leaves are \xi_\ell \xi_\ell
	
\DeclareSymbol{d-lBld-eI(Be)}{-2}{
	\draw  (0,-0.3) node[not] {} -- 
	 (1,1.2) node[xi] {\tiny $\ell$}; 
	\draw (0,-0.3) node[not] {} --    node[left] {\tiny $ \- \ell\!$}
	(1,-1.7) node[not] {} --  node[right,near start] {\tiny $\- \ell$}
		 (2,-0.3) node[xi] {\tiny $\ell$};
	}

% Shape:
%     j /
%      j V
% two leaves are \zeta_\ell
\DeclareSymbol{Psi-djIdjPsi}{-2}{
	\draw (0,-0.3) node[not] {} 
			-- node[left] {\tiny $j\!\!$}  (1,1.2) node[zeta] {\tiny $\ell$}; 
	\draw (0,-0.3) node[not] {} --  node[left,near end] {\tiny $j\!\!$}   (1,-1.7) node[not] {} 
			-- (2,-0.3) node[zeta] {\tiny $\ell$}; 
	}
%\DeclareSymbol{Psi-dIdPsi-12}{-2}{
%	\draw (0,-0.3) node[not] {} -- node[sloped] {\tiny $j$}  (1,1.2) node[zeta] {\tiny $1$}; 
%	%\draw[thick] (0,0.2) -- (1,0.6); 
%	\draw (0,-0.3) node[not] {} --  node[left] {\tiny $k\!\!\!$}   (1,-1.7) node[not] {} -- (2,-0.3) node[zeta] {\tiny $2$};
%	%\draw[thick] (0,-1.2) -- (1,-0.8); 
%	}
%\DeclareSymbol{Psi-dIdPsi-21}{-2}{
%	\draw  (0,-0.3) node[not] {} -- node[sloped] {\tiny $j$}  (1,1.2) node[zeta] {\tiny $2$}; 
%	%\draw[thick] (0,0.2) -- (1,0.6); 
%	\draw (0,-0.3) node[not] {} --  node[left] {\tiny $k\!\!\!$}   (1,-1.7) node[not] {} -- (2,-0.3) node[zeta] {\tiny $1$};
%	%\draw[thick] (0,-1.2) -- (1,-0.8); 
%	}

% Shape:
%  l /
%  k V
% top leaf is \zeta_j,  right leaf is \xi_k
\DeclareSymbol{B-dIdPsi}{-2.5}{
	\draw (0,-0.3) node[not] {} -- node[left] {\tiny $\ell\!\!$} (1,1.2) node[zeta] {\tiny $j$}; 
	\draw (0,-0.3) node[not] {}  -- node[left,near end] {\tiny $k \!\!$} (1,-1.7) node[not] {} -- (2,-0.3) node[xi] {\tiny $k$};
	}
  %and the same one with signs...
\DeclareSymbol{B-d-sign-Id-sign-Psi}{-2.5}{
	\draw (0,-0.3) node[not] {} -- node[left] {\tiny $\si\ell\!\!$} (1,1.2) node[zeta] {\tiny $j$}; 
	\draw (0,-0.3) node[not] {}  -- node[left,near end] {\tiny $\bar\si k \!\!$} (1,-1.7) node[not] {} -- (2,-0.3) node[xi] {\tiny $k$};
	}
\DeclareSymbol{B-d-pm-Id-pm-Psi}{-2.5}{
	\draw (0,-0.3) node[not] {} -- node[left] {\tiny $\pm\ell\!\!$} (1,1.2) node[zeta] {\tiny $j$}; 
	\draw (0,-0.3) node[not] {}  -- node[left,near end] {\tiny $\pm k \!\!$} (1,-1.7) node[not] {} -- (2,-0.3) node[xi] {\tiny $k$};
	}

% Shape:
%  k  /
%  -k V
% top leaf is \zeta_ell,  right leaf is \zeta_ell
\DeclareSymbol{dk-Psi-IdkPsi}{-2.5}{
	\draw (0,-0.3) node[not] {} -- node[left] {\tiny $k\!\!$} (1,1.2) node[zeta] {\tiny $\ell$}; 
	\draw (0,-0.3) node[not] {}  -- node[left,near end] {\tiny $-k \!\!$} (1,-1.7) node[not] {} -- (2,-0.3) node[zeta] {\tiny $\ell$};
	}
% top leaf is \zeta_j,  right leaf is \zeta_ell
\DeclareSymbol{dk-Psi-IdkPsi2}{-2.5}{
	\draw (0,-0.3) node[not] {} -- node[left] {\tiny $k\!\!$} (1,1.2) node[zeta] {\tiny $j$}; 
	\draw (0,-0.3) node[not] {}  -- node[left,near end] {\tiny $-k \!\!$} (1,-1.7) node[not] {} -- (2,-0.3) node[zeta] {\tiny $\ell$};
	}
	
% Shape:
%     j /
%       V j
% top leaf is \zeta_\ell, right leaf is \zeta_k
\DeclareSymbol{dj-Psik-Idj-Psil}{-2}{
	\draw  (0,-0.3) node[not] {} -- node[left] {\tiny $j\!\!$}   (1,1.2) node[zeta] {\tiny $\ell$}; 
	\draw (0,-0.3) node[not] {} -- (1,-1.7) node[not] {} 
			-- node[right,near start] {\tiny $\!\!j$} (2,-0.3) node[zeta] {\tiny $k$};
	}
\DeclareSymbol{dj-Psik-Id-pm-j-Psil}{-2}{
	\draw  (0,-0.3) node[not] {} -- node[left] {\tiny $j\!\!$}   (1,1.2) node[zeta] {\tiny $\ell$}; 
	\draw (0,-0.3) node[not] {} -- (1,-1.7) node[not] {} 
			-- node[right,near start] {\tiny $\!\!\pm j$} (2,-0.3) node[zeta] {\tiny $k$};
	}
% Shape:
%     j /
%       V k
% both leaves are \zeta_\ell
\DeclareSymbol{dk-Psil-Idj-Psil}{-2}{
	\draw  (0,-0.3) node[not] {} -- node[left] {\tiny $j\!\!$}   (1,1.2) node[zeta] {\tiny $\ell$}; 
	\draw (0,-0.3) node[not] {} -- (1,-1.7) node[not] {} --  
		node[right,near start] {\tiny $\!\!k$} (2,-0.3) node[zeta] {\tiny $\ell$};
	}
% Shape:
%     k /
%       V j
% both leaves are \zeta_\ell
\DeclareSymbol{dj-Psil-Idk-Psil}{-2}{
	\draw  (0,-0.3) node[not] {} -- node[left] {\tiny $k\!\!$}   (1,1.2) node[zeta] {\tiny $\ell$}; 
	\draw (0,-0.3) node[not] {} -- (1,-1.7) node[not] {} --  
		node[right,near start] {\tiny $\!\!j$} (2,-0.3) node[zeta] {\tiny $\ell$};
	}
\DeclareSymbol{dj-Psil-Id-k-Psil}{-2}{
	\draw  (0,-0.3) node[not] {} -- node[left] {\tiny $ \- k\!\!$}   (1,1.2) node[zeta] {\tiny $\ell$}; 
	\draw (0,-0.3) node[not] {} -- (1,-1.7) node[not] {} --  
		node[right,near start] {\tiny $\!\!j$} (2,-0.3) node[zeta] {\tiny $\ell$};
	}
\DeclareSymbol{dj-Psil-Id-pm-k-Psil}{-2}{
	\draw  (0,-0.3) node[not] {} -- node[left] {\tiny $ \pm k\!\!$}   (1,1.2) node[zeta] {\tiny $\ell$}; 
	\draw (0,-0.3) node[not] {} -- (1,-1.7) node[not] {} --  
		node[right,near start] {\tiny $\!\!j$} (2,-0.3) node[zeta] {\tiny $\ell$};
	}
% Shape:
%     j /
%       V j
% both leaves are \zeta_\ell
\DeclareSymbol{djPsil-IdjPsil}{-2}{
	\draw  (0,-0.3) node[not] {} -- node[left] {\tiny $j\!\!$}   (1,1.2) node[zeta] {\tiny $\ell$}; 
	\draw (0,-0.3) node[not] {} -- (1,-1.7) node[not] {} --  
		node[right,near start] {\tiny $\!\!j$} (2,-0.3) node[zeta] {\tiny $\ell$};
	}
% Shape:
%     k /
%       V k
% both leaves are \zeta_\ell
\DeclareSymbol{dkPsil-IdkPsil}{-2}{
	\draw  (0,-0.3) node[not] {} -- node[left] {\tiny $k\!\!$}   (1,1.2) node[zeta] {\tiny $\ell$}; 
	\draw (0,-0.3) node[not] {} -- (1,-1.7) node[not] {} --  
		node[right,near start] {\tiny $\!\!k$} (2,-0.3) node[zeta] {\tiny $\ell$};
	}
\DeclareSymbol{d-kPsil-IdkPsil}{-2}{
	\draw  (0,-0.3) node[not] {} -- node[left] {\tiny $k\!\!$}   (1,1.2) node[zeta] {\tiny $\ell$}; 
	\draw (0,-0.3) node[not] {} -- (1,-1.7) node[not] {} --  
		node[right,near start] {\tiny $\!\!\-k$} (2,-0.3) node[zeta] {\tiny $\ell$};
	}
\DeclareSymbol{d-pm-kPsil-IdkPsil}{-2}{
	\draw  (0,-0.3) node[not] {} -- node[left] {\tiny $k\!\!$}   (1,1.2) node[zeta] {\tiny $\ell$}; 
	\draw (0,-0.3) node[not] {} -- (1,-1.7) node[not] {} --  
		node[right,near start] {\tiny $\!\! \- k$} (2,-0.3) node[zeta] {\tiny $\ell$};
	}
% Shape:
%     k /
%       V j
% top leaf is \zeta_\ell, right leaf is \zeta_i
\DeclareSymbol{dj-Psii-Idk-Psil}{-3}{
	\draw  (0,-0.3) node[not] {} -- node[left] {\tiny $k\!\!$}   (1,1.2) node[zeta] {\tiny $\ell$}; 
	\draw (0,-0.3) node[not] {} -- (1,-1.7) node[not] {} --  
		node[right,near start] {\tiny $\!\!j$} (2,-0.3) node[zeta] {\tiny $i$};
	}
% Shape:
%     k /
%       V k
% top leaf is \zeta_j, right leaf is \zeta_\ell
\DeclareSymbol{dkPsil-IdkPsij}{-2}{
	\draw  (0,-0.3) node[not] {} -- node[left] {\tiny $k\!\!$}   (1,1.2) node[zeta] {\tiny $j$}; 
	\draw (0,-0.3) node[not] {} -- (1,-1.7) node[not] {} --  
		node[right,near start] {\tiny $\!\!k$} (2,-0.3) node[zeta] {\tiny $\ell$};
	}	
\DeclareSymbol{d-kPsil-IdkPsij}{-2}{
	\draw  (0,-0.3) node[not] {} -- node[left] {\tiny $k\!\!$}   (1,1.2) node[zeta] {\tiny $j$}; 
	\draw (0,-0.3) node[not] {} -- (1,-1.7) node[not] {} --  
		node[right,near start] {\tiny $\!\! \-k$} (2,-0.3) node[zeta] {\tiny $\ell$};
	}

% Shape:
%     V k
%      | j
% left leaf is \xi_k, right leaf is \zeta_\ell
\DeclareSymbol{dI-BdPsi}{-2}{
	\draw (-1,1.2) node[xi] {\tiny $k$} -- (0,-0.3) node[not] {} 
		-- node[right,near start] {\tiny $\!\!k$}  (1,1.2) node[zeta] {\tiny $\ell$};
	\draw  (0,-0.3) node[not] {} --  node[right] {\tiny $\!\!j$} (0,-1.7) node[not] {} ;
	}
\DeclareSymbol{dIV}{-2}{
	\node[zeta]  at (1,1.2)  (topright) {};
	\draw (-1,1.2) node[zeta] {} -- (0,-0.3) node[not] {};
	\draw[very thick] (0,-0.3) node[not] {} to  (topright);
	\draw[very thick]  (0,-0.3) node[not] {} to  (0,-1.7) node[not] {} ;
	}
\DeclareSymbol{d-pm-I-Bd-pm-Psi}{-2}{
	\draw (-1,1.2) node[xi] {\tiny $k$} -- (0,-0.3) node[not] {} 
		-- node[right,near start] {\tiny $\!\! \pm k$}  (1,1.2) node[zeta] {\tiny $\ell$};
	\draw  (0,-0.3) node[not] {} --  node[right] {\tiny $\!\!\pm j$} (0,-1.7) node[not] {} ;
	}
\DeclareSymbol{dI-Bd-Psi}{-2}{
	\draw (-1,1.2) node[xi] {\tiny $k$} -- (0,-0.3) node[not] {} 
		-- node[right,near start] {\tiny $\!\! \- k$}  (1,1.2) node[zeta] {\tiny $\ell$};
	\draw  (0,-0.3) node[not] {} --  node[right] {\tiny $\!\!j$} (0,-1.7) node[not] {} ;
	}
\DeclareSymbol{diI-BdPsi}{-2}{
	\draw (-1,1.2) node[xi] {\tiny $k$} -- (0,-0.3) node[not] {} 
		-- node[right,near start] {\tiny $\!\!k$}  (1,1.2) node[zeta] {\tiny $\ell$};
	\draw  (0,-0.3) node[not] {} --  node[right] {\tiny $\!\!i$} (0,-1.7) node[not] {} ;
	}
\DeclareSymbol{diI-Bd-Psi}{-2}{
	\draw (-1,1.2) node[xi] {\tiny $k$} -- (0,-0.3) node[not] {} 
		-- node[right,near start] {\tiny $\!\!\- k$}  (1,1.2) node[zeta] {\tiny $\ell$};
	\draw  (0,-0.3) node[not] {} --  node[right] {\tiny $\!\!i$} (0,-1.7) node[not] {} ;
	}
\DeclareSymbol{dkI-BdPsij}{-2}{
	\draw (-1,1.2) node[xi] {\tiny $\ell$} -- (0,-0.3) node[not] {} 
		-- node[right,near start] {\tiny $\!\!\ell$}  (1,1.2) node[zeta] {\tiny $j$};
	\draw  (0,-0.3) node[not] {} --  node[right] {\tiny $\!\!k$} (0,-1.7) node[not] {} ;
	}
\DeclareSymbol{d-sign-kI-Bd-sign-Psij}{-2}{
	\draw (-1,1.2) node[xi] {\tiny $\ell$} -- (0,-0.3) node[not] {} 
		-- node[right,near start] {\tiny $\!\!\si \ell$}  (1,1.2) node[zeta] {\tiny $j$};
	\draw  (0,-0.3) node[not] {} --  node[right] {\tiny $\!\!\bar\si k$} (0,-1.7) node[not] {} ;
	}

% Homogeneity = -kappa	
\DeclareSymbol{Izeta}{-2}{
	\draw (0,-1) node[not] {} -- (0,1) node[zeta] {}; 
	}
\DeclareSymbol{I'zeta}{-2}{
	\node[zeta] at (0,1) (a) {};
	\draw[very thick] (0,-1) node[not] {} to (a); 
	}
\DeclareSymbol{Izetaj}{-2}{
	\draw (0,-1) node[not] {} -- (0,1) node[zeta] {\tiny $j$}; 
	}
\DeclareSymbol{Izetal}{-2}{
	\draw (0,-1) node[not] {} -- (0,1) node[zeta] {\tiny $\ell$}; 
	}
\DeclareSymbol{Izetak}{-2}{
	\draw (0,-1) node[not] {} -- (0,1) node[zeta] {\tiny $k$}; 
	}
\DeclareSymbol{Izeta1}{-2}{
	\draw (0,-1) node[not] {} -- (0,1) node[zeta] {\tiny $1$}; 
	}
\DeclareSymbol{Izeta2}{-2}{
	\draw (0,-1) node[not] {} -- (0,1) node[zeta] {\tiny $2$}; 
	}
\DeclareSymbol{Izetaj-e}{-2}{
	\draw (0,-1) node[not] {} -- (0,1) node[zeta] {\tiny $j$}; 
	\node at (0.5,-1) {$\be$};
	} %shift
\DeclareSymbol{Ixi}{-2}{
	\draw (0,-1) node[not] {} -- (0,1) node[xi] {}; 
	}
\DeclareSymbol{Ixij}{-2}{
	\draw (0,-1) node[not] {} -- (0,1) node[xi] {\tiny $j$}; 
	}
\DeclareSymbol{Ixik}{-2}{
	\draw (0,-1) node[not] {} -- (0,1) node[xi] {\tiny $k$}; 
	}
% Shape:
%   / k
%   \ j
\DeclareSymbol{dIdIzeta}{-2}{
	\node[zeta] at (1,1.2) (a) {};
	\draw[very thick]  (0,-0.3) node[not] {} 
		--   (a);
	\draw[very thick]  (0,-0.3) node[not] {} --   (1,-1.7) node[not] {} ;
	}
\DeclareSymbol{dI-dPsi}{-2}{
	\draw  (0,-0.3) node[not] {} 
		-- node[right,near start] {\tiny $\!\!k$}  (1,1.2) node[zeta] {\tiny $\ell$};
	\draw  (0,-0.3) node[not] {} --  node[left,near end] {\tiny $j\!\!$} (1,-1.7) node[not] {} ;
	}
\DeclareSymbol{dI-d-Psi}{-2}{
	\draw  (0,-0.3) node[not] {} 
		-- node[right,near start] {\tiny $\!\! \- k$}  (1,1.2) node[zeta] {\tiny $\ell$};
	\draw  (0,-0.3) node[not] {} --  node[left,near end] {\tiny $j\!\!$} (1,-1.7) node[not] {} ;
	}
\DeclareSymbol{d-pm-I-d-pm-Psi}{-2}{
	\draw  (0,-0.3) node[not] {} 
		-- node[right,near start] {\tiny $\!\! \pm k$}  (1,1.2) node[zeta] {\tiny $\ell$};
	\draw  (0,-0.3) node[not] {} --  node[left,near end] {\tiny $\pm j\!\!$} (1,-1.7) node[not] {} ;
	}
% Shape:
%   / k
%   \ i
\DeclareSymbol{diI-dkPsi}{-2}{
	\draw  (0,-0.3) node[not] {} 
		-- node[right,near start] {\tiny $\!\!k$}  (1,1.2) node[zeta] {\tiny $\ell$};
	\draw  (0,-0.3) node[not] {} --  node[left,near end] {\tiny $i\!\!$} (1,-1.7) node[not] {} ;
	}
\DeclareSymbol{diI-d-kPsi}{-2}{
	\draw  (0,-0.3) node[not] {} 
		-- node[right,near start] {\tiny $\!\! \- k$}  (1,1.2) node[zeta] {\tiny $\ell$};
	\draw  (0,-0.3) node[not] {} --  node[left,near end] {\tiny $i\!\!$} (1,-1.7) node[not] {} ;
	}
% Shape:
%   / \ell
%   \ k
\DeclareSymbol{dkI-dlPsi}{-3}{
	\draw  (0,-0.3) node[not] {} 
		-- node[right,near start] {\tiny $\!\!\ell$}  (1,1.2) node[zeta] {\tiny $j$};
	\draw  (0,-0.3) node[not] {} --  node[left,near end] {\tiny $k\!\!$} (1,-1.7) node[not] {} ;
	}
\DeclareSymbol{d-sign-kI-d-sign-lPsi}{-3}{
	\draw  (0,-0.3) node[not] {} 
		-- node[right,near start] {\tiny $\!\!\si\ell$}  (1,1.2) node[zeta] {\tiny $j$};
	\draw  (0,-0.3) node[not] {} --  node[left,near end] {\tiny $\bar\si k\!\!$} (1,-1.7) node[not] {} ;
	}

% Homogeneity = 1- 2kappa

\DeclareSymbol{I-PsidPsi}{-2}{
	\draw (-1,1.2) node[zeta] {} -- (0,-0.3) node[not] {};
	\node[zeta] at (1,1.2) (a) {};
	\draw[very thick] (0,-0.3)  to (a); 
	\draw (0,-0.3) node[not] {} -- (0,-1.7) node[not] {}; 
	}
	
\DeclareSymbol{I-Psi1dPsi2}{-2}{
	\draw (-1,1.2) node[zeta] {\tiny $1$} -- (0,-0.3) node[not] {}
		 -- node[right,near start] {\tiny $\!\!j$} (1,1.2) node[zeta] {\tiny $2$}; 
	\draw (0,-0.3) node[not] {} -- (0,-1.7) node[not] {}; 
	}
\DeclareSymbol{I-Psi2dPsi1}{-2}{
	\draw (-1,1.2) node[zeta] {\tiny $2$} -- (0,-0.3) node[not] {}
		 -- node[right,near start] {\tiny $\!\!j$} (1,1.2) node[zeta] {\tiny $1$}; 
	\draw (0,-0.3) node[not] {} -- (0,-1.7) node[not] {}; 
	}
\DeclareSymbol{I-PsikdPsil}{-2}{
	\draw (-1,1.2) node[zeta] {\tiny $k$} -- (0,-0.3) node[not] {}
		 -- node[right,near start] {\tiny $\!\!j$} (1,1.2) node[zeta] {\tiny $\ell$}; 
	\draw (0,-0.3) node[not] {} -- (0,-1.7) node[not] {}; 
	}
\DeclareSymbol{I-PsijdkPsil}{-2}{
	\draw (-1,1.2) node[zeta] {\tiny $j$} -- (0,-0.3) node[not] {}
		 -- node[right,near start] {\tiny $\!\!k$} (1,1.2) node[zeta] {\tiny $\ell$}; 
	\draw (0,-0.3) node[not] {} -- (0,-1.7) node[not] {}; 
	}
\DeclareSymbol{I-BdPsi--1}{-2}{
	\draw (-1,1.2) node[xi] {\tiny $k$} -- (0,-0.3) node[not] {} 
		-- node[right,near start] {\tiny $\!\!k$}  (1,1.2) node[zeta] {\tiny $\ell$}; 
	\draw (0,-0.3) node[not] {} -- (0,-1.7) node[not] {}; 
	}
\DeclareSymbol{I-BdPsi--2}{-2}{
	\draw (-1,1.2) node[xi] {\tiny $k$} -- (0,-0.3) node[not] {} 
		-- node[right,near start] {\tiny $\!\! \- k$}  (1,1.2) node[zeta] {\tiny $\ell$}; 
	\draw (0,-0.3) node[not] {} -- (0,-1.7) node[not] {}; 
	}
	
% Shape:
%     V k
%      | 
% left leaf is \xi_k, right leaf is \zeta_\ell
\DeclareSymbol{I-BdPsi}{-2}{
	\draw (-1,1.2) node[xi] {\tiny $k$} -- (0,-0.3) node[not] {} 
		-- node[right,near start] {\tiny $\!\!k$}  (1,1.2) node[zeta] {\tiny $\ell$};
	\draw  (0,-0.3) node[not] {} --   (0,-1.7) node[not] {} ;
	}
  %the same with -k
\DeclareSymbol{I-Bd-Psi}{-2}{
	\draw (-1,1.2) node[xi] {\tiny $k$} -- (0,-0.3) node[not] {} 
		-- node[right,near start] {\tiny $\!\!\- k$}  (1,1.2) node[zeta] {\tiny $\ell$};
	\draw  (0,-0.3) node[not] {} --   (0,-1.7) node[not] {} ;
	}
  %the same with \pm k
\DeclareSymbol{I-Bd-pm-Psi}{-2}{
	\draw (-1,1.2) node[xi] {\tiny $k$} -- (0,-0.3) node[not] {} 
		-- node[right,near start] {\tiny $\!\!\pm k$}  (1,1.2) node[zeta] {\tiny $\ell$};
	\draw  (0,-0.3) node[not] {} --   (0,-1.7) node[not] {} ;
	}
% Shape:
% -k V 
%      | 
% left leaf is \xi_k, right leaf is \zeta_\ell
\DeclareSymbol{I(d-kBP)}{-2}{
	\draw (-1,1.2) node[xi] {\tiny $k$} -- 
	node[left,near end] {\tiny $\- k\!\!$} 
	(0,-0.3) node[not] {} 
		--  (1,1.2) node[zeta] {\tiny $\ell$};
	\draw  (0,-0.3) node[not] {} --   (0,-1.7) node[not] {} ;
	}
	
% Shape:
%    V 
%     | -k 
% left leaf is \xi_k, right leaf is \zeta_\ell
\DeclareSymbol{d-kI(BP)}{-2}{
	\draw (-1,1.2) node[xi] {\tiny $k$} -- 
	(0,-0.3) node[not] {} 
		--  (1,1.2) node[zeta] {\tiny $\ell$};
	\draw  (0,-0.3) node[not] {} --  
		node[right] {\tiny $\!\!\- k$} 
		 (0,-1.7) node[not] {} ;
	}
	
% Shape:
%     V j
%      | 
% left leaf is \xi_j, right leaf is \zeta_k
\DeclareSymbol{I-BjdjPsik}{-2}{
	\draw (-1,1.2) node[xi] {\tiny $j$} -- (0,-0.3) node[not] {} 
		-- node[right,near start] {\tiny $\!\!j$}  (1,1.2) node[zeta] {\tiny $k$};
	\draw  (0,-0.3) node[not] {} --   (0,-1.7) node[not] {} ;
	}
  %and the same with \pm j
\DeclareSymbol{I-Bjd-pm-jPsik}{-2}{
	\draw (-1,1.2) node[xi] {\tiny $j$} -- (0,-0.3) node[not] {} 
		-- node[right,near start] {\tiny $\!\! \pm j$}  (1,1.2) node[zeta] {\tiny $k$};
	\draw  (0,-0.3) node[not] {} --   (0,-1.7) node[not] {} ;
	}

% Homogeity = 1-kappa
% Shape:
%   / j
%   \ 
\DeclareSymbol{I-dPsi1}{-2}{
	\draw  (0,-0.3) node[not] {} 
		-- node[right,near start] {\tiny $\!\!j$}  (1,1.2) node[zeta] {\tiny $1$};
	\draw  (0,-0.3) node[not] {} -- (1,-1.7) node[not] {} ;
	}
\DeclareSymbol{I-dPsi2}{-2}{
	\draw  (0,-0.3) node[not] {} 
		-- node[right,near start] {\tiny $\!\!j$}  (1,1.2) node[zeta] {\tiny $2$};
	\draw  (0,-0.3) node[not] {} -- (1,-1.7) node[not] {} ;
	}
\DeclareSymbol{I-dPsil}{-2}{
	\draw  (0,-0.3) node[not] {} 
		-- node[right,near start] {\tiny $\!\!k$}  (1,1.2) node[zeta] {\tiny $\ell$};
	\draw  (0,-0.3) node[not] {} -- (1,-1.7) node[not] {} ;
	}
\DeclareSymbol{I-d-Psil}{-2}{
	\draw  (0,-0.3) node[not] {} 
		-- node[right,near start] {\tiny $\!\!\- k$}  (1,1.2) node[zeta] {\tiny $\ell$};
	\draw  (0,-0.3) node[not] {} -- (1,-1.7) node[not] {} ;
	}
	
% Shape:
%   / -k
%   \ 	
% noise is \xi_k
\DeclareSymbol{I(I-k(Bk))}{-2}{
	\draw  (0,-0.3) node[not] {} 
		-- node[right,near start] {\tiny $\!\!\- k$}  (1,1.2) node[xi] {\tiny $k$};
	\draw  (0,-0.3) node[not] {} -- (1,-1.7) node[not] {} ;
	}
	
% Shape:
%   / 
%   \ -k	
% noise is \zeta_ell
\DeclareSymbol{d-kI(I(Pe))}{-2}{
	\draw  (0,-0.3) node[not] {} 
		--  (1,1.2) node[zeta] {\tiny $\ell$};
	\draw  (0,-0.3) node[not] {} -- node[right,near start] {\tiny $\!\!\- k$}  (1,-1.7) node[not] {} ;
	}	
% Shape:
%   / 
%   \ -k	
% noise is \xi_k
\DeclareSymbol{d-kI(I(Bk))}{-2}{
	\draw  (0,-0.3) node[not] {} 
		--  (1,1.2) node[xi] {\tiny $k$};
	\draw  (0,-0.3) node[not] {} -- node[right,near start] {\tiny $\!\!\- k$}  (1,-1.7) node[not] {} ;
	}

% Homogeneity = 2- 3kappa
% Shape:
%     W j
%      | 
% left leaf is \xi_j, middle and right leave are \zeta_k
\DeclareSymbol{I-BPsidPsi}{-2}{
	\node at (-1.2,-0.2) {$\!\!\!\CE$};
	\draw (-1.2,1.2) node[xi] {\tiny $j$} -- (0,-0.3) node[not] {} 
		-- node[right,near start] {\tiny $\!\!j$}  (1.2,1.2) node[zeta] {\tiny $k$};
	\draw  (0,-0.3) node[not] {} --   (0,1.2) node[zeta] {\tiny $k$} ;
	\draw  (0,-0.3) node[not] {} --   (0,-1.7) node[not] {} ;
	}

\definecolor{darkred}{rgb}{0.9,0.1,0.1}

\def\hhao#1{}

\def\s{\mathfrak{s}}

\def\be{\mathbf e}
\def\br{\mathbf r}
\def\poly #1{{#1}_{\mathrm{poly}}}
\def\onorm#1{| #1|_0}

\newcommand{\e}{\varepsilon}

\def\MM{\mathscr{M}}

\def\DD{\mathscr{D}}

\def\${|\!|\!|}
\def\Wick#1{\,\colon\!\! #1 \colon}
\def\E{\mathbf{E}}
\def\T{\mathbf{T}}

\def\curl{\mbox{curl}}
\def\d{\mbox{d}}
\def\-{\mbox{-}}

\def\ST{\mathscr{T}}
\newcommand{\VERT}{\vert\!\vert\!\vert}

\newcommand{\si}{\sigma}
\newcommand{\Cdot}{{\boldsymbol{\cdot}}}

%Definitions Sec 2
 								%Rescaled field 
 								% Interaction kernel

\begin{document}

\title{Stochastic quantization of an Abelian gauge theory}
\author{Hao Shen}
\institute{University of Wisconsin-Madison, US, \email{pkushenhao@gmail.com}}

\maketitle

\begin{abstract}
We study the Langevin dynamics of a $U(1)$ lattice gauge theory on the torus, and prove that they converge for short time in a suitable gauge to a system of stochastic PDEs driven by space-time white noises. This also yields convergence of some gauge invariant observables on a short time interval.
We fix gauge via a DeTurck trick,  and  prove a version of Ward identity which results in cancellation of renormalization constants that would otherwise break gauge symmetry.
%We study the local well-posedness of stochastic quantization equation for a gauge theory 
%with $U(1)$ gauge invariance on the torus, which is a system of stochastic PDEs
%driven by independent space-time white noises.
%%The equation also arises as the stochastic Ginzburg-Landau equation
%%in superconductivity.
%We show that the discretized equations, which is a large system of It\^o diffusions,
%converge (in a suitable gauge) to a limiting solution as lattice spacing vanishes.
%The evolution system is not parabolic, but we attain parabolicity by 
%tuning the gauge 
%at each time via
%a DeTurck trick. The applicability of this gauge tuning argument
%relies on our discretization that preserves gauge symmetry, and a cancellation of 
%renormalization constants that would otherwise break gauge symmetry.
The proof relies on a discrete version of the theory of regularity structures.
%with a ``shift" operation to deal with nonlinearities that are not exactly local.
\end{abstract}

%\keywords{...}

\setcounter{tocdepth}{1}
\tableofcontents

\section{Introduction}

%\subsection{An Abelian gauge theory}

Consider a $U(1)$ gauge theory defined by the following Hamiltonian, which is sometimes called the Higgs model or scalar quantum electromagnetic dynamics (scalar QED):
\begin{equ} [e:potential]
\mathcal H(A,\Phi) \eqdef
\frac12 \int_{\T^2} \big( F_A(x)^2 + \sum_{j=1,2} |D^A_j \Phi(x)|^2 \big) \,\d^2 x \;.
\end{equ}
Here $A$ and $\Phi$ are both $\R^2$ valued functions on the unit torus $\T^2$,
and by the standard convention we will  
call $A=(A_1,A_2)$ a vector or gauge field and $\Phi = \Phi_1+i\Phi_2$  a complex valued field ($i=\sqrt{-1}$). Moreover,
\[
F_A \eqdef \curl A =
\partial_1 A_2 -\partial_2 A_1
\]
is a scalar function on $\T^2$
and $D_j^A$ is the {\it gauge covariant} derivative
\[
D_j^A \Phi \eqdef \partial_j \Phi - i \lambda A_j \Phi  \qquad (j=1,2)
\]
where  $\lambda\in \R$ and finally $|D^A_j \Phi(x)|$ 
is the complex amplitude of $D^A_j \Phi(x)$. 
Here gauge covariance means that for any real valued function $f$, if we introduce a gauge transformation as
 \begin{equ} \label{e:gauge-trans}
 \tilde A=A+\nabla f \;,
 \qquad \tilde \Phi = e^{i\lambda f} \Phi \;,
 \end{equ}
then one can check that 
 \begin{equ} \label{e:covariance}
D_j^{\tilde A} \tilde\Phi = e^{i\lambda f} D_j^A \Phi \;.
 \end{equ}
Since the vector field $A$ can be also naturally viewed as a differential one-form, the quantity $F_A=dA$ where $d$ is the exterior differential and the gauge transformation on $A$ is simply adding an exact one-form $df$.
The gauge covariant derivative has an adjoint operator which is 
$(D_j^A)^* = -\partial_j + i \lambda A_j=-D_j^A $.
\footnote{If we view $D^A$ as an operator which gives a 1-form, then
for a 1-form $B$, $(D^A)^* B = -\mbox{div}B + i A\cdot B$.}

The model \eqref{e:potential} exhibits gauge symmetry, namely, under the transformation \eqref{e:gauge-trans}
one can check that 
\[
\mathcal H(\tilde A, \tilde\Phi) =\mathcal H(A,\Phi) 
\]
since both $ F_A(x)$ and $ |D^A_j \Phi(x)|^2$ remain unchanged,
assuming that $f\in\CC^2(\T^2)$ such that %$\partial_1 \partial_2 f = \partial_2 \partial_1 f$.  
$\curl \nabla f =0$.
This symmetry is called $U(1)$ gauge  invariance 
due to the nature of the transformation \eqref{e:gauge-trans}:
the transformation $e^{i\lambda f}$ is a function taking values in the Lie group $U(1)$ and
the term $- i \lambda A_j$ which defines the gauge covariant derivative 
is a function taking values in the Lie algebra $\mathfrak u(1)$.

Because of this invariance,
the formal ``Gibbs measure" (i.e. quantum gauge theory)
\begin{equ}[e:formal-measure]
e^{-\mathcal H(A,\Phi)} \CD A \CD\Phi
\end{equ}
where $\CD A \CD\Phi$ is the formal ``Lebesgue measure" 
can not be naively defined as a probability measure,
 even if $\T^2$ is replaced by a finite lattice, since the measure would not be normalizable. 
To make sense of this probability measure one has to fix a gauge, which amounts to
selecting a suitable representative element from each trajectory of the action by the gauge transformation \eqref{e:gauge-trans}.

% {\bf Maybe find references on construction of this measure; mention the Higgs model? / Scalar QED? which is the above model plus a term $|\Phi|^4$} The stochastic dynamics with the above measure as the formal invariant measure is \footnote{For $\Phi$ checked in real coordinates, see scanned notes.}
%\begin{equs}
%\partial_{t}A_{1} 
%	&=\partial_{2}^{2}A_{1}-\partial_{1}\partial_{2}A_{2}
%	-i \lambda \Big( \bar{\Phi}D^A_{1}\Phi-\Phi\overline{D^A_{1}\Phi} \Big)+\xi_{1} \\
%\partial_{t}A_{2}
%	&=\partial_{1}^{2}A_{2}-\partial_{1}\partial_{2}A_{1}
%	-i \lambda\Big( \bar{\Phi}D^A_{2}\Phi-\Phi\overline{D^A_{2}\Phi} \Big)+\xi_{2} 
%	\label{e:APhi} \\
%\partial_t \Phi &= \sum_j D^A_j D^A_j\Phi + \zeta
%\end{equs}

%\subsection{Stochastic quantization}

The aim of this article is to study the Langevin dynamic associated with the potential $\mathcal H(A,\Phi)$, which is {\it formally} given by
\minilab{e:APhi}
\begin{equs} 
\partial_{t}A
	%&=L \nabla\times A
	&=-\curl^*\curl A
	- \frac{i \lambda}{2} \Big( \bar{\Phi}D^A\Phi-\Phi\overline{D^A\Phi} \Big)+\xi \;,
		\label{e:APhi-A}
	 \\
\partial_t \Phi &= \sum_{j=1}^2 D^A_j D^A_j\Phi + \zeta \;,
		\label{e:APhi-Phi}
\end{equs}
where $\xi=(\xi_1,\xi_2)$, $\zeta=\zeta_1 + i\zeta_2$
and $\xi_1,\xi_2,\zeta_1,\zeta_2$ are four independent space-time white noises,
and $\sum_{j=1}^2 D^A_j D^A_j= -\sum_{j=1}^2 (D^A_j)^* D^A_j$ is called the
gauge covariant Laplacian (which is nonlinear).
For $\zeta$ as a complex valued space-time white noise one has
$\E(\zeta(t,x)\bar \zeta(s,y))=2\delta(t-s)\delta(x-y)$.
Equation~\eqref{e:APhi} is often called the ``stochastic quantization'' of 
the quantum field theory defined by $\mathcal H(A,\Phi)$:
formally, \eqref{e:formal-measure} is the invariant measure of \eqref{e:APhi}.
Stochastic quantization was first proposed by physicists Parisi and Wu in 
the paper {\it ``perturbation theory without gauge fixing"}
\cite{ParisiWu}; 
 as a matter of fact their main motivation was to study gauge theory, especially
 the gauge-fixing issue in the dynamic setting; see Section~\ref{sec:nonAbelian} for more formal discussion.

%The reason that the above trick works relies on \eqref{e:FAFB}, i.e. the invariance of the curl. 
The nonlinear equation \eqref{e:APhi} also exhibits $U(1)$ gauge invariance in the distributional sense.
In fact, at the purely formal level, we can define  the gauge transformed functions $\tilde A,\tilde \Phi$ as \eqref{e:gauge-trans}
with  $f:\T^2\to \R$ (independent of time).
One can formally check 
 \footnote{Of course the initial condition will change under this transformation.} 
using \eqref{e:covariance} that $\tilde A $ and $\tilde \Phi $ still satisfy
 \eqref{e:APhi-A}, while \eqref{e:APhi-Phi} becomes
\[
\partial_t \tilde\Phi  =e^{i\lambda f} \partial_t \Phi 
	= e^{i\lambda f} \sum_j D^{A }_j D^{A }_j\Phi  + e^{i\lambda f} \zeta 
	=  \sum_j D^{\tilde A }_j D^{\tilde A }_j \tilde\Phi  
		+ e^{i\lambda f} \zeta  \;.
\]
Note that the gauge transformed noise $e^{i\lambda f}\zeta $ is still distributed as a complex valued Gaussian space-time white noise, in particular
$\E(e^{i\lambda f} \zeta(t,x)\overline{e^{i\lambda f}\zeta(s,y)})=2\delta(t-s)\delta(x-y)$.

Mathematically there are several reasons
 that the study of this system is interesting and nontrivial. 
First, note that the equations in \eqref{e:APhi} are completely  formal.
Indeed, in two spatial dimensions, the solution of the linearized equation 
$\partial_t \Phi = \Delta\Phi + \zeta$ is distributional, so that 
the nonlinearities of the system lack a classical meaning.
This is the general problem shared by a large class of singular stochastic PDEs 
that have been intensively studied recently, such as 
the KPZ equation \cite{KPZ,gubinelli2015kpz}, 
generalized parabolic Anderson model \cite{MR3406823,Regularity},
stochastic Navier-Stokes equation \cite{MR1941997,Zhu2014NS},
as well as some other equations which arise from the stochastic quantization procedure
such as the
$\Phi^4$ equation \cite{MR2016604,Regularity}
 and sine-Gordon equation \cite{MR3452276,SineGordon8pi}.
The theory of regularity structures \cite{Regularity}, the paracontrolled distribution method \cite{MR3406823,BailleulBernicot}
and the renormalization group method \cite{Kupiainen} are developed to provide solution theories to equations with such ill-defined nonlinearities. 
Typically, these equations require renormalizations. For the formal equation \eqref{e:APhi},
we will see that the only renormalization we will have is a 
``mass renormalization'' 
for $\Phi$ in the second equation. This amounts to renormalizing the Hamiltonian by subtracting a term  ``$\infty|\Phi|^2$"
which is again invariant under the transformation \eqref{e:gauge-trans}.

Another difficulty comes from  the {\it linear} part; note that the equation \eqref{e:APhi-A} for $A$
is not parabolic. In fact, if we drop the field $\Phi$ (by setting $\lambda=0$)  Eq.~\eqref{e:APhi-A} then becomes a linear system
\begin{equs}[e:linearA]
\partial_t A_1 &= -\partial_2 F_A + \xi_1 =\partial_2^2 A_1 - \partial_1\partial_2 A_2 +\xi_1\\
\partial_t A_2 &= \partial_1 F_A + \xi_2=\partial_1^2 A_2 - \partial_1\partial_2 A_1 +\xi_2 \;.
\end{equs}
This is the dynamic for the potential
\begin{equ} [e:potentialA]
\frac12 \int_{\T^2} F_A(x)^2 \,d^2x
= \frac12 \int_{\T^2} (\partial_1 A_2 -\partial_2 A_1)(x)^2 \,d^2x \;.
\end{equ}
Obviously the system \eqref{e:linearA} is not parabolic.
The loss of parabolicity is related with the $U(1)$ gauge symmetry: 
%for the potential \eqref{e:potential} or \eqref{e:potentialA}: 
the quadratic form \eqref{e:potentialA} annihilates all the gradients 
$A=\nabla f$.

Before the discussion on the full system \eqref{e:APhi},
we illustrate the main idea of gauge tuning
which allows us to recover the parabolicity
 using the simple linear case \eqref{e:linearA}.
Consider the system \eqref{e:linearA}
with an initial data $\mathring A = (\mathring A_1,\mathring A_2)$.
We apply a version of ``DeTurck trick"
(originally arising from the context of Ricci flow \cite{DeTurck})
 to ``gauge out the  non-parabolic part'' as follows.
Let $B=(B_1,B_2)$ solves the parabolic equation
\begin{equs} [e:OUprocess]
\partial_t B_j= \Delta B_j +\xi_j  \qquad (j=1,2)
%\partial_t B_2 &= \Delta B_2 +\xi_2
\end{equs}
with the same initial condition $\mathring B=\mathring A$,
whose solution is a well-known Ornstein-Uhlenbeck process.
 Then, with
 $\mbox{div}B=\partial_1 B_1 + \partial_2 B_2$, and let
\begin{equs} [e:ABAB]
A_1(t) &\eqdef  B_1(t) -\int_0^t \partial_1 \mbox{div}B(s)\,ds
= B_1(t) 
	-\int_0^t \Big( \partial_1\partial_2 B_2(s) + \partial_1^2 B_1 (s) \Big)\,ds \\
A_2(t) &\eqdef  B_2(t) -\int_0^t \partial_2 \mbox{div} B(s)\,ds
=  B_2(t) 
	-\int_0^t \Big( \partial_1\partial_2 B_1(s) + \partial_2^2 B_2 (s) \Big)\,ds
\end{equs}
one can check that
\begin{equs}
\partial_t A_1 &=  \Delta B_1 +\xi_1 -\Big( \partial_1\partial_2 B_2 + \partial_1^2 B_1 \Big)
= -\partial_2 F_B + \xi_1 \;, \\
\partial_t A_2 &=  \Delta B_2 +\xi_2 -\Big( \partial_1\partial_2 B_1 + \partial_2^2 B_2 \Big)
= \partial_1 F_B + \xi_2 \;.
\end{equs}
The important point is that with $A$ defined via $B$ as in \eqref{e:ABAB}, one has
\begin{equ} [e:FAFB]
F_A =\partial_1  A_2 -\partial_2 A_1
=\partial_1 B_2 -\partial_2 B_1
= F_B
\end{equ}
so $A$ actually solves the original equation \eqref{e:linearA},
with the initial condition $A(t=0) = \mathring A$ obviously satisfied.

The above manipulation  can be rephrased as follows. For the equations \eqref{e:linearA} for $A$, there exists
a time-dependent family of gauge transformations $\mathcal G_t$, such that $\mathcal G_0$ is identity transformation and the transformed process
\[
B(t) =\mathcal G_t A(t) \eqdef A(t)+ \nabla f(t) \;,
\qquad f(t)\eqdef \int_0^t \mbox{div} B(s)\,ds 
\]
satisfies a parabolic equation. Note that the transformations $\mathcal G_t$ depend on the ``target" (i.e. transformed) process $B$, for which one has to solve first from \eqref{e:OUprocess}.

\begin{remark} \label{rem:gauge-fix-term}
In the differential form notation \eqref{e:linearA} can be written as 
$\partial_t A = - d^* d A + \xi$,
and \eqref{e:OUprocess} can be written as 
$\partial_t B = - d^* d B - d d^* B + \xi$.
So, in a sense we have inserted a ``gauge fixing" term $- d d^* B = -\nabla\mbox{div}B$.
Note that this term {\it breaks} the $U(1)$ gauge symmetry:
unlike \eqref{e:APhi} or \eqref{e:linearA} which enjoy gauge invariance as discussed above,
\eqref{e:OUprocess} does not exhibit gauge invariance,
because if $B$ satisfies \eqref{e:OUprocess}, the function
 $B + \nabla g$ for a  time-independent function $g$ will not satisfy 
\eqref{e:OUprocess} anymore ($\Delta\nabla g\neq 0$ in general).
\end{remark}

\begin{remark} \label{rem:Helmholtz}
There is of course another natural way to fix gauge for our Abelian model.
Define an equivalence relation $\sim$ as: $A\sim \bar A$ if and only if $A=\bar A+\nabla f+(c_1,c_2)$. One can define a Gaussian measure formally given by $\exp\big(-\frac12 \int F_A^2 \,dx\big) \CD A/Z$  on the space of equivalence classes, where $\CD A$ is the formal Lebesgue measure and $Z$ is the normalization factor.
In fact, by Helmholtz decomposition a smooth vector field on $\T^2$ can be uniquely decomposed into a divergence free part and a gradient, i.e.
\begin{equ}[e:Helmholtz]
A = (-\partial_2 g,\partial_1 g) + \nabla f + (c_1,c_2)
\end{equ}
where the function $g$ solves $\Delta g=F_A$ and $f $ solves $\Delta f=\mbox{div}A$,
and $c_1,c_2$ are constants.
One can easily check that the first term on the right hand side is divergence free, and $\nabla f $ is curl free. 
%The above space of equivalence classes is just the first subspace in this decomposition.
%Assume that we are on a discrete torus. Let $\Pi_{div},\Pi_{curl},\Pi_{con}$ be the projections to the three subspaces. The above Gaussian measure is then an invariant measure for \eqref{e:linearA} with $\xi$ replaced by $\Pi_{div} \xi$.
It seems natural to solve our equation by projecting $A$ onto the first subspace of this decomposition,
as in the case of stochastic Navier-Stokes equation \cite{MR1941997,Zhu2014NS}.

We instead choose the present approach in this paper
mainly because we aim to develop  methods here which are potentially able to study SPDEs with non-Abelian gauge symmetry.
For non-Abelian gauge theories the gauge-fixing can not be achieved by the above {\it linear} decomposition. 
On the other hand, DeTurck trick has been successful in the study of {\it deterministic} Yang-Mills flow with non-Abelian gauge groups (see for instance  \cite[Section~6.3]{Donaldson} or the more recent monograph \cite{FeehanBook} and many references therein).
For stochastic quantization of non-Abelian gauge theories,
the use of the  gauge fixing method that is close to the DeTurck trick
goes back to physicists \cite{Zwanziger1981,sadun1987continuum} (see \cite[Eq.~(2.1)-(2.2)]{BernHalpernSadunTaubes1987} which takes the viewpoint of inserting a ``gauge fixing'' term as in Remark~\ref{rem:gauge-fix-term}).
 Recently \cite{CharalambousGrossCMP} applies this gauge fixing to deterministic Yang-Mills flow to define a kind of nonlinear negative-index Sobolev space (the motivation is to find possible orbit spaces where non-Abelian gauge theoretic measures could live on) and they refer to this trick as Zwanziger-Donaldson-Sadun gauge fixing,
see also \cite{CharalambousGrossJMP,Gross2016YM} for subsequential works along this line.
\end{remark}

With the above linear example in mind, our strategy is then to apply the DeTurck trick 
together with the theory of regularity structures \cite{Regularity} which provide the solution theory 
by regularizing the equation, renormalization, and passing to the limit. 
In the regularization step, we will consider the dynamics of {\it lattice gauge theory},
which was first proposed by Wilson \cite{Wilson1974}, see for instance the physics book \cite{Rothe2012Book} on this  extensively studied topic.
Lattice gauge theory provides us with discretization of Hamiltonian \eqref{e:potential}
which preserves the exact gauge symmetry;
 this is discussed in Section~\ref{sec:Discretization}.
One main reason  we are particularly interested in the lattice gauge theory is that one would hope to obtain gauge invariant continuum limit, so we start from regularizations that preserve the exact symmetry.
Another reason  we   choose a gauge invariant regularization
is that 
our arguments such as  DeTurck trick and Ward identities rely on gauge invariance of the nonlinearities.
\footnote{In discussion with I.Bailleul we learned  
a way of continuous regularization by a function of covariant Laplacian multiplying on the noise in the physics literature, see \cite[Eq.~(2.1),(2.4)]{BernHalpernSadunTaubes1987}. It would be very interesting to study this rigorously, in particular show that this regularization yields the same limit.}

In applying this strategy, several issues arise and need to be addressed.
 The dynamic of our lattice gauge theory 
is slightly nonlocal so we will need to localize these terms and control the errors (see Section~\ref{sec:Discrete regularity structure theory}).
In the renormalization step, the renormalization terms must also preserve the gauge invariance
in order to make  our gauge tuning argument work
and more importantly in order to obtain a gauge invariant limiting dynamic;
so we should prove that any ``symmetry-breaking'' renormalizations will actually cancel out
(these are discussed in Section~\ref{sec:GaugeTrans} and Section~\ref{sec:renormalization}).

%\begin{remark}
%If one considers the stationary solution $B$,
% and integrate $s$ from $-\infty$ instead of $0$ up to $t$ in the definition of $f$, 
% then a stationary solution to  \eqref{e:linearA} can be obtained.
%\end{remark}

\subsection*{Main results}

In Section~\ref{sec:Discretization} we will discretize the Hamiltonian \eqref{e:potential} and the stochastic PDE, which  preserves the exact gauge symmetry. 
Here assuming all the  precise definitions of discretizations (for which we refer to  Section~\ref{sec:Discretization}) we state our main results.
Let $\eps$ be the lattice spacing.
For each $t\ge 0$, the discretized field $A^\eps$ will be a real valued function 
on the set of edges of the lattice (with component $A_1^\eps$ on the horizontal edges and component $A_2^\eps$ on the vertical edges), 
while  the discretized field $\Phi^\eps$ will be a complex valued function 
on the set of vertices of the lattice.
With such a discretization at hand, we say that two  pairs $(A^\eps,\Phi^\eps)$ and  
$(B^\eps,\Psi^\eps)$ are {\it gauge equivalent} if there exists a real valued function $f^\eps$
on the vertices of the lattice
such that
\begin{equs} [e:Dgauge-equ]
A^\eps_j(e) =
	B^\eps_j(e) -\nabla^\eps_j  f^\eps(e) \;,
\qquad
\Phi^\eps(x ) =  e^{-i\lambda f^\eps(x)} \Psi^\eps(x) \;,
\end{equs}
for every edge $e$,  and vertex $x$ and $j\in\{1,2\}$. Here
$\nabla^\eps_j  f^\eps$ is the finite difference of $f^\eps$.
Denote the set of vertices by $\Lambda_\e$ and the set of edges by $\CE_\e$.
The main result of the article is then the following.
The spaces $\CC^\eta$,  $\CC^{\delta, \alpha}_{\eta, T^*}$
and distances $\Vert \cdot ; \cdot \Vert_{\CC^\eta}^{(\eps)} $
and $\Vert \cdot ; \cdot \Vert_{\CC^{\delta, \alpha}_{\bar{\eta}, T_\eps}}^{(\eps)} $
 will be defined  in 
\eqref{e:AlphaNorm}, \eqref{e:SpaceTimeNorm},
 \eqref{e:diffCalpha} and  \eqref{e:DHolderDist}.

\begin{theorem}\label{theo:main}
%Let $\xi$ be a space-time white noise over $L^2(\R \times \Torus^3)$ on a probability space $(\Omega, \SF, \mathbf{P})$, let $\Phi_0 \in \CC^\eta(\R^{d})$ almost surely,
%for some $\eta > -\frac{2}{3}$, and let $\Phi$ be the unique maximal solution of \eqref{e:Phi} on $[0, T^*]$. 
%Let furthermore $\Delta^\eps$ be the nearest-neighbour approximation of $\Delta$, let $\Phi^\eps_0 \in \R^{\Torus^3_\eps}$ be a random variable on the same probability space, let $\xi^\eps$ be given by \eqref{e:SimpleDNoise}, and let $\Phi^\eps$ be the unique global solution of \eqref{e:DPhiRenorm}. 
For every $\eps =2^{-N}$ with $N\in\N$, let $\CH^\eps$ be an gauge invariant 
discretization of \eqref{e:potential} given by \eqref{e:d-potential} below, and
let $(A^\eps(t), \Phi^\eps(t))_{t\ge 0}$ be the solution to the It\^o system parametrized by
the set of lattice sites $\Lambda_\e$ and the set of edges $\CE_\e$
%Langevin equation for $\CH^\eps$ given by 
\begin{equs}   [e:DLangevin-0]
dA^\eps(e) &= -\frac{\partial\CH^\eps}{\partial A^\eps(e)}\eps^{-2} dt
	+ dW_t^\eps(e)      \qquad\qquad (e\in\CE_\e)\;,
\\
d \Phi^\eps (x) &= -\frac{\partial \CH^\eps}{\partial \bar\Phi^\eps(x)} \eps^{-2}dt
- C^{(\eps)} \Phi^\eps
+ dW_t^\eps(x)
  \quad (x\in \Lambda_\e)\;,
\end{equs}
%whose precise meaning is given in
%\eqref{e:DLangevin} and the more 
whose explicit form is given in  \eqref{e:APhi-long},
with initial data $(\mathring A^\eps, \mathring \Phi^\eps)$.
Suppose that 
the initial data satisfy almost surely
\begin{equ} [e:init-data-conv]
\lim_{\eps \to 0} 
\Big( \Vert \mathring \Phi;  \mathring \Phi^\eps \Vert^{(\eps)}_{\CC^\eta} 
+\Vert \mathring A;  \mathring A^\eps \Vert^{(\eps)}_{\CC^\eta}\Big)
= 0\;,
\end{equ}
for $\mathring A,\mathring \Phi \in \CC^\eta$ with $\eta>-\tfrac12$.

Then, there exists $(B^\eps(t),\Psi^\eps(t))$ which is gauge equivalent with
$(A^\eps(t),\Phi^\eps(t))$ for each $t>0$ and 
$(B^\eps(0),\Psi^\eps(0))=(\mathring A^\eps, \mathring \Phi^\eps)$  %for some function $f^\eps(t)$,
such that the following holds:
 for every $\alpha < 0$ there is a sequence of renormalization constants $C^{(\eps)} =O( \log\eps)$,  
 a sequence of stopping times $T_\eps$ satisfying $\lim_{\eps \to 0} T_\eps = T^*$ in probability, and processes $B,\Psi\in \CC^{\delta, \alpha}_{\eta, T^*}$
  such that, for every $\bar{\eta} < \eta \wedge \alpha$, and for any $\delta > 0$ small enough, one has the  limit in distribution
\begin{equ}[e:PhiConvergence]
\lim_{\eps \to 0} \Big(
	\Vert \Psi; \Psi^\eps \Vert^{(\eps)}_{\CC^{\delta, \alpha}_{\bar{\eta}, T_\eps}}
	+\Vert B; B^\eps \Vert^{(\eps)}_{\CC^{\delta, \alpha}_{\bar{\eta}, T_\eps}}
\Big) = 0\;.
\end{equ}
\end{theorem}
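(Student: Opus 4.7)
The plan is to follow the DeTurck strategy laid out in the introduction: introduce a well-chosen time-dependent gauge transformation that turns the non-parabolic discrete equation for $A^\eps$ into a genuinely parabolic system for $B^\eps$, then run the machinery of discrete regularity structures on the gauge-fixed pair $(B^\eps,\Psi^\eps)$ and carefully track the renormalization constants so that any gauge-breaking terms cancel.

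\textbf{Step 1: Gauge fixing on the lattice.} First I would define, for each $\eps$, a discrete analogue $f^\eps$ of the gauge parameter produced by the DeTurck trick, namely $f^\eps(t)=\int_0^t \nabla^\eps\!\cdot\! B^\eps(s)\,ds$ (with an appropriate $\lambda\Phi$-dependent correction coming from the $\Phi$ equation), and set
\begin{equ}
B^\eps_j(t,e) \eqdef A^\eps_j(t,e) -\nabla^\eps_j f^\eps(t,e), \qquad
\Psi^\eps(t,x) \eqdef e^{i\lambda f^\eps(t,x)} \Phi^\eps(t,x).
\end{equ}
Using the lattice analogues of \eqref{e:covariance} and \eqref{e:FAFB}, this transformation should produce a closed system for $(B^\eps,\Psi^\eps)$ in which the top order operator on $B^\eps$ is the full Laplacian $\Delta^\eps$ (rather than $-\curl^*\curl$), and the $\Psi^\eps$ equation is the covariant heat equation driven by a gauge-rotated complex white noise. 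Because the discrete model from Section~\ref{sec:Discretization} is slightly non-local (the covariant derivative involves finite shifts), care is needed to keep the shift information attached to each kernel; this is exactly the purpose of the ``slightly shifted'' symbols in the discrete regularity structure to be introduced in Section~\ref{sec:Discrete regularity structure theory}.

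\textbf{Step 2: Discrete regularity structures analysis.} Once $(B^\eps,\Psi^\eps)$ satisfies a parabolic system with smooth (mollified) white noise inputs at scale $\eps$, I would formulate the fixed-point problem in a discrete modelled-distribution space $\CC^{\delta,\alpha}_{\bar\eta,T}$. The nonlinearities $\bar\Psi D^B\Psi$ and $D^B_j D^B_j\Psi$ have negative regularity degree slightly below $-1$, so the abstract regularity structure must contain trees carrying the distinct edge types corresponding to the kernel of $\partial_t-\Delta$ applied to $\xi$ and to $\zeta$, together with the products and shifted copies produced by the covariant derivatives. The standard Hairer--Matetski-type BPHZ analysis, adapted to lattice noise, then provides stochastic estimates on the relevant models uniformly in $\eps$, producing renormalization constants of order $\log\eps$.

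\textbf{Step 3: Ward identities and cancellation of gauge-breaking renormalizations.} This is where the main obstacle lies. A priori the BPHZ renormalization produces several divergent constants attached to trees of negative degree, and only some of them correspond to a mass renormalization $C^{(\eps)}|\Psi|^2$ of $\Phi$ which is compatible with gauge invariance. Every other divergence -- in particular any constant that would shift $B^\eps$ by a non-zero vector, or create a non-gauge-covariant mass-like term -- would destroy the DeTurck reduction, since such a counterterm would break the identity used in Step 1. The resolution, to be carried out in Sections~\ref{sec:GaugeTrans}--\ref{sec:renormalization}, is to prove a discrete version of a Ward identity: exploit the gauge symmetry of $\CH^\eps$ and the rotational invariance of the driving noise to show that the offending stochastic objects have vanishing expectation (or pair up to cancel), leaving only the mass counterterm $C^{(\eps)}\Phi^\eps$ already present in \eqref{e:DLangevin-0}.

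\textbf{Step 4: Convergence and identification of the limit.} With the model convergence and the gauge-invariant renormalization in hand, the discrete fixed point is well posed up to a random time $T_\eps$ with $T_\eps\to T^*$ in probability by the standard short-time argument (since the driving model converges, solutions blow up later and later as $\eps\to 0$). The convergence \eqref{e:PhiConvergence} then follows from the continuity of the abstract solution map in the discrete modelled-distribution topology applied to both the initial data (via \eqref{e:init-data-conv}) and the renormalized models. Finally one checks that $(B^\eps,\Psi^\eps)$ so constructed is gauge-equivalent to $(A^\eps,\Phi^\eps)$ in the sense of \eqref{e:Dgauge-equ} with $f^\eps$ from Step~1, and that the equivalence persists in the limit. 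The principal technical difficulty throughout will be combining the non-local, shifted nature of the discrete covariant derivatives with the symmetry argument: one must verify that the Ward-type cancellations survive at the discrete level so that the limit is genuinely gauge covariant.
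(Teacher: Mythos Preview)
Your outline tracks the paper's strategy closely (DeTurck trick $\to$ parabolic system $\to$ discrete regularity structures $\to$ Ward-type cancellation), but two points in Step~1 are not quite right and deserve attention.

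First, the logical direction of the DeTurck transformation is inverted. You define $B^\eps$ from $A^\eps$ via $f^\eps=\int_0^t \mbox{div}^\eps B^\eps(s)\,ds$, but $f^\eps$ already depends on $B^\eps$, so this is circular. The paper does it the other way around: one \emph{first} writes down the parabolic system \eqref{e:BPsi} for $(B^\eps,\Psi^\eps)$, solves it, and \emph{then} defines $(A^\eps,\Phi^\eps)$ from $(B^\eps,\Psi^\eps)$ by \eqref{e:APhi-via-BPsi}; Lemma~\ref{lem:nonlin-deturck} checks that this $(A^\eps,\Phi^\eps)$ solves the original Langevin system. Since the theorem only asks for \emph{some} gauge-equivalent process with the stated convergence, this ordering is what makes the argument non-circular.

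Second, and more substantively, you note that the $\Psi^\eps$ equation is driven by a gauge-rotated noise $e^{i\lambda f^\eps}\zeta^\eps$ but do not say how to handle it. This factor depends on $B^\eps$, so the system is self-referential in the noise and cannot be fed directly into the regularity-structure machinery. The paper's resolution is Lemma~\ref{lem:Levy}: since $e^{i\lambda f^\eps}$ is adapted and unitary, L\'evy's characterization shows the rotated noise has the same \emph{law} as a fresh complex white noise, so one may replace it by an independent $\zeta^\eps$ and work with the simpler system \eqref{e:BPsi-new}. This is why the final convergence is only in distribution. After that, Lemmas~\ref{lem:poly-form} and~\ref{lem:move-to-vertices} put the system in polynomial form on vertices, and the remainder of your Steps~2--4 match Proposition~\ref{prop:parabolic} and Section~\ref{sec:renormalization}.
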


The above theorem does not make any claim about convergence of 
the original fields 
$(A^\eps,\Phi^\eps)$. 
However, it is really the convergence of {\it gauge invariant observables} 
that is of true interest. Once we obtain convergence of gauge and scalar 
fields in a suitable gauge, 
we then have convergence of local observables such as 
curvature, composite fields of $\Phi^\e$ and its gauge covariant derivative, as well as some global observables such as 
%``string'' and 
``loop'' observables. These are defined in Section~\ref{sec:Discretization} and their convergences
are given in Theorem~\ref{theo:observables} below.

\begin{remark}
With the method developed in this article, it should be straightforward to obtain a local solution theory for
such a U(1) theory in two spatial dimensions  with
``Ginzburg-Landau potentials", that is, with polynomial terms 
$P(|\Phi|^2)$ 
%and $|\Phi|^4$ in three dimensions
 in the Hamiltonian $\mathcal H$. These are important models in superconductivity,
with the ``vortices dynamics" of particular interest;
to our best knowledge the stochastic dynamics of vortices have been only defined intuitively so far
(see for instance  \cite{iyer2016model}).
% and it seems reasonable to define them via stochastic PDEs.
%\hao{what did Serfaty do?}
\end{remark}

We conclude the introduction by discussing the earlier related literature.
The Higgs model (i.e. the formal measure \eqref{e:formal-measure} with Hamiltonian \eqref{e:potential}, or its variants and generalizations) has been of great interest in constructive quantum field theory.
Let us mention the previous works on this model, especially those related with our continuum limit question.
In a series of papers
\cite{BrydgesDiamag,BrydgesGauge1,BrydgesGauge2}
 Brydges, Fr\"ohlich and Seiler
showed several important properties of 
general lattice gauge theories such as 
diamagnetic inequalities, correlation inequalities, reflection positivity, etc.
Focusing on two dimensional Higgs model, with an additional polynomial interaction $P(|\Phi|^2)$ in the Hamiltonian, 
they then proved in \cite[Section~2]{BrydgesGauge3} the continuum limit of (not necessarily gauge invariant)
correlation functions, using the diamagnetic inequalities, Nelson / hypercontractive estimate and some power counting results for Feynman graphs;
in \cite[Section~4]{BrydgesGauge3}, the infinite volume limit for 
generating functional of $F_A$ and $\Wick{|\Phi|^2}$ 
 is obtained using the correlation inequalities and Osterwalder-Schrader axioms are verified  (except for clustering) for  correlations of these gauge invariant local fields.
The series of papers by Balaban \cite{MR677999,MR679203,MR701926}
studied the quantum Higgs models of type  \eqref{e:potential}
with $|\Phi|^4$ interaction where
 $\Phi$ is an $N$-component field, in both two and three space dimensions restricted to a finite volume,
and
proved upper and lower bounds uniform in lattice spacings
on the partition function
using renormalization group transformation techniques.
Using these renormalization group  techniques,
working also in  both two and three space dimensions,
King \cite[Theorem~2.1]{MR824095} proved continuum limit of generating functional 
of the fields $F_A$ and $\Wick{|\Phi|^2}$ 
and in \cite{MR826868} combining with correlation inequalities he proved infinite volume limit and verified some of the
Osterwalder-Schrader axioms such as reflection positivity.
Clustering properties and mass gap are discussed in 
\cite{MR772615,MR782971,MR928228}.
%\hao{these three seem not completed?}

In the pure gauge case without the scalar field $\Phi$ (i.e. $\lambda=0$ in the Hamiltonian $\mathcal H$), a lot more is known since the quantum theory is Gaussian after fixing a suitable gauge. For continuum limit results of these pure Abelian gauge models, see \cite{MR728862} in three dimensional case and \cite{MR891949} in four dimensional case.
Note that some of the literature is concerned with gauge theory models in ``compact'' setting, for instance in \cite[Section~3]{MR891949} the components of the lattice fields take values in the unit circle (i.e. the Lie group), whereas we work in the ``non-compact'' setting where the lattice fields take values in the Lie algebra.

%
%It would be certainly interesting to study gauge theories on manifolds, see \cite{DahlqvistDiehlDriver} for a recent development in this direction for the parabolic Anderson model.

There is also much interest recently in the convergence problem of discretized singular SPDEs.
The present paper proves convergence of discretized systems  based upon the  discrete regularity structure theory developed in \cite{hairer2015discrete}.
More recently,
\cite{ErhardHairer} developed a more general framework for studying such discrete SPDEs.
In the paracontrolled distribution approach,
convergence results of discretized singular SPDEs are proved in 
\cite{MR3406823} for KPZ equation,
 \cite{ChoukGairingPerkowski,martin2017paracontrolled} on parabolic Anderson model and \cite{ZhuZhuLatticePhi4} on dynamical $\Phi^4$ model.
 The paper \cite{CannizzaroMatetski} studied general {\it nonlocal} discretizations in the context of KPZ equation.
These papers show that the discrete solutions converges to the {\it same} limits as the ones obtained via continuous regularizations which were proved previously; while
for our model, the (symmetry-preserving) discretization
is the object we start with.

Finally, let's point out that establishing long time solution  will be certainly interesting, 
and will possibly shed some light on constructive field theory (see \cite{MourratWeberComesDown} for the case of $\Phi^4$ model). Generalization to non-Abelian gauge groups is also an interesting future direction;
for the earlier results in the equilibrium see 
\cite{MR1006295,MR1337973,MR2006374} in two spatial dimensions and
\cite{MR1230032,MR998658} in three and four spatial dimensions and the literature therein.

%\cite{Yamashita2017} recently studied smooth approximation of two-dimensional Yang-Mills theory for some class of Wilson loops via rough paths \cite{MR2257130,FrizHairer}.

%\hao{1st paper of Brydges-Frohlich-Seiler: several ways of discretizing; dual to classical spin system taking values in $\Z$ or Yukawa-type lattice gas.....
%3rd paper  of Brydges-Frohlich-Seiler, page 2, observables....
%Driver 4D U(1) paper: section 3 has examples of different discretizations but that's `compact' case ($S^1$ valued field)..}

\vspace{2ex}
{\bf Acknowledgement.} The author is very grateful to Ismael Bailleul, David Brydges and Martin Hairer  for the helpful discussions on  stochastic quantization and gauge theories, and would also like to thank
Konstantin Matetski for helpful discussions on discrete regularity structures.
 The author is partially supported by NSF through DMS-1712684 / DMS-1909525.

\section{Lattice gauge theory and observables} \label{sec:Discretization}

\subsection{Lattice gauge theory}
\label{sec:Lattice gauge theory}
We henceforth consider discretizations of \eqref{e:potential} and \eqref{e:APhi}. Let $\T_\eps^2$ be the dyadic discretization of the unit torus $\T^2$
with mesh size $\eps=2^{-N}$ for $N\in\N$, 
and let $\Lambda_\e$, $\CE_\e$ and $\CP_\e$ be the sets of vertices, {\it undirected} edges and squares \footnote{Each square is simply an $\eps\times\eps$ area surrounded by four edges. The notation $\CP_\e$ here should not be confused with the operator $\CP^\e$  introduced in \eqref{e:RP-PR}} of the lattice $\T_\eps^2$. 
Let $\CE^1_\e$  and  $\CE^2_\e$ be  the set of horizontal and vertical
undirected edges respectively.
Let $\be_1=(\eps,0)$ and $\be_2=(0,\eps)$.

%\textcolor{blue}{Are all these notations actually used later?}
For $j\in\{1,2\}$,
let  $\vec{\CE^j_\e}$ and $\cev{\CE^j_\e}$ be two copies of $\CE^j_\e$,
which consist of {\it directed} edges that are oriented in the positive and negative directions along  $\be_j$  respectively.
Let $\vec{\CE_\e} = \vec{\CE^1_\e} \cup \vec{\CE^2_\e}$,
and
 $\cev{\CE_\e} = \cev{\CE^1_\e} \cup \cev{\CE^2_\e}$.
%Finally 
%let $\veccev{\CE_\e}=\vec{\CE_\e} \cup \cev{\CE_\e}$ be the set of all the directed edges.
%  If $e\in \veccev{\CE_\e}$, then $-e\in  \veccev{\CE_\e}$ is simply the same edge with reversed direction.
For a directed edge $e$, the edge $-e$ denotes the edge with the reversed direction.
A directed edge is often denoted by $(x,y)$ while an undirected edge is often denoted by $\{x,y\}$, where $x,y\in\Lambda_\e$ are two nearest neighbor vertices. 
For any $e\in\vec{\CE}_\e^j$ we will often denote by $e_-$ and $e_+$ 
the starting and terminating vertices of $e$, namely
\[
e=(e_-,e_+) \in \vec{\CE}_\e^j\;,
\qquad 
\mbox{where }
e_+ = e_- + \be_j \;.
\]
According to these definitions we have
$|\Lambda_\e|=\eps^{-2}$ and
$|\CE_\e|=2 \eps^{-2}$.
%$|\veccev{\CE_\e}|=4\eps^{-2}$.

A vertex $x\in \Lambda_\e$ can be naturally viewed as a point in $\T^2$.
For an edge $e$, we will also sometimes think of it as a point in $\T^2$,
that is, the point $\frac{e_- +e_+}{2}$.
For two edges $e,\tilde e$,
we will sometimes write $e-\tilde e \eqdef \frac{e_- +e_+}{2}-\frac{\tilde e_- +\tilde e_+}{2} \in \R^2$;
and for an edge $e$ and a grid point $x$ 
we write 
$e-x\eqdef \frac{e_- +e_+}{2}-x$.

{\it Discretized fields.}
A discretized complex field $\Phi^\eps \in \C^{\Lambda_\e}$ is a function  which assigns each vertex $x\in\Lambda_\e$  with a complex number $\Phi^\eps(x) =\Phi^\eps_1(x)+i\Phi^\eps_2(x)$.
A discretized vector field (i.e. gauge field) $A^\eps=(A^\eps_1,A^\eps_2)\in \R^{\CE_\e}$ is a function
which assigns
each horizontal (resp. vertical) undirected edge $e\in\CE_\e$
 with a real number $A^\eps_1(e)$ (resp. $A^\eps_2(e)$).
When $e$ is known in the context as horizontal or vertical, 
we sometimes simply write $A^\eps(e)$. 
The field $A^\eps \in \R^{\CE_\e}$ can be naturally extended  to the directed edges:
If $(x,y)\in \vec{\CE_\e}$,
$A^\eps(x,y):=A^\eps(\{x,y\})$; 
if $(x,y)\in\cev{\CE_\e}$, $A^\eps(x,y):=-A^\eps(\{x,y\})$.

For $\Phi^\eps \in \R^{\Lambda_\e}$ or $\C^{\Lambda_\e}$,
$j\in\{1,2\}$, $x\in\Lambda_\e$, $\be\in\{\pm  \be_j\}$
we define the discrete derivative 
\[
(\nabla^\eps_{\be} \Phi^\eps)(x)  
 \eqdef \eps^{-1} \big( \Phi^\eps(x+\be)-\Phi^\eps(x) \big)
\]
and set $(\nabla^\eps_j \Phi^\eps)(x) \eqdef (\nabla^\eps_{\be_j} \Phi^\eps)(x) $
so that one has $(\nabla^\eps_{-\be_j} \Phi^\eps)(x)= - (\nabla^\eps_j \Phi^\eps)(x-\be_j)$.
%Finite difference operators are defined on the fields on $\Lambda_\e$:
%if $\Phi^\e \in \R^{\Lambda_\e}$ then
%\begin{equs}
%(\nabla^\eps_j \Phi^\eps)(x) &=(\nabla^\eps_{\be_j} \Phi^\eps)(x)  
% \eqdef \eps^{-1} \big( \Phi^\eps(x+\be_j)-\Phi^\eps(x) \big) \;, 
%\\
%(\nabla^\eps_{-\be_j} \Phi^\eps)(x) &\eqdef - (\nabla^\eps_j \Phi^\eps)(x-\be_j)
%=\eps^{-1} \big( \Phi^\eps(x-\be_j)-\Phi^\eps(x) \big)
%\;.
%\end{equs}
Discrete derivatives are also defined  on the fields on the edges.
For $j,k\in\{1,2\}$, $e\in \CE^k_\e$, $\be\in\{\pm  \be_j\}$ and
$A^\e_k\in \R^{\CE^k_\e}$, 
\begin{equ}[e:nabla-j-Ak]
(\nabla^\eps_\be  A^\eps_k )(e) \eqdef 
 \eps^{-1} \big(  A_k(e+ \be)-A_k(e) \big) 
\end{equ}
where $e+ \be = \{x+ \be ,y+ \be \}   \in \CE^k_\e$ if $e=\{x,y\}$,
so that $e+ \be$ and $e$ are  end-to-end if $j=k$ 
and are parallel if $j\neq k$.
We also set $(\nabla^\eps_j A^\eps)(x) \eqdef (\nabla^\eps_{\be_j} A^\eps)(x) $.

%if $e\in \CE^2_\e$ then   $ e+ \be_1$ is the nearest parallel  vertical edge on the right of $e$ and  $ e+ \be_2$ is the nearest aligned  vertical edge on the top of $e$;
%and if $e\in \CE^1_\e$ then   $ e+ \be_1$ is the nearest aligned horizontal edge on the right of $e$ and  $ e+ \be_2$ is the nearest parallel horizontal edge on the top of $e$.

\begin{remark} \label{rem:grad-Phi}
For $\Phi^\e\in\R^{\Lambda_\e} $ or $\C^{\Lambda_\e}$,
 we will sometimes also write $\nabla^\e_j \Phi^\e(e)$ instead of $\nabla^\e_j \Phi^\e(x)$   where $e=(x,x+\be_j)$, meaning that we regard $\nabla^\e_j \Phi^\e$ as a field on $\CE^j_\e$.
\end{remark}

\vspace{1ex}

{\it Discretized curl. } % and Wilson loop.}
Let $A^\e \in \R^{\CE_\e}$. For each square $p\in\mathcal P_\eps$, summing over the values of $A^\e$ along its four  adjacent edges counter-clock-wisely then yields the value of $F^\e_{A^\e}$ on $p$:
\begin{equ}[e:discFA]
F^\e_{A^\e}(p) \eqdef \eps^{-1} \Big(A^\e(e^E_p)+A^\e(e^N_p)-A^\e(e^W_p)-A^\e(e^S_p)\Big)
\end{equ}
where $e^E_p,e^N_p,e^W_p,e^S_p \in \vec{\CE_\e}$ are the  edges 
to the east, north, west and south of 
 the square $p$ as shown in the middle part of Figure~\ref{fig:discrete}. 
 $F^\e_{A^\e}$ is often called the (discrete) curl or curvature of $A^\e$.
It is easy to see that $F^\e_{A^\e}(p)$ is invariant under $A^\e\to A^\e+\nabla^\e f^\e$ for any function $f^\e \in \R^{\Lambda_\e}$, since summing a gradient over a loop gives zero.

\begin{figure}[h] 
\centering
\begin{tikzpicture}
  \draw[step=1.5cm] (-3.5,-2) grid (7,2);
\node at (-3.2,1.8) {$\Phi$}; \filldraw (-3,1.5) circle (0.1) ;
\node at (-1.2,1.2) {$\Phi$}; \filldraw (-1.5,1.5) circle (0.1) ;
\node at (-3.2,-0.3) {$\Phi$}; \filldraw (-3,0) circle (0.1) ;
\node at (-2.2,1.8) {$D^A_1 \Phi$}; \draw[->,very thick] (-2.7,1.5) -- (-1.8,1.5) ; 
\node at (-3.5,0.7) {$D^A_2 \Phi$}; \draw[->,very thick] (-3,0.3) -- (-3,1.2) ; 
\node at (0.7,0.25) {$A_1(e_p^N)$}; \draw[->,very thick] (0.2,0) -- (1.3,0) ; 
\node at (-0.65,-0.7) {$A_2(e_p^W)$};\draw[->,very thick] (0,-1.3) -- (0,-0.2) ; 
\node at (0.7,-1.8) {$A_1(e_p^S)$}; \draw[->,very thick] (0.2,-1.5) -- (1.3,-1.5) ; 
\node at (2.1,-0.7) {$A_2(e_p^E)$};\draw[->,very thick] (1.5,-1.3) -- (1.5,-0.2) ; 
\node at (0.7,-0.7) {$F_A(p)$};
\node at (5.3,0.3) {$A_1$}; \draw[->,very thick] (4.8,0) -- (5.7,0) ; 
\node at (4.2,-0.8) {$A_2 $}; \draw[->,very thick] (4.5,-1.2) -- (4.5,-0.3) ; 
\node at (3.8,0.3) {$A_1$}; \draw[->,very thick] (3.3,0) -- (4.2,0) ; 
\node at (4.2,0.8) {$A_2 $}; \draw[->,very thick] (4.5,0.3) -- (4.5,1.2) ; 
\node at (5,-0.3) {div$A$}; \filldraw (4.5,0) circle (0.1) ;
\end{tikzpicture}
\caption{Picture for $F^\e_{A^\e}$, $D_j^{A^\e}$ and $\mbox{div}^\e A^\e$;  we drop the $\e$ dependence in the picture to make it a bit cleaner.}
\label{fig:discrete}
\end{figure}

%We say that $C^\eps\subset \veccev{\CE_\e}$ is a discrete loop, if 
%$C^\eps = \{e^{(1)},\cdots,e^{(n)}\}$ for some $n\ge 4$ such that 
%$e^{(i)}\neq e^{(j)}$ for $i\neq j$,
%$e_+^{(i)}=e_-^{(i+1)}$ and $e_+^{(n)}=e_-^{(1)}$. Given a discrete loop $C^\eps$ and a discrete vector field $A^\eps$, we define the Wilson loop
%\[
%F^\eps_{A^\eps}(C^\eps)\eqdef \eps^{-1} \sum_{e\in C^\eps} A^\eps(e)\;.
%\]
%In particular if $C^\eps$ is the counterclockwise loop winding around $p\in \CP_\eps$
%(which consists of four edges)
%then $F^\eps_{A^\eps}(C^\eps) = F^\eps_{A^\eps}(p)$.

% Define admissible discretization $C^\eps$ for $C$: regions separated off by the curve and the discrete curve is the same.

%%%%%%%% The problem is that I want to do 
%\[
%F_{A^\eps}^\eps (C^\eps) = \eps^{-1} \sum_{e\in C^\eps} A^\eps(e) = \sum_{p} F_{A^\eps}^\eps(p) = \sum_{p} F_{B^\eps}^\eps(p) \to \int\int F_B \,dxdy
%\]
%but I'm not sure if one can define integral of distribution over a smooth domain.

\vspace{2ex}

{\it Discretized gauge covariant derivative.}
We introduce a gauge-invariant discretization of the covariant derivative.
For $x\in \Lambda_\e$ and $\Phi^\e \in \C^{\Lambda_\e}$, let
\begin{equ}[e:dDAj]
(D_j^{A^\eps} \Phi^\eps) (x) 
= \eps^{-1} \Big(e^{-i\eps \lambda A^\eps(x,x+\be_j)} \Phi^\eps(x+\be_j) -\Phi^\eps(x)\Big) \;.
\end{equ}
With this discretization, the covariance property \eqref{e:gauge-trans} \eqref{e:covariance} remains true:
for any  $f^\e \in \R^{\Lambda_\e}$,
\begin{equs} 
( & D_j^{A^\eps +\nabla^\eps f^\e} (e^{i\lambda f^\e}\Phi^\eps)) (x) \\
&= \eps^{-1} \Big(
	e^{-i\eps \lambda A^\eps(x,x+\be_j) 
		-i\lambda f^\e(x+\be_j) + i\lambda f^\e(x)} 
			\Big( e^{i\lambda f^\e(x+\be_j) }\Phi^\eps(x+\be_j)  \Big)
	-e^{i\lambda f^\e(x) }\Phi^\eps(x)\Big)\\
&= e^{i\lambda f^\e(x) } (D_j^{A^\eps } \Phi^\eps) (x) \;.
\label{e:DCovariance}
\end{equs}
It is easy to check that its adjoint operator is given by
\begin{equ} [e:adjoint]
((D_j^{A^\eps})^* \Phi^\eps) (x) 
= \eps^{-1} \Big(e^{-i\eps \lambda A^\eps(x,x-\be_j)} \Phi^\eps(x-\be_j) -\Phi^\eps(x)\Big) \;,
\end{equ}
which is the discrete covariant derivative along the ``negative direction" $-\be_j$.
Here according to our aforementioned convention
 $(x,x-\be_j) \in \cev{\CE^j_\e}$ and 
 $A^\eps(x,x-\be_j) = - A^\eps(\{x-\be_j,x\})$.

\begin{remark}
Note that a ``naive" discretization of $D^A_j \Phi$ such as 
\[
\eps^{-1}\big( \Phi^\eps(x+\be_j) -\Phi^\eps(x) \big)- i\lambda A^\eps(x,x+\be_j) \Phi^\eps(x)
 \]
  would violate
the gauge covariance property, and hence the gauge invariance of the Hamiltonian.
As discussed in the introduction,
we choose gauge invariant discretization 
in order to implement the gauge tuning argument 
in Section~\ref{sec:GaugeTrans}, and also because we aim to obtaining the ``right" limit that inherits gauge symmetry.
%Also, if one hopes to construct the Euclidean quantum field theoretic measure
%via studying the long time solution of the stochastic flow (see \cite{MourratWeberComesDown} for the case of $\Phi^4$ model) as the next step, the measure should have gauge symmetry, which should be preserved at the discrete level.
There is obviously another choice of  discretization:
$
(\tilde D_j^{A^\eps} \Phi^\eps) (x)  \eqdef
\eps^{-1} \Big( \Phi^\eps(x+\be_j) - e^{i\eps \lambda A^\eps(x,x+\be_j)}\Phi^\eps(x)\Big) 
$,
which also satisfies the covariance property, 
but it only differs with \eqref{e:dDAj} by a phase factor,
which does not change the Hamiltonian \eqref{e:d-potential}.
One might also think that the  covariance derivative along the ``negative" directions
as defined in \eqref{e:adjoint}
%\[
%\eps^{-1}\Big(e^{-i\eps\lambda A^\eps(x,x-\be_j)} \Phi^\eps(x-\be_j) -\Phi^\eps(x)\Big)
%\]
should also be included in the Hamiltonian to look more ``symmetric",
but obviously \eqref{e:adjoint} is just equal to $- (\tilde D_j^{A^\eps} \Phi^\eps) (x-\be_j) $
so it does not matter under $\sum_x |\cdot|^2$.
%but this is equal to 
%\begin{equs}
%\eps^{-1}&\Big(e^{i\eps\lambda A^\eps(x-\be_j,x)} \Phi^\eps(x-\be_j) -\Phi^\eps(x)\Big)\\
%&=\eps^{-1} e^{i\eps\lambda A^\eps(x-\be_j,x)} \Big( \Phi^\eps(x-\be_j) - e^{-i\eps\lambda A^\eps(x-\be_j,x)}\Phi^\eps(x)\Big)
%\end{equs}
%which gives the same value as $(D_j^{A^\eps} \Phi^\eps) (x-\be_j)$ does inside $|\cdot|^2$.
\end{remark}

{\it Discretized divergence.} The discrete divergence for a vector field $A^\eps\in \R^{\CE^\e}$ gives a scalar field on $\Lambda_\e$ and is defined as
\begin{equ}[e:def-discrete-div]
\left(\mbox{div}^\e A^\e\right) (x) \eqdef 
\eps^{-1} \left( A^\eps (e_x^N)  -A^\eps (e_x^S) +A^\eps (e_x^E) -A^\eps (e_x^W) \right)
\end{equ}
where $e_x^N, e_x^S, e_x^E, e_x^W \in \vec{\CE_\e}$ are the edges to the north, south, east and west of the site $x$ respectively, see the right picture in Figure~\ref{fig:discrete}.

With the above discrete objects at hand, our discretized Hamiltonian is then written as
\begin{equs} [e:d-potential]
\mathcal H^\eps(A^\eps,\Phi^\eps) &=
\frac{\eps^2}{2} \sum_{p\in \CP_\e}  F_{A^\eps}(p)^2 
+ \frac{\eps^2}{2} \sum_{x \in \Lambda_\e} \sum_{j=1,2} | (D^{A^\eps}_j \Phi^\eps) (x)|^2 \\
&= \mathcal H_F^\eps(A^\eps)+\mathcal H_D^\eps(A^\eps,\Phi^\eps)  
\end{equs}
where $\mathcal H_F^\eps(A), \mathcal H_D^\eps(A,\Phi)$ denote the two terms respectively.
With all the aforementioned invariance / covariance properties enjoyed by the quantities on the right hand side of \eqref{e:d-potential}, one has that $\mathcal H^\eps(A^\eps,\Phi^\eps)$ is invariant under
 \begin{equ} \label{e:disc-gauge-trans}
 \tilde A^\e(e)=A^\e(e)+\left(\nabla^\eps f^\e\right)(e) \;,
 \qquad \tilde \Phi^\e (x)= e^{i\lambda f^\e (x)} \Phi^\e (x)\;,
 \end{equ}
for any function $f\in \R^{\Lambda_\e}$.
The fact that our discretization preserves gauge symmetry will be crucial for the rest of the paper, especially the gauge tuning arguments in Section~\ref{sec:GaugeTrans}.

\subsection{Dynamics of lattice gauge theory}
The (renormalized) Langevin dynamic for the discretized Hamiltonian \eqref{e:d-potential},
sped up by a factor $\eps^{-2}$,
is given by
 the following system of  It\^o
stochastic differential equations parametrized by 
$\CE_\e \sqcup \Lambda_\e$
(with $t$ understood as the macroscopic time variable)
\begin{equs} [e:DLangevin]
dA^\eps(e) &= -\frac{\partial\CH^\eps}{\partial A^\eps(e)}\eps^{-2} dt
	+ dW_t^\eps(e)\qquad\qquad (e\in\CE_\e)\;,
\\
d \Phi^\eps (x) &= -\frac{\partial \CH^\eps}{\partial \bar\Phi^\eps(x)} \eps^{-2}dt
- C^{(\eps)} \Phi^\eps(x)
+ dW_t^\eps(x)
\quad (x\in \Lambda_\e)\;,
\end{equs}
where 
$\{W^\eps(e)\}_{e\in\CE_\e}$, $\{\mbox{Re}W^\eps(x),\mbox{Im}W^\eps(x)\}_{x\in\Lambda_\e}$
are $|\CE_\e|+2|\Lambda_\e|=4\eps^{-2}$ independent real valued Brownian motions all with quadratic covariation 
\[
d[W^\eps(e)]_t=d[\mbox{Re}W^\eps(x)]_t=d[\mbox{Im}W^\eps(x)]_t=\eps^{-2}dt \;.
\]
Note that we have inserted a renormalization term
$- C^{(\eps)} \Phi^\eps$,  as alluded in the Introduction.

Let $\xi^\eps(e)=\dot W^\eps(e)$, 
$\zeta^\eps(x)=\dot W^\eps(x)$ be the time derivatives of the Brownian motions, i.e. the discrete white noises.
We will  write $\xi^\eps_j(e)$ with $j=1$ (resp. $j=2$)
if $e$ is a horizontal (resp. vertical) edge.

\begin{lemma} \label{lem:APhi-discrete}
The equations \eqref{e:DLangevin} satisfied by $(A^\eps,\Phi^\eps)$ has the following explicit form
\begin{equs} 
\partial_{t}A^\eps_{1} (e)
	&= \eps^{-1} \Big( F_{A^\eps} (p_e^S) -F_{A^\eps} (p_e^N) \Big)
	+\eps^{-1}\lambda \,\textup{Im}\Big(
		e^{-i\eps \lambda A_1^\eps(e)} \Phi^\eps(e_+)\bar\Phi^\eps(e_-)\Big)
	+\xi^\eps_1(e) \\
\partial_{t}A^\eps_{2} (e)
	&=   \eps^{-1} \Big( F_{A^\eps} (p_e^E) -F_{A^\eps} (p_e^W) \Big)
	+\eps^{-1}\lambda \,\textup{Im}\Big(
		e^{-i\eps \lambda A_2^\eps(e)} \Phi^\eps(e_+)\bar\Phi^\eps(e_-)\Big)
	+\xi^\eps_2(e) \\
\partial_t \Phi^\eps (x)&= \Delta_{A^\eps}^\eps \Phi^\eps (x) - C^{(\eps)} \Phi^\eps(x)
		+\zeta^\eps(x)
			\label{e:APhi-long}
\end{equs}
%\begin{equs}
%\eps^{-1}\lambda \,  &  \textup{Im}\Big(
%		e^{-i\eps \lambda A^\eps(e)} \Phi^\eps(x+\be_1)\bar\Phi^\eps(x)\Big)\\
%
%\end{equs}
for every $x\in\Lambda_\e$ and $e\in\vec{\CE}_\e$,
where %$A_2^\e$ satisfies the same equation as $A_1^\e$ with $1,2$ swapped, and
\begin{equ}[e:discCovLap]
\Delta_{A^\eps}^\eps \Phi^\eps (x) \eqdef
\eps^{-2} 
\Big( \sum_{\be}  
  		e^{-i\eps \lambda A^\eps(x,x+\be)}\Phi^\eps(x+\be)
		-4 \Phi^\eps(x)  \Big) 
\end{equ}
with $\be$ summed over $ \{\pm \be_1, \pm\be_2\}$ is called the discrete gauge covariant Laplacian,
%and the equation for $A_2^\eps$ is simply the equation for $A_1^\eps$
%with all the subscripts $1$ and $2$ swapped,
and we have used the notation for directed edge $e=(e_-,e_+)$,
and $p_e^S$, $p_e^N$ are the squares immediately below and above 
the horizontal edge $e$ respectively (as shown in the left picture of Figure~\ref{fig:discrete-equ}),
and
$p_e^W$, $p_e^E$ are the squares immediately to the left and right of
the vertical edge $e$ respectively.
\end{lemma}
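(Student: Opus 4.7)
The proof reduces to computing the partial derivatives of the Hamiltonian \eqref{e:d-potential} explicitly and substituting into \eqref{e:DLangevin}. Since $\CH^\eps=\CH^\eps_F(A^\eps)+\CH^\eps_D(A^\eps,\Phi^\eps)$, only $\CH^\eps_D$ contributes to the $\Phi^\eps$-equation, while both pieces contribute to the $A^\eps$-equation. My plan is to handle three blocks separately.

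Block (i): $\partial \CH^\eps_F/\partial A^\eps(e)$. A given undirected edge $e$ enters $\sum_p F_{A^\eps}(p)^2$ through exactly the two plaquettes adjacent to it. For a horizontal edge $e$ these are $p_e^S$ (in which $e$ plays the role of $e^N$ in \eqref{e:discFA}) and $p_e^N$ (in which $e$ plays the role of $e^S$); the two $\pm\eps^{-1}$ coefficients have opposite signs, so the chain rule together with the prefactor $\eps^2/2$ yields a multiple of $F_{A^\eps}(p_e^S)-F_{A^\eps}(p_e^N)$. Multiplying by $-\eps^{-2}$ from \eqref{e:DLangevin} produces the curvature-difference term in the first equation of \eqref{e:APhi-long}; vertical edges are handled by the same argument with $p_e^W,p_e^E$ replacing $p_e^N,p_e^S$.

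Block (ii): $\partial \CH^\eps_D/\partial A^\eps(e)$. For each directed edge $e=(e_-,e_+)\in\vec{\CE}^j_\eps$, the variable $A^\eps(e)$ enters exactly one summand $|D^{A^\eps}_j \Phi^\eps(e_-)|^2$ of the sum defining $\CH^\eps_D$. Expanding
\[
|D^{A^\eps}_j\Phi^\eps(e_-)|^2 = \eps^{-2}\Big(|\Phi^\eps(e_+)|^2+|\Phi^\eps(e_-)|^2 - 2\,\mathrm{Re}\big(e^{-i\eps\lambda A^\eps(e)}\Phi^\eps(e_+)\bar\Phi^\eps(e_-)\big)\Big),
\]
differentiating via $\partial_{A^\eps(e)}e^{-i\eps\lambda A^\eps(e)} = -i\eps\lambda\, e^{-i\eps\lambda A^\eps(e)}$ and applying the identity $\mathrm{Re}(iz) = -\mathrm{Im}(z)$, and then multiplying by $-\eps^{-2}$ from \eqref{e:DLangevin}, yields precisely the $\mathrm{Im}(\cdot)$ term in \eqref{e:APhi-long}.

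Block (iii): $\partial \CH^\eps_D/\partial \bar\Phi^\eps(x)$. Treating $\Phi^\eps$ and $\bar\Phi^\eps$ as independent via Wirtinger calculus, at a fixed site $x\in\Lambda_\eps$ the variable $\bar\Phi^\eps(x)$ appears in (a) four diagonal $|\Phi^\eps(x)|^2$ pieces, collectively contributing a multiple of $4\Phi^\eps(x)$, and (b) four cross terms of the form $e^{-i\eps\lambda A^\eps(y,y+\be_j)}\Phi^\eps(y+\be_j)\bar\Phi^\eps(y)$, two with $y=x$ (forward neighbors) and two with $y+\be_j=x$ (backward neighbors). For the backward ones, the convention $A^\eps(x-\be_j,x) = -A^\eps(x,x-\be_j)$ for oppositely directed edges converts the exponentials into $e^{-i\eps\lambda A^\eps(x,x-\be_j)}\Phi^\eps(x-\be_j)$, so the four cross terms consolidate into a single sum $\sum_{\be\in\{\pm\be_1,\pm\be_2\}}e^{-i\eps\lambda A^\eps(x,x+\be)}\Phi^\eps(x+\be)$. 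After multiplication by $-\eps^{-2}$ this is exactly $\Delta^\eps_{A^\eps}\Phi^\eps(x)$ from \eqref{e:discCovLap}, and the renormalization $-C^{(\eps)}\Phi^\eps(x)$ passes through unchanged from \eqref{e:DLangevin}. The only real obstacle is careful bookkeeping: tracking how a given edge appears as different sides of its two adjacent plaquettes (so the $\pm\eps^{-1}$ coefficients in \eqref{e:discFA} combine with the correct relative signs), the directed-edge sign convention, and the normalization of $\partial/\partial\bar\Phi^\eps$, so that the prefactors in front of the curvature and Laplacian terms emerge with the claimed powers of $\eps$ and correct signs.
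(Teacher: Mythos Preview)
Your proposal is correct and follows essentially the same approach as the paper: both split the computation into the contribution of $\CH^\eps_F$ and $\CH^\eps_D$ to the $A^\eps$-equation and the contribution of $\CH^\eps_D$ to the $\Phi^\eps$-equation, and both carry out the same derivative calculations. The only cosmetic difference is that for block~(iii) the paper packages the computation as $-\eps^{-2}\partial\CH^\eps_D/\partial\bar\Phi^\eps(x)=-(D_j^{A^\eps})^*(D_j^{A^\eps}\Phi^\eps)(x)$ using the adjoint formula \eqref{e:adjoint} and then expands, whereas you expand the sum directly; the bookkeeping is identical.
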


 The linear part of the equation for $A^\eps_1$ has the form
\begin{equs} [e:defd2d12]
%\nabla_2^\eps &\nabla_2^\eps A^\eps_{1}(e) 
%	-\nabla_1^\eps \nabla_2^\eps A^\eps_{2}(e)
\eps^{-1} & \Big( F_{A^\eps} (p_e^S) -F_{A^\eps} (p_e^N) \Big) \\
&= \eps^{-2} \big( A^\eps_1(\dot e)+A^\eps_1(\underaccent{\Cdot}{e})  -2A^\eps_1(e) \big) 
 - \eps^{-2} \big( A^\eps_2({e}^\Cdot) +A^\eps_2(\leftidx{_{\Cdot}}{e})
	-A^\eps_2({e}_\Cdot)- A^\eps_2(\leftidx{^{\Cdot\!}}{e})\big)
\end{equs}
for every horizontal edge $e$,
where the dotted edges are the edges around $e$ as shown in   the left picture of Figure~\ref{fig:discrete-equ} (which also illustrates the coefficients in front of $A^\eps_j$ 
on the right hand side of \eqref{e:defd2d12}),
and it is clearly a discretization of 
$\partial_2 \partial_2 A_1 - \partial_1\partial_2 A_2$.
The linear part in the equation for $A_2^\eps$
for every vertical edge $e$ is an analogous discretization of
$\partial_1 \partial_1 A_2 - \partial_1\partial_2 A_1$.

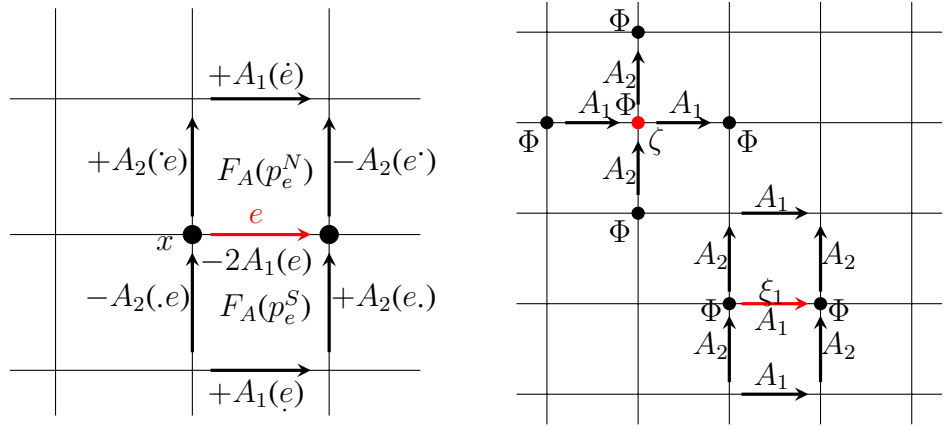
\begin{figure}[h]
\centering
\begin{tikzpicture}[scale=1.2]
  \draw[step=1.5cm] (-2,-3.5) grid (2.5,1);
\node at (0.7,0.25) {$+A_1(\dot e)$}; \draw[->,very thick] (0.2,0) -- (1.3,0) ; 
\node at (-0.6,-0.7) {$+A_2(\leftidx{^{\Cdot\!}}{e})$};
	\draw[->,very thick] (0,-1.3) -- (0,-0.2) ; 
\node at (0.7,-1.8) {$-2A_1(e)$}; \draw[->,very thick,red] (0.2,-1.5) -- (1.3,-1.5) ; 
%\node at (0.7,-1.3) {$\xi_1$};
\node[red] at (0.7,-1.3) {$e$};
\node at (2.1,-0.7) {$-A_2({e}^\Cdot)$};\draw[->,very thick] (1.5,-1.3) -- (1.5,-0.2) ; 
\node at (0.7,-3.3) {$+A_1(\underaccent{\Cdot}{e})$}; \draw[->,very thick] (0.2,-3) -- (1.3,-3) ; 
\node at (-0.6,-2.2) {$-A_2(\leftidx{_{\Cdot}}{e})$};\draw[->,very thick] (0,-2.8) -- (0,-1.7) ; 
\node at (2.1,-2.2) {$+A_2({e}_\Cdot)$};\draw[->,very thick] (1.5,-2.8) -- (1.5,-1.7) ; 
%\node at (1.8,-1.6) {$\Phi$}; 
\filldraw (1.5,-1.5) circle (0.1) ;
%\node at (-0.3,-1.6) {$\Phi$}; 
\filldraw (0,-1.5) circle (0.1) ;
\node at (-0.3,-1.6) {$x$}; 
\node at (0.8,-0.8) {$F_A(p_e^N)$}; 
\node at (0.8,-2.3) {$F_A(p_e^S)$}; 
\end{tikzpicture}
\hspace{3ex}
\begin{tikzpicture}[scale=0.8]
  \draw[step=1.5cm] (-3.5,-3.5) grid (3.5,3.5);
\node at (-1.7,1.8) {$\Phi$}; \filldraw[red] (-1.5,1.5) circle (0.1) ;
\node at (-1.2,1.2) {$\zeta$};
\node at (0.3,1.2) {$\Phi$}; \filldraw (0,1.5) circle (0.1) ;
\node at (-1.8,-0.3) {$\Phi$}; \filldraw (-1.5,0) circle (0.1) ;
\node at (-3.3,1.2) {$\Phi$}; \filldraw (-3,1.5) circle (0.1) ;
\node at (-1.8,3.2) {$\Phi$}; \filldraw (-1.5,3) circle (0.1) ;
\node at (-0.7,1.8) {$A_1$}; \draw[->,very thick] (-1.2,1.5) -- (-0.3,1.5) ; 
\node at (-1.8,0.7) {$A_2 $}; \draw[->,very thick] (-1.5,0.3) -- (-1.5,1.2) ; 
\node at (-2.2,1.8) {$A_1$}; \draw[->,very thick] (-2.7,1.5) -- (-1.8,1.5) ; 
\node at (-1.8,2.3) {$A_2 $}; \draw[->,very thick] (-1.5,1.8) -- (-1.5,2.7) ; 
\node at (0.7,0.25) {$A_1$}; \draw[->,very thick] (0.2,0) -- (1.3,0) ; 
\node at (-0.3,-0.7) {$A_2$};\draw[->,very thick] (0,-1.3) -- (0,-0.2) ; 
\node at (0.7,-1.8) {$A_1$}; \draw[->,very thick,red] (0.2,-1.5) -- (1.3,-1.5) ; 
\node at (0.7,-1.3) {$\xi_1$};
\node at (1.8,-0.7) {$A_2$};\draw[->,very thick] (1.5,-1.3) -- (1.5,-0.2) ; 
\node at (0.7,-2.7) {$A_1$}; \draw[->,very thick] (0.2,-3) -- (1.3,-3) ; 
\node at (-0.3,-2.2) {$A_2$};\draw[->,very thick] (0,-2.8) -- (0,-1.7) ; 
\node at (1.8,-2.2) {$A_2$};\draw[->,very thick] (1.5,-2.8) -- (1.5,-1.7) ; 
\node at (1.8,-1.6) {$\Phi$}; \filldraw (1.5,-1.5) circle (0.1) ;
\node at (-0.3,-1.6) {$\Phi$}; \filldraw (0,-1.5) circle (0.1) ;
\end{tikzpicture}
\caption{The left picture illustrates the linear part of the evolution of $A_1^\e(e)$.
The right picture shows that 
 the evolution of the field $A_1^\e$ on the red edge depends on the noise on that edge and values of the fields surrounding that edge;   the evolution of  the field $\Phi^\e$ on  the red vertex depends on the noise on that vertex and values of the fields surrounding that vertex. Dependence on $\eps$ is dropped for cleaner picture.
 }
\label{fig:discrete-equ}
\end{figure}

\begin{proof}[of Lemma~\ref{lem:APhi-discrete}]
The proof is by straightforward computations. Recall from \eqref{e:d-potential} that $\CH^\e=\CH^\e_F + \CH^\e_D$.
For a horizontal edge $e$ as shown in the picture,
 differentiating $-\eps^{-2}\CH_F^\eps$ w.r.t. $A^\eps_1(e)$ yields precisely \eqref{e:defd2d12}.
Regarding $-\eps^{-2} \frac{\partial\CH^\eps_D}{\partial A^\eps_1(e)}$
the only term that contributes to this derivative is
 \[
- \frac{1}{2} | (D^{A^\eps}_1 \Phi^\eps) (x)|^2 
 =-\frac{\eps^{-2}}{2} \Big|e^{-i\eps \lambda A_1^\eps(e)} \Phi^\eps(x+\be_1) -\Phi^\eps(x)\Big|^2
 %\Big(e^{i\eps \lambda A(e)} \bar\Phi(x+\be_j) -\bar\Phi(x)\Big)
 \]
 where $e=(x,x+\be_1)$, and its derivative
with respect to $A_1^\eps(e)$ is
\begin{equs}
- &\eps^{-2} \mbox{Re}\Big((-i\eps\lambda)\,e^{-i\eps \lambda A_1^\eps(e)} \Phi^\eps(x+\be_1)
	\Big((e^{i\eps \lambda A_1^\eps(e)} \bar\Phi^\eps(x+\be_1) -\bar\Phi^\eps(x)\Big) \Big) \\
&=
-\eps^{-2}\mbox{Re}\Big((i\eps\lambda)\,e^{-i\eps \lambda A_1^\eps(e)} \Phi^\eps(x+\be_1)
	\bar\Phi^\eps(x)\Big)
=
\eps^{-1}\lambda\,\mbox{Im}\Big(e^{-i\eps \lambda A_1^\eps(e)} \Phi^\eps(x+\be_1)
	\bar\Phi^\eps(x)\Big) \;,
\end{equs}
which is the claimed nonlinear term in the equation for $A_1^\e$.
The derivation for  the equation for $A_2^\e$ follows in the same way.
%{\bf where $O(\eps\lambda^3)$ has positive homogeneity.}
Turning to the equation for $\Phi^\eps(x)$, note that 
$-\eps^{-2} \frac{\partial\CH^\eps_D}{\partial \Phi^\eps(x)}=-(D_j^{A^\eps} )^*(D_j^{A^\eps} \Phi^\eps) (x) $ which equals
\begin{equs}
    - & \eps^{-2} \Big[    e^{-i\eps \lambda A^\eps(x,x-\be_j)}
	\Big(e^{-i\eps \lambda A^\eps(x-\be_j,x)} \Phi^\eps(x) -\Phi^\eps(x-\be_j)\Big)\\
 & \qquad -\Big(e^{-i\eps \lambda A^\eps(x,x+\be_j)} \Phi^\eps(x+\be_j) -\Phi^\eps(x)\Big) \Big] \\
  & =\eps^{-2} \Big[ e^{-i\eps \lambda A^\eps(x,x-\be_j)}\Phi^\eps(x-\be_j)
  		+e^{-i\eps \lambda A^\eps(x,x+\be_j)}\Phi^\eps(x+\be_j)
		-2 \Phi^\eps(x)  \Big] 
\end{equs}
%Note that
%\begin{equs}
%- &i \eps^{-1} \lambda \Big(
%	A^\eps(x,x-\be_j)\Phi^\eps(x-\be_j)+A^\eps(x,x+\be_j)\Phi^\eps(x+\be_j) \Big) \\
%&=- i \eps^{-1} \lambda \Big(
%	A^\eps(x,x-\be_j) (\Phi^\eps(x)+\eps\nabla^\eps_{-\be_j}\Phi^\eps(x))+A^\eps(x,x+\be_j)\Phi^\eps(x+\be_j) \Big) 
%\end{equs}
and summing $j$ over $1,2$ gives the equation for $\Phi^\eps(x)$.
\end{proof}

\begin{remark}
By Taylor expanding the exponential factors into polynomials in the field $A^\e$ and only retaining the leading terms one obtains the ``approximate'' equations
\begin{equs} [e:APhi-long1]
\partial_{t}A_{1}^\e 
	&=\partial_{2}^{2}A_{1}^\e
	-\partial_{1}\partial_{2}A_{2}^\e
	- \frac{ i \lambda}{2} \Big(\bar{\Phi}\partial_{1}\Phi^\e-\Phi^\e\partial_{1}\bar{\Phi}^\e-2i\lambda A_{1}^\e|\Phi^\e|^{2}\Big)
	+\xi_{1}^\e \\
\partial_{t}A_{2}^\e 
	&=\partial_{1}^{2}A_{2}^\e-\partial_{1}\partial_{2}A_{1}^\e
	- \frac{ i \lambda}{2} \Big(\bar{\Phi}^\e\partial_{2}\Phi^\e-\Phi^\e\partial_{2}\bar{\Phi}^\e-2i\lambda A_{2}^\e|\Phi^\e|^{2}\Big)
	+\xi_{2}^\e 	\\
\partial_t \Phi^\e &= 
\Delta\Phi^\e -i \lambda \sum_j\partial_j (A_j^\e \Phi^\e)-i \lambda \sum_j A_j^\e (\partial_j \Phi^\e) 
	- \lambda^2 \sum_j (A_j^\e)^2 \Phi^\e + \zeta^\e \;.
\end{equs}
It is then clearer that this system is discretization of \eqref{e:APhi}.
We will carry out such an expansion in the proof of Lemma~\ref{lem:poly-form} below. 
These equations appear
in the physics review on stochastic quantization \cite[(4.32,4.38)]{damgaard1987},
except that in their  convention there is not the factor $\frac12$ in the equation for $A$.
By simple power counting as in \cite{Regularity}, is critical spatial dimension is four.
Note that the Da Prato-Debussche method \cite{MR2016604} would not work for this model, even it is in two dimensions, because of  the term $\Psi\nabla\Psi$.
\end{remark}

\subsection{Norms and distances}

With the discretized functions and processes as above we define
norms and distances for them.

%\begin{enumerate}
%\item 
{\it 1. H\"{o}lder spaces and test function spaces.}
For $r > 0$, we denote by $\CC^r(\R^d)$ the usual H\"{o}lder space on $\R^d$, and by $\CC^r_0(\R^d)$  the space of compactly supported $\CC^r$-functions. 
Denote by $\CB^r_0(\R^d)$  the set of 
$\CC^r_0(\R^d)$-functions  supported in the unit ball centered at the origin and with the $\CC^r$-norm bounded by $1$.
For $\varphi \in \CB^r_0(\R^d)$, $\lambda > 0$ and $x, y \in \R^{d}$ we define 
\begin{equ} [e:rescale-phi]
\varphi_x^\lambda(y) \eqdef \lambda^{-d} \varphi(\lambda^{-1}(y-x)) \;.
\end{equ}
For $\alpha < 0$, we define the space $\CC^\alpha(\R^d)$ to consist of $\zeta \in \CS'(\R^d)$, belonging to the dual space of the space of $\CC^{r}_0$-functions, with $r > -\lfloor \alpha \rfloor$, and such that
\begin{equ}[e:AlphaNorm]
\Vert \zeta \Vert_{\CC^\alpha} \eqdef \sup_{\varphi \in \CB^r_0} \sup_{x \in \R^d} \sup_{\lambda \in (0,1]} \lambda^{-\alpha} |\langle \zeta, \varphi_x^\lambda \rangle| < \infty\;.
\end{equ}
We also define Holder spaces
 over space-time $\R^{d+1}$ as follows.
For $\varphi \in \CB^r_0(\R^{d+1})$, $\lambda > 0$ and $(t,x), (s,y) \in \R\times \R^d$ we define 
\begin{equ} [e:rescale-phi-hat]
\hat\varphi_{(t,x)}^\lambda(s,y) \eqdef \lambda^{-d-2} 
\varphi(\lambda^{-2}(s-t),\lambda^{-1}(y-x)) \;.
\end{equ}
For $\alpha < 0$, we define the space $\hat\CC^\alpha(\R^{d+1})$ to consist of $\zeta \in \CS'(\R^{d+1})$, belonging to the dual space of the space of $\CC^{r}_0$-fucntions, with $r > -\lfloor \alpha \rfloor$, and such that
\begin{equ}[e:AlphaNorm-hat]
\Vert \zeta \Vert_{\hat\CC^\alpha} 
\eqdef \sup_{\varphi \in \CB^r_0(\R^{d+1})}
 \sup_{t \in \R} \sup_{x \in \R^d} \sup_{\lambda \in (0,1]} \lambda^{-\alpha} |\langle \zeta, \hat\varphi_{(t,x)}^\lambda \rangle| < \infty\;.
\end{equ}

{\it 2. The space $\CC^{\delta, \alpha}_{\eta}\bigl([0,T], \R^d\bigr)$.}
For a function $\R \ni t \mapsto \zeta_t$ we define the operator $\delta^{s, t}$ by
\begin{equ}[e:deltaTime]
 \delta^{s, t} \zeta \eqdef \zeta_t - \zeta_s\;,
\end{equ}
and for $\delta > 0$, $\eta \leq 0$ and $T > 0$, we define the space $\CC^{\delta, \alpha}_{\eta}\bigl([0,T], \R^d\bigr)$ to consist of the functions $(0, T] \ni t \mapsto \zeta_t \in \CC^{\alpha}(\R^d)$, such that the following norm is finite
\begin{equ}[e:SpaceTimeNorm]
\Vert \zeta \Vert_{\CC^{\delta, \alpha}_{\eta, T}} \eqdef \sup_{t \in (0, T]} \onorm{t}^{-\eta} \Vert \zeta_t \Vert_{\CC^\alpha} + \sup_{s \neq t \in (0, T]} \onorm{t, s}^{-\eta} \frac{\Vert \delta^{s, t} \zeta \Vert_{\CC^{\alpha - \delta}}}{|t-s|^{\frac{\delta}{2}}}\;,
\end{equ}
where $\onorm{t} \eqdef |t|^{\frac12} \wedge 1$ and $\onorm{t, s} \eqdef \onorm{t} \wedge \onorm{s}$.

\vspace{1ex}

{\it 3. Comparing functions on lattice and continuum.}
We will need to compare discrete functions on the lattice 
(either on the grid points or on the edges)
with their continuous counterparts, and for this
we first introduce the following convention.
For a continuous function $\phi: \R^d\to \R$, a site $x\in \Lambda_\eps$
can be naturally identified with a point $x\in\R^d$, thus giving the meaning of
$\phi(x)$.
For an edge $e = (e_-,e_+) \in \vec{\CE}_\e$, we define $\phi(e) \eqdef \phi(\frac{e_- +e_+}{2})$.

The values of $\phi$ on differences of grid points or edges 
are then defined in a similar way. For instance,
the quantity $\phi(e-\tilde e) $ for two edges $e,\tilde e$ then stands for 
$\phi(\frac{e_- +e_+}{2}-\frac{\tilde e_- +\tilde e_+}{2}) $,
and 
the quantity $\phi(e-x) $ for an edge $e$ and a grid point $x$ then stands for 
$\phi(\frac{e_- +e_+}{2}-x) $.

Given this convention,
in order to compare discrete functions 
 $\zeta^\eps \in \R^{\Lambda_\eps}$ on the dyadic grid
with their continuous counterparts $\zeta \in \CC^\alpha(\R^d)$ with $\alpha \leq 0$, we introduce the following ``distance''
\begin{equ} [e:diffCalpha]
\Vert \zeta; \zeta^\eps \Vert^{(\eps)}_{\CC^\alpha} 
\eqdef \sup_{\varphi \in \CB^r_0} \sup_{x \in \R^d} \sup_{\lambda \in [\eps,1]} \lambda^{-\alpha} |\langle \zeta, \varphi_x^\lambda \rangle - \langle \zeta^\eps, \varphi_x^\lambda \rangle_\eps|\;,
\end{equ}
where 
\begin{equ}[e:DPairing]
 \langle \zeta^\eps, \varphi_x^\lambda \rangle_\eps
  %\eqdef \int_{\Lambda_\eps^d} \zeta^\eps(y) \varphi_x^\lambda(y)\, dy
   \eqdef \eps^d \sum_{y \in \Lambda_\eps} \zeta^\eps(y) \varphi_x^\lambda(y)\;.
\end{equ}
For discrete functions 
 $\zeta^\eps \in \R^{\CE^j_\e}$ with 
 $j\in\{1,2\}$ on the edges,
we define $\Vert \zeta; \zeta^\eps \Vert^{(\eps)}_{\CC^\alpha} $ in the same way
but with $ \langle \zeta^\eps, \varphi_x^\lambda \rangle_\eps$ defined by
\begin{equ}[e:DPairing-e]
 \langle \zeta^\eps, \varphi_x^\lambda \rangle_\eps
   \eqdef \eps^d \sum_{e \in \CE^j_\e} \zeta^\eps(e) \varphi_x^\lambda(e)\;.
\end{equ}
Here $\varphi_x^\lambda(y)$ and $\varphi_x^\lambda(e)$ are defined according to the aforementioned convention 
as well as \eqref{e:rescale-phi}.
For $\zeta^\eps: \R \times \Lambda_\eps \to \R$ and its continuous counterpart
 $\zeta \in \hat\CC^\alpha(\R^{d+1})$ with $\alpha \leq 0$,
 we also define the ``distance"  $\Vert \zeta; \zeta^\eps \Vert^{(\eps)}_{\hat\CC^\alpha} $
 in the analogous way with the parabolically scaled test function \eqref{e:rescale-phi-hat}
 and scalar product that is discrete in space and continuous in time.
 
 \vspace{1ex}
{\it 4. Comparing processes on lattice and continuum.}
Assume that we are given  a  space-time distribution $\zeta$, 
 and a function $\zeta^\eps$ 
 on $(0,T]\times \Lambda_\eps $ or on $(0,T]\times \CE^j_\eps $ with $j\in\{1,2\}$.
For $\delta > 0$ and $\eta \leq 0$, 
we define
\begin{equ}[e:DHolderDist]
\Vert \zeta; \zeta^\eps \Vert^{(\eps)}_{\CC^{\delta, \alpha}_{\eta, T}}
 \eqdef
  \sup_{t \in (0, T]} \enorm{t}^{-\eta} \Vert \zeta_t; \zeta_t^\eps \Vert_{\CC^\alpha}^{(\eps)} 
  + \sup_{s \neq t \in (0, T]} \enorm{s, t}^{-\eta} 
  	\frac{\Vert \delta^{s, t} \zeta; \delta^{s, t}\zeta^\eps \Vert_{\CC^{\alpha - \delta}}^{(\eps)} }
		{\bigl(|t-s|^{\frac12} \vee \eps\bigr)^{\delta}},
\end{equ}
where $\enorm{t} \eqdef \onorm{t} \vee \eps$ and $\enorm{s, t} \eqdef \enorm{s} \wedge \enorm{t}$, 
and $\Vert \zeta; \zeta^\eps \Vert^{(\eps)}_{\CC^\alpha} $
is defined as above with the discrete pairing defined as in 
 \eqref{e:DPairing} or \eqref{e:DPairing-e},
 depending on the domain where $\zeta^\eps$ is defined.
 
 Furthermore, we define the norms $\Vert \zeta^\e \Vert^{(\eps)}_{\CC^\alpha}$ and
 $\Vert \zeta^\eps \Vert^{(\eps)}_{\CC^{\delta, \alpha}_{\eta, T}}$ in the same way as in \eqref{e:AlphaNorm} and \eqref{e:SpaceTimeNorm}, but using the discrete pairing \eqref{e:DPairing} and \eqref{e:DPairing-e}.
 
If $\zeta^\eps=(\zeta_1,\zeta_2)$ is vector valued or  $\zeta^\eps=\zeta_1+i\zeta_2$ is complex valued, and so is $\zeta$, then $\Vert \zeta; \zeta^\eps \Vert^{(\eps)}_{\CC^{\delta, \alpha}_{\eta, T}} \eqdef \sum_{k=1}^2 \Vert \zeta_k; \zeta^\eps_k \Vert^{(\eps)}_{\CC^{\delta, \alpha}_{\eta, T}}$.
The other norms or distances %$\Vert \zeta; \zeta^\eps \Vert^{(\eps)}_{\CC^\alpha} $ 
for vector or complex fields 
are defined in the same way.

%\end{enumerate}

\subsection{Gauge invariant observables}
\label{sec:observables}

We now define a collection of discrete gauge invariant observables
and state the convergence result for them, as mentioned in the introduction.

As discussed below \eqref{e:discFA}, 
given a discrete vector field $A^\e$,
the curvature field $F^\e_{A^\e}$ is gauge invariant.

We can also define composite scalar fields $\Wick{|\Phi^\eps|^{2n}}$ as follows.
Given a stationary complex valued Gaussian field  $\psi^\e$ on 
$\Lambda_\e$,
recall that for processes $\Phi^\eps$ on $ \Lambda_\e$ 
and its complex conjugate $\bar\Phi^\eps$,
the Wick power (with respect to the Gaussian measure of $\psi^\e$) is given by
\begin{equ}[e:generating-Wick]
\Wick{(\Phi^\eps)^p (\bar{\Phi}^\eps)^q } (x)
=
\frac{\partial^p}{\partial t_1^p} 
\frac{\partial^q}{\partial t_2^q} 
e^{t_1 \Phi^\eps(x) + t_2 \bar\Phi^\eps (x)
	- \frac12 \E \big((t_1 \psi^\eps (x)+ t_2 \bar\psi^\eps(x))^2 \big)}
\Big|_{t_1=t_2=0}
\qquad (x\in\Lambda_\e) \;.
\end{equ}
From this it is easy to see that  for any integer $n\geq 1$,
the quantity
$\Wick{|\Phi^\eps|^{2n}} = \Wick{(\Phi^\eps\bar\Phi^\eps)^n }$
is a polynomial in $\Phi^\eps\bar\Phi^\eps$ of order $n$.
Therefore $\Wick{|\Phi^\eps|^{2n}}$ is gauge invariant
since $\Phi^\eps\bar\Phi^\eps$ is invariant under the gauge transformation \eqref{e:disc-gauge-trans}.
In the sequel $\Phi^\e$ will be taken as the solution to our discrete equation and thus time dependent, and  $\psi^\e \eqdef K^\e \ast_\e \zeta^\eps$,
where $K^\eps$ is the  truncated discrete heat kernel as given in \cite[Lemma~5.4]{hairer2015discrete}.
%and thus it suffices to prove convergence of $\Wick{(\Psi^\eps\bar\Psi^\eps)^n }$.

Another type of composite field is given by 
\begin{equ} [e:def-obs-PhiDAPhi]
\bar\Phi^\eps (e_-) (D_j^{A^\eps} \Phi^\eps)(e)
-C^\e_{\bar\Phi D^A \Phi}
%\qquad
%\mbox{and}
%\qquad
%\bar\Phi^\eps (e_-)  e^{-i\e \lambda A_j(e)}  \Phi^\eps(e_+)
\end{equ}
 for $j\in\{1,2\}$ and $e=(e_-,e_+)\in\vec\CE^j_\e$,
 where $C^\e_{\bar\Phi D^A \Phi}$ is a renormalization constant.
By \eqref{e:DCovariance},
this field is also gauge invariant under the transformation \eqref{e:disc-gauge-trans}.

Besides these local observables,
we also define some global observables as follows.
To start with, given a simple $\CC^2$ curve $C$ 
with a parametrization $\br(\si)$ for $\si\in[0,1]$ with $|\dot \br(\si)|\neq 0$,
and $\eps>0$ sufficiently small (depending on $C$),
we introduce a notion of {\it regular approximation} of $C$ as follows.
First, we find {\it a collection $P^C_{\e}$ of squares} in $\CP_\e$
which covers the curve $C$,  in an inductive way as follows.
Letting $p_1 \in \CP_\e$  be the square that contains $C(\br(0))$,
we impose that $p_1\in P^C_{\e}$.
Assuming that  $p_i\in P^C_{\e}$ for some $i\ge 1$,
if $C$ exits $p_i$ from edge $e$ (generically $C$ will not intersect with $p_i$ at the corner of $p_i$), then we add $p_{i+1} \in \CP_\e$ into the collection $P^C_{\e}$ such that $p_{i+1} \cap p_i = e$, and repeat this procedure until $C$ is completely covered by such squares. 
By construction, provided that $\eps>0$ is sufficiently small, every vertex $x\in\Lambda_\e$
is shared by at most $3$ squares in the collection $P^C_{\e}$.
As an example, in the following picture with a curve $C$ running from left to right as $\si$ increases, it exits $p_i$ from the right edge of $p_i$, so $p_{i+1}$  is the  adjacent square on the right.
\begin{center}
\includegraphics[scale=0.6]{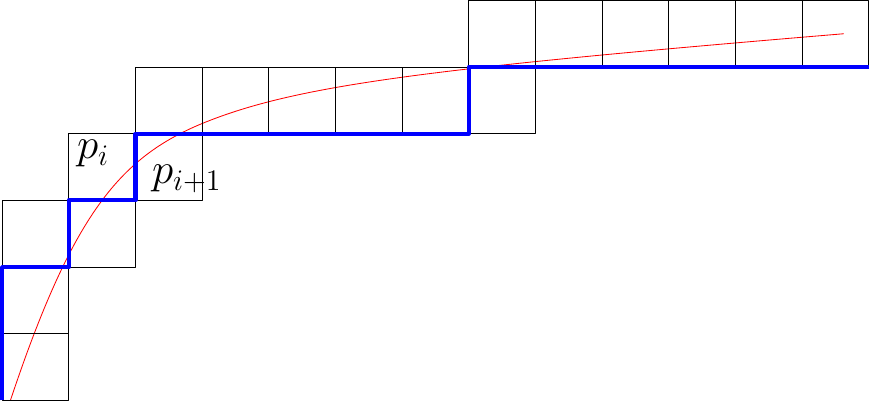}
\end{center}
By construction every point $\br(\si)\in C$ lies in a unique square in $P^C_{\e}$,
therefore this collection $P^C_{\e}$ specifies a partition of $[0,1]$ into subintervals:
\[
0=\sigma_0 <\sigma_1 <\cdots <\sigma_{|P^C_{\e}|}=1
\] 
such that $\si\in [\si_{i},\si_{i+1}]$ if and only if the point $\br(\si)$ is in the corresponding square.
With slight abuse of notation
we will denote this partition of $[0,1]$ also by $P^C_{\e}$.

We then select {\it one} edge from each $p \in  P^C_{\e}$, such that
these edges concatenate forming a discrete curve $C^\e$. We call $C^\e$
 a {\it regular approximation} of $C$.
 In the above picture a regular approximation $C^\e$ is drawn in blue.
 By construction we have a one-to-one correspondence between edges in $C^\e$
 and subintervals in the partition $P^C_{\e}$.
% For each $\si\in [0,1]$, denote by $e(\si) \in C^\e$ the edge which corresponds to the subinterval of $[0,1]$ that contains $\si$. We also denote by $[\si]_\e$ or $[e]_\e$ the length of the subinterval (which is typically of order $\e$).
 
 \begin{remark}\label{rem:hori-vert-tang}
For a $\CC^2$ curve $C$, provided that $\eps>0$ is sufficiently small,
for any point $\br(\si) =(\br_1(\si) ,\br_2(\si) )\in C$ such that $\dot{\br}_2(\si)=0$, namely the tangent line is horizontal,
the edge  $e(\si)\in C^\e$ is necessarily an horizontal edge.
Indeed, in this situation a small section of $C$ containing $\br(\si)$ must be covered by 
a few squares in $\CP_\e$  concatenated horizontally,
and by inspection (or simply looking at the above picture) the edges of  $C^\e$ which are associated to these squares 
must also be horizontal,
in order
 to ensure that each square in $\CP_\e$ in  associated with a {\it unique} edge in $C^\e$.
The same holds near a point with a vertical tangent line.
\end{remark}

Let $C\subset \T^2$ be a given  simple $\CC^2$ % contractible 
closed curve.
Let  $C^\e$  be a discrete loop which is a regular approximation of $C$.
Let $A^\e\in \R^{\CE_\eps}$ be discrete vector field.
One can then consider the %``$\mathfrak u(1)$-loop observable'' and  ``$U(1)$-loop observable'' 
loop observables
\begin{equ}[e:def-loop-obs]
\mathcal O_{C,\e} \eqdef
\e \sum_{e\in C^\e} A^\e(e)
\qquad
\mbox{and}
\qquad
\tilde{\mathcal O}_{C,\e} \eqdef
e^{i\e\sum_{e\in C^\e} A^\e(e)}\;.
\end{equ}
These are obviously gauge invariant since $\sum_{C^\e} \nabla f^\e =0$ for any $f\in \R^{\Lambda_\e}$.

%Moreover, for any smooth contractible closed curve $C\subset \T^2$, let $D_1,\cdots,D_{k(C)}$ 
%be the simply connected regions separated off by $C$, and let $\omega_i$ ($i=1,\cdots,k(C)$) be the winding number 
%\footnote{Winding numbers are usually defined for curves on the plane, but since $C$ is assumed to be contractible we can take a disk from $\T^2$ that contains the entire $C$, which is homeomorphic to the plane.}
% of $C$ with respect to any point inside $D_i$. Let $C^\eps$ be a sequence of discrete loops (as defined in Section~\ref{sec:Discretization}) such that $\max_{x\in \CV(C^\eps)} d_{\T^2} (x,C)\to 0$ as $\eps\to 0$, where $\CV(C^\eps)$ is the set of all the vertices in $C^\eps$ and $d_{\T^2}$ is the Euclidean distance on $\T^2$. Then 
% the (time-dependent) Wilson loop 
% \[
% F^\eps_{A^\eps}(C^\eps) \to 
% \]

\begin{theorem}\label{theo:observables}
Let $(A^\e,\Phi^\e)$ be the local It\^o solution to \eqref{e:APhi-long} with initial data $(\mathring A^\eps, \mathring \Phi^\eps)$.
As $\eps\to 0$, 
the gauge invariant observables 
$F^\eps_{A^\e}$ and 
\begin{equ}
\bar\Phi^\eps(\cdot_-) (D_j^{A^\eps} \Phi^\eps)(\cdot)
- C^\e_{\bar\Phi D^A \Phi}
\end{equ}
for $j\in\{1,2\}$ and  $C^\e_{\bar\Phi D^A \Phi}=O(\e^{-1})$
given in \eqref{e:defC-composite} below
 converge in distribution with respect to the distance 
$\Vert \cdot ; \cdot \Vert_{\CC^{\delta, \alpha-1}_{\bar{\eta}, T_\eps}}^{(\eps)} $.
For any integer $n\geq 1$, the gauge invariant observables
$\Wick{|\Phi^\eps|^{2n}}$
converge in distribution with respect to the distance 
$\Vert \cdot ; \cdot \Vert_{\CC^{\delta, \alpha}_{\bar{\eta}, T_\eps}}^{(\eps)} $, 
for every $\alpha<0$
and $\delta,\bar\eta$ are as in Theorem~\ref{theo:main}.
%to the corresponding observable of the limit, i.e. $\mbox{curl} B$.

Moreover, as $\eps\to 0$, the loop observables $\mathcal O_{C,\e}$ 
defined in \eqref{e:def-loop-obs} 
via the solutions $(A^\e,\Phi^\e)$
converge in $\CC([0,T],\R)$
in distribution,
and the loop observables $\tilde{\mathcal O}_{C,\e}$ in \eqref{e:def-loop-obs} 
%and the string observables $\mathcal O_{C,\e}^{\phi_1,\phi_2} $ defined in  \eqref{e:def-string-obs}
converge in $\CC([0,T],\C)$
in distribution.
\end{theorem}

\begin{remark}
There are other observables which could be constructed using our method.
For instance, given a two-dimensional subdomain $\Omega$ of the torus $\T^2$,
and let $\Omega_\e$ be a suitable approximation to $\Omega$, which is a union of a collection of squares in $\CP_\e$. 
One can consider the gauge invariance observable
$\e^2 \sum_{p\in \Omega_\e} F^\e_{A^\e}$. 
%which is equal to 
%$\e\sum_{p\in \Omega_\e} F^\e_{B^\e}$ by gauge invariance. 
In view of the above Corollary, the field $ F^\e_{A^\e}$ converges in 
the space of regularity $\alpha-1$ for $\alpha<0$,
so one would expect that the second moment of this observable 
behaves like $\e^4 \sum_{p,\bar p\in \Omega_\e} |p-\bar p|^{2(\alpha-1)}$
which seems to diverge. However, using the definition of $F^\e_{A^\e}$
together with a discrete Green's theorem, this observable
can be re-written in terms of (possibly a sum of, if the boundary of $\Omega$ is not connected) the observable $\mathcal O_{C,\e}$.
 
 Another type of observables is the ``string observables'' which are formally written as ``$\bar \Phi (x) e^{-i\lambda \int_x^y A} \Phi(y)$'' where $\int_x^y A$ denotes the line integral of $A$ along a given curve from $x$ to $y$. Since for {\it fixed} $x,y$ even when $\lambda=0$, the formal expression 
 $\bar \Phi (x)  \Phi(y)$ would be meaningless in the continuum, we may need to define it in an averaged sense.
Assume that $C\subset \T^2$ is a given  simple smooth %contractible 
open curve,
and that $C^\e$  is a discrete curve which is a regular approximation of $C$.
%and $C^\e$  consists of $n_\e$ edges.
%% COMMENTED PARAMETRIZATION STUFF %%
%We view $C$ as a map from its parametrization interval $[0,1]$
%to $\T^2$ and $C^\e$ as a map from the discrete interval 
%$\{0,{1\over n_\e},\cdots,{n_\e-1 \over n_\e},1\}$ to $\T^2_\e$.
%Fix $0<\bar s<s<1 $ and consider 
%\begin{equ}[e:def-string-obs]
%\bar\Phi^\e(C^\e(\lfloor \bar s n_\e\rfloor n_\e^{-1}))\,
%e^{ - i\e\sum_{j } A^\e( C^\e ({j-1\over n_\e},{j\over n_\e}))}\,
%\Phi^\e(C^\e(\lfloor s n_\e\rfloor n_\e^{-1}))
%\end{equ}
%where $j$ is summed over $\{j:\,j/n_\e \in (\bar s,s)\}$.
%It is also easy to see that this is gauge invariant.
For smooth test functions $\phi_1,\phi_2$ on $\T^2$ we define 
\begin{equ}[e:def-string-obs]
\mathcal O_{C,\e}^{\phi_1,\phi_2} \eqdef
\e^2
\sum_{x,y\in C^\e}
\phi_1(x)
\Big(
\bar\Phi^\e(x)\,
e^{ - i\e \lambda \sum_{e\in C^\e(x\to y)} A^\e(e)}\,
\Phi^\e(y)
\Big)
\phi_2(y)
\end{equ}
where $C^\e(x\to y) \subset C^\e$
is the part of $C^\e$ between the two sites $x,y$.
It is also  gauge invariant
since the quantity in the parenthesis is gauge invariant.

A variant of this is the ``smeared string observable'' (for instance \cite{MR836009}) formally written as
\[
\bar \Phi (x) e^{-i\lambda \int_x^y A\cdot \nabla U} \Phi(y)
\]
where $\int_x^y A\cdot \nabla U$ is the line integral
of the scalar function $A\cdot \nabla U$
and $U$ is the Coulomb potential generated by 
a negative charge at $x$ and a positive charge at $y$,
namely $-\Delta U = -\delta_x + \delta_y$.
We do not rigorously construct these observables in the present article.
\end{remark}

\section{Gauge transformations and Ward identity}\label{sec:GaugeTrans}

%We write \eqref{e:APhi} in more explicit form
%\begin{equs}
%\partial_{t}A_{1} 
%	&=\partial_{2}^{2}A_{1}-\partial_{1}\partial_{2}A_{2}
%	-i \lambda \Big(\bar{\Phi}\partial_{1}\Phi-\Phi\partial_{1}\bar{\Phi}-2i\lambda A_{1}|\Phi|^{2}\Big)
%	+\xi_{1} \\
%\partial_{t}A_{2} 
%	&=\partial_{1}^{2}A_{2}-\partial_{1}\partial_{2}A_{1}
%	-i \lambda \Big(\bar{\Phi}\partial_{2}\Phi-\Phi\partial_{2}\bar{\Phi}-2i\lambda A_{2}|\Phi|^{2}\Big)
%	+\xi_{2} 	\label{e:APhi-long}\\
%\partial_t \Phi &= 
%\Delta\Phi -i \lambda \sum_j\partial_j (A_j \Phi)-i \lambda \sum_j A_j (\partial_j \Phi) 
%	- \lambda^2 \sum_j A_j^2 \Phi + \zeta
%\end{equs}

%For instance, since we assumed the space dimension is two, 
%the solution $\Phi$  only belongs to $C^\alpha$ with $\alpha<0$ so $|\Phi|^2$ in \eqref{e:APhi-long} does not have a classical meaning.
%%The main objective of this article is to show that if we replace the noises $\xi_j,\zeta$ by smooth noises $\xi_j^{(\eps)},\zeta^{(\eps)}$ and then take $\eps\to 0$ in a suitable way,
%%we can obtain a nontrivial limit.
%The main objective of this article is to consider the equation on lattice and prove continuum limit.

\subsection{DeTurck trick}

In this section we apply the DeTurck trick, which amounts to introducing a time dependent family of gauge transformations that can turn the system \eqref{e:APhi} 
or \eqref{e:APhi-long}
into a parabolic system. The argument relies crucially on the gauge invariant discretization
discussed in the previous section.
%Fix $\eps>0$. Let $B^\eps, \Psi^\eps$ solve the following It\^o system:
%\begin{equs} [e:BPsi]
%\partial_{t}B^\eps_{j} 
%	&=\Delta^\eps B^\eps_j
%	-i \lambda \Big( \overline{\Psi^\eps}D^{B^\eps}_{j}\Psi^\eps-\Psi^\eps\overline{D^{B^\eps}_{j}\Psi^\eps} \Big)
%		+\xi^\eps_{j} \qquad (j=1,2) 
% \\
%\partial_t \Psi^\eps 
%	&= \sum_j D^{B^\eps}_j D^{B^\eps}_j\Psi^\eps 
%	+ \Big( i \lambda \divB^\eps \Big)\Psi^\eps 
%	+ e^{i\lambda \int_0^t \nabla\cdot B^\eps(s)\,ds} \zeta^\eps 
%\end{equs}

Fix $\eps>0$. Let $(B^\eps, \Psi^\eps)$ solve the following It\^o system: 
\begin{equs} [e:BPsi]
\partial_{t}B^\eps_j (e)
	&=\Delta^\eps B^\eps_j (e)
	+\eps^{-1}\lambda \,\textup{Im}\Big(
		e^{-i\eps \lambda B_j^\eps(e)} \Psi^\eps(e_+)\bar\Psi^\eps(e_-)\Big)
	+\xi^\eps_j (e)    \\
\partial_t \Psi^\eps (x)&=  \Delta_{B^\eps}^\eps \Psi^\eps (x)
 + i\lambda \, \textup{div}^\eps B^\eps (x)\,\Psi^\eps(x)
 	- C^{(\eps)} \Psi^\eps(x)
		+e^{i\lambda \int_0^t \textup{div}^\eps B^\eps(s,x)\,ds}  \zeta^\eps(x)
\end{equs}
for $j=1,2$ 
with the same initial condition $(\mathring A^\e, \mathring \Phi^\e)  $ as that of $(A^\eps,\Phi^\eps)$,
where the directed edge $e=(e^-,e^+)$ and
 %$x$ in the equation for $B^\e_j$ is such that , and
 $\Delta_{B^\eps}^\eps \Psi^\eps (x)$ is the gauge covariant Laplacian defined in 
\eqref{e:discCovLap}.
%\[
%\Delta_{B^\eps}^\eps \Psi^\eps (x) \eqdef
%\eps^{-2} 
%\Big( \sum_{\be}  
%  		e^{-i\eps \lambda B^\eps(x,x+\be)}\Psi^\eps(x+\be)
%		-4 \Psi^\eps(x)  \Big) 
%\]
%with $\be$ summed over $ \{\pm \be_1, \pm\be_2\}$ is called the discrete gauge covariant Laplacian.
In the equation for $B^\e$,
the discrete Laplacian $\Delta^\eps$ acting on a field on the edges is defined in the natural way, namely,
 for a horizontal edge $e$,
\begin{equ} [e:LapB]
\Delta^\eps B^\eps_1 (e)
%= \eps^{-2} \Big(  B^\eps_1 (\bar e)  +B^\eps_1 (\underbar{$e$}) 
%	+ B^\eps_1 (e \text{-}) + B^\eps_1 (\text{-} e) - 4 B^\eps_1 (e)\Big)
= \eps^{-2} \Big(  B^\eps_1 (e^N)  +B^\eps_1 (e^S) 
	+ B^\eps_1 (e^E) + B^\eps_1 (e^W) - 4 B^\eps_1 (e)\Big)
\end{equ}
where $e^N$, $e^S$ are the horizontal edges above (``north") and below (``south") $e$ respectively (they are parallel with $e$),
and  $e^W$, $e^E$ are the horizontal edges on the left (``west") and right (``east") of $e$ respectively (they are aligned and end-to-end with $e$).
For a vertical edge $e$ the quantity $\Delta^\eps B^\eps_2 (e)$ is defined in the analogous way.
We emphasize that the last term 
in the equation for $\Psi^\eps$ is understood as the It\^o product
of $\zeta^\eps$ and 
the process $e^{i\lambda \int_0^t \textup{div}^\eps B^\eps(s)\,ds}$
which is adapted to the filtration generated by the 
Brownian motions $W^\eps$ introduced in \eqref{e:DLangevin},
since the It\^o solution $(B^\eps,\Psi^\eps)$ is adapted to this filtration.

%   This deleted paragraph was the formal parabolic equation.
%For the renormalization analysis below
%it is more useful to write the equations into a more explicit form. One has (it turns out that the newly appeared term $i \lambda (\divB^{(\eps)}) \Psi^{(\eps)}$ is cancelled by a term from the covariant Laplacian)
%\begin{equs} [e:BPsi-long]
%\partial_{t} B^{(\eps)}_{j} 
%	&=\Delta B^{(\eps)}_j
%	- \frac{i \lambda}{2} \Big(\overline{\Psi^{(\eps)}}\partial_{j}\Psi^{(\eps)}
%	-\Psi^{(\eps)} \partial_{j} \overline{\Psi^{(\eps)}}
%	-2i\lambda B_{j}|\Psi^{(\eps)}|^{2}\Big)
%	+\xi^{(\eps)}_{j} \\
%\partial_{t} B^{(\eps)}_{j} 
%	&=\Delta B^{(\eps)}_j
%	+ \lambda \Big(  \Psi_R \partial_j \Psi_I - \Psi_I \partial_j \Psi_R \Big)
%	- \lambda^2 B_{j}|\Psi^{(\eps)}|^{2}
%	+\xi^{(\eps)}_{j} \\
%\partial_t \Psi^{(\eps)} &= 			 
%\Delta\Psi^{(\eps)} 
%	%-i \lambda \sum_j\partial_j (B_j^{(\eps)} \Psi^{(\eps)})
%	-i 2\lambda \sum_j B^{(\eps)}_j (\partial_j \Psi^{(\eps)}) 
%	 - \lambda^2  |B^{(\eps)}|^2 \Psi^{(\eps)} 
%	  %+i \lambda (\divB^{(\eps)}) \Psi^{(\eps)}
%	+ g^{(\eps)} \zeta^{(\eps)}   
%%\\
%%\partial_t g^{(\eps)}  
%%	&= \Big( i \lambda \divB^{(\eps)} \Big)\,g^{(\eps)} 
%\end{equs}
%where $|B^{(\eps)}|^2 = \sum_j (B^{(\eps)}_j)^2$,
%with the same initial conditions as $A^{(\eps)},\Phi^{(\eps)}$ and periodic boundary condition on $\R_+ \times \T^2$.

For each $\eps>0$, $x\in\Lambda_\e$, $e\in\CE_\e$, $t>0$, we then define
%\footnote{Recall the convention set in Remark~\ref{rem:grad-Phi}. Here $\textup{div}^\eps B^\eps \in \R^{\Lambda_\e}$.}
\begin{equs} [e:APhi-via-BPsi]
A^\eps_j(t , e) &\eqdef  
	B^\eps_j(t , e) -\int_0^t \nabla^\eps_j  \textup{div}^\eps B^\eps(s , e)\,ds 
\\
\Phi^\eps(t ,x ) &\eqdef   e^{-i\lambda \int_0^t \textup{div}^\eps B^\eps(s , x)\,ds} \Psi^\eps(t ,x) \;.
\end{equs}
The quantity $\nabla^\eps_j  \textup{div}^\eps B^\eps( e)$ has the following explicit form (by \eqref{e:def-discrete-div} and Remark~\ref{rem:grad-Phi}):
\begin{equs} [e:explicit-DdivB]
\nabla^\eps_1  \textup{div}^\eps B_1^\eps( e)
&= \eps^{-2} \big( B^\eps_1(e\cdot)
	+B^\eps_2({e}^\Cdot)  -B^\eps_2({e}_\Cdot)  -B^\eps_1(e) \big) \\
& \qquad - \eps^{-2} \big( B^\eps_1(e)+ B^\eps_2(\leftidx{^{\Cdot\!}}{e})
 	-B^\eps_2(\leftidx{_{\Cdot}}{e})
	-B^\eps_2(\cdot e) \big)
\end{equs}
for a horizontal edge $e$, which is illustrated in the following picture,
and similarly for a vertical edge.
\begin{equ}[e:pic-grad-div]
\begin{tikzpicture}[scale=1.2,baseline=-25]
  \draw[step=1.5cm] (-2,-3.5) grid (3.5,0.8);
\node at (2.2, -1.25) {$+B_1(e \cdot)$}; \draw[->,very thick] (1.7,-1.5) -- (2.8,-1.5) ; 
% the right horizontal one
\node at (-0.6,-0.7) {$-B_2(\leftidx{^{\Cdot\!}}{e})$};
	\draw[->,very thick] (0,-1.3) -- (0,-0.2) ; 
% the northwest one
\node at (0.7,-1.8) {$-2B_1(e)$}; 
	\draw[->,very thick,red] (0.2,-1.5) -- (1.3,-1.5) ; 
% the middle one
\node[red] at (0.7,-1.3) {$e$};
\node at (2.1,-0.7) {$+ B_2({e}^\Cdot)$};\draw[->,very thick] (1.5,-1.3) -- (1.5,-0.2) ; 
% the northeast one
\node at (-0.8,-1.25) {$+B_1(\cdot e)$}; 
	\draw[->,very thick] (-1.3,-1.5) -- (-0.2,-1.5) ; 
\node at (-0.6,-2.2) {$+B_2(\leftidx{_{\Cdot}}{e})$};\draw[->,very thick] (0,-2.8) -- (0,-1.7) ; 
\node at (2.1,-2.2) {$-B_2({e}_\Cdot)$};\draw[->,very thick] (1.5,-2.8) -- (1.5,-1.7) ; 
%\node at (1.8,-1.6) {$\Phi$}; 
\filldraw (1.5,-1.5) circle (0.1) ;
%\node at (-0.3,-1.6) {$\Phi$}; 
\filldraw (0,-1.5) circle (0.1) ;
\end{tikzpicture}
\end{equ}

\begin{remark}
The U(1) gauge transformation $g^\eps (t)\eqdef   e^{i\lambda \int_0^t \textup{div}^\eps B^\eps(s)\,ds}$
 satisfies 
\[
\partial_t g^\eps  = \left( i \lambda \textup{div}^\eps B^\eps\right)\,g^\eps
 \quad \mbox{with } g^{(\eps)} (0)=1 \;.
\]
%and thus making \eqref{e:BPsi} into a more traditional form of differential equations.
In general gauge theories with non-Abelian gauge groups one should expect that the gauge transformation solves 
a differential equation when applying the DeTruck trick.
\end{remark}

\begin{lemma} \label{lem:nonlin-deturck}
For $\eps>0$, let
$(A^\eps,\Phi^\eps)$ be defined as in \eqref{e:APhi-via-BPsi},
where $(B^\eps,\Psi^\eps)$ is the solution to \eqref{e:BPsi} with initial condition $(\mathring A^\eps, \mathring \Phi^\eps)$. 
Then $(A^\eps,\Phi^\eps)$
satisfy the system \eqref{e:APhi-long} with the same initial condition
 $(\mathring A^\eps, \mathring \Phi^\eps)$. %driven by noises $\xi^{(\eps)}$
\end{lemma}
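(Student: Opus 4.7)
The key observation is that \eqref{e:APhi-via-BPsi} is exactly the time-dependent discrete gauge transformation \eqref{e:disc-gauge-trans} with
\begin{equ}
f^\eps(t,x) \eqdef -g^\eps(t,x)\;,\qquad g^\eps(t,x)\eqdef \int_0^t \textup{div}^\eps B^\eps(s,x)\,ds\;,
\end{equ}
so that $A^\eps = B^\eps + \nabla^\eps f^\eps$ and $\Phi^\eps = e^{i\lambda f^\eps}\Psi^\eps$. The plan is to verify the two equations of \eqref{e:APhi-long} separately, exploiting gauge covariance \eqref{e:DCovariance} (and its adjoint), gauge invariance of the curvature $F^\eps$, and a discrete Itô computation. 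The initial data match because $g^\eps(0,\cdot)=0$.

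\textbf{The $A^\eps$ equation.} Differentiating \eqref{e:APhi-via-BPsi} in $t$ and inserting the evolution of $B^\eps$ gives, for a horizontal edge $e$,
\begin{equ}
\partial_t A^\eps_1(e)=\Delta^\eps B^\eps_1(e)-\nabla^\eps_1\textup{div}^\eps B^\eps(e)+\eps^{-1}\lambda\,\textup{Im}\bigl(e^{-i\eps\lambda B^\eps_1(e)}\Psi^\eps(e_+)\bar\Psi^\eps(e_-)\bigr)+\xi^\eps_1(e)\;.
\end{equ}
The first step is a purely combinatorial discrete identity: using the explicit formulas \eqref{e:LapB} and \eqref{e:explicit-DdivB}, the two horizontal $B^\eps_1$-terms at $e^E,e^W$ cancel, leaving precisely the right-hand side of \eqref{e:defd2d12} with $B^\eps$ in place of $A^\eps$, i.e.\ $\eps^{-1}(F_{B^\eps}(p^S_e)-F_{B^\eps}(p^N_e))$. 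Since $A^\eps-B^\eps$ is by construction a discrete gradient, summing it around any plaquette yields zero, whence $F^\eps_{A^\eps}=F^\eps_{B^\eps}$. For the nonlinear term, using $A^\eps_1(e)=B^\eps_1(e)-\eps^{-1}(g^\eps(e_+)-g^\eps(e_-))$ and $\Phi^\eps(x)=e^{-i\lambda g^\eps(x)}\Psi^\eps(x)$, one checks
\begin{equ}
e^{-i\eps\lambda A^\eps_1(e)}\Phi^\eps(e_+)\bar\Phi^\eps(e_-)=e^{-i\eps\lambda B^\eps_1(e)}\Psi^\eps(e_+)\bar\Psi^\eps(e_-)\;,
\end{equ}
since the phase factors $e^{i\lambda(g^\eps(e_+)-g^\eps(e_-))}$ coming from $A^\eps$ and from $\Phi^\eps\bar\Phi^\eps$ exactly cancel. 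The analogous computation applies to vertical edges, recovering \eqref{e:APhi-long} for $A^\eps$.

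\textbf{The $\Phi^\eps$ equation.} Since $g^\eps$ is almost surely absolutely continuous in $t$ (being a time integral of the adapted process $\textup{div}^\eps B^\eps$) and hence of finite variation with no quadratic variation, ordinary chain rule (no Itô correction, no cross-variation) gives
\begin{equ}
d\Phi^\eps(x)=-i\lambda\,\textup{div}^\eps B^\eps(x)\,\Phi^\eps(x)\,dt+e^{-i\lambda g^\eps(x)}\,d\Psi^\eps(x)\;.
\end{equ}
Substituting \eqref{e:BPsi} for $d\Psi^\eps$, the $i\lambda\,\textup{div}^\eps B^\eps\,\Psi^\eps$ term produces $+i\lambda\,\textup{div}^\eps B^\eps\,\Phi^\eps$, which exactly cancels the contribution of $\partial_t g^\eps$; the renormalisation term carries through as $-C^{(\eps)}\Phi^\eps$; and the noise factor is precisely why the twist $e^{i\lambda g^\eps(t,x)}$ was built into \eqref{e:BPsi}: it cancels with $e^{-i\lambda g^\eps(t,x)}$ to produce $\zeta^\eps$. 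It remains to show
\begin{equ}
e^{-i\lambda g^\eps(x)}\Delta^\eps_{B^\eps}\Psi^\eps(x)=\Delta^\eps_{A^\eps}\Phi^\eps(x)\;.
\end{equ}
This is gauge covariance of the covariant Laplacian: applying \eqref{e:DCovariance} with $f^\eps=-g^\eps$ gives $D_j^{A^\eps}\Phi^\eps=e^{-i\lambda g^\eps}D_j^{B^\eps}\Psi^\eps$, and a direct inspection of \eqref{e:adjoint} shows that $(D_j^{A^\eps})^*(e^{-i\lambda g^\eps}\,\cdot\,)=e^{-i\lambda g^\eps}(D_j^{B^\eps})^*(\cdot)$ when both sides are evaluated at the same vertex. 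Iterating and summing $j=1,2$, together with $\Delta^\eps_{A^\eps}=-\sum_j (D_j^{A^\eps})^*D_j^{A^\eps}$ (as shown in the proof of Lemma~\ref{lem:APhi-discrete}), yields the desired identity.

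\textbf{Expected obstacle.} The proof is essentially a verification, and all three ingredients (the discrete $\Delta^\eps-\nabla^\eps\textup{div}^\eps=\curl^{\eps*}\curl^\eps$ identity, gauge covariance, and the noise-twist cancellation via Itô calculus) are individually clean. The mild subtlety worth care is the Itô computation for the complex exponential of an absolutely continuous but random process $g^\eps$, together with making sure that the sign conventions in the gauge transformation $f^\eps=-g^\eps$ are consistently tracked between the $A$-level definition $A^\eps=B^\eps-\nabla^\eps g^\eps$ and the $\Phi$-level definition $\Phi^\eps=e^{-i\lambda g^\eps}\Psi^\eps$; once these conventions are fixed, everything assembles with no further analytic difficulty.
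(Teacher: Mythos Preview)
Your proposal is correct and follows essentially the same approach as the paper: the discrete identity $\Delta^\eps B^\eps-\nabla^\eps\textup{div}^\eps B^\eps=\eps^{-1}(F_{B^\eps}(p^S_e)-F_{B^\eps}(p^N_e))$, gauge invariance of the curvature and of the nonlinear term, and the product-rule computation for $\Phi^\eps$ (with the observation that no It\^o correction arises because $g^\eps$ is absolutely continuous) are exactly the three ingredients used in the paper. The only cosmetic difference is that the paper verifies gauge covariance of the covariant Laplacian by a direct term-by-term computation on \eqref{e:discCovLap}, whereas you factor it through $(D_j^{A^\eps})^*D_j^{A^\eps}$; both arguments are equivalent.
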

\begin{proof}
From the definition of the transformation \eqref{e:APhi-via-BPsi} we can immediately see that $(A^\eps,\Phi^\eps)$ and $(B^\eps,\Psi^\eps)$ satisfy the same initial condition.
\footnote{Here the fact that the transformation \eqref{e:APhi-via-BPsi}  is identity at $t=0$
is only a matter of convenience. One could also change the initial condition to a gauge equivalent one.}

For $j=1,2$, by the transformation \eqref{e:APhi-via-BPsi},
one has
\begin{equs}[e:dtA-in-B]
\partial_{t} A^{\eps}_{j} (e)
&= \partial_{t} B^{\eps}_{j} (e)-\nabla^\eps_j  \textup{div}^\eps B^\eps( e) \\
&=\Delta^\eps B^\eps_j -\nabla^\eps_j  \textup{div}^\eps B^\eps( e)
	+\eps^{-1}\lambda \,\textup{Im}\Big(
		e^{-i\eps \lambda B_j^\eps(e)} \Psi^\eps(e_+)\bar\Psi^\eps(e_-)\Big)
	+\xi^\eps_j (e) \;.
\end{equs}
By \eqref{e:defd2d12}, \eqref{e:LapB} and \eqref{e:explicit-DdivB}, one has
(this is actually best seen by comparing the left picture of Figure~\ref{fig:discrete-equ} and the picture \eqref{e:pic-grad-div})
\begin{equ}[e:DeltaB-DdivB]
\Delta^\eps B^\eps_1 -\nabla^\eps_1  \textup{div}^\eps B^\eps( e)
= \eps^{-1} ( F_{B^\eps} (p_e^S) -F_{B^\eps} (p_e^N) )
%=
%\nabla_2^\eps \nabla_2^\eps B^\eps_{1}(e)
%	-\nabla_1^\eps \nabla_2^\eps B^\eps_{2}(e)
\end{equ}
and the same is true with all the subscripts $1,2$ swapped.

With \eqref{e:DeltaB-DdivB} at hand we argue that one can replace $(B^\eps,\Psi^\eps)$  by $(A^\eps,\Phi^\eps)$  
on the right hand side of \eqref{e:dtA-in-B} by invoking gauge invariance, and thus show that $A_j^\eps$ satisfies \eqref{e:APhi-long}.
Indeed, by the fact that $F^\e(p)$ defined in \eqref{e:discFA}
annihilate any gradient of a function on the lattice sites, 
together with the fact that the transformation \eqref{e:APhi-via-BPsi}
is precisely shifting the $B^\eps$ by a gradient, one has $F^\e_{A^\eps}=F^\e_{B^\eps}$ and thus \eqref{e:DeltaB-DdivB} equals $\eps^{-1} ( F_{A^\eps} (p_e^S) -F_{A^\eps} (p_e^N) )$.
%by \eqref{e:defd2d12} 
%\[
%\nabla_2^\eps \nabla_2^\eps B^\eps_{1}(e)
%	-\nabla_1^\eps \nabla_2^\eps B^\eps_{2}(e)
%=
%\nabla_2^\eps \nabla_2^\eps A^\eps_{1}(e)
%	-\nabla_1^\eps \nabla_2^\eps A^\eps_{2}(e) \;.
%\]
As for the nonlinear term on the right hand side of \eqref{e:dtA-in-B}, by \eqref{e:APhi-via-BPsi} we have
\begin{equs} 
		e^{-i\eps \lambda B_j^\eps(e)}& \Psi^\eps(e_+)\bar\Psi^\eps(e_-) 
=
	e^{-i\eps \lambda A_j^\eps(e)} 
	e^{-i\eps \lambda \int_0^t \nabla^\eps_j  \textup{div}^\eps B^\eps(s , e)\,ds}
	\cdot   \Psi^\eps(e_+)\bar\Psi^\eps(e_-)\\
&=
	e^{-i\eps \lambda A^\eps(e)} 
	\Big( e^{-i \lambda \int_0^t   \textup{div}^\eps B^\eps(s , e_+)\,ds}
	\Psi^\eps(e_+) \Big)
	\Big( e^{ i \lambda \int_0^t  \textup{div}^\eps B^\eps(s , e_-)\,ds}
	\bar\Psi^\eps(e_-) \Big)  \\
&=
	e^{-i\eps \lambda A^\eps(e)} 
	\Phi^\eps(e_+)\bar\Phi^\eps(e_-)\;. \label{e:change-nonlin-B}
\end{equs}
Therefore $A^\eps$ satisfies \eqref{e:APhi-long}.

Regarding the equation for $\Phi^\eps$, note that by \eqref{e:BPsi} and \eqref{e:APhi-via-BPsi}  one has
\begin{equs}
\partial_t \Phi^\eps(x )&=  e^{-i\lambda \int_0^t \textup{div}^\eps B^\eps(s , x)\,ds} 		\partial_t \Psi^\eps(x) 
- i \lambda  \textup{div}^\eps B^\eps(x )\,\Phi^\eps(x ) \\
&= e^{-i\lambda \int_0^t \textup{div}^\eps B^\eps(s , x)\,ds} 
	\Big( \Delta_{B^\eps}^\eps \Psi^\eps (x) - C^{(\eps)} \Psi^\eps(x) \Big)
		+  \zeta^\eps(x) \;.
\end{equs}
Note that when taking the time derivative of the product on the right hand side of \eqref{e:APhi-via-BPsi},
the usual product rule applies without It\^o correction term. 
Also, note that the term $i \lambda(\textup{div}^\eps B^{\eps}) \Psi^{\eps}$
appearing  in \eqref{e:BPsi} has been used to cancel (after multiplying the exponential factor $ e^{-i\lambda \int_0^t \textup{div}^\eps B^\eps(s , x)\,ds} $)
the term $ - i \lambda(\textup{div}^\eps B^{\eps}) \Phi^{\eps}$ which arises from
the time derivative of  the exponential factor.
 We also have the gauge covariance of the discrete covariant Laplacian, namely, with $f^\e(x) \eqdef \int_0^t \textup{div}^\eps B^\eps(s , x)\,ds$ 
and by 
\eqref{e:discCovLap}, \eqref{e:APhi-via-BPsi}
we have
\begin{equs}  [e:DCovLap]
e^{-i\lambda f^\e(x)} &
\Delta_{B^\eps}^\eps \Psi^\eps (x)
=
e^{-i\lambda f^\e(x)} 
 \Big( \sum_{\be} 
  		e^{-i\eps \lambda B^\eps(x,x+\be)}\Psi^\eps(x+\be)
		-4 \Psi^\eps(x)  \Big) \\
&=
 \sum_{\be} 
  		e^{-i\eps \lambda B^\eps(x,x+\be)}
		e^{i\eps \lambda \nabla^\eps_\be f^\e(x)} 
		\Phi^\eps(x+\be)
		-4 \Phi^\eps(x) 
\\
&=
\sum_{\be} 
  		e^{-i\eps \lambda A^\eps(x,x+\be)}\Phi^\eps(x+\be)
		-4 \Phi^\eps(x) 
=
\Delta_{A^\eps}^\eps \Phi^\eps 
\end{equs}
where $\be$ summed over $ \{\pm \be_1, \pm\be_2\}$.
Note also that
$ C^{(\eps)} e^{-i\lambda \int_0^t \textup{div}^\eps B^\eps(s , x)\,ds}  \Psi^\eps(x) 
= C^{(\eps)}  \Phi^\eps(x)  $.
Therefore $\Phi^\eps$   satisfies the second equation in \eqref{e:APhi-long}.
% using \eqref{e:covariance} we find that $A^{(\eps)},\Phi^{(\eps)}$
%satisfy the original system \eqref{e:APhi} driven by noises $\xi^{(\eps)}$
%and $\zeta^{(\eps)}$. 
% \exp \big( -i \int_0^t  f^{(\eps)}(s)\,ds \big)$
%and the latter has the same distribution as $\zeta^{(\eps)}$.
%{\bf This is not true!}
\end{proof}

%\begin{remark}
%The following cancellations of renormalization constants are expected.
%$\overline{\Psi^{(\eps)}}\partial_{j}\Psi^{(\eps)}$ and
%	$\Psi^{(\eps)} \partial_{j} \overline{\Psi^{(\eps)}}$
%both need a Wick renormalization and they cancel.
%The term $-2i\lambda B_j \Psi^{(\eps)} \overline{\Psi^{(\eps)} } +4i\lambda C_\eps B_j$
%where $C_\eps \sim P*\hat P$ and the extra factor $2$ comes from $\E(\zeta\bar\zeta)$.
%The term $\overline{\Psi^{(\eps)}}\partial_{j}\Psi^{(\eps)}$ with $\overline{\Psi^{(\eps)}}$ replaced by 
%$i 2\lambda \sum_\ell B^{(\eps)}_\ell (\partial_\ell \Psi^{(\eps)}) $
%has a renormalization
%$-4i\lambda \sum_\ell B_\ell$ times $P$ convolving ..
%\end{remark}

At this stage, the factor $e^{i\lambda \int_0^t \textup{div}^\eps B^\eps(s)\,ds} $ in front of $ \zeta^\eps$
in \eqref{e:BPsi} looks harsh to deal with since this factor depends on the solution $B^\eps$ itself. However, we observe that the noise is also gauge invariant in a distributional sense, which is manifest in our It\^o regularization:
as a general  fact, for a vector valued It\^o equation
$d\vec{X_t} = M(t,X) \,d \vec{B_t}$ where the matrix valued process $M$ is adapted to the filtration generated by $B_t$ and satisfies
$ MM^T=\mathrm{Id}$,
then
$\vec{X_t} $ has quadratic variation $t\,\mathrm{Id}$ and thus $\vec{X_t} $ is distributed as a Brownian motion.
We write this as a lemma for our case.

\begin{lemma} \label{lem:Levy}
For each $j\in\{1,2\} $ let $\xi_j$ and $\zeta_j$ be independent space-time
white noises over $L^2(\R\times \T^2)$ which are all independent  of the noises $\{W^\eps(e)\}_{e\in\CE_\e}$ and $\{W^\eps(x)\}_{x\in\Lambda_\e}$ in \eqref{e:DLangevin},
and define discretized white noises 
\begin{equ} [e:average-noise]
\xi_j^\e (t,e)
\eqdef \eps^{-2} \langle \xi(t,\Cdot),\one_{|\Cdot - e|\le \e/2}\rangle\;,
\qquad
\zeta_j^\e  (t,x)\eqdef \eps^{-2} \langle \zeta(t,\Cdot),\one_{|\Cdot - x|\le \e/2}\rangle\;,
\end{equ}
and $\zeta^\e = \zeta^\e _1+i \zeta^\e_2$.
The solution to the following It\^o system
\minilab{e:BPsi-new}
\begin{equs} 
\partial_{t}B^\eps_j (e)
	&=\Delta^\eps B^\eps_j (e)
	+\eps^{-1}\lambda \,\textup{Im}\Big(
		e^{-i\eps \lambda B_j^\eps(e)} \Psi^\eps(e_+)\bar\Psi^\eps(e_-)\Big)
	+\xi^\eps_j (e) \;,   \label{e:BPsi-new-B} \\
\partial_t \Psi^\eps (x)&= \Delta_{B^\eps}^\eps \Psi^\eps (x)
 +i\lambda \textup{div}^\eps B^\eps (x)\,\Psi^\eps(x)
		-C^{(\eps)}  \Psi^\eps(x)   +  \zeta^\eps(x)
					\label{e:BPsi-new-Psi}
\end{equs}
for $ j\in\{1,2\}$ where $x\in\Lambda_\e$, $e=(e_-,e_+)\in\vec{\CE}^j_\e$
with initial condition $(\mathring A^\e, \mathring \Phi^\e)  $
has the same law  as the solution to the  It\^o system \eqref{e:BPsi},
that is, the (strong) solution to \eqref{e:BPsi-new} is a weak solution to \eqref{e:BPsi}.
\end{lemma}

\begin{proof}
Since the process
$e^{i\lambda \int_0^t \textup{div}^\eps B^\eps(s)\,ds}$
is adapted to  the filtration generated by the 
Brownian motions $\{W^\eps(e), \mbox{Re}W^\eps(x),\mbox{Im}W^\eps(x)\}_{e\in\CE_\e, x\in\Lambda_\e}$ introduced in \eqref{e:DLangevin},
using the fact that the transformation $e^{i\lambda \int_0^t \textup{div}^\eps B^\eps(s)\,ds}$ is orthogonal,
one has that  
\[
\Big\{  W_t^\eps(e), 
\mbox{Re} \int_0^t 
	e^{i\lambda \int_0^s \textup{div}^\eps B^\eps(s,x)\,ds} dW_s^\eps(x),
\mbox{Im} \int_0^t 
	e^{i\lambda \int_0^t \textup{div}^\eps B^\eps(s,x)\,ds} dW_s^\eps(x)
\Big\}_{e\in\CE_\e, x\in\Lambda_\e}
\]
has $(\eps^{-2}t )\mathrm{Id}_{4\eps^{-2}}$
as its quadratic covariation, where $\mathrm{Id}_{4\eps^{-2}}$ is a $4\eps^{-2}$ by $4\eps^{-2}$ identity matrix (recall that $\eps^{-1}=2^{N}$ is an integer).
Thus by L\'evy's  characterization  of Brownian motion
the above process is a collection of $4\eps^{-2}$ independent standard 
Brownian motions sped up by $\eps^{-2}$.
The statement of the lemma then immediately follows.
\end{proof}

Thanks  to the above lemma, we will simply work with \eqref{e:BPsi-new},
without changing the distribution of our solution.

\begin{remark}
Another possible approach, at least in the Abelian case we are considering, 
is to project the field $A^\eps$ into the {\it divergence-free} subspace as 
discussed in Remark~\ref{rem:Helmholtz}.
This would be more reminiscent of the procedure of fixing the ``Coulomb gauge''
in the corresponding quantum field theory.
In the end the analysis might turn out to be  similar,
because setting $\textup{div}^\eps B^\eps =0$
 Eq.~\eqref{e:BPsi-new} would 
have the same form as  \eqref{e:APhi-long}   since $\Delta^\eps = \nabla^\eps  \mbox{div}^\eps+ \mbox{curl}^* \mbox{curl}$.
\end{remark}

\subsection{Preparation for regularity structures}

To study \eqref{e:BPsi-new} via regularity structures,
we start by expanding the nonlinearities into polynomial forms with  remainders.
%In other words after we have gained parabolicity, we can afford to lose 
%the exact gauge invariance of the nonlinearities.

\begin{lemma} \label{lem:poly-form}
Equation \eqref{e:BPsi-new}
can be written in the following form:
\begin{equs} [e:BPsi-poly-complex]
\partial_{t}  B^\eps_j  (e)
	  = \Delta^\eps B^\eps_j (e)
& + \lambda \,\textup{Im}\Big( \nabla_j^\eps\Psi^\eps(e_-)\bar\Psi^\eps(e_-)\Big) \\
%	-\lambda^2 B_j^\eps(e)  |\Psi^\eps(x)|^2  \\
%& -\eps\lambda^2  \textup{Im}\Big(iB_j^\eps(e) \nabla_j^\eps\Psi^\eps(x)\bar\Psi^\eps(x)\Big)
&- \lambda^2 \textup{Re} \Big( B_j^\eps(e)  \Psi^\eps(e_+) \bar\Psi^\eps(e_-) \Big) 
%+ R_{B_j^\eps}^\eps(B_j^\eps(e),\Psi^\eps(x),\nabla_j^\eps \Psi^\eps(x))
+ R_{B_j^\eps}^\eps (e)
	 +\xi^\eps_j(e) \\
\partial_t \Psi^\eps (x)
=  \Delta^\eps  \Psi^\eps(x)
	& -i \lambda  \sum\nolimits_{\be  } B^\eps(x,x+\be) \nabla^\eps_\be \Psi^\eps (x)
	%-i \lambda \sum_{j=1,2}(\nabla^\eps_j B^\eps) (x)  \,  \Psi^\eps(x)
	 \\
& - \tfrac12 \lambda^2   \sum\nolimits_{\be} B^\eps(x,x+\be)^2 \,\Psi^\eps(x+\be)
%- \frac{\eps}{2}  \lambda^2   \sum_{\be\in \CE_\pm } B^\eps(x,x+\be)^2 \,\nabla^\eps_\be\Psi^\eps(x)
%\Big( 
%	B^\eps(x,x-\be_j)^2 \Psi^\eps(x-\be_j)+B^\eps(x,x+\be_j)^2 \Psi^\eps(x+\be_j)\Big)
- C^{(\eps)}  \Psi^\eps(x)  
 +R_{\Psi^\eps}^\eps(x) +\zeta^\eps(x)
\end{equs}
%\[
%(\nabla^\eps_j B^\eps) (x) =B^\eps (x,x+\be_j ) -  B^\eps (x-\be_j ,x) \;,
%\]
\begin{equs} [e:defRBePsix]
R_{B_j^\eps}^\eps  (e)  %(B_j^\eps(e),\Psi^\eps(x),\nabla_j^\eps \Psi^\eps(x)) 
& \eqdef \eps^{-1} \lambda\, \textup{Im} 
\Big( \tilde F_1 \left(-i \eps \lambda B_j^\eps(e) \right) 
\Psi^\eps(e_+) \bar\Psi^\eps(e_-) \Big)
\\
R_{\Psi^\eps}^\eps(x)
& \eqdef
\eps^{-2} \sum\nolimits_{\be} 
  		\tilde F_2\left(-i\eps \lambda B^\eps(x,x+\be)\right)
		\,\Psi^\eps(x+\be)
\end{equs}
where $\tilde F_1(z) \eqdef e^z - 1- z$ and $\tilde F_2(z) \eqdef e^z - 1- z-z^2/2$;  
the sums are over  $\be\in \{\pm \be_1,\pm \be_2\}$, and
 $e=(e_-,e_+)\in\vec{\CE}^j_\e$.

\end{lemma}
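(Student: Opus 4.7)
The result is pure bookkeeping: Taylor-expand the two exponentials appearing in \eqref{e:BPsi-new} to the appropriate order and reorganise using one discrete identity. No analytic input beyond $e^z=1+z+z^2/2+\tilde F_2(z)$ is needed.

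First I would handle the equation for $B^\eps_j$. Writing the exponential as $e^{-i\eps\lambda B_j^\eps(e)} = 1 - i\eps\lambda B_j^\eps(e) + \tilde F_1(-i\eps\lambda B_j^\eps(e))$, multiplying by $\Psi^\eps(e_+)\bar\Psi^\eps(e_-)$ and applying $\eps^{-1}\lambda\,\textup{Im}$, one gets three contributions. The $\tilde F_1$-piece is exactly $R_{B_j^\eps}^\eps(e)$ as defined in \eqref{e:defRBePsix}. The first-order-in-$B_j^\eps$ piece is $-\lambda^2\textup{Im}\bigl(iB_j^\eps(e)\Psi^\eps(e_+)\bar\Psi^\eps(e_-)\bigr)$, which equals $-\lambda^2\,\textup{Re}\bigl(B_j^\eps(e)\Psi^\eps(e_+)\bar\Psi^\eps(e_-)\bigr)$ since $B_j^\eps(e)\in\R$. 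The zeroth-order piece is $\eps^{-1}\lambda\,\textup{Im}\bigl(\Psi^\eps(e_+)\bar\Psi^\eps(e_-)\bigr)$; since $e_+=e_-+\be_j$, I write $\Psi^\eps(e_+)=\Psi^\eps(e_-)+\eps\nabla^\eps_j\Psi^\eps(e_-)$, so that $\Psi^\eps(e_+)\bar\Psi^\eps(e_-)=|\Psi^\eps(e_-)|^2+\eps\,\nabla^\eps_j\Psi^\eps(e_-)\bar\Psi^\eps(e_-)$. The first term is real and is killed by $\textup{Im}$, leaving $\lambda\,\textup{Im}\bigl(\nabla^\eps_j\Psi^\eps(x)\bar\Psi^\eps(x)\bigr)$ with $x=e_-$, which matches the first line of \eqref{e:BPsi-poly-complex}.

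Next I treat the equation for $\Psi^\eps$. I Taylor-expand the exponential inside $\Delta_{B^\eps}^\eps$ one order further,
\[
e^{-i\eps\lambda B^\eps(x,x+\be)}=1-i\eps\lambda B^\eps(x,x+\be)-\tfrac12\eps^2\lambda^2 B^\eps(x,x+\be)^2+\tilde F_2\bigl(-i\eps\lambda B^\eps(x,x+\be)\bigr),
\]
and insert into \eqref{e:discCovLap}. The zeroth-order terms produce $\Delta^\eps\Psi^\eps(x)$ (by definition of the scalar discrete Laplacian); the $\tilde F_2$-term gives $R_{\Psi^\eps}^\eps(x)$; the quadratic term is already in the required form. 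The only nontrivial point is the first-order term $-i\eps^{-1}\lambda\sum_\be B^\eps(x,x+\be)\Psi^\eps(x+\be)$, which I combine with the extra drift $i\lambda\,\textup{div}^\eps B^\eps(x)\,\Psi^\eps(x)$ appearing in \eqref{e:BPsi-new}. The key identity is
\[
\textup{div}^\eps B^\eps(x)=\eps^{-1}\sum_{\be\in\{\pm\be_1,\pm\be_2\}}B^\eps(x,x+\be),
\]
which is immediate from \eqref{e:def-discrete-div} together with the sign convention $B^\eps(x,x-\be_j)=-B^\eps(\{x-\be_j,x\})$. Using this identity,
\[
-i\eps^{-1}\lambda\sum_{\be}B^\eps(x,x+\be)\Psi^\eps(x+\be)+i\lambda\,\textup{div}^\eps B^\eps(x)\Psi^\eps(x)=-i\lambda\sum_{\be}B^\eps(x,x+\be)\nabla^\eps_\be\Psi^\eps(x),
\]
which is exactly the required term. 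The $-C^{(\eps)}\Psi^\eps$ and $\zeta^\eps$ terms pass through unchanged, completing the rewriting.

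There is no real obstacle; the only place to be careful is the sign convention for $B^\eps$ on directed edges in the negative direction, and keeping track of which variable ($e_-$ versus $e_+$) is labelled as $x$ in the first equation. Both are cosmetic and are checked by inspection.
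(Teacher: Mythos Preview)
Your proof is correct and follows essentially the same approach as the paper: Taylor-expand the exponentials to the required order, identify the remainders as $R_{B_j^\eps}^\eps$ and $R_{\Psi^\eps}^\eps$, and use $\textup{Im}|\Psi^\eps(e_-)|^2=0$ for the first equation. Your handling of the first-order term in the $\Psi^\eps$ equation via the identity $\textup{div}^\eps B^\eps(x)=\eps^{-1}\sum_\be B^\eps(x,x+\be)$ is slightly more streamlined than the paper, which splits by direction $j$ first (writing the extra piece as $-i\lambda\nabla^\eps_j B^\eps(x-\be_j,x)\Psi^\eps(x)$) and only afterwards sums over $j$ to recognise the divergence; the content is identical.
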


\begin{proof}
Taylor expanding the factor $e^{-i\eps \lambda B_j^\eps(e)}$ in the nonlinearity of \eqref{e:BPsi-new-B}
 yields
\begin{equs}
\eps^{-1}  &   \lambda\,\mbox{Im}\Big(e^{-i\eps \lambda B_j^\eps(e)} \Psi^\eps(e_+)
	\bar\Psi^\eps(e_-)\Big)\\
&=
\eps^{-1}\lambda \,\textup{Im}\Big( \Psi^\eps(e_+)\bar\Psi^\eps(e_-)\Big)
	-\lambda^2\textup{Im}\Big(iB_j^\eps(e)\Psi^\eps(e_+)\bar\Psi^\eps(e_-)\Big)
	+R_{B_j^\eps}^\eps  (e) \;.
\end{equs}
%where $O(\eps\lambda^3) $ stands for the term involving the Taylor remainder, which is 
%\[
%\eps^{-1}\lambda \,\mbox{Im}\Big( \tilde F (-i\eps \lambda B_j^\eps(e))\Psi^\eps(x+\be_j)\bar\Psi^\eps(x)\Big) \;.
%\]
Writing 
$
\Psi^\eps(e_+) = \Psi^\eps(e_-) + \eps \nabla^\eps_j  \Psi^\eps(e_-) 
$
and noting that $\mbox{Im}\big(\Psi^\eps(e_-)\bar\Psi^\eps(e_-)\big) =0$
one obtains the first equation in \eqref{e:BPsi-poly-complex}.
%\footnote{In certain sense $R_{B_j^\eps}^\eps=O(\eps\lambda^3)$ since $\tilde F_1(z)=O(z^2)$.}

For each $j\in\{1,2\}$,
expanding  $e^{-i\eps \lambda B^\eps(x,x+\be)}$ in the term $\Delta_{B^\eps}^\eps \Psi^\eps (x)$ of \eqref{e:BPsi-new-Psi}:
\begin{equs}
\eps^{-2} & \Big( \!   \sum_{\be = \pm \be_j }   
  		e^{-i\eps \lambda B_j^\eps(x,x+\be)}\Psi^\eps(x+\be)
		 -2 \Psi^\eps(x)  \Big) 
		 = \Delta_j^\eps  \Psi^\eps(x)
	\\
&\quad
 - i \eps^{-1} \lambda\! \!  \sum_{\be = \pm \be_j }\! \! 
	B_j^\eps(x,x+\be) \,\Psi^\eps(x+\be)  
- \frac{\lambda^2}{2} \! \!  \sum_{\be = \pm \be_j }\! \! 
	B_j^\eps(x,x+\be)^2 \, \Psi^\eps(x+\be)
 +R_{\Psi^\eps}^\eps(x) .
\end{equs}
It is straightforward to check that the second term on the right hand side  is equal to (recall that $\nabla^\eps_j B$ is defined in \eqref{e:nabla-j-Ak})
\begin{equ}[e:a-term-contain-div-cancel]
%-i \eps^{-1} \lambda &\sum_{\be = \pm \be_j }
%	B^\eps(x, x+\be) \big(\Psi^\eps(x+\be) - \Psi^\eps(x)\big) 
%-i \eps^{-1} \lambda \sum_{\be = \pm \be_j } B^\eps(x, x+\be)\,\Psi^\eps(x)
%\\
 -i \lambda  \sum_{\be = \pm \be_j} 
	B^\eps(x,x+\be) \nabla^\eps_\be \Psi^\eps (x)
	-i \lambda \nabla^\eps_j B^\eps(x-\be_j ,x) \Psi^\eps(x) \;.
\end{equ}
The second term in \eqref{e:a-term-contain-div-cancel}, upon summed over $j\in\{1,2\}$, precisely cancel with the term
$i\lambda\textup{div}^\e B^\eps (x)\,\Psi^\eps(x)$
in \eqref{e:BPsi-new-Psi}. The other terms when summing over $j\in\{1,2\}$ precisely
give the right hand side of the second equation in
\eqref{e:BPsi-poly-complex}.
\end{proof}

Recall
that the definition of the spaces $\CC^{\delta, \alpha}_{\eta, T_\eps}$
depends on a choice of regularity $r>0$ of test functions.
In the following lemma, we explicitly record this dependence
and write the spaces as $\CC^{\delta, \alpha,r}_{\eta, T_\eps}$, and we turn the system into a slightly different one that only lives on the lattice sites $\Lambda_\eps$.
\footnote{Alternatively one could 
 introduce modeled distributions on {\it both} vertices and edges, and  two reconstruction operators (one on $\Lambda_\e$ and one on $\CE_\e$).
This however would probably cause notational complication.}

\begin{lemma} \label{lem:move-to-vertices}
Let the real-valued processes $\{ (B_j^\eps(t,x),\Psi_j^\eps(t,x)): x\in\Lambda_\eps,j\in\{1,2\},t\in[0,T_\eps]\}$ be the  solution to
\minilab{e:BPsi-poly}
\begin{equs} 
\partial_{t}  B^\eps_j  & (x)
	  = \Delta^\eps  B^\eps_j (x)
 + \lambda \Big( \Psi_1^\eps(x) \nabla_j^\eps\Psi_2^\eps(x) 
		-\Psi_2^\eps(x) \nabla_j^\eps\Psi_1^\eps(x)  \Big) \\
& \qquad \qquad
- \lambda^2  B_j^\eps(x)  \sum_{k=1,2} \Psi_k^\eps(x+\be_j) \Psi_k^\eps(x)
+ R_{B_j^\eps}^\eps (x)
	 +\xi^\eps_j(x) 
\label{e:BPsi-poly1}
\\
\partial_t \Psi_j^\eps & (x)
 =  \Delta^\eps   \Psi_j^\eps(x)
	  -(-1)^{j} \lambda  \sum_{k=1,2 \atop \ell\neq j} %\sum_{\be \in \{0, \-\be_k\}}
	  \Big(
B_k^\eps(x) \nabla^\eps_{\be_k}\Psi_{\ell}^\eps (x)
+ \nabla^\eps_{\-\be_k} B_k^\eps (x) \Psi_{\ell}^\eps (x) 
- \nabla^\eps_{\-\be_k} (B_k^\eps \Psi_{\ell}^\eps )(x)
\Big)
\\
&  \qquad \qquad
- \tfrac12 \lambda^2   \sum_{k=1,2} \Big(
	B^\eps_k(x)^2 \,\Psi_j^\eps(x+\be_k)
	+ B^\eps_k(x- \be_k)^2 \,\Psi_j^\eps(x-\be_k) \Big)
\\
&  \qquad \qquad
 - C^{(\eps)} \Psi_j^\eps(x) +R_{\Psi^\eps_j}^\eps(x) +\zeta_j^\eps(x) \;,
\label{e:BPsi-poly2}
\end{equs}
where $\xi^\eps_j(x) \eqdef \xi^\eps_j(x,x+\be_j) $ and
$\zeta_j^\eps(x)$ as in \eqref{e:average-noise}, and
$R_{B_j^\eps}^\eps  (x) \eqdef R_{B_j^\eps}^\eps  (x,B^\eps,\Psi^\eps) $ and $R_{\Psi_j^\eps}^\eps  (x) \eqdef R_{\Psi_j^\eps}^\eps  (x,B^\eps,\Psi^\eps) $ are defined as 
\begin{equs} [e:defRBxPsix]
R_{B_j^\eps}^\eps  (x,B^\eps,\Psi^\eps)   
& 
%\eqdef 
%R_{B_j^\eps}^\eps  (x; B^\eps,\Psi^\eps)   
\eqdef 
\eps^{-1} \lambda\, \textup{Im} 
\Big( \tilde F_1 \left(-i \eps \lambda B_j^\eps(x) \right) 
(\Psi_1^\eps +i\Psi^\e_2)(x+\be_j) (\Psi_1^\eps - i\Psi^\e_2)(x) \Big)
\\
R_{\Psi_j^\eps}^\eps(x,B^\eps,\Psi^\eps)
& 
%\eqdef
%R_{\Psi^\eps}^\eps(x; B^\eps,\Psi^\eps)
\eqdef
\eps^{-2} \!\!\!\!
\sum_{k\in\{1,2\} \atop \be=\pm\be_k } 
  		\textup{Re} \Big(
			 (-i)^{j-1} \tilde F_2\left(-i\eps \lambda B_k^\eps(\mathbf{x})\right)
			\,(\Psi_1^\eps +  i\Psi^\e_2)(x+\be) \Big) \;,
\end{equs}
with $\mathbf{x}=x$ if $\be=+\be_k$ and  $\mathbf{x}=x+\be$ if $\be=-\be_k$, and $\tilde F_1, \tilde F_2$ are the functions defined below \eqref{e:defRBePsix}.

If these solutions  converge in probability
to a limiting process $(B,\Psi)$ on $[0,T^*]$ in
$\CC^{\delta, \alpha,r-1}_{\eta, T_\eps}$ 
for every $\alpha<\bar\alpha$
as $\eps\to 0$
where the stopping time $T^*=\lim_{\eps\to 0} T_\eps$ (in probability),
then the solutions 
%$(B^\eps(e),\Psi^\eps(x))_{x\in\CV_\eps,e\in\CE_\eps}$ 
to \eqref{e:BPsi-poly-complex} converge in probability
to the same limit $(B,\Psi)$ in
$\CC^{\delta, \alpha,r}_{\eta, T_\eps}$
for every $\alpha<\bar\alpha$.
\end{lemma}

\begin{proof}
With $\Psi^\eps=\Psi^\eps_1+i\Psi^\eps_2$
we readily rewrite \eqref{e:BPsi-poly-complex} into the following real form 
\begin{equs} [e:BPsi-poly-e]
\partial_{t}  B^\eps_j  (e)
	  = \Delta^\eps B^\eps_j (e)
& + \lambda \Big( \Psi_1^\eps(e_-) \nabla_j^\eps\Psi_2^\eps(e_-) 
		-\Psi_2^\eps(e_-) \nabla_j^\eps\Psi_1^\eps(e_-)  \Big) \\
&- \lambda^2  B_j^\eps(e)  \sum_{k=1,2} \Psi_k^\eps(e_+) \Psi_k^\eps(e_-)
+ R_{B_j^\eps}^\eps (e)
	 +\xi^\eps_j(e) \\
\partial_t \Psi_j^\eps (x)
=  \Delta^\eps  \Psi_j^\eps(x)
	&  -(-1)^{j} \lambda  \sum\nolimits_{\be  } B^\eps(x,x+\be) \nabla^\eps_\be \Psi_{3-j}^\eps (x)
	%-i \lambda \sum_{j=1,2}(\nabla^\eps_j B^\eps) (x)  \,  \Psi^\eps(x)
	 \\
& - \tfrac12 \lambda^2   \sum\nolimits_{\be} B^\eps(x,x+\be)^2 \,\Psi_j^\eps(x+\be)
%- \frac{\eps}{2}  \lambda^2   \sum_{\be\in \CE_\pm } B^\eps(x,x+\be)^2 \,\nabla^\eps_\be\Psi^\eps(x)
%\Big( 
%	B^\eps(x,x-\be_j)^2 \Psi^\eps(x-\be_j)+B^\eps(x,x+\be_j)^2 \Psi^\eps(x+\be_j)\Big)
 -C^{(\eps)}  \Psi_j^\eps(x)  
  +R_{\Psi^\eps_j}^\eps(x) +\zeta_j^\eps(x)
\end{equs}
where  $\be$
is summed over $\{\pm \be_1,\pm \be_2\}$.
%and $x$ in the equation for $B^\e_j$ is such that $e=\{x,x+\be_j\}$.
Here $R_{B_j^\eps}^\eps(e)$ is as in \eqref{e:defRBePsix},
and $R_{\Psi_j^\eps}^\eps(x)$ is 
such that $R_{\Psi^\eps}^\eps(x) $ in \eqref{e:defRBePsix} equals $R_{\Psi_1^\eps}^\eps(x) +iR_{\Psi_2^\eps}^\eps(x) $,
where in \eqref{e:defRBePsix} every incidence of   $\Psi^\eps$
should be replaced by $\Psi^\eps_1+i\Psi^\eps_2$.

 Denote by $(\tilde B^\eps, \tilde \Psi^\eps)$ the solution to \eqref{e:BPsi-poly}, and
by $(B^\eps, \Psi^\eps)$ the solution to \eqref{e:BPsi-poly-e}. 
Noting that \footnote{Be cautious that product rule $\partial(fg)=\partial f g+f\partial g$ does not exactly hold on lattice.}
\begin{equs}
- \nabla^\eps_{\-\be_k} (B_k^\eps \Psi_{3-j}^\eps )(x)
&+ B_k^\eps(x) \nabla^\eps_{\be_k}\Psi_{3-j}^\eps (x)
+ \nabla^\eps_{\-\be_k} B_k^\eps (x) \Psi_{3-j}^\eps (x)
\\
&=
B_k^\eps(x) \nabla^\eps_{\be_k} \Psi_{3-j}^\eps (x)
		-B_k^\eps(x-\be_k) \nabla^\eps_{\-\be_k} \Psi_{3-j}^\eps (x)  \;,
\end{equs}
one can see that 
in going from \eqref{e:BPsi-poly-e} to \eqref{e:BPsi-poly} we have just
re-indexed 
 the collection of processes $B^\eps$ by
\[
\tilde B_j^\eps(t,x) \eqdef B_j^\eps(t,(x,x+\be_j))  \;,
\qquad
\mbox{i.e.}\quad
B^\eps_j(t,e)=\tilde B_j^\eps(t,e^-)
\quad \mbox{where } e=(e^-,e^+) \;,
\]
and $ \tilde \Psi_j^\eps(t,x) \eqdef \Psi_j^\eps(t,x)$ for every $t,x$ and every realization of the noises. 

By assumption we have 
$\lim_{\eps\to 0}
\Vert B; \tilde B^\eps 
	\Vert^{(\eps)}_{\CC^{\delta, \alpha,r-1}_{\eta, T_\eps}}=0$
in probability. To control
$\Vert B;  B^\eps \Vert^{(\eps)}_{\CC^{\delta, \alpha,r}_{\eta, T_\eps}}$,
we first estimate the first term on the right hand side of \eqref{e:DHolderDist}.
Write
\begin{equs}
\Big| \langle B_j^\eps(t) & , \varphi_x^\lambda \rangle_\eps 
 	-  \langle B_j(t), \varphi_x^\lambda \rangle \Big| \\
& \le
\Big| \langle B_j^\eps(t), \varphi_x^\lambda \rangle_\eps 
		-\langle \tilde B_j^\eps(t), \varphi_x^\lambda \rangle_\eps\Big|
+  \Big|  \langle \tilde B_j^\eps(t), \varphi_x^\lambda \rangle_\eps
 	-  \langle B_j(t), \varphi_x^\lambda \rangle \Big|  \;.
\end{equs}
By convergence of $ \tilde B^\eps$ the second term multiplied by $\lambda^{-\alpha} \enorm{t}^{-\eta} $ goes to zero uniformly in
$\varphi \in \CB^r_0$, $x \in \R^d$ and $\lambda \in [\eps,1]$. The first term equals 
(recall that 
$\varphi_x^\lambda(e)=\varphi_x^\lambda(\frac{e^-+e^+}{2})$)
\begin{equs}
\Big|
 \eps^d \sum_{e \in \CE^j_\e}&   B_j^\eps(t,e) \varphi_x^\lambda(e)
 - \eps^d \sum_{y \in \Lambda_\eps} \tilde B_j^\eps(t,y) \varphi_x^\lambda(y)\Big| \\
&= \Big|\eps^d \sum_{y \in \Lambda_\eps} \tilde B_j^\eps(t,y) 
	\Big( \varphi_x^\lambda(y+\tfrac{\eps}{2}\be_j)  -\varphi_x^\lambda(y)  \Big) \Big|  \;.
\end{equs}
Define 
$\psi(z)\eqdef \varphi(2z+\tfrac{\eps}{2\lambda}\be_j) - \varphi(2z)$.
Since $\varphi$ is supported in the unit ball centered at origin, so is $\psi$ for $\eps\le \lambda$.
One can also prove that there exists a constant $c_r$ only depending on $r$
such that $\|\psi\|_{\CC^{r-1}}\le c_r \tfrac{\eps}{\lambda}\|\varphi\|_{\CC^{r}}\le c_r \tfrac{\eps}{\lambda}$,
namely
$c_r^{-1} \tfrac{\lambda}{\eps}\psi \in \CB_0^{r-1}$.
%To see this, let $r=\lfloor r \rfloor + \delta$. Assuming $y>x$, if $\tfrac{\eps}{\lambda} > 4(y_j-x_j)$ one has $D_j^r\varphi(2x)-D_j^r\varphi(2y)=2(x_j-y_j)D_j^{r+1} \varphi(b_1)$ for $b_1\in (x_j,y_j)$
%and $D_j^r\varphi(2x+\tfrac{\eps}{\lambda}\be_j)-D_j^r\varphi(2y+\tfrac{\eps}{\lambda}\be_j)=2(x_j-y_j)D_j^{r+1} \varphi(b_2)$ for $b_2\in (x_j+\tfrac{\eps}{\lambda},y_j+\tfrac{\eps}{\lambda})$,
%therefore 
%$|D_j^r\psi(x)-D_j^r\psi(y)|\lesssim |x-y||D_j^{r+1} \varphi(b_1)-D_j^{r+1} \varphi(b_2)|
% \lesssim |x-y| (\eps/\lambda)^\delta \lesssim  |x-y|^\delta (\eps/\lambda) $.
% If on the other hand 
%$\tfrac{\eps}{\lambda} \le 4(y_j-x_j)$ one has 
%$D_j^r\varphi(2x+\tfrac{\eps}{\lambda}\be_j) - D_j^r\varphi(2x) = \tfrac{\eps}{\lambda} D_j^{r+1}\varphi(x^*)$ for $x^*\in(2x_j,2x_j+\tfrac{\eps}{\lambda})$
%and the same holds with $x$ replaced by $y$,
%therefore 
%$|D_j^r\psi(x)-D_j^r\psi(y)|\lesssim |x^*-y^*|^\delta (\eps/\lambda) \lesssim  |x-y|^\delta (\eps/\lambda) $.
Thus the above quantity is bounded  by
\[
\Big|\eps^d \sum_{y \in \Lambda_\eps} \tilde B_j^\eps(t,y) 
	\lambda^{-d} \psi(\tfrac{y-x}{2\lambda}) \Big|
\lesssim
\Big|\eps^d \sum_{y \in \Lambda_\eps} \tilde B_j^\eps(t,y) 
	(c_r^{-1}\tfrac{\lambda}{\eps}\psi)^{2\lambda}_x (y) \tfrac{\eps}{\lambda}\Big|
\lesssim  \lambda^\alpha \tfrac{\eps}{\lambda} |t|_\eps^\eta
\lesssim \eps^\theta \lambda^{\alpha-\theta}|t|_\eps^\eta
\]
for any $\theta\in(0,1)$, uniformly in $\lambda\in[\eps,1]$,
where we used  the uniform bound on $\tilde B_j^\eps$.
So choosing $\theta$ small enough we obtain that 
$ \lim_{\eps\to 0} 
\sup_{t \in (0, T]} \enorm{t}^{-\eta} \Vert B(t); B^\eps(t) \Vert_{\CC^\alpha}^{(\eps)} =0$ for every $\alpha<\bar\alpha$.

The second term on the right hand side of \eqref{e:DHolderDist} can be bounded in the same way.
\end{proof}

%
%
%\begin{remark}
%Note that the Da Prato-Debussche method \cite{MR2016604} would not work for this model, even it is in two dimensions. Indeed, the term $\Psi\nabla\Psi$ would have regularity below $-1$, so the solution subtracting the linear part (i.e. ``remainder") should have regularity below $1$, and then the product between the linear part and derivative of the remainder cannot be classically defined.
%\end{remark}
%

\begin{proposition}\label{prop:parabolic}
Let $(B^\eps(t), \Psi^\eps(t))_{t\ge 0}$ be the  solution to the system \eqref{e:BPsi-poly}.
Assume that the initial data $(B^\eps(0), \Psi^\eps(0)) = (\mathring A^\eps,\mathring \Phi^\eps)$    satisfies
\eqref{e:init-data-conv}.
Then  for every $\alpha < 0$ there is a sequence of renormalisation constants $C^{(\eps)} =O( \log\eps)$  and a sequence of stopping times $T_\eps$ satisfying $\lim_{\eps \to 0} T_\eps = T^*$ in probability such that, for every $\bar{\eta} < \eta \wedge \alpha$, and for any $\delta > 0$ small enough, one has   the following limit in probability:
\begin{equ} [e:Convergence-Prob]
\lim_{\eps \to 0} 
	\Vert   (\Psi^\eps,B^\eps),( \Psi,B)  \Vert^{(\eps)}_{\CC^{\delta, \alpha}_{\bar{\eta}, T_\eps}}
 = 0\;.
\end{equ}
\end{proposition}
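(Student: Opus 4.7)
My plan is to prove Proposition~\ref{prop:parabolic} by lifting the system \eqref{e:BPsi-poly} to a fixed-point problem in the discrete regularity structure setting of \cite{hairer2015discrete}, following the general scheme: (i) build a regularity structure rich enough to carry $B^\eps$ and $\Psi^\eps$, (ii) set up the abstract fixed point problem and solve it locally, (iii) construct admissible discrete models from the lattice noises $\xi^\eps,\zeta^\eps$ together with the renormalization, and (iv) read off the sequence $C^{(\eps)}$ from the BPHZ-type renormalization and verify by the Ward identity that no other counterterms arise.

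First I would identify the relevant symbols. Since $\xi^\eps,\zeta^\eps$ are space-time white noises in two spatial dimensions, the basic noise symbols $\Xi_B,\Xi_\Psi$ have regularity $-2-\kappa$, and integration against the heat kernel raises regularity by $2$, so the ``trees'' $\Izeta\,,\,\Ixi$ have regularity $-\kappa$. The nonlinearities of \eqref{e:BPsi-poly} are polynomial in $B^\eps,\Psi^\eps,\nabla^\eps\Psi^\eps$ (the remainders $R_{B_j^\eps}^\eps,R_{\Psi^\eps}^\eps$ carry explicit $\eps$ prefactors that make them subcritical), so the list of symbols needed is finite and subcritical whenever the spatial dimension is $<4$. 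I would then enlarge this to a discrete regularity structure as in \cite{hairer2015discrete}, taking care that the discretization of $\Psi(x+\be_j)$ etc.\ introduces the small spatial shifts already treated in Section~\ref{sec:Discrete regularity structure theory}; in particular $\Psi^\eps(x+\be_j)$ is handled by absorbing the shift into a discrete Taylor remainder modeled on the symbols for $\nabla\Psi$.

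Next I would set up the fixed-point problem $U=\mathcal{P}^\eps\bigl(\one_{t>0}F^\eps(U)\bigr)+P^\eps \mathring U^\eps$ in a discrete modeled distribution space $\CD^{\gamma,\eta}_\eps$, where $F^\eps$ collects the abstract lifts of the nonlinearities in \eqref{e:BPsi-poly} and the initial condition is treated using \eqref{e:init-data-conv} via the discrete heat semigroup. Short-time contraction in $\CD^{\gamma,\eta}_\eps$ uniformly in $\eps$ gives a local solution up to a stopping time $T_\eps$ with $\liminf T_\eps=T^*>0$ in probability. The remainder terms $R_{B_j^\eps}^\eps, R_{\Psi^\eps}^\eps$ are bounded modeled distributions whose abstract lifts have at least one extra power of $\eps$ (because $\tilde F_1(z), \tilde F_2(z)=O(z^2), O(z^3)$), hence they contribute strictly positive powers of $\eps$ and vanish in the limit; by Lemma~\ref{lem:move-to-vertices} we then deduce the claim for \eqref{e:BPsi-poly-e} from the vertex-indexed version.

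The main obstacle is the construction and convergence of the renormalized \emph{discrete models} and the identification of $C^{(\eps)}$. The divergent trees are the standard planar/cherry diagrams built from $\Izeta\,,\,\Ixi$ contracted by Wick-type pairings, and their BPHZ subtractions produce logarithmically divergent constants. A naive renormalization would generate counterterms for both the $B^\eps$ and $\Psi^\eps$ equations, including ones proportional to $A^\eps$ that would break gauge invariance. The crucial step is the Ward-identity cancellation (Section~\ref{sec:renormalization}): one shows that when one computes the renormalization constants attached to each divergent tree, those that are not of the form $C^{(\eps)}\Psi^\eps$ cancel in pairs thanks to the discrete $U(1)$ gauge invariance of the regularized noise covariance and heat kernel. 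Concretely, graphs that would renormalize the $B^\eps$-equation cancel after summing the two orientations of the covariant derivative, and graphs that would give a $B^\eps$-dependent mass for $\Psi^\eps$ cancel against the symmetric counterpart; what remains is a single logarithmically divergent constant $C^{(\eps)}=O(\log\eps)$ multiplying $\Psi^\eps$, exactly as inserted in \eqref{e:BPsi-poly2}. Once the renormalized discrete model is shown to converge in probability to a limiting continuous model (via the Kolmogorov-type criterion of \cite[Thm.~10.7]{hairer2015discrete} applied in the $L^p$ sense to moments of the stochastic symbols), the reconstruction theorem together with continuity of the fixed point map yields the convergence \eqref{e:PhiConvergence} on the time interval $[0,T_\eps]$, completing the proof.
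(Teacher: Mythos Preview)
Your overall strategy matches the paper's: lift \eqref{e:BPsi-poly} to an abstract fixed-point problem in a discrete regularity structure, prove convergence of renormalized models, and read off $C^{(\eps)}$ from the renormalized equation. The paper's proof of Proposition~\ref{prop:parabolic} itself is one line, combining Proposition~\ref{prop:moments} (model convergence), Proposition~\ref{prop:sol-abs} (fixed-point convergence), and the reconstruction bound \eqref{e:RH-ReHe}.

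However, your description of the Ward-identity mechanism is not quite what happens, and getting this right is the crux of the paper. You write that the $B^\eps$-renormalizations ``cancel after summing the two orientations of the covariant derivative''. In the paper there is no such automatic cancellation. Instead, one \emph{defines} the constant $C_1^{(\eps)}$ renormalizing the symbol $\<PsikPsik>$ by the particular combination \eqref{e:def-C1} of the constants $C_{3,j}^{\pm k(\eps)},C_{4,j}^{\pm k(\eps)}$, chosen precisely so that the induced counterterm in the $B^\eps$-equation vanishes identically. The nontrivial step is then to show that this ``wrong'' choice of $C_1^{(\eps)}$ (wrong in the sense that it is not $\E[\psi_k^\eps(0)^2]=C_2^{(\eps)}$) still makes the model on $\<PsikPsik>$ converge. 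This requires showing $C_1^{(\eps)}-C_2^{(\eps)}$ has a finite limit, and \emph{that} is where the Ward identity \eqref{e:Ward-use} enters: see the argument around \eqref{e:C1e-C2e}--\eqref{e:WardGraph}. So the logic is ``force the $B$-mass to zero by design, then invoke Ward to rescue model convergence'', not ``Ward makes the $B$-mass cancel''.

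A second imprecision: the nonlocality remainders you allude to do not all vanish. Expanding $\Psi^\eps(x+\be_j)=\Psi^\eps(x)+\eps\nabla_j^\eps\Psi^\eps(x)$ in the cubic terms produces the $\widetilde{\boldsymbol R}$-terms \eqref{e:remainders3}--\eqref{e:remainders4}, and several of these converge to nonzero finite multiples of $K\star\xi_j$ or $K\star\zeta_j$ (see \eqref{e:tilde-RB1}--\eqref{e:control-tilde-R-Psi-3}). These contribute the extra constants $\bar c_{B_j},\bar c_{\Psi_j}$ in the limiting fixed-point problem \eqref{e:abs-fp-limit}; they do not obstruct the argument, but your claim that they ``contribute strictly positive powers of $\eps$ and vanish'' is not correct for all of them.
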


\begin{proof}[of Theorem~\ref{theo:main}]
The proof of this main theorem
is an immediate consequence of  Proposition~\ref{prop:parabolic} together  with
Lemma~\ref{lem:nonlin-deturck}, \ref{lem:Levy}, \ref{lem:poly-form} and \ref{lem:move-to-vertices}.
\end{proof}

The rest of the paper is devoted to proving Proposition~\ref{prop:parabolic} and convergence of observables.
Before introducing the machinery of regularity structures
 we derive an important identity that will used later for cancellation of renormalization in Section~\ref{sec:mom} as well as showing convergence of certain observables in Section~\ref{sec:conv-obs}.

\subsection{Ward identity}
\label{sec:Ward}

A reader who is familiar with renormalization 
techniques would imagine that the equations  \eqref{e:BPsi-new} or \eqref{e:BPsi-poly}
would need more renormalizations besides the mass renormalization 
 $-C^{(\e)} \Psi^\eps(x)$.
In particular,
% if one expands $e^{-i\eps \lambda B_j^\eps} $ in \eqref{e:BPsi-new} (which will indeed be done in Lemma~\ref{lem:poly-form}),
%one would obtain a term 
in view of the term $B_j^\e \Psi^\e \bar\Psi^\e$,
  a mass renormalization  for $B_j^\eps$
would seem to be necessary
in order to counter the divergence of $\E [\Psi^\e \bar\Psi^\e]$. 
However, with a mass renormalization  for $B_j^\eps$
the proof of Lemma~\ref{lem:nonlin-deturck} would break down since 
the type of argument as in \eqref{e:change-nonlin-B}
which allows us to replace $(B^\eps,\Psi^\eps)$ by $(A^\eps,\Phi^\eps)$
would not work for a term $\tilde C^\e B^\eps$ for some constant $\tilde C^\e$.
We will see that actually there will be several contributions to a mass renormalization  for $B_j^\eps$
and these different contributions cancel out.
This cancellation is due to gauge symmetry.
One can prove such cancellation by some elementary tricks (see Remark~\ref{rem:fav-id}),
but here we derive a version of ``Ward identity'' 
which will help us better understand this cancellation; such identities often arise in quantum field theories with symmetries.
\footnote{After we posted the first version of the paper we learned that \cite{bruned2019geometric} developed a more systematic way of exploring the relation between symmetries and renormalizations which allows the authors to treat much more complicated models.}

Fix $\eps>0$.
Consider Eq.\eqref{e:BPsi-new}, 
%namely
%\begin{equs} 
%\partial_{t}B^\eps_j (e)
%	&=\Delta^\eps B^\eps_j (e)
%	+\eps^{-1}\lambda \,\textup{Im}\Big(
%		e^{-i\eps \lambda B_j^\eps(e)} \Psi^\eps(e_+)\bar\Psi^\eps(e_-)\Big)
%	+\xi^\eps_j (e) \;,   \\
%\partial_t \Psi^\eps (x)&= \Delta_{B^\eps}^\eps \Psi^\eps (x)
% +i\lambda \textup{div}^\eps B^\eps (x)\,\Psi^\eps(x)
%		+  \zeta^\eps(x)
%\end{equs}
%where we used the notation $e=(e_-,e_+)$, 
but now on $\Lambda_\e^M$ which is the discrete torus of length size $M\gg 1$ and lattice spacing $\e$, and
 with initial condition 
 \begin{equ} [e:zero-init-data]
 B^\eps (-T)=0\;,\qquad \Psi^\eps (-T)=0
 \end{equ}
at time $-T$ for $T>0$. 
Once we have obtained the desired identity we will take the infinite volume limit $M\to \infty$ and send $T\to \infty$. 
Let $\CE^{M,j}_\e$ for $j\in\{1,2\}$ denote the sets of horizontal and vertical edges of $\Lambda_\e^M$.
With a slight abuse of notation we write $P_M^\e$ for the transition probability of continuous time random walk 
 either  on the vertices $\Lambda_\e^M$, or  the edges $\CE^{M,j}_\e$ for some $j\in\{1,2\}$;
they are the essentially same kernel since $\Lambda_\e^M$ and $\CE^{M,j}_\e$
only differ by a small translation,
 and it will be clear which case is under consideration from the context.
%as well as on the infinite lattice $\Z^2_\e$.

%\hao{One should check if it has global solution. If not, it's still OK: for the finite $M,T$ identity, LHS-RHS is analytic in $t$, so if it holds for $t$ close to $-T$, it should hold for all $t$.}

Fix a function $f^\e$ on $\Lambda_\e^M$ which is independent of time.
Let 
\[
B_f^\eps = B^\eps + \nabla^\eps f^\e\;,
\qquad
\hat\Psi_f^\eps = \Psi^\eps e^{i\lambda f^\e}\;,
\]
where $(B^\e,\Psi^\e)$ the the solution to the above initial value problem.
By \eqref{e:zero-init-data} one has
\[
 B_f^\eps (-T)=\nabla^\eps f^\e \;, \qquad \hat\Psi_f^\eps (-T)=0\;.
\]
%Note that the initial condition at time $-T$ has changed, and this change will play an important role in the following arguments.
%
%\begin{equs} 
%\partial_{t} \tilde B^\eps_j (e)
%&= \partial_{t}B^\eps_j (e)
%=\Delta^\eps B^\eps_j (e)
%	+\eps^{-1}\lambda \,\textup{Im}\Big(
%		e^{-i\eps \lambda B_j^\eps(e)} \Psi^\eps(x+\be_j)\bar\Psi^\eps(x)\Big)
%	+\xi^\eps_j (e) \\
%&=\Delta^\eps \tilde B^\eps_j (e)
%	+\eps^{-1}\lambda \,\textup{Im}\Big(
%		e^{-i\eps \lambda \tilde B_j^\eps(e)} \tilde \Psi^\eps(x+\be_j)\bar{\tilde\Psi}^\eps(x)\Big)
%	+\xi^\eps_j (e) - \Delta^\eps \nabla^\eps f
%\end{equs}
%namely
%\begin{equs} 
%\tilde B^\eps_j (e)
%=   P^\eps * \Big( 
%	\eps^{-1}\lambda \,\textup{Im}\Big(
%		e^{-i\eps \lambda \tilde B_j^\eps(e)} \tilde \Psi^\eps(x+\be_j)\bar{\tilde\Psi}^\eps(x)\Big)
%	+\xi^\eps_j (e) - \Delta^\eps \nabla^\eps f   \Big) 
%+ P^\eps(t+T)  \nabla^\eps f 
%\end{equs}
%Note that since $f$  is independent of time,
%\[
%\int_{-T}^t e^{(t-s)\Delta^\eps}  \Delta^\eps \nabla^\eps f  \,ds 
%=
%-\frac{1}{\Delta^\eps }   e^{(t-s)\Delta^\eps}  |_{-T}^t   \Delta^\eps \nabla^\eps f
%= 
% \Big( e^{(t+T)\Delta^\eps}  -1\Big)  \nabla^\eps f
%\]
%So
By gauge invariance of the nonlinearity as shown in \eqref{e:change-nonlin-B}, one has  
\begin{equs} 
B^\eps_{f,j} (t,e)
&=
B^\eps_j (t,e) + \nabla_j^\eps f^\e(e) \\
&=   P_M^\eps \ast_{\eps,T} \Big( 
	\eps^{-1}\lambda \,\textup{Im}\Big(
		e^{-i\eps \lambda  B_{f,j}^\eps(\Cdot)}  \hat\Psi_f^\eps(\Cdot_+)\bar{\hat\Psi}_f^\eps(\Cdot_-)\Big)
	+\xi^\eps_j (\Cdot)  \Big) (t,e)
+ \nabla^\eps_j f^\e (e) \;.
\end{equs}
%{\bf Oh but this is obvious by our definition...}
Here, %we omitted the variable $t$ for simpler notation, and
 for $\Lambda \in \{\Lambda^M_\e,\CE^{M,1}_\e,\CE^{M,2}_\e \}$, $x\in\Lambda$,
and any space-time function $g^\e \in C(\R,\R^\Lambda)$ we have introduced the convolution notation
\[
P_M^\e \ast_{\e,T}  g^\e (t,x)
	\eqdef \int_{-T}^\infty \sum_{y\in\Lambda}   P_M^\eps(t-s,x-y)  g^\e(s,y) \,ds \;.
\]
By the covariance property \eqref{e:DCovLap} of the covariant Laplacian one also has 
\begin{equ} %[e:Psi-tilde]
\partial_t \hat\Psi_f^\eps (x)
=
  e^{i\lambda f^\e(x)}  \partial_t \Psi^\eps (x)
  = \Delta_{B_f^\eps}^\eps \hat \Psi_f^\eps (x)
 +i\lambda \textup{div}^\eps (B_f^\eps - \nabla^\eps f^\e)(x)    \,\hat \Psi_f^\eps(x)
		+   \hat\zeta^\eps(x)\;,
\end{equ}
where $\hat\zeta^\eps(x) \eqdef e^{i\lambda f^\e(x)} \zeta^\eps(x)$.
Let $\Psi_f^\e$ solve
\begin{equ}[e:Psi-tilde]
\partial_t \Psi_f^\eps (x)
  = \Delta_{B_f^\eps}^\eps  \Psi_f^\eps (x)
 +i\lambda \textup{div}^\eps (B_f^\eps - \nabla^\eps f^\e)(x)    \,\hat \Psi_f^\eps(x)
		+   \zeta^\eps(x)\;,
\end{equ}
 %with $\zeta^\e$ in place of $\hat\zeta^\e$
  with initial condition $\Psi_f^\e(-T)=0$. We have 
  $(B_f^\e, \Psi_f^\e) \stackrel{law}{=} (B_f^\e, \hat \Psi_f^\e)$. % by the argument of Lemma~\ref{lem:Levy}.

The perturbative expansion in $\lambda$ for $(B_f^\e,\Psi_f^\e)$ will now depend on $f^\e$.
For  the ``0-th order'' perturbation we have
\begin{equs}[e:def-bj-psi]
b_j^\e = b_j^\e(f^\e) & \eqdef  B^\eps_{f,j} |_{\lambda=0} 
= P_M^\eps \ast_{\e,T} \xi^\eps_j
+ \nabla^\eps_j f^\e\;,
\\
\psi^\e  &\eqdef  \Psi_f^\e  |_{\lambda=0} = 
	 P_M^\e \ast_{\e,T} \zeta^\e \;.
\end{equs}
(We drop the $M$ dependence in the notation $b_j^\e $ and $\psi^\e$ for simplicity.)
We will also need the ``1st order'' perturbation of $\Psi_f^\e$:
differentiating \eqref{e:Psi-tilde} with respect to $\lambda$ and by similar computation as in proof of Lemma~\ref{lem:poly-form}, one can  check  that
\begin{equ} 
\partial_t \left(\partial_\lambda \Psi_f^\e(x) |_{\lambda=0}\right) 
= \Delta^\e \partial_\lambda \Psi_f^\e(x)  |_{\lambda=0}
 -i  \sum\nolimits_{\be  }  B_f^\eps(x,x+\be) \nabla^\eps_\be \Psi_f^\eps (x) |_{\lambda=0}
 - i\Delta^\eps f^\e (x)    \, \Psi_f^\eps(x) |_{\lambda=0}
%+ if^\e \zeta^\e
\end{equ}
where $\be\in\{\pm\be_1,\pm\be_2\}$, namely
\begin{equ} [e:1st-der-Psi]
\partial_\lambda \Psi_f^\e |_{\lambda=0} 
= -i P_M^\e \ast_{\e,T}  \Big( \sum\nolimits_{\be  } b^\eps(\cdot,\cdot+\be) \nabla^\eps_\be \psi^\eps 
+\Delta^\eps f^\e     \, \psi^\eps % -f^\e\zeta^\e
 \Big)\;.
\end{equ}
Note that this quantity depends on $f^\e$ via $b^\e$ and $\Delta f^\e$.

For any $x\in\Lambda^M_\e$ and $t\ge -T$ consider the following observable % \hao{Maybe just take $x=0$}
\begin{equ}[e:Ward-observ]
Z^\e_j (t,x) \eqdef %\sum_{e\in\CE_j(\Lambda_\e)} 
%\E \Big[ 
\eps^{-1} \,\textup{Im}\Big(
		e^{-i\eps \lambda  B_{f,j}^\eps(t,e)}  \Psi_f^\eps(t,x+\be_j)\bar{\Psi}_f^\eps(t,x)\Big)% \Big]
\end{equ}
where %$x$ is such that 
$e=\{x,x+\be_j\}$.

Applying \eqref{e:def-bj-psi} and \eqref{e:1st-der-Psi} we compute  (dropping the variable $t$ for simplicity of notation)
\begin{equs}
\partial_\lambda  &  Z^\e_j  (t,x) |_{\lambda=0}
=  % \E \Big[ - %\sum_e  \,
\textup{Re}\Big(
		 b_j^\e  (e)  \psi^\e (x+\be_j)  \bar{\psi }^\e(x) \Big) \\
 & \qquad -  %\sum_e 
 \eps^{-1}  \textup{Re}\Big(
		 P_M^\e \ast_{\e,T} \Big(  \sum\nolimits_{\be  }  b^\eps(\cdot,\cdot+\be) \nabla^\eps_\be \psi^\eps 
		 +\Delta^\eps f^\e     \, \psi^\eps   \Big)(x+\be_j)  \cdot \bar{\psi}^\e(x)\Big)
\\
& \qquad  + %\sum_e  
\eps^{-1} \textup{Re}\Big(
		\psi^\e(x+\be_j) \cdot  P_M^\e \ast_{\e,T} \Big(  \sum\nolimits_{\be  } b^\eps(\cdot,\cdot+\be) \nabla^\eps_\be  \bar\psi^\eps  
		+\Delta^\eps f^\e  \, \bar\psi^\eps \Big)(x)\Big) %  \Big]
\\
&=  % \E \Big[  
 - \sum_{k} 
		 b_j^\e  (e)  \psi^\e_k (x+\be_j)  \psi_k^\e(x)
\\
& \qquad - \sum_{k}  
		 \nabla^\e_j P_M^\e \ast_{\e,T} \Big(  \sum\nolimits_{\be  } b^\eps(\cdot,\cdot+\be) \nabla^\eps_\be \psi_k^\eps 
		 +\Delta^\eps f^\e  \, \psi_k^\eps     \Big)(x)  \cdot  \psi_k^\e(x)
\\
& \qquad  + \sum_{k} 
		\nabla^\e_j \psi_k^\e(x) \cdot  P_M^\e \ast_{\e,T} \Big(  \sum\nolimits_{\be  }  b^\eps(\cdot,\cdot+\be) \nabla^\eps_\be \psi_k^\eps  +\Delta^\eps f^\e  \, \psi_k^\eps \Big)(x)  % \Big]
\end{equs}
where $k$ sums over $\{1,2\}$, $\psi^\e = \psi^\e_1+ i \psi^\e_2$.
Here,  we have expanded a factor in the third line as
$\psi^\e(x+\be_j) =\psi^\e(x) +\e \nabla^\e_j \psi^\e(x) $,
and expanded  a factor in the second line as $P_M^\e \ast_{\e,T} ( \cdots )(x+\be_j) = P_M^\e \ast_{\e,T} ( \cdots )(x) + \e \nabla^\e_j P_M^\e \ast_{\e,T} ( \cdots )(x)$;
the two terms of order $\e^{-1}$  cancel each other,
therefore the factor $\e^{-1}$ does not show up in the 
last two lines of the above equation.
%Note that summing over $e\in\CE_j(\Lambda_\e)$ is the same as summing over $x\in\Lambda_\e$.

It is important to note that $Z^\e_j (t,x)$ is
gauge-invariant in law, in the sense that although $( B_f^\e, \Psi_f^\e)$ depends on $f^\e$,
the law of the quantity $Z^\e_j (t,x)$ is independent of $f^\e$.
 Indeed, $Z^\e_j (t,x)$ would be deterministically gauge invariant
  by the calculation \eqref{e:change-nonlin-B} if $\Psi_f^\e$ was replaced by $\hat\Psi_f^\e$; on the other hand
    $(B_f^\e, \Psi_f^\e) \stackrel{law}{=} (B_f^\e, \hat \Psi_f^\e)$.
Therefore the expectation of $\partial_\lambda    Z^\e_j  (t,x) |_{\lambda=0}$  is  independent of $f^\e$.

Replacing $f^\e$ by $\alpha f^\e$ in the above arguments,
the expectation of $\partial_\alpha \partial_\lambda Z^\e_j  (t,x) |_{\lambda=0}$  must then be zero, namely,
\begin{equs}
 \E &\Big[
-  \Big\langle \nabla_j^\e f^\e ,\,  
	\sum\nolimits_{k} \delta(\cdot-x) \psi^\e_k (\cdot +\be_j)  \psi_k^\e (\cdot) \Big\rangle_{\Lambda_\e^M} 
\\
&  - \sum_\be \Big\langle  \nabla^\e_\be f^\e ,\, \sum_{k}  
		 \psi_k^\e(x) \int_{-T}^\infty \nabla^\e_j P_M^\e (t-s,x-\cdot)  \nabla^\eps_\be \psi_k^\eps(s,\cdot)\,ds
	\Big\rangle_{\Lambda_\e^M} 
\\
&  + \sum_\be \Big\langle  \nabla^\e_\be f^\e ,\, \sum_{k}  
		\nabla^\e_j \psi_k^\e(x) \int_{-T}^\infty P_M^\e (t-s,x-\cdot)  \nabla^\eps_\be \psi_k^\eps(s,\cdot)\,ds
	\Big\rangle_{\Lambda_\e^M}  \Big]
\\
&  +  \Big\langle \Delta^\e f^\e ,\,\CU_j^\e
\Big\rangle_{\Lambda_\e^M} 
=0
%& \qquad 
%+\Big\langle f^\e ,\,
%\sum_{k}  \nabla^\e_j P^\e_{t-s}(x-\cdot)  \zeta^\e_k  (\cdot)  \,\psi_k^\e(x)
%-\sum_{k}   P^\e_{t-s}(x-\cdot)  \zeta^\e_k  (\cdot)  \,\nabla^\e_j\psi_k^\e(x) 
%\Big\rangle_\e
%\\
%& 
%-  \Big\langle \Delta^\e f^\e ,\,
%\sum_{k}  \int_{-T}^\infty  \Big( \nabla^\e_j P_M^\e(t-s,x-\cdot)  \psi^\e_k  (s,\cdot)  \,\psi_k^\e(x)
%- P_M^\e(t-s,x-\cdot)  \psi^\e_k  (s,\cdot)  \,\nabla^\e_j\psi_k^\e(x)  \Big)\,ds
%\Big\rangle_\e   
%=0
\end{equs}
where all the functions without time variables are evaluated at $t$, and the function $\CU_j^\e(y)$, whose precise form will not actually matter much in the sequel, is given by
 \begin{equs}
%\CU_j^\e(y) \eqdef
%\sum_{k,j} \Big( \nabla^\e_j P^\e_{t-s}(x-y)  \zeta^\e_k  (y)  \,\psi_k^\e(x)
%-  P^\e_{t-s}(x-y)  \zeta^\e_k  (y)  \,\nabla^\e_j\psi_k^\e(x) \Big)
%\\
\sum_{k}   \int_{-T}^\infty \Big(
P_M^\e(t-s,x-y) \E\Big[ \psi^\e_k  (s,y)  \,\nabla^\e_j\psi_k^\e(x) \Big]
 -\nabla^\e_j P_M^\e(t-s,x-y) \E\Big[ \psi^\e_k  (s,y)  \,\psi_k^\e(x)\Big]
  \Big)\,ds \;.
\end{equs}
Summing over $j\in\{1,2\}$ and integrating by parts,
noting that $f^\e$ is arbitrary, one obtains
\begin{equ} [e:Ward]
(\textup{div}^\e \CV^\e)(t,y) + \Delta^\e \CU^\e(t,y) =0 
	\qquad \forall y\in \Lambda^M_\e
\end{equ}
where the vector field $\CV^\e= (\CV^\e_1,\CV^\e_2)$ is defined as
\begin{equs} [e:def-vecfield-V]
\CV^\e & (y,y+\be_\ell) \eqdef
-  \sum_{k} \delta(y-x) \E\Big[ \psi^\e_k (t,y +\be_\ell)  \psi_k^\e(t,y)\Big] \\
& \quad
- \sum_{\be\in\{0,\be_\ell\}} \sum_{k,j}  
		\int_{-T}^\infty \nabla^\e_j P_M^\e (t-s,x-y-\be) \, \E\Big[ \nabla^\eps_\ell \psi_k^\eps(s,y) \psi_k^\e(t,x) \Big]\,ds
\\
& \quad + \sum_{\be\in\{0,\be_\ell\}} \sum_{k,j}  
		 \int_{-T}^\infty P_M^\e (t-s,x-y-\be) \,\E\Big[ \nabla^\eps_\ell \psi_k^\eps(s,y) \nabla^\e_j \psi_k^\e(t,x)\Big]\,ds
\end{equs}
%{\bf This should be:}
%\begin{equs}
%\CV^\e_\ell (y,y+\be_\ell)&=
%-  \sum_{k} \delta(y-x) \E\Big[ \psi^\e_k (y +\be_\ell)  \psi_k^\e(y)\Big] \\
%& \qquad
%- \sum_{k,j}   \sum_{\be\in\{0,\be_j\}} 
%		\int_{-T}^\infty \nabla^\e_j P_M^\e (t-s,x-y-\be) \, \E\Big[ \nabla^\eps_\ell \psi_k^\eps(s,y) \psi_k^\e(x) \Big]\,ds
%\\
%& \qquad +  \sum_{k,j}   \sum_{\be\in\{0,\be_j\}}
%		 \int_{-T}^\infty P_M^\e (t-s,x-y \pm ???\be) \,\E\Big[ \nabla^\eps_\ell \psi_k^\eps(s,y) \nabla^\e_j \psi_k^\e(x)\Big]\,ds
%\end{equs}
for $\ell\in\{1,2\}$, and the function $\CU^\e \eqdef \sum_{j=1}^2\CU^\e_j$;
we suppressed their dependence on $M,T$ for cleaner notation.
Note that the left hand side of \eqref{e:Ward} is analytic in $t>-T$ so once we have obtained  \eqref{e:Ward}
for small $t-(-T)$ we have it for any $t>-T$.
We then take an infinite volume limit $M\to \infty $   as well as $T\to \infty$,
so that \eqref{e:Ward} holds with the infinite volume heat kernel $P^\e$ in place of $P^\e_M$ and the time integrals over entire $\R$.
We then have that there exists a function $\CW^\e$ such that
\footnote{This is by a discrete version of Helmholtz decomposition \eqref{e:Helmholtz} and its proof is exactly the same as its continuous version and thus is omitted.}
\[
\CV^\e + \nabla^\e \CU^\e = \textup{curl}^*\CW^\e  \quad \big(= (-\nabla^\e_2 \CW^\e, \nabla^\e_1 \CW^\e)\big)  \;.
\]
It is easy to see that $\CU^\e$ and $\CV^\e$ both decay to zero at infinity, so $\CW^\e$ must go to a constant at infinity;
we can well shift $\CW^\e$ by constant so that $\CW^\e$ also decays to zero at infinity. Summing the above identity over horizontal or vertical edges of  $\Z^2_\e$, one has
\begin{equ} [e:Ward-use]
\sum_{e\in \CE^\ell(\Z^2_\e)} \CV^\e (e)=0\;,
\qquad
\ell\in\{1,2\}\;.
\end{equ}
This is precisely what we will need for cancellation of mass renormalization of the gauge field
as well as the convergence of the composite field observable (see Lemma~\ref{lem:proof-PhiDAPhi}).

\begin{remark}
Identities of the type \eqref{e:Ward} are discovered in quantum field theories and usually referred to as Ward or Ward-Takahashi identities, see  \cite[Eq.~(A.3)]{BrydgesGauge2} and \cite[Eq.~(2.27)]{MR701926} for examples of  such identities in the context of quantum Abelian gauge theory (i.e. Higgs model). Note that the ``useless'' term of the form $\Delta^\e \CU^\e$ did not appear in this reference; this is probably because their gauge fixing condition amounts to imposing the gauge field to be divergence free, 
and without the div term  in \eqref{e:Psi-tilde} 
the term $\Delta^\e \CU^\e$  would not appear.
For a reader who is familiar with quantum electrodynamics (QED),  \eqref{e:Ward} (without the term $\Delta^\e \CU^\e$) is reminiscent to the QED Ward identity $\sum_j k_j \CM^j (k)=0$ in Fourier space where $\CM(k)$ is an amplitude for some QED process involving an external photon (i.e. gauge field) with momentum $k$.
\end{remark}

\section{Discrete regularity structure theory}
\label{sec:Discrete regularity structure theory}

\subsection{Regularity structure}

From this section we aim to prove the limit 
claimed in Proposition~\ref{prop:parabolic}
using the theory of regularity structures \cite{Regularity} and its discrete version \cite{hairer2015discrete}. 
We will  first 
build regularity structures for the ``unrenormalized version" of Eq.~\eqref{e:BPsi-poly}, 
namely setting $C^{(\eps)}=0$,
and then show  that the renormalized equation given by the theory is indeed \eqref{e:BPsi-poly} with a divergent constant $C^{(\eps)}$.
We start by briefly recalling the framework.

%In our problem the space dimension $d=2$, but in this section we keep the notation $d$.
%To start with we consider a parabolic scaling $\s = (2, 1, \ldots, 1)$ over space-time $\R^{d+1}$, where $d=2$ for our present problem.
To start with we consider a parabolic scaling $\s = (2, 1,1)$ over space-time $\R^{2+1}$.
We set $|\s| \eqdef 4$, denote by $| x|$ the $\ell^\infty$-norm of a point $x \in \R^2$, 
and define $\|z\|_\s \eqdef |t|^{1/2} \vee | x|$ to be the $\s$-scaled $\ell^\infty$-norm of $z=(t,x) \in \R^{3}$. For a multiindex $k \in \N^{3}$ 
we define 
%$|k|_\s \eqdef 2 k_0+ \sum_{i=1}^{d}  k_i$, 
$|k|_\s \eqdef 2 k_0+  k_1+k_2$, 
and for $k \in \N^2$ with the scaling $(1,  1)$
 we denote the respective norm by $|k|$. 
Here $\N=\{0,1,2,\cdots\}$. 
Moreover,
for $r > 0$, we denote by $\CC^r(\R^2)$ the usual H\"{o}lder space on $\R^2$, 
by $\CC^r_0(\R^2)$ we denote the space of compactly supported $\CC^r$-functions and by $\CB^r_0(\R^2)$ we denote the set of $\CC^r$-functions, compactly supported in $B(0,1)$ (the unit ball centered at the origin) and with the $\CC^r$-norm bounded by $1$.

%\begin{definition}
Recall from  \cite{Regularity} that a {\it regularity structure} $\ST = (\CT, \CG)$ consists of two objects:
%a {\it model space} $\CT$, which is 
a graded vector space $\CT = \bigoplus_{\alpha \in \CA} \CT_\alpha$, 
%where each $\CT_\alpha$ is a  Banach space  (in our case finite dimensional)
where $\CA \subset \R$ is a finite set of
``homogeneities'';
and a {\it structure group} $\CG$ of linear transformations of $\CT$, such that for every $\Gamma~\in~\CG$, every $\alpha \in \CA$ and every $\tau \in \CT_\alpha$ one has
$\Gamma \tau - \tau \in  \bigoplus_{\beta < \alpha} \CT_\beta$. 
%\end{definition}
One example of a regularity structure denoted by $\poly\CT$
is given by the ``abstract polynomials'' in $2+1$ 
indeterminates $X_i$, with $i = 0, 1, 2$, 
where $\CA$ consists of the natural numbers
$\alpha  \leq r$ for some fixed  $r >0$,
and $\CT_\alpha$ contains all monomials $X^k = X_0^{k_0}X_1^{k_1}X_2^{k_2}$ of scaled degree $|k|_{\s}=\alpha$; the structure group $\poly\CG$ is then simply the group of translations in $\R^{3}$
acting on $X^k$ by $h \mapsto (X-h)^k$. 
%We only consider regularity structures containing $\poly\CT $ as a subspace. 
%In particular, we always assume that there is a natural morphism
%$\CG \to \poly\CG$ compatible with the action of $\poly\CG$ on $\poly\CT \hookrightarrow \CT$.  

We build 
the regularity structure relevant for the analysis
of  \eqref{e:BPsi-poly}. For now we ignore the slight non-locality of the nonlinear terms  in  \eqref{e:BPsi-poly} and will control the discrepancy later.

We write $\CU_{B_j}$, $\CU_{\Psi_j}$, $\CU_{B_k}^{\-k}$, $\CU^{ j}_{\Psi_k}$
for $j,k\in\{1,2\}$
%\begin{equ} [e:all-Us]
%\CU_{B_j} \;,  
%\quad \CU_{B_j}^{-\be_j} \;, 
%\quad \CU_{\Psi_j}\;, 
%\quad  \CU^{\pm\be_\ell}_{\Psi_j} \;,
%\quad \CU_{\nabla_j\Psi_k}\;, 
%\quad \CU^{-\be_j}_{\nabla_j\Psi_k}\;,
%\qquad
%(j,k,\ell=1,2)  
%
%\CU_{B_j} \;,  
%\quad \CU_{B_j}^{-\be_j} \;, 
%\quad \CU_{\Psi_j}\;, 
%\quad  \CU^{+\be_\ell}_{\Psi_j} \;,
%\quad \CU_{\nabla_j\Psi_k}\;, 
%\qquad
%(j,k,\ell=1,2) 
%\end{equ}
 for collections of  formal expressions
that will describe the solutions $B_j$, $\Psi_j$, $\nabla_{\-\be_k} B_k$ and 
$\nabla_{\be_j} \Psi_k$.
%as well as these solutions shifted by one lattice unit to the directions  indicated by the superscripts.
(Here,  the notation  $\nabla_{\pm\be_j} f$ should be thought of as discrete derivatives of $f$ along the direction $\pm\be_j$
in the discrete setting, while in the continuum setting $\nabla_{\-\be_j}f$
should be simply thought of as $-\partial_j f$.)
We  write   $\CV_{B_j}$, $\CV_{\Psi_j}$ for $j\in\{1,2\}$
%\begin{equ} [e:all-Vs]
%\CV_{B_j}\;, \qquad \CV_{\Psi_j} \;, \qquad (j=1,2)
%\end{equ}
 for a collection of formal expressions
 useful to describe the right hand side of the equation for $B_j$ and $\Psi_j$ respectively. 
We decree that 
%$\CU_{B_j}$, $\CU_{\Psi_j}$, $\CU_{\Psi_j}^\be$ and $\CU_{\nabla_j\Psi_k}$ 
all the above sets contain at least the polynomials
of  $X_0,X_1,X_2$.
We then introduce additional symbols
$\Xi_{\xi_j} \in \CV_{B_j}$
 and $\Xi_{\zeta_j} \in \CV_{\Psi_j}$ with $j\in\{1,2\}$, 
 as well as $\CI$, $\CI'_j$ and $\CI'_{\- j}$,
where $\Xi_{\xi_j}$, $\Xi_{\zeta_j}$ will be interpreted as an abstract representation of (discretization of) the 
driving noises $\xi_j$ and $\zeta_j$, 
and $\CI$, $\CI'_j$ and $\CI'_{\- j}$ will be interpreted as
 the operation of convolving with a truncation of the heat kernel  and its spatial derivative along the  direction $\be_j$ and $-\be_j$  respectively.
%Finally we introduce ``shifting" symbols $\CS_\be$
%for $\be\in\{\pm\be_1,\pm\be_2\}$,
%which will be interpreted as shifting a process  by one lattice unit.
%\footnote{In some private discussion we learned that in the ongoing work
%\cite{JeremyKonstantin} the authors are  introducing essentially the same shifting operator.
%We then changed our notation to $\CS_\be \tau$ which is the same as their notation (in an older version we called it $\tau_\be$),
%such that the literature will be more consistent.
%}
 
In view of the structure of the equation  \eqref{e:BPsi-poly}, 
to  describe the fixed point problem
we should at least also decree that %for $k,j,\ell \in \{1,2\}$

1) If $\tau_1 \in  \CU_{\Psi_k}$ and  $ \tau_2 \in \CU^j_{\Psi_\ell}$
where $k\neq \ell$,
then $ \tau_1\tau_2\in\CV_{B_j}$.

2) If $\tau_1 \in \CU_{B_j}$ and    
 $ \tau_2 ,\tau_3 \in \CU_{\Psi_k} $, then
	$\tau_1\tau_2 \tau_3 \in\CV_{B_j} $.
	
3) If 	 $\tau\in \CV_{B_j}$ then $\CI(\tau)\in\CU_{B_j} $ and  
$\CI'_{\-k}(\tau)\in\CU_{B_j}^{\-k}$.

4)  If $\tau_1 \in \CU_{B_k}$ and  $\tau_2 \in \CU_{\Psi_\ell} $,
then 
$\tau_1\tau_2 \in\CV_{\Psi_{j}} $ and
$\CI'_{\-k}(\tau_1\tau_2) \in\CU_{\Psi_{j}} $,
 for $j\neq \ell$.

5)  If $(\tau_1,\tau_2) \in (\CU_{B_k} \times \CU^{k}_{\Psi_\ell} )
\cup (\CU_{B_k}^{\-k} \times \CU_{\Psi_\ell} )$ 
then 
$\tau_1\tau_2 \in\CV_{\Psi_{j}} $, $\CI(\tau_1\tau_2) \in \CU_{\Psi_{j}} $
and $\CI_m'(\tau_1\tau_2) \in \CU^m_{\Psi_{j}} $
 for $j\neq \ell$ and any $m\in\{1,2\}$.
 
6)  If $\tau_1,\tau_2 
 	\in    \CU_{B_k}$ and
$\tau_3 \in \CU_{\Psi_j} $,
then 
$ \tau_1  \tau_2  \tau_3 \in\CV_{\Psi_j} $,
$\CI(\tau_1\tau_2\tau_3) \in \CU_{\Psi_{j}} $
and $\CI_m'(\tau_1\tau_2\tau_3) \in \CU^m_{\Psi_{j}} $ for any $m\in\{1,2\}$.

%\begin{equs} [e:buildUUV]
%\tau_1 \in  \CU_{\Psi_k} ,  \tau_2 \in \CU_{\nabla_j\Psi_\ell} \;
% &\Rightarrow 
% \tau_1\tau_2\in\CV_{B_j}  ,
%  \quad (\mbox{for } k\neq \ell) \;,  
%\\
% \tau_1 \in \CU_{B_j}, \;    
%  \tau_2 \tau_3 \in \CU_{\Psi_k} 
% \;
%& \Rightarrow 
%	\tau_1\tau_2 \tau_3 \in\CV_{B_j} \;,
%\\
% \tau_1 \in \CU_{B_j}, \tau_2 \in \CU_{\nabla_j\Psi_k} \cup  \CU_{\nabla_{\- j}\Psi_k} 
%& \Rightarrow 
%\tau_1\tau_2 \in\CV_{\Psi_{3-k}} \;,
%\\
% \tau_1,\tau_2 
% 	\in    \CU_{B_j}, 
%\tau_3 \in \CU_{\Psi_k} 
% & \Rightarrow 
% \tau_1  \tau_2  \tau_3 \in\CV_{\Psi_k} \;,
%\end{equs}
%and		
%\begin{equs}[e:buildUUV-I]
% \tau\in \CV_{B_j} &\Rightarrow \CI(\tau)\in\CU_{B_j} 
% \\
%  \tau\in \CV_{\Psi_k}
% &\Rightarrow 
% \CI(\tau)\in \CU_{\Psi_k}, \;
% \CI'_j(\tau)
% 	\in \CU_{\nabla_j\Psi_k}, \;
% \CI'_{\- j}(\tau)
% 	\in \CU_{\nabla_{\- j}\Psi_k}
%\end{equs}
%for every $j,k,\ell\in\{1,2\}$.  
We furthermore decree that $\tau\bar\tau=\bar\tau \tau$, 
$\CI(X^k)=\CI'_j(X^k)=\CI'_{\- j}(X^k)=0$, and $\tau\one = \tau$.
We then define 
\begin{equ}
\CF\eqdef \cup_{j,k  %,\ell 
\in\{1,2\}}  
\Big( \CU_{B_j}    
\cup \CU_{\Psi_k}  
 \cup \CU_{B_k}^{\-k}
 \cup \CU^{ j}_{\Psi_k}
\cup  \CV_{B_j} \cup \CV_{\Psi_k} \Big)\;.
\end{equ}

%{\bf Shifting operator? Positive renormalization for it?}

For each formal expression $\tau$, the homogeneity $|\tau| \in\R$
 is assigned in the following way.
%First of all, for a multi-index $k=(k_0,k_1,k_2)$, 
%we have $|X^k| =|k|=2k_0 + k_1+k_2$.
%We then set 
Set
\begin{equ}
|\Xi_{\xi_j}|=|\Xi_{\zeta_j}| = -2-\bar\kappa
\qquad
j\in\{1,2\}
\end{equ}
where $\bar\kappa>0$ is a fixed small number,
and we define the homogeneity for every formal expression by
\begin{equ}
|\tau\bar\tau| = |\tau|+|\bar\tau|\;,
\qquad
|\CI(\tau)| %=|\CS_\be\CI(\tau)|
=|\tau|+2 \;,
\qquad
|\CI'_j(\tau)| =|\CI'_{\- j}(\tau)|=|\tau|+1 \;.
\end{equ}
The linear space $\CT$ is then defined as the linear span
of $\CF$,
and $\CT_\alpha$ 
is the subspace spanned by $\{\tau\,:\,|\tau|= \alpha\}$.
The product is defined on the  regularity structure  by construction.
The sector $\langle\CU_{\Psi_k}  \rangle$
comes with an abstract derivative 
$\DD_j : \langle\CU_{\Psi_k}  \rangle \to \langle\CU^j_{\Psi_k}  \rangle$ 
 and  likewise $\DD_{\- j} : \langle\CU_{B_k}  \rangle\to \langle\CU^{\- j}_{B_k}  \rangle$ 
which decrease homogeneity by $1$.
By a simple power-counting argument, one finds
that as long as $\bar\kappa<1/4$, 
the sets $\{\tau\in\CF:|\tau|<\gamma\}$ are finite for every $\gamma \in \R$,
reflecting the fact that the  equation \eqref{e:BPsi-poly}
in two space dimensions is subcritical.

%%% COMMENTED $(\mathfrak G,\mathfrak{sf})$ NOTATION STUFF %%%
%
%For each formal expression in $\CF$, we associate it with a pair 
%$(\mathfrak G,\mathfrak{sf})$ 
%where $\mathfrak G$ is a graphic shorthand 
%and $\mathfrak{sf}$ is an array of vectors. 
%For the graphic shorthand  $\mathfrak G$,
We will represent the relevant elements in graphic notations.
We use $\;\tikz [baseline=-3] \node[xi] {\tiny $1$};\;$  and $\;\tikz [baseline=-3] \node[xi] {\tiny $2$};\;$
to represent the expressions $\Xi_{\xi_1}$ and $\Xi_{\xi_2}$ respectively,
and 
$\;\tikz [baseline=-3] \node[zeta] {\tiny $1$}; \;$ and $\;\tikz [baseline=-3] \node[zeta] {\tiny $2$}; \;$
to represent the expressions $\Xi_{\zeta_1}$ and  $\Xi_{\zeta_2}$ respectively,
%%For notation simplicity, we sometimes only draw $\;\tikz [baseline=-3] \node[xi] {};\;$ and $\;\tikz [baseline=-3] \node[zeta] {}; \;$
%%when it is not important to specify which component is being considered in the discussions.
and also use $\;\tikz [baseline=-3,draw=symbols] \draw (0,0) -- (0.4,0);  \;$ 
 to represent the operator $\CI$,
 and 
 $\;\tikz [baseline=-3,draw=symbols] \draw (0,0) -- node[color=symbols,above=-0.6ex] {\tiny $j$} (0.4,0);$
 or
  $\;\tikz [baseline=-3,draw=symbols] \draw (0,0) -- node[color=symbols,above=-0.6ex] {\tiny $\-j$} (0.4,0);$
% %\draw[thick] (0.1,-0.08) -- (0.3,0.08); \end{tikzpicture}\;$ 
 to represent the operator $\CI'_j$ or $\CI'_{\-j}$.
% % (where  we sometimes omit the $j$ dependence in the notation).
Joining of formal expressions by their roots
denotes their product. 
For example,
%one has 
%%$\<IPsidPsi> = \CI(  \CI(\Xi_{\zeta_a})  \CI'_j (\Xi_{\zeta_b}))$
%%for some $j \in \{1,2\}$ and $a,b\in \{R,I\}$
$\<Psi1dPsi2>=\CI(  \CI(\Xi_{\zeta_1})  \CI'_j (\Xi_{\zeta_2}))$.
%Regarding the array $\mathfrak{sf}$, it consists of $n$
%vectors where $n$
% is equal to the number of incidences of $\CI$ and $\CI'_j$ for $1\le j\le 2$, and each of the $n$ vectors 
%is an element in $\{0,\pm\be_1,\pm\be_2\}$,
%which plays the role of a ``shifting vector",
%that is  associated to the corresponding $\CI$ or $\CI'_j$. 
%For example, 
%\[
%\Big(\<Psi-dIBdPsi>, 
%			\sfmatrix{\-\be_j,\-\be_j\\0,0} \Big)
%	=      \CI_k' \Big(\CS_{-\be_j}\CI(\Xi_{\xi_j})   \CS_{-\be_j} \CI_j'(\Xi_{\zeta_\ell}) \Big) \, \CI\big(\Xi_{\zeta_\ell}\big)\;.
%\]
%The symbol $\one$ will not be drawn in our graphical notation.

%When the particular indices in a graphic notation are not important in the context,
%we also use colors instead of  indices: within a graph,  nodes or edge labels with the same color must have the same index, and those with different colors do not necessarily have the same index, and finally {\color{color2} Green} and {\color{color1} Orange} will always stand for two {\it non-equal} indices. We will use number-indexed and color-indexed graphic notations interchangeably.

%We list all the formal expressions in $\CF$ with negative 
%homogeneities other than $\Xi_{\xi_j},\Xi_{\zeta_j} $. We have
%%\[
%%\<PsidPsi>
%%\qquad
%%\<BPsiPsi>
%%\qquad
%%\in \CV_{B}
%%\]
%%\[
%%\<Bd+Psi>
%%\qquad
%%\<d-BPsi>
%%\qquad
%%\in \CV_{\Psi}
%%\]
%\[
%\<PsikdPsil> 
%\qquad
%\<BjPsikPsik> 
%\qquad
%\in \CV_{B_j}
%\]
%\[
%\<BdPsi--1> 
%\qquad
%\<dBPsi>
%\qquad
%\<BPsi>
%\qquad
%\<BkBkPsij> 
%\qquad
%\in \CV_{\Psi_{j}}
%\qquad
%(j\neq \ell)
%\]
%

The elements in $\CF$  are completely determined by the above generating rule. 
\footnote{These elements can be generated by specifying a ``rule'' as in \cite{bruned2016algebraic}.}
We list some of these elements
which have negative 
homogeneities other than $\Xi_{\xi_j},\Xi_{\zeta_j} $ that will be useful later.

At the homogeneity  $- 1 - 2\bar\kappa$ we have
\begin{equ}[e:homo1-2kappa]
\<PsikdPsil> (\mbox{where } k\neq \ell)
\qquad
\<BdPsi--1>
\qquad
\<dBPsi>
\end{equ}
At the homogeneity  $- 1 - \bar\kappa$ we have
\[
\<Ij-zeta-k>
\qquad
\<I-k-xi-k>
\]
At the homogeneity
$ - 3 \bar\kappa $  
 we have
\[ 
	\<BjPsikPsik> 
	\qquad 
	\<BjBjPsik> 
\]
One can find many other elements of negative homogeneities  in $\CF$ in Section~\ref{sec:renorm}.

One can also build a structure group $\CG$ acting on $\CT$  as done in \cite{Regularity}
in such a way that
the operation $\CI$ satisfies $\CI \Gamma \tau - \Gamma \CI \tau \in \poly{\CT}$ for every $\tau \in \CT$ and $\Gamma \in \CG$, 
and $\Gamma(\tau\bar\tau)=(\Gamma\tau )(\Gamma \bar\tau )$
for every $\Gamma \in \CG$ and $\tau,\bar\tau\in\CT$ with $\tau\bar\tau\in\CT$,
and $\CG$ acts on $\poly{\CT}$ by translations.

A problem with this regularity structure $\ST = (\CT, \CG)$ is that 
$\Xi_{\xi_j}$,  $\Xi_{\zeta_j}$ and the elements in \eqref{e:homo1-2kappa} will not  be realized by our model as processes which are continuous in time. 
To circumvent this problem we actually work with a smaller regularity structure  $\hat \ST\subset \ST $ as in \cite[Section~3.2]{hairer2015discrete}. 
Instead of generating the  regularity structure from $\Xi_{\xi_j} \in \CV_{B_j}$
 and $\Xi_{\zeta_j} \in \CV_{\Psi_j}$
 and the abstract polynomials in these sets, we actually define 
a set of generating elements $\CF^{gen} $ which consists of  the abstract polynomials
and the following symbols
% Commented the version with shifts
%\begin{equs} 
% \Big( \<Izetaj> , (0) \Big) \in \CU_{\Psi_j}, & \quad
% \Big( \<Izetaj> ,(\pm\be_\ell) \Big)  \in  \CU_{\Psi_j}^{\pm\be_\ell}, \\
%\Big(  \<I-BjdjPsik>, \s\f \in \Big\{ \sfmatrix{0,0\\  \; 0 \;},
%			 \sfmatrix{\-\be_j,\-\be_j\\  \; 0 \;} \Big\} \Big)  \in \CU_{\Psi_{3-k}} ,& \quad
%\Big(  \<I-BjdjPsik>, \s\f \in \Big\{ 
%			 \sfmatrix{0,0\\  \; \pm\be_\ell \;},
%			 \sfmatrix{\-\be_j,\-\be_j\\  \;  \pm \be_\ell \;} \Big\} \Big)  \in  \CU_{\Psi_{3-k}}^{\pm\be_\ell}
%\\
%  \Big( \<Ixij>, (0) \Big), \Big( \<I-PsikdPsil> ,  \sfmatrix{0,0\\  \; 0 \;} \Big) \in \CU_{B_j}, & \quad
%  \Big( \<Ixij>,(-\be_j) \Big) , \Big( \<I-PsikdPsil> ,   \sfmatrix{0,0\\  \; \-\be_j \;} \Big) \in \CU_{B_j}^{-\be_j} ,
%  \label{e:F-gen-list}
%\end{equs}
\begin{equ} 
  \<Izetaj> ,\quad  
   \<I-BdPsi> ,\quad  \<I(d-kBP)> ,
   \quad  \<Ixij>, \quad  
  \<I-PsikdPsil> ,  
  \label{e:F-gen-list}
\end{equ}
for $j,k,\ell\in\{1,2\}$ where in the last symbol $k\neq \ell$.
%\begin{equs}
%\Big\{    &  \Big( \<Izetaj> ,\s\f \in \{(0),(\pm\be_\ell)\} \Big) ,
% \Big( \<Ixij>,\s\f \in \{(0),(-\be_j)\} \Big),    
%	   \Big( \<I-PsikdPsil> , \s\f \in \Big\{ \sfmatrix{0,0\\  \; 0 \;},
%			 \sfmatrix{0,0\\  \; \-\be_j \;} \Big\}   \Big)  (k\neq \ell) , \\
%&\Big(  \<I-BjdjPsik>, \s\f \in \Big\{ \sfmatrix{0,0\\  \; 0 \;},
%			 \sfmatrix{0,0\\  \; \pm\be_\ell \;},
%			 \sfmatrix{\-\be_j,\-\be_j\\  \; 0 \;},
%			 \sfmatrix{\-\be_j,\-\be_j\\  \;  \pm \be_\ell \;} \Big\} \Big)\, (k\neq j???)
%\Big\}\cup \poly{\CF}
%\end{equs}
We then define the set $\hat \CF$ as the subset of $\CF$ generated by $\CF^{gen} $ via
the aforementioned procedure.
The advantage of this manipulation is that
$\Xi_{\xi_j}, \Xi_{\zeta_j}$ and elements \eqref{e:homo1-2kappa} 
are not included in $\hat\CF$.
These elements could not be realized as processes continuous in time if they were included in 
our regularity structure.
The  symbols $ \<Izetaj>$ and $ \<Ixij>$
have homogeneity  $-\bar\kappa $  and the other symbols have homogeneity
$1-2\bar\kappa$.

We then define the {\it truncated} regularity structure $\hat{\ST} = (\hat\CT, \CG)$
with $\hat\CT \eqdef \mbox{span}\{\tau \in \hat{\CF}\,:\, |\tau| \le r\} \subset \CT$, which is closed under $\CG$ \cite[Remark~3.5]{hairer2015discrete}.

\subsection{Inhomogeneous models}
\label{sec:Inhomo-models}

%\begin{definition}\label{d:Model}
Given a regularity structure $\ST = (\CT, \CG)$, recall from \cite{hairer2015discrete}
that an {\it inhomogeneous  model} $(\Pi, \Gamma, \Sigma)$ consists of the following three elements.
First, a collection of maps $\Gamma^t : \R^{2} \times \R^{2} \to \CG$, parametrized by $t \in \R$, such that 
%\hao{We do need to list these conditions, because we will refer them in Lemma\ref{lem:is-a-model}}
\begin{equ}[e:GammaDef]
\Gamma^t_{x x}=1\;, \qquad 
\Gamma^t_{x y} \Gamma^t_{y z} = \Gamma^t_{x z}\;,
\qquad
\forall x, y, z \in \R^{2} \;, \; t \in \R
\end{equ}
and their action  on $\poly{\CF} $ is given by $\Gamma^t_{x y} X^k  =    (X-(0,y-x))^k$. 
Secondly, a collection of maps $\Sigma_x : \R \times \R \to \CG$, parametrized by $x \in \R^2$, such that
\begin{equ}[e:SigmaDef]
\Sigma^{t t}_{x}=1\;, \qquad \Sigma^{s r}_{x} \Sigma^{r t}_{x} = \Sigma^{s t}_{x}\;, \qquad \Sigma^{s t}_{x} \Gamma^{t}_{x y} = \Gamma^{s}_{x y} \Sigma^{s t}_{y}\;,
\qquad
\forall x \in \R^{2} \;, \;  s, r, t \in \R
\end{equ}
and % the action of $\Sigma^{s t}_{x}$ on polynomials is given by
 $\Sigma^{s t}_{x} X^k  =    (X-(t-s,0))^k$. 
Finally, a collection of linear maps $\Pi^t_x: \CT \to \mathcal{S}'(\R^{2})$, such that
\begin{equ}[e:PiDef]
\Pi^t_{y} = \Pi^t_x \Gamma^t_{x y}\;, \quad \bigl(\Pi_x^t X^{(0, \bar k)}\bigr)(y) = (y-x)^{\bar k}\;, \quad \bigl(\Pi_x^t X^{(k_0, \bar k)}\bigr)(y) = 0\;,
\end{equ}
for all $x, y \in \R^{2}$, $t \in \R$, $\bar{k} \in \N^{2}$, $k_0 \in \N$ such that $k_0 > 0$.
Moreover, it requires that for any $\gamma > 0$ and every $T > 0$, there is a constant $C$ such that
\minilab{Model}
\begin{equs}\label{e:PiGammaBound}
| \langle \Pi^t_{x} \tau, \varphi_{x}^\lambda \rangle| 
\leq 
C \Vert \tau \Vert \lambda^{l} &\;, 
\qquad 
\Vert \Gamma^t_{x y} \tau \Vert_{m} \leq C \Vert \tau \Vert | x-y|^{l - m}\;,\\
\Vert \Sigma^{s t}_{x} \tau \Vert_{m} &\leq C \Vert \tau \Vert |t - s|^{(l - m)/2}\;,\label{e:SigmaBound}
\end{equs}
uniformly over all $\tau \in \CT_l$, with $l \in \CA$ and $l < \gamma$, all $m \in \CA$ such that $m < l$, all $\lambda \in (0,1]$, all $\varphi \in \CB^r_0(\R^2)$ with $r > -\lfloor\min \CA\rfloor$, and all $t, s \in [-T, T]$ and $x, y \in \R^2$ such that $|t - s| \leq 1$ and $|x-y| \leq 1$.
In addition, we say that the map $\Pi$ has {\it time regularity} $\delta > 0$, if the bound
\begin{equs}
\label{e:PiTimeBound}
| \langle \bigl(\Pi^t_{x} - \Pi^{s}_{x}\bigr) \tau, \varphi_{x}^\lambda \rangle| 
\leq C \Vert \tau \Vert |t-s|^{\delta/2} \lambda^{l - \delta}\;,
\end{equs}
holds for all $\tau \in \CT_l$ and the other parameters as before.
%\end{definition}
%
%\begin{definition}\label{def:DModel}
%Given a regularity structure $\ST$,   % with the above splitting and $\eps>0$, 
We can also define a {\it discrete inhomogeneous model} $(\Pi^\eps, \Gamma^\eps, \Sigma^\eps)$ which consists of the collections of maps 
\begin{equs}
\Pi_x^{\eps, t}: \CT \to \R^{\Lambda_\eps}\;, 
\qquad \Gamma^{\eps, t} : \Lambda_\eps \times \Lambda_\eps \to \CG\;, 
\qquad \Sigma^{\eps}_x : \R \times \R \to \CG\;,
% \\
%\Pi_e^{\eps, t}: \CT \to \R^{\CE_j}\;, 
%\qquad \Gamma_j^{\eps, t} : \CE_j \times \CE_j \to \CG\;, 
%\qquad \Sigma^{\eps}_e : \R \times \R \to \CG\;,
\end{equs}
%parametrized by $t \in \R$, $x \in \Lambda^\eps$ 
%and $e\in \CE_j = \CE^{\eps}_j$ for $j\in\{1,2\}$, 
which have all the algebraic properties of their continuous counterparts \eqref{e:GammaDef}, \eqref{e:SigmaDef} and \eqref{e:PiDef} with the spatial variables restricted to the grid $\Lambda_\eps$. Additionally,
\footnote{This does not follow  
automatically from the discrete analogue of \eqref{e:PiGammaBound} since these
are only assumed to hold for test functions at scale $\lambda \ge \eps$.}
 we require $\bigl(\Pi^{\eps, t}_x \tau\bigr) (x) = 0$, for all $\tau \in \CT_l$ with $l > 0$, 
 and all $x\in\Lambda_\eps$ and $t \in \R$.
% and all $x \in \CV\cup\CE_1\cup\CE_2$ and $t \in \R$.
% Since the domain of the maps $\Gamma_j^{\eps, t}$ will be always clear from the context, we will often drop the subscript $j$ and simply write $\Gamma^{\eps, t}$ even when the domain is $\CE_j \times \CE_j$.
%\end{definition}

%\begin{definition}\label{def:ModelNorm}
%\hao{Check if these are used.}
For a model $Z = (\Pi, \Gamma, \Sigma)$, we denote by $\Vert \Pi \Vert_{\gamma; T}$, $\Vert \Gamma \Vert_{\gamma;T}$ and $\Vert \Sigma \Vert_{\gamma;T}$ the smallest constants $C$ such that the bounds on $\Pi$, $\Gamma$ and $\Sigma$ in \eqref{e:PiGammaBound} and \eqref{e:SigmaBound} hold. We then define 
$
\VERT Z \VERT_{\gamma; T} \eqdef \Vert \Pi \Vert_{\gamma; T} + \Vert \Gamma \Vert_{\gamma; T} + \Vert \Sigma \Vert_{\gamma; T}
$
%If $\bar{Z} = (\bar{\Pi}, \bar{\Gamma}, \bar{\Sigma})$ is another model, then we also define
and we can define
 the ``distance'' $\VERT Z; \bar{Z} \VERT_{\gamma; T} $ between two models by taking the corresponding differences (in $\Vert\cdot\Vert_{\gamma; T}$ norms) of $(\Pi, \Gamma, \Sigma)$.
%\begin{equ}[e:ModelsDist]
%\VERT Z; \bar{Z} \VERT_{\gamma; T} \eqdef \Vert \Pi - \bar{\Pi} \Vert_{\gamma; T} + \Vert \Gamma - \bar{\Gamma} \Vert_{\gamma; T} + \Vert \Sigma - \bar{\Sigma} \Vert_{\gamma; T}\;.
%\end{equ}
%We note that the norms on the right-hand side still make sense with $\Gamma$ and $\Sigma$ viewed
%as linear maps on $\CT$. 
%
To take time regularity into account
we also set $\Vert \Pi \Vert_{\delta, \gamma; T} \eqdef \Vert \Pi \Vert_{\gamma; T} + C$, where $C$ is the smallest constant such that the bound \eqref{e:PiTimeBound} holds, and we define
$
\VERT Z \VERT_{\delta, \gamma; T} \eqdef \Vert \Pi \Vert_{\delta, \gamma; T} + \Vert \Gamma \Vert_{\gamma; T} + \Vert \Sigma \Vert_{\gamma; T}
$, and
 the ``distance'' $\VERT Z; \bar{Z} \VERT_{\delta, \gamma; T}$ 
 is defined analogously, as in \cite[Remark~2.5]{hairer2015discrete}.

% also introduced the {\it discrete  inhomogeneous models}.
%
% We need to slightly tweak their definition in our setting since our fields live on both lattice sites and edges.
%For $N \in \N$, we denote by $\eps \eqdef 2^{-N}$ the mesh size of the grid.
%The grid is the union of   three lattices:
%\[
%\Lambda_\eps=
%\Lambda_\eps^\bullet \cup \Lambda_\eps^{=} \cup \Lambda_\eps^{\verteq}
%\]
%where $\Lambda_\eps^\bullet$ is the grid of lattice vertices,
%and  $\Lambda_\eps^{=}$ and $ \Lambda_\eps^{\verteq}$
%are the grids of horizontal and vertical edges.
%{\bf Fields that only live on the vertices, such as $\Psi^\eps$,
%are extended to the entire $\Lambda_\eps$
%by specifying their values on the edges to be zero;
%similarly we extend fields on edges such as
%$B^\eps$ to the entire $\Lambda_\eps$ by specifying their values on the vertices to be zero.}

%Note that the above definition is such that we will never talk about quantities such as $(\Pi^{\eps,t}_u \tau )(w)$
%if $u,w$ belong to two different sets of $\CE_1,\CE_2,\CV$.

In the discrete setting
we define the quantities $\Vert \Pi^\eps \Vert^{(\eps)}_{\gamma; T}$, 
$\Vert \Gamma^\eps \Vert_{\gamma;T}^{(\eps)}$ and  $\Vert \Sigma^\eps \Vert_{\gamma;T}^{(\eps)}$
to be the smallest constants $C$ such that the bounds \eqref{e:PiGammaBound}, \eqref{e:SigmaBound}
hold uniformly in $x, y \in \Lambda_\eps$, $\lambda \in [\eps,1]$ 
and the other parameters as in the continuum setting,
and
measure the time regularity of $\Pi^\eps$ as in \eqref{e:PiTimeBound},
with the discrete pairing \eqref{e:DPairing} in place of the standard one,
and $|t - s|^{\frac12} \vee \eps$ in place of $|t - s|^{\frac12}$.
%
% The quantity $\Vert \Sigma^\eps \Vert_{\gamma;T}^{(\eps)}$ is defined as the smallest constant $C$ such that the bounds
%\begin{equ}[e:DSigmaBound]
% \Vert \Sigma^{\eps, s t}_{x} \tau \Vert_{m} \leq C \Vert \tau \Vert \bigl(|t - s|^{\frac12} \vee \eps\bigr)^{l - m}\;,
%\end{equ}
%hold uniformly in $x \in \Lambda_\eps$ and the other parameters as in \eqref{e:SigmaBound}.
%%
%We measure the time regularity of $\Pi^\eps$ as in \eqref{e:PiTimeBound}, by substituting the continuous objects by their discrete analogues, and by using $|t - s|^{\frac12} \vee \eps$ instead of $|t - s|^{\frac12}$ on the right-hand side.
The other norms and distances 
 $\VERT \cdot \VERT^{(\eps)}_{\gamma,T}$, 
 $\VERT \cdot \VERT^{(\eps)}_{\delta,\gamma,T}$
 and  $\VERT \cdot ; \cdot \VERT^{(\eps)}_{\delta,\gamma,T}$
 are defined by analogy with their continuous counterparts,
 as in \cite[Section~4.1]{hairer2015discrete}.

With the specific regularity structure defined above, 
we define a discrete model $Z^\eps = (\Pi^\eps, \Gamma^\eps, \Sigma^\eps)$ on $\ST^{gen}$
as follows.
Let $K^\eps$ be the  truncated discrete heat kernel as given in \cite[Lemma~5.4]{hairer2015discrete}.
%First of all, we always impose that for all $\tau$ if $u,w$ belong to two different sets of $\CE_1,\CE_2,\CV$ then 
%\[
%(\Pi^{\eps,t}_u \tau )(w)=0
%\]
Define
\begin{equs}
\Pi^{\eps,t}_x \, \<Izetaj>  (y) 
	= ( K^\e   \ast_\e \,\zeta^\e_j ) \,(t,y) \;, %(t,y+\be) \;,
\qquad
\Pi^{\eps,t}_x \,   \<Ixij>  (y) 
	= ( K^\e \ast_\e \,\xi^\e_j )\, (t,y) \;,  % (t,y+\be) \;, 
%\\
%&
%\Pi^{\eps,t}_x \, \<Ij-zeta-k> (y) 
%	= ( \nabla^\e_{\be_j} K^\e \ast_\e \,\zeta^\e_k) (t,y) \;,
\end{equs}
where $\ast_\eps$ denotes the convolutions on $\R \times \Lambda_\eps$;
%and $\be$ sums over all possible shifts as specified in \eqref{e:F-gen-list} (this of course includes the case $\be=0$);
note that the expressions on the right hand sides 
do not depend on the base point $x$.
%For $\<Ixij>\in \CU_{B_j}$ we define
%
%For $\Pi^{\eps,t}_x \,\<Ij-zeta-k> \in \CU_{\nabla_j\Psi_k}$
%
%{\bf Let's define for now that $\Pi^{\eps,t}_x \,\<Ijzeta>$ support on both j-edges and vertices???}
%\[
%\Pi^{\eps,t}_x \,\<Ijzeta> (e) = \int \sum_{z\in\CV} \nabla^\e_{\be_j} K_{\CV}^\e (t-s,y-z) \,\zeta^\e_j(s,z)\,ds
%\qquad
%e=\{y,y+\be_j\}\in\CE_j 
%\]
%Moreover we define
%\[
%\Pi^{\eps,t}_x \, \Big( \<Izetaj> ,(-\be_k) \Big)(y) 
%	\eqdef \Pi^{\eps,t}_{x-\be_k} \, \<Izetaj>\, (y-\be_k) \;,
%\qquad
%\Pi^{\eps,t}_x \, \Big(  \<Ixij> ,(-\be_j) \Big)(y) 
%	\eqdef \Pi^{\eps,t}_{x-\be_j} \,  \<Ixij>\, (y-\be_j) \;.
% \]
The group elements $\Gamma^{\eps, t}_{xy},\Sigma^{\eps,st}_x$
act on the above symbols as identities.
Furthermore, we denote the functions on $\R \times \Lambda_\eps$
\begin{equs} [e:def-Y-Ybar]
Y_{k j}^\eps
&\eqdef 
K^\eps \ast_\eps 
	\Big(\big(K^\e  \ast_\e \,\zeta^\e_k \big)
	\big( \nabla^\e_j K^\e  \ast_\e \,\zeta^\e_{3-k} \big)\Big)
\\
\bar Y_{k \ell }^\eps
&\eqdef 
K^\eps \ast_\eps 
\Big( \big(K^\e  \ast_\e \,\xi^\e_k\big)
\big( \nabla^\e_k K^\e  \ast_\e \,\zeta^\e_{\ell}\big)
\Big)
\\
\tilde Y_{k \ell }^\eps
&\eqdef 
K^\eps \ast_\eps 
\Big( \big(\nabla^\e_{\-\be_k} K^\e  \ast_\e \,\xi^\e_k\big)
\big(  K^\e  \ast_\e \,\zeta^\e_{\ell}\big)
\Big)
\\
\end{equs}
 and set  for $k\neq \ell$
\begin{equs}[e:Model-on-Yguy]
\bigl(\Pi^{\eps, t}_x \<I-PsikdPsil> \bigr)(y) 
= Y_{kj}^\eps(t,y) - Y_{kj}^\eps&(t,x)\;,
\qquad 
\Gamma^{\eps, t}_{x y} \<I-PsikdPsil> 
= \<I-PsikdPsil>  
	- \left( Y_{k j}^\eps(t,y) - Y_{kj}^\eps(t,x)\right) \one\;,
\\
\Sigma^{\eps, s t}_{x} \<I-PsikdPsil> 
= \<I-PsikdPsil>  &
	- \left( Y_{k j}^\eps(t,x) - Y_{k j}^\eps(s,x)\right) \one\;.
\end{equs}
We define the action of $(\Pi^{\eps},\Gamma^\eps,\Sigma^\eps)$
on $\<I-BdPsi> ,  \<I(d-kBP)> $ in the same way with  $\bar Y^\eps$ and $\tilde Y^\e$.

%\hao{(Sep.2) Just realized one may actually have $\Gamma_{xy} \tau_\be = (\Gamma_{xy} \tau)_\be$. Indeed, $Y_{k\ell j}^\eps(t,x+\be) - Y_{k\ell j}^\eps(s,x+\be)$ equals that without $\be$. If group action and shift do commute, it will be then very simple to define group in the previous section, and Lemma\ref{lem:is-a-model} can be simplified a bit.}

%{\bf OLD STUFF -----}
%For $\<PsildPsik>\in \CV_{B_j}$ with $k\neq \ell$ we define
%\[
%\Pi^{\eps,t}_x \, \<PsildPsik> (e) 
%	= \Pi^{\eps,t}_x \, \<Izetal> (y) \cdot \Pi^{\eps,t}_x \,\<Ij-zeta-k> (e) 
%\qquad
%e=\{y,y+\be_j\}\in\CE_j 
%\]
%and vanishes on  $\CV$ and $\CE_{j'}$ for $j'\neq j$.
%
%For $ \<BdPsi>\in \CV_{\Psi_j}$ with
%$k\neq j$ we define
%\[
%\Pi^{\eps,t}_x \, \<BdPsi> (y)
%	= \Pi^{\eps,t}_x \, \<Ixij> (e) \cdot \Pi^{\eps,t}_x \,\<Ij-zeta-k> (e) 
%	+\Pi^{\eps,t}_x \, \<Ixij> (e') \cdot \Pi^{\eps,t}_x \,\<Ij-zeta-k> (e') 
%\qquad
%y\in\CV
%\]
%where $e=\{y-\be_j,y\}$ and $e'=\{y,y+\be_j\}$, and it vanishes on $\CE$.
%{\bf ----- OLD STUFF}

%%%  Canonical model:   %%%

We then extend the model to the entire $\hat\CF$ in the canonical way.
For elements of the form $\tau_1\tau_2$   we define
\minilab{e:canon}
\begin{equs} 
(\Pi^{\eps,t}_{x} \tau_1\tau_2)(y) 
	= (& \Pi^{\eps,t}_{x}  \tau_1)( y) 
		\cdot  (\Pi^{\eps,t}_{x} \tau_2)(y) \;,
	\label{e:canon1} \\
\Gamma^{\eps, t}_{xy}(\tau_1\tau_2)
	= \big( \Gamma^{\eps, t}_{xy}\tau_1 \big) \,
		 \big(\Gamma^{\eps, t}_{xy}\tau_2\big) \;,
&\qquad
\Sigma^{\eps,st}_x (\tau_1\tau_2) 
	= \big(\Sigma^{\eps,st}_x \tau_1 \big) \,\big( \Sigma^{\eps,st}_x \tau_2\big)
	\label{e:canon2}
\end{equs}
for all $x,y\in\Lambda_\eps$, $s,t\in \R$.

%
%%%  Here I deleted the old stuff where one wants to use some symbols to describe fields on edges and some symbols to describe fields on vertices....   See older version (say 6-27 version) for what I deleted
%
%{\bf Define groups, should check}
% For the product $\tau \bar{\tau}$, we set
%\minilab{CanonicalProduct}
%\begin{equs} \label{e:CanonicalGammaSigmaProduct}
%\Sigma_x^{\eps, s t} \tau \bar{\tau} = \big(\Sigma_x^{\eps, s t} \tau\big)\, \big(\Sigma_x^{\eps, s t} \bar{\tau}\big)\;,\qquad \Gamma_{x y}^{\eps, t} \tau \bar{\tau} = \big(\Gamma_{x y}^{\eps, t} \tau\big)\, \big(\Gamma_{x y}^{\eps, t} \bar{\tau}\big)\;.
%\end{equs}
For the symbols of the form $\CI \tau$ with  $\tau \in \CT_\alpha$  we define the actions by the models as follows.
\begin{equs}[e:PiIntegral]
\Pi^{\eps,t}_{x} \left(\CI \tau  \right)(y)
&\eqdef
 \int_{\R} \langle \Pi^{\eps, s}_{x} \Sigma_x^{\eps, s t} \tau, K^\e_{t-s}(y - \cdot)\rangle_\eps \, ds
- \Pi^{\eps,t}_{x} \left( \CJ_{t, x} \tau \right)(y)  \;, \\
\mbox{where} \qquad
\CJ_{t, x} \tau 
&\eqdef \sum_{|k|_{\s} < \alpha + 2} \frac{X^k}{k!} 
	\int_{\R} \langle\Pi^{\eps,s}_{x} \Sigma_x^{\eps, s t} \tau, D^k K^\e_{t-s}(x - \cdot)\rangle_\eps \, ds\;.
\end{equs}
Here $k \in \N^{3}$ and the derivative 
$D^k=\partial_t^{k_0} (\nabla^\eps_1)^{k_1} (\nabla^\eps_2)^{k_2}$. Moreover, we require that
\begin{equs}[e:GammaSigmaIntegral]
\Gamma_{x y}^{\eps,t} \big(\CI \tau\big) 
	&= \big(\CI + \CJ_{t, x}\big) \Gamma_{x y}^{\eps,t} \tau 
	-\Gamma_{x y}^{\eps,t} \big( \CJ_{t, y}\tau\big) \;,\\
\Sigma_x^{\eps, st} \big(\CI \tau\big) 
	&= \big(\CI + \CJ_{s, x}\big)\Sigma_x^{\eps,st} \tau
	 -\Sigma_x^{\eps,st} \big( \CJ_{t, x}\tau\big)\;,
\end{equs}
for all $s, t \in \R$ and $x, y \in \Lambda_\eps$.
For elements of the form $\CI'_{\pm j}\tau$ the model action is defined in the same way as \eqref{e:PiIntegral}
with $K^\e$ replaced by $\nabla^\eps_{\pm\be_j} K^\e$ 
and with $k$ sums over $|k|_{\s} < \alpha + 1$.
It satisfies
%\hao{Why did I write $\Pi^{\eps,t}_{x+\be_j} (\CI\tau)(y+\be_j)- \Pi^{\eps,t}_{x} (\CI\tau)(y)$ before?}
\begin{equ}[e:model-compat-der]
%\Pi^{\eps,t}_e (\CI'_j\tau)(\tilde e) 
%	= \Pi^{\eps,t}_{x_e^+} (\CI\tau)(x^+_{\tilde e})
%		- \Pi^{\eps,t}_{x_e^-} (\CI\tau)(x^-_{\tilde e})
\Pi^{\eps,t}_x (\DD_{\pm j}(\CI\tau))(y)
= \Pi^{\eps,t}_x (\CI'_{\pm j}\tau)(y) 
	= \nabla_{\pm j}^\eps \big( \Pi^{\eps,t}_{x} (\CI\tau) \big) (y)
\qquad
\forall x,y\in\Lambda_\eps, \; t\in\R \;.
\end{equ}

We call a discrete model $Z^\eps = (\Pi^{\eps}, \Gamma^{\eps}, \Sigma^{\eps})$ defined on $\hat{\ST}$ {\it admissible}, if it satisfies the identities \eqref{e:canon2}, \eqref{e:PiIntegral} and \eqref{e:GammaSigmaIntegral};
the set of admissible models is denoted by $\MM_\e$.
We also denote by $\MM_0$ the set of admissible models defined in continuum with all the  discrete maps and scalar products in \eqref{e:canon2}, \eqref{e:PiIntegral} and \eqref{e:GammaSigmaIntegral} replaced by their continuous counterparts.
For each $\eps>0$, $(\Pi^\eps,\Gamma^\e, \Sigma^\e)$ defined above
  is  an admissible discrete inhomogeneous model.

\subsection{Modeled distributions}
%\begin{equ}[e:DHolderDist]
%\Vert \zeta; \zeta^\eps \Vert^{(\eps)}_{\CC^{\delta, \alpha}_{\eta, T}} \eqdef \sup_{t \in (0, T]} \enorm{t}^{-\eta} \Vert \zeta_t; \zeta_t^\eps \Vert_{\CC^\alpha} + \sup_{s \neq t \in (0, T]} \enorm{s, t}^{-\eta} \frac{\Vert \delta^{s, t} \zeta; \delta^{s, t}\zeta^\eps \Vert_{\CC^{\alpha - \delta}}}{\bigl(|t-s|^{1/\s_0} \vee \eps\bigr)^{\delta}},
%\end{equ}

We recall from \cite{hairer2015discrete} the notion of modeled distributions,
both in continuum and
on a grid. 
Given a regularity structure $\ST = (\CT, \CG)$ with an inhomogeneous model $Z=(\Pi, \Gamma, \Sigma)$,  
$\gamma, \eta \in \R$ and  $T > 0$, we consider maps 
$H : (0, T] \times \R^{d} \to \CT_{<\gamma}$ and define
\begin{equs}[e:ModelledDistributionNormAbs]
 \Vert H \Vert_{\gamma, \eta; T} 
 \eqdef 
 \sup_{ \substack{ t \in (0,T]  \\ x \in \R^2}} 
 \sup_{ \substack{ l \in \CA \\ l < \gamma}} \onorm{t}^{(l - \eta) \vee 0} \Vert H_t(x) \Vert_l
+ \sup_{t \in (0,T]} \sup_{\substack{x \neq y \in \R^d \\ | x - y | \leq 1}} 
	\sup_{ \substack{ l \in \CA \\ l < \gamma}} \frac{\Vert H_t(x) - \Gamma^{t}_{x y} H_t(y) \Vert_l}{\onorm{t}^{\eta - \gamma} | x - y |^{\gamma - l}}
\end{equs}
where $\onorm{t} \eqdef |t|^{1/2} \wedge 1$. 
The space of {\it inhomogeneous modeled distributions} $\CD^{\gamma, \eta}_T(Z)$ consists of 
all such functions $H$ such that
\begin{equ}[e:ModelledDistributionNorm]
\VERT H \VERT_{\gamma, \eta; T} 
\eqdef \Vert H \Vert_{\gamma, \eta; T} 
	+ \sup_{\substack{s \neq t \in (0,T] \\ | t - s | \leq \onorm{t, s}^{2}}} \sup_{x \in \R^d} 
	\sup_{ \substack{ l \in \CA \\ l < \gamma}}
	\frac{\Vert H_t(x) - \Sigma_x^{t s} H_{s}(x) \Vert_l}{\onorm{t, s}^{\eta - \gamma} |t - s|^{(\gamma - l)/2}} < \infty\;.
\end{equ}
Here $\onorm{t, s} \eqdef \onorm{t} \wedge \onorm{s}$.
For a discrete inhomogeneous model $Z^\eps=(\Pi^\eps, \Gamma^\eps, \Sigma^\eps)$,
and for a function $H : (0, T] \times \Lambda_\eps \to \CT_{<\gamma}$, 
we define
$ \Vert H \Vert^{(\eps)}_{\gamma, \eta; T} $ and $\VERT H \VERT^{(\eps)}_{\gamma, \eta; T} $
the same way as \eqref{e:ModelledDistributionNormAbs} and \eqref{e:ModelledDistributionNorm}
 but with spatial variables restricted to the grid $\Lambda_\eps$,
$|t|_0$ replaced by $|t|_\eps \eqdef \onorm{t} \vee \eps$
and $\onorm{t, s}$  replaced by $\enorm{t,s}\eqdef \enorm{s} \wedge \enorm{t}$.
%We can furthermore define the norm $\VERT H \VERT^{(\eps)}_{\gamma, \eta; T} $
%the same way as \eqref{e:ModelledDistributionNorm}
%with again
%spatial variables restricted to the grid $\Lambda_\eps$, and
% $\onorm{t, s}$ in the denominator replaced by $\enorm{t,s}\eqdef \enorm{s} \wedge \enorm{t}$,
We call such functions $H$ {\it discrete modeled distributions}
and denote the space for them by $\CD^{\gamma, \eta}_{\e,T}(Z^\eps)$.
Both $\CD^{\gamma, \eta}_{T}$ and $\CD^{\gamma, \eta}_{\e,T}$ come with products that are defined point-wisely.
The space $\CD^{\gamma, \eta}_{T}$ comes with derivative operators
still denoted by $\DD_j$ and  $\DD_{\-j}$ for $j\in\{1,2\}$
which act
on modeled distributions point-wisely.
On the space $\CD^{\gamma, \eta}_{\e,T}$,
the operator $\DD_j$ also acts point-wisely as before,
while the  linear operator $\DD_{\- j}$ acts point-wisely 
on all elements except 
on polynomials
in a slightly nonlocal way: 
for $\langle \nabla\phi , \mathbf X\rangle \eqdef \sum_{j=1}^d \nabla_j \phi \mathbf X_j$
 with $\nabla\phi =(\nabla_1\phi,\cdots,\nabla_d\phi): \Z^d\to \R^d$, one has
$(\DD_{\- j} \langle \nabla\phi , \mathbf X\rangle )(x) 
\eqdef -\nabla_j \phi(x-\be_j) \one $.
We can check that
\footnote{Here $\nabla\phi$ is merely a notation for a $d$ component field and does not mean a gradient of a function. We also remark that in this paper we will only need abstract polynomials of order one. For higher order polynomials it is a bit more subtle because the finite difference of  $x^k$ is not  $k x^{k-1}$ for $k\ge 2$.}
\begin{equs}
\Pi_x^{\eps, t}  \DD_{ j} \langle \nabla\phi , \mathbf X\rangle 
&= \nabla_{\be_j} \Pi_x^{\eps, t}   \langle \nabla\phi , \mathbf X\rangle = (\nabla_j \phi) (x) \;,
\\
\Pi_x^{\eps, t}  \DD_{\- j} \langle \nabla\phi , \mathbf X\rangle 
&= \nabla_{\- \be_j} \Pi_x^{\eps, t}   \langle \nabla\phi , \mathbf X\rangle  = -(\nabla_j \phi)(x-\be_j) \;.
\end{equs}
We will also sometimes write $\CD^{\gamma, \eta}_{0,T}$
which stands for $\CD^{\gamma, \eta}_{T}$.

One can compare two modeled distributions 
$H \in \CD^{\gamma, \eta}_T(Z)$ and $\bar{H} \in \CD^{\gamma, \eta}_T(\bar{Z})$
by the distance $\VERT H; \bar{H} \VERT_{\gamma, \eta; T}$,
as well as two discrete modeled distributions 
$H \in \CD^{\gamma, \eta}_{\e,T}(Z^\eps)$ and $\bar{H} \in \CD^{\gamma, \eta}_{\e,T}(\bar Z^\eps)$
by the distance $\VERT H; \bar{H} \VERT_{\gamma, \eta; T}^{(\e)}$
defined in the natural way as in \cite[Section~2.2 and 4.1]{hairer2015discrete}.

The inhomogeneous modeled distributions can be ``realized" as
$\CC^\alpha$ distributions.
By \cite[Theorem~2.11]{hairer2015discrete} or \cite[Theorem~3.10]{Regularity}, 
letting  $\alpha \eqdef \min \CA < 0$,
for every $\eta \in \R$, $\gamma > 0$ and $T > 0$, there is a unique family of linear 
operators $\CR_t : \CD_T^{\gamma, \eta} \to \CC^{\alpha}(\R^d)$, parametrized by $t \in (0,T]$, called reconstruction operators,
such that the bound
\begin{equ}[e:Reconstruction]
|\langle \CR_t H_t - \Pi^t_x H_t(x), \varphi_x^\lambda \rangle| \lesssim \lambda^\gamma \onorm{t}^{\eta - \gamma} \Vert H \Vert_{\gamma, \eta; T} \Vert \Pi \Vert_{\gamma; T}
\end{equ}
holds uniformly in $H \in \CD^{\gamma, \eta}_T$, $t \in (0,T]$, $x \in \R^d$, $\lambda \in (0,1]$ and $\varphi \in \CB^r_0(\R^d)$ with $r > -\lfloor \alpha\rfloor$.
If furthermore the map $\Pi$ has time regularity $\delta > 0$, then by \cite[Theorem~2.11]{hairer2015discrete}, for any $\tilde{\delta} \in (0, \delta]$ such that $\tilde{\delta} \leq (m - \zeta)$ for all $\zeta, m \in \left((-\infty, \gamma) \cap \CA\right) \cup \{\gamma\}$ such that $\zeta < m$, 
%the function $t \mapsto \CR_t H_t$ satisfies
%\begin{equ}[e:ReconstructBound]
one has
$ \Vert \CR H \Vert_{\CC^{\tilde{\delta}, \alpha}_{\eta - \gamma, T}} \lesssim \Vert \Pi \Vert_{\delta, \gamma; T} \bigl(1 + \Vert \Sigma \Vert_{\gamma; T} \bigr) \VERT H \VERT_{\gamma, \eta; T} $ 
and $\CR$ is locally Lipschitz continuous in $H$ and the model.

\begin{definition}\label{def:DReconstruct}
Given a discrete model $Z^\eps = (\Pi^\eps, \Gamma^\eps, \Sigma^\eps)$ and a discrete modeled distribution $H$ we define the {\it discrete reconstruction map} $\CR^{\eps}$ by $\CR^{\eps}_t = 0$ for $t \leq 0$, and
\begin{equ}[e:DReconstructDef]
\big(\CR^{\eps}_t H_t\big)(x) \eqdef \big(\Pi_x^{\eps, t} H_t(x) \big)(x)\;, \qquad (t, x) \in (0, T] \times \Lambda_\eps\;.
\end{equ}
\end{definition}

For a discrete model $Z^\eps = (\Pi^\eps, \Gamma^\eps, \Sigma^\eps)$ the analogous bound \eqref{e:Reconstruction}
holds with discrete pairing, $|t|_\eps$ and discrete norms in place of the continuous ones, uniformly in
 $x \in \Lambda_\eps$,  $\lambda \in [\eps, 1]$ and other variables as in continuum.
The map $\CR^\eps$ is also locally Lipschitz continuous in $H$ and the model.

Finally as in \cite[Remark~4.6]{hairer2015discrete} 
one can compare a discrete model $Z^\eps = (\Pi^\eps, \Gamma^\eps, \Sigma^\eps)$ with a continuous model $Z = (\Pi, \Gamma, \Sigma)$ by distance 
\begin{equ} [e:ZeZ-distance]
\VERT Z; Z^\eps \VERT^{(0,\eps)}_{\delta, \gamma; T} 
\eqdef
\Vert \Pi ; \Pi^\eps \Vert^{(0,\eps)}_{\delta, \gamma; T} 
+
\Vert \Gamma; \Gamma^\eps \Vert^{(0,\eps)}_{\gamma; T} 
+
\Vert \Sigma; \Sigma^\eps \Vert^{(0,\eps)}_{\gamma; T} 
\end{equ}
 where
 \begin{equs}
\Vert \Pi &; \Pi^\eps \Vert^{(0,\eps)}_{\delta, \gamma; T} 
\eqdef 
\sup_{\varphi, x, \lambda, l, \tau} \sup_{t \in [-T, T]}
	 \lambda^{-l} | \langle \Pi^t_{x} \tau, \varphi_{x}^\lambda \rangle - \langle \Pi^{\eps, t}_{x} \tau, \varphi_{x}^\lambda \rangle_\eps|\\
&
+ \sup_{\varphi, x, \lambda, l, \tau} \sup_{\substack{s \neq t \in [-T, T] \\ |t-s| \leq 1}}
	 \lambda^{-l + \delta} \frac{| \langle \bigl(\Pi^t_{x} - \Pi^s_{x}\bigr) \tau, \varphi_{x}^\lambda \rangle - \langle \bigl(\Pi^{\eps, t}_{x} - \Pi^{\eps, s}_{x}\bigr) \tau, \varphi_{x}^\lambda \rangle_\eps|}{\bigl(|t-s|^{\frac12} \vee \eps\bigr)^{\delta}}\;,
\end{equs}
where   $\varphi \in \CB^r_0$, $x \in \Lambda_\eps$, $\lambda \in [\eps, 1]$, $l < \gamma$ and $\tau \in \CT_l$ with $\Vert \tau \Vert = 1$, and $\Vert \Gamma; \Gamma^\eps \Vert^{(0,\eps)}_{\gamma; T} $ and
$\Vert \Sigma; \Sigma^\eps \Vert^{(0,\eps)}_{\gamma; T} $
are defined in the analogous way (of course without the need of the second line here that measures time regularity).
We can  also compare discrete and continuous modeled distributions in the analogous way by 
$\VERT H; H^\eps \VERT^{(0,\eps)}_{\gamma, \eta; T} $.
 With these notions for $H \in \CD^{\gamma, \eta}_T(Z)$ and a discrete modeled distribution $H^\eps$ one can prove the estimate
\begin{equ}[e:RH-ReHe]
\Vert 
	\CR H; \CR^\eps H^\eps
 \Vert^{(0,\eps)}_{\CC^{\tilde{\delta}, \alpha}_{\eta - \gamma, T}} 
\lesssim 
\VERT H; H^\eps \VERT^{(0,\eps)}_{\gamma, \eta; T} 
+ \VERT Z; Z^\eps \VERT^{(0,\eps)}_{\delta, \gamma; T} + \eps^\theta\;,
\end{equ}
for $\tilde \delta > 0$ and $\theta > 0$ small enough.

We can define  a nonlocal operator $\CP^\e$ on the space $\CD^{\gamma, \eta}_{\e,T}$
such that 
\begin{equ}[e:RP-PR]
\CR_t^\e \CP^\e = P^\e \ast_\e \CR_t^\e
\end{equ}
where $P^\e$ is the discrete heat kernel, see 
\cite[Eq.(4.35)(5.18)]{hairer2015discrete} for definition of $\CP^\e$ which utilizes a decomposition of $P^\eps$ given in
\cite[Eq.(5.13)]{hairer2015discrete}. We can also define an operator $\CP_{\-k}^\e \eqdef \DD_{\-k}\CP^\e$.

Fixing $\alpha\in (-\frac43,-1)$, 
$\gamma>|\alpha|$, $\eta\in (-\frac12,1)$, we now formulate the abstract fixed point problem in the space of modeled distributions  $\CD^{\gamma, \eta}_{\e,T}$
for the system of equations \eqref{e:BPsi-poly}.
% with $C^{(\eps)}=0$.
Set
\footnote{
We expect (see \eqref{e:expand-sol} below) our solutions to have the form
$%\bB^\e_j = \<Ixij> + (\cdots)
\bPsi^\e_j = \<Izetaj> + (\cdots)$
where ``$(\cdots)$'' take values in the subspace  spanned by $\one$ and elements with strictly positive homogeneity; so the symbols $ \<Psi1dPsi2> $ - which is not in the truncated regularity structure $\hat{\ST}$ as one may worry - %and $ \<Psi2dPsi1>$
will be cancelled by the ``leading order'' part of $\bPsi_1^\e \DD_j \bPsi_2^\e$
in \eqref{e:FhatB-fields}. Same remark for the other abstract symbols of such kind in \eqref{e:FhatB-fields}.
% and thus will not actually appear 
%in the nonlinearity of the abstract fixed point equations; the same holds for the second equation.
}
\begin{equs} [e:FhatB-fields]
\hat F_{B_j^\eps} (\bB^\eps,  \bPsi^\eps)
& \eqdef
\lambda \left(  \bPsi_1^\e \DD_j \bPsi_2^\e
		 - \bPsi_2^\e \DD_j \bPsi_1^\e \right)
- \lambda^2  \sum_{k=1,2} \bB_j^\e
	 (\bPsi_k^\e)^2 
-
\lambda\Big( \<Psi1dPsi2> - \<Psi2dPsi1> \Big) 
\\
\hat F_{\Psi_j^\eps} (\bB^\eps,    \bPsi^\eps)
&  \eqdef  -(-1)^j \lambda  \sum_{k=1,2 \atop  \ell\neq j}  
		 \Big(\bB_k^\e \DD_k \bPsi^\e_{\ell}  
		  +  \bPsi^\e_{\ell} \DD_{\- k} \bB_k^\e \Big)
 -  \lambda^2   \sum_{k=1,2}
		 (\bB_k^\e)^2 \,\bPsi_j^\e
		 \\
	&\qquad\qquad
+(-1)^j \lambda \sum_{ k=1,2 \atop \ell\neq j } 
	 \Big(\<BdPsi--1>+ \<dBPsi> \Big)
\\
\hat F^{(k)}_{\Psi_j^\eps} (\bB^\eps,    \bPsi^\eps)
&  \eqdef  (-1)^j \lambda  \sum_{\ell\neq j}  
		 \bB_k^\e  \bPsi^\e_{\ell} 
\end{equs}
%\hao{should probably write $\hat F_{\Psi_j^\eps}^1,\hat F_{\Psi_j^\eps}^2,\hat F_{\Psi_j^\eps} ^3$ corresponding to what $\mathbb S_{0,\pm\be}$ acting on}
for modeled distributions $\bB_j^\e$ taking values in $\langle \CU_{B_j} \rangle$
and $\bPsi_j^\e $ taking values in $\langle \CU_{\Psi_j} \rangle$ with $j\in\{1,2\}$.

The abstract fixed point problem then reads
\minilab{e:abs-fp}
\begin{equs} 
% B:
\bB^\eps_j  
	 & = \CP^\eps 
	 	\Big(\hat F_{B_j^\e}(\bB^\eps,\bPsi^\eps)+ \boldsymbol R_{B_j^\eps}^\eps\Big)
	 +S^\e \mathring A_{j}^\eps 
+ \<Ixij> 
+
%\lambda\Big(
%	\<I-Psi1dPsi2> - \<I-Psi2dPsi1>
%	\Big)    
\lambda\Big(
	\CY^\e_{1j}- \CY^\e_{2j}
	\Big)     \one
%-\lambda^2 \sum_{k=1,2} \<I-BPsidPsi>
+\widetilde{\boldsymbol R}_{B^\eps_j}^\eps
	\label{e:abs-fp1}
\\
% Psi:
\bPsi^\eps_j 
&=  \CP^\eps 
	\Big(\hat F_{\Psi_j^\e}(\bB^\eps,\bPsi^\eps)+ \boldsymbol R_{\Psi^\eps_j}^\eps\Big)
	+\sum_{k} \CP_{\-k}^\e \hat F^{(k)}_{\Psi_j^\e}(\bB^\eps,\bPsi^\eps)
	\\
	& \qquad\qquad \qquad
	+S^\e \mathring \Phi_{j}^\eps 
+ \<Izetaj>     
-(-1)^j \lambda \sum_{ k=1,2 \atop \ell\neq j } 
	 %\Big(\<I-BdPsi--1> -\<I-BdPsi--2> \Big)
	 \Big(\bar{\CY}^\e_{k \ell} - \tilde{\CY}^\e_{k \ell} \Big) \one
+\widetilde{\boldsymbol R}_{\Psi^\eps_j}^\eps 
	\label{e:abs-fp2}
\end{equs}
where 
$\CY,\bar\CY,\tilde\CY$ are defined the same way 
as $Y,\bar Y,\tilde Y$  in \eqref{e:def-Y-Ybar}, but 
with a slight tweak that the left-most $K^\e$ appearing in each equation of \eqref{e:def-Y-Ybar}
is replaced by the un-truncated heat kernel $P^\e$.
Moreover
 $S^\e$ is the discrete semi-group so that 
$S^\e \mathring A_{j}^\eps , S^\e \mathring \Phi_{j}^\eps $
are naturally lifted into $\CD^{\gamma, \eta}_{\e,T}$
as \cite[Lemma~3.6]{hairer2015discrete}.
Finally the ``remainder'' terms are defined as
%$\s\f$ in the second equation sums over 
%$\{\sfmatrix{0,0\\  \; 0 \;},
% 			\sfmatrix{\-\be_k,\-\be_k\\ \;0\;}\}$, and
\minilab{e:remainders}
\begin{equs} 
{}&
\boldsymbol R_{B^\eps_j}^\eps   (x) 
	\eqdef 
	R_{B^\eps_j}^\eps (x; \CR_t^\eps \bB^\eps,\CR_t^\eps \bPsi^\eps)
	\one \;,
			\label{e:remainders1}
\\
{}&
\boldsymbol R_{\Psi^\eps_j}^\eps  (x)\eqdef 
R_{\Psi^\eps_j}^\eps (x; \CR_t^\eps \bB^\eps,\CR_t^\eps\bPsi^\eps)\,\one  \;,
			\label{e:remainders2}
\\
{}&
\widetilde{\boldsymbol R}_{B^\eps_j}^\eps  
 \eqdef
-  \lambda^2 
		P^\e \ast_\e
		\Big(\sum_{k=1,2}\eps\,
		(\CR^\eps \bB_j^\eps)
		(\nabla_{\be_j}^\e \CR^\eps \bPsi_k^\eps)
		( \CR^\eps \bPsi^\eps_k)
		- 2 c_B^\e  \CR^\eps \bB_j^\eps  \Big) \circ \langle \one, \mathbf X\rangle   \;,
			\label{e:remainders3}
 \\
			 \label{e:remainders4}
{}&
\widetilde{\boldsymbol R}_{\Psi^\eps_j}^\eps  
\eqdef
-\lambda^2
	P^\e \ast_\e
	\Big[\sum_{k=1,2} \frac{\eps}{2}\,\Big( (\CR^\eps \bB_k^\eps)^2
		(\nabla_{\be_k}^\e \CR^\eps \bPsi_j^\eps)
	+ (\CR^\eps \bB_k^\eps)(\cdot -\be_k)^2
		(\nabla_{\- \be_k}^\e \CR^\eps \bPsi_j^\eps)
%	(-1)^j \eps \lambda 
%		(\nabla_{\-\be_k}^\e\CR^\eps \bB_k^\eps)
%		( \nabla_{\- \be_k}^\e \CR^\eps \bPsi_{3-j}^\e)
\\
& \qquad \qquad\qquad\quad
	+
	(\CR^\eps \bB_k^\eps) 
	(\nabla^\e_{\-\be_k}\CR^\eps \bB_k^\eps)
	( \CR^\eps \bPsi_j^\eps)
	+ (\CR^\eps \bB_k^\eps) (\cdot -\be_k)
	(\nabla^\e_{\-\be_k}\CR^\eps \bB_k^\eps)
	( \CR^\eps \bPsi_j^\eps)\Big)\\
& \qquad \qquad\qquad\quad
 - c^\e_{\Psi}  \CR^\eps \bPsi_j^\eps
\Big]  \circ \langle \one, \mathbf X\rangle 
\end{equs}  
with $R_{B^\eps}^\eps$ and $R_{\Psi^\eps}^\eps$ defined in
\eqref{e:defRBxPsix}. We will specify the constants $c_B^\e$ and $c_{\Psi}^\e$ later but let's immediately remark that they will converge to finite limits. Here we used a 
 shorthand notation 
 \begin{equ}[e:circ1X]
 F\circ \langle\one,\mathbf X\rangle\eqdef F\one +  \langle \nabla F,\mathbf X\rangle \;.
 \end{equ}
%\begin{equs}
%\boldsymbol R_{B_j^\eps}^\eps  (e)  
%&= \eps^{-1} \lambda\, \textup{Im} 
%\Big( \tilde F_1 \left(-i \eps \lambda \CR\bB_j^\eps(e)  \right) 
%\CR??? \bPsi^\eps(x+\be_j)\bar\bPsi^\eps(x) \Big) \,\one
%\\
%\boldsymbol  R_{\Psi^\eps}^\eps
%&=
%\sum_{\be = \CE_\pm} 
%  		\tilde F_2\left(-i\eps \lambda \CR\bB^\eps(x,x+\be)\right)
%		\,\CR???\bPsi^\eps(x+\be) \,\one
%\end{equs}

%Since $\bPsi_1^\eps(x) \in \langle \CU_{\Psi_1}\rangle$,
%$\nabla_j^\eps\bPsi_2^\eps(x) \in \langle \CU_{\nabla_j \Psi_2}\rangle$,
%and thus $\bPsi_1^\eps(x) \nabla_j^\eps\bPsi_2^\eps(x)
% \in \langle \CV_{B_j}\rangle $,
%by \eqref{e:tau12-1} and \eqref{e:tau123-1}, $\CR_{\CE_j} \bB^\eps_j $
%satisfies the equation....
%\[
%(\CR_{\CE_j} \bB^\eps_j ) (e)
%	  = K_\CE* \CR_{\CE_j} \Big(
%...  \Big)
%\]
%{\bf There is only a small problem if it should be $ \bPsi_1^\eps(x) \nabla_j^\eps \bPsi_2^\eps(x) $ or $ \bPsi_1^\eps(x) \nabla_j^\eps \bPsi_2^\eps(e) $, or what  $\nabla_j^\eps $  means.}

We will renormalize the models in the next section. On the other hand,
assuming that the abstract fixed point problem is locally wellposed, we would like to see what equation we obtain 
using the reconstruction operator defined in Definition~\ref{def:DReconstruct} for the un-renormalized models:

\begin{lemma} \label{lem:unren-equ}
Suppose that  $c^\e_{\Psi}=c^\e_{B}=0$, and that $(\bB^\e,\bPsi^\e)$ is a solution to the abstract fixed point problem \eqref{e:abs-fp}. Then
$ (B^\eps,\Psi^\eps)\eqdef (\CR_t^\eps \bB^\eps,  \CR_t^\eps \bPsi^\eps)$ satisfy 
\eqref{e:BPsi-poly} with $C^{(\eps)} =0$.
\end{lemma}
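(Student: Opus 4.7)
The plan is to apply $\CR^\eps_t$ to both sides of the abstract fixed point identities \eqref{e:abs-fp1} and \eqref{e:abs-fp2}, and identify each resulting contribution with the corresponding term in \eqref{e:BPsi-poly1}--\eqref{e:BPsi-poly2}. The three key structural facts I will rely on are: (i) the intertwining relation $\CR^\eps \CP^\e = P^\e \star_\e \CR^\eps$ from \eqref{e:RP-PR}, so that the reconstruction of the $\CP^\e$-term converts into a discrete heat-kernel convolution of the reconstructed nonlinearity; (ii) since the discrete model is canonical, the reconstruction is multiplicative on pointwise products of modeled distributions (by \eqref{e:canon1} evaluated at $y=x$), so that $\CR^\eps(\bB_k^\eps \bPsi_\ell^\eps)(x) = (\CR^\eps\bB_k^\eps)(x)(\CR^\eps\bPsi_\ell^\eps)(x)$ and similarly for triple products; (iii) the abstract derivatives $\DD_j, \DD_{\-j}$ are compatible with the model (see \eqref{e:model-compat-der}), so that $\CR^\eps(\DD_j H) = \nabla^\eps_j \CR^\eps H$ and likewise for the backward derivative.

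Writing $B_j^\eps \eqdef \CR^\eps\bB_j^\eps$ and $\Psi_j^\eps \eqdef \CR^\eps \bPsi_j^\eps$, I would first compute the reconstruction of $\hat F_{B_j}(\bB^\eps,\bPsi^\eps)$ at base point $x$ using (ii) and (iii): the product terms produce the local polynomial expressions $\lambda(\Psi_1^\eps \nabla_j^\eps \Psi_2^\eps -\Psi_2^\eps \nabla_j^\eps \Psi_1^\eps)(x) - \lambda^2 \sum_k B_j^\eps(x)\Psi_k^\eps(x)^2$, while the subtracted singular symbols $\<Psi1dPsi2>$ and $\<Psi2dPsi1>$ contribute $-\lambda(\psi_1^\eps \nabla_j^\eps \psi_2^\eps -\psi_2^\eps \nabla_j^\eps \psi_1^\eps)(x)$. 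Next, the explicit symbols $\lambda\bigl(\<I-Psi1dPsi2> - \<I-Psi2dPsi1>\bigr)$ appearing outside the $\CP^\eps$ in \eqref{e:abs-fp1} reconstruct, via the model prescription \eqref{e:Model-on-Yguy} interpreted as the canonical lift of the functions $Y_{k\ell j}^\eps$, to the discrete convolution $\lambda K^\eps\star_\eps(\psi_1^\eps\nabla_j^\eps\psi_2^\eps -\psi_2^\eps\nabla_j^\eps\psi_1^\eps)(x)$; combined with the $P^\eps\star_\eps$ of the same singular difference coming from step~(i), these match up via the splitting $P^\eps = K^\eps + (P^\eps-K^\eps)$ (where $P^\eps-K^\eps$ is smooth and absorbed in the polynomial part of the lift) to give the correct contribution $\lambda P^\eps\star_\eps(\Psi_1^\eps\nabla_j^\eps\Psi_2^\eps -\Psi_2^\eps\nabla_j^\eps\Psi_1^\eps)$ demanded by \eqref{e:BPsi-poly1}. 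Similarly $\CR^\eps\<Ixij>(x) = K^\eps \star_\eps \xi_j^\eps(t,x)$ together with smooth polynomial corrections reconstructs to the expected $P^\eps \star_\eps \xi_j^\eps$ contribution, and $\CR^\eps(S^\eps\mathring A_j^\eps) = S^\eps \mathring A_j^\eps$.

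The remaining discrepancy to match is that \eqref{e:BPsi-poly1} contains the shifted discrete product $\Psi_k^\eps(x+\be_j)\Psi_k^\eps(x)$ rather than the local square $\Psi_k^\eps(x)^2$ produced above, and analogously for the shifts in the $\Psi$-equation; writing $\Psi_k^\eps(x+\be_j) = \Psi_k^\eps(x) + \eps \nabla_j^\eps \Psi_k^\eps(x)$ produces precisely the $-\eps\lambda^2 B_j^\eps \nabla_j^\eps\Psi_k^\eps \Psi_k^\eps$ discrepancy that is encoded, after $P^\eps\star_\eps$, by $\widetilde{\boldsymbol R}_{B_j^\eps}^\eps$ as defined in \eqref{e:remainders3}. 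An analogous but more elaborate bookkeeping handles \eqref{e:abs-fp2}: reconstructing $\hat F_{\Psi_j}$ and the abstract symbols $\<I-Bd-pm-Psi>$ (playing the role that $\<I-Psi1dPsi2>$ played above, but for the singular products $\<Bd-pm-Psi>$), then accounting via $\widetilde{\boldsymbol R}_{\Psi_j^\eps}^\eps$ in \eqref{e:remainders4} for the shifts appearing in the three families of discrete terms in \eqref{e:BPsi-poly2}, namely the backward-derivative pieces $-B_k^\eps(x-\be_k)\nabla^\eps_{\-\be_k}\Psi^\eps_{3-j}$, the shifted squared-coefficient terms $B_k^\eps(x-\be_k)^2\Psi_j^\eps(x\pm\be_k)$, and the analogous differences arising in the $R_{\Psi^\eps}^\eps$ remainder. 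The remainders $\boldsymbol R_{B_j^\eps}^\eps$ and $\boldsymbol R_{\Psi_j^\eps}^\eps$ defined in \eqref{e:remainders1}--\eqref{e:remainders2} reconstruct immediately to their definitions since they are already $\one$-proportional modeled distributions. The main bookkeeping obstacle is thus the careful check that the combination of the subtracted abstract symbols, the separately lifted $\<I-\cdots>$ terms, and the $\widetilde{\boldsymbol R}$ corrections exactly reproduces the shifted discrete products on the right-hand side of \eqref{e:BPsi-poly}; once this verification is done term by term, the identity $(\CR^\eps\bB^\eps,\CR^\eps\bPsi^\eps) = (B^\eps,\Psi^\eps)$ with $C^{(\eps)}=0$ in \eqref{e:BPsi-poly} follows from uniqueness of solutions to the resulting linear integral equation.
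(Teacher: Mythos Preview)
Your proposal is correct and follows essentially the same approach as the paper: both invoke the intertwining $\CR^\eps\CP^\eps = P^\eps\star_\eps\CR^\eps$, the multiplicativity of the canonical model (so that reconstruction factors over pointwise products), the compatibility of $\DD_{\pm j}$ with the discrete derivatives, and the observation that the $\widetilde{\boldsymbol R}$ remainders are designed precisely to convert the local products produced by reconstruction into the shifted products appearing in \eqref{e:BPsi-poly}. The paper's own proof is much terser, simply citing these four ingredients and giving a single illustrative example of how a shift correction works; your detailed term-by-term accounting is a faithful expansion of that sketch.

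One small remark: your closing appeal to ``uniqueness of solutions to the resulting linear integral equation'' is superfluous. Once you have verified that applying $\CR^\eps$ to both sides of \eqref{e:abs-fp} produces exactly the integrated form of \eqref{e:BPsi-poly} (with $C^{(\eps)}=0$) for the pair $(\CR^\eps\bB^\eps,\CR^\eps\bPsi^\eps)$, the lemma is proved directly --- there is no separate identification step that would require a uniqueness argument.
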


We will turn on the constants $c^\e_{\Psi}, c^\e_{B}$
in Lemma~\ref{lem:renorm-equ} when we do renormalizations.

\begin{proof}
This can be readily checked, 
by action of reconstruction $\CR^\e$ for the above canonical model
 on both sides of \eqref{e:abs-fp}, and using
the property \eqref{e:RP-PR} of the operator $\CP^\e$, the definition of 
reconstruction operator $\CR_t^\eps$ in \eqref{e:DReconstructDef},
the definition of the canonical model \eqref{e:canon}, and the 
compatibility of the abstract derivative \eqref{e:model-compat-der}.
%and the shifts \eqref{e:shift-Dga} on the spaces $\CD^{\gamma, \eta}_{\e,T} $.

%% Commented sf stuff
%In fact by \eqref{e:DReconstructDef} and \eqref{e:model-compat-der}
%one has $\CR_t^\eps \DD_j f = \nabla^\eps_j \CR_t^\eps f$
%for any $f\in \CD^{\gamma,\eta}_{\e,T} (\CU_{\Psi})$.
%%
%For a shifted modeled distribution $H^\e_\be$ one has
%\begin{equs}
%(\CR_t^\eps H^\e_\be )(t,x)
%&  \stackrel{\eqref{e:DReconstructDef}}{=}
% \Pi^{\eps,t}_x  \left( H^\e_\be(t,x)\right) (t,x)
%\stackrel{\eqref{e:shift-Dga}}{=}
% \Pi^{\eps,t}_x  \left( H^\e(t,x+\be)_\be  \right) (t,x)\\
%&\stackrel{\eqref{e:canon}}{=}
% \Pi^{\eps,t}_{x+\be}  \left( H^\e(t,x+\be)  \right) (t,x+\be) \\
%&  \stackrel{\eqref{e:DReconstructDef}}{=}
%(\CR_t^\eps H^\e )(t,x+\be) \;.
%\end{equs}
%
%% was checking the sf for 4 nonlinear terms kind of one by one, but then commented. deleted them 2017-12-30.
In particular, the ``remainder'' terms \eqref{e:remainders3} and  \eqref{e:remainders4}
are defined in order to exactly obtain
the slightly nonlocal cubic terms in \eqref{e:BPsi-poly}  upon
action by the reconstruction:
\[
\CR^\eps \CP^\eps  \Big(- \lambda^2  \sum_{k=1,2} \bB_j^\e
	 (\bPsi_k^\e)^2 \Big)
+\CR^\eps \widetilde{\boldsymbol R}_{B^\eps_j}^\eps 
=
- \lambda^2  \sum_{k=1,2} 
P^\eps  \ast_\eps 
 \Big(B_j^\e (\Psi_k^\e)^2
+ \eps B_j^\eps
		\nabla_{\be_j}^\e \Psi_k^\eps
		 \Psi^\eps_k\Big)
\]
% for instance, 
%the effect of having $\widetilde{\boldsymbol R}_{B^\eps_j}^\eps $
%is that together with the term $ \bB_j^\e  (\bPsi_k^\e)^2$ one has
%\begin{equs}
% ( & \CR^\e \bB_j^\e)(x) (\CR^\e\bPsi_k^\e)(x)^2
%+ \eps (\CR^\eps \bB_j^\eps)(x)
%		(\nabla_{\be_j}^\e \CR^\eps \bPsi_k^\eps)(x)
%		( \CR^\eps \bPsi^\eps_k)(x)
%	\\
%&=
%(\CR^\eps \bB_j^\eps)(x)
%		(\CR^\eps \bPsi_k^\eps)(x+\be_j)
%		( \CR^\eps \bPsi^\eps_k)(x)
%\end{equs}
where the terms in the parenthesis equal to the cubic term  $B_j^\eps(x)  \Psi_k^\eps(x+\be_j) \Psi_k^\eps(x)$  in \eqref{e:BPsi-poly1}, and by straightforward computation
{\small
\[
\CR^\eps \CP^\eps  \Big(- \lambda^2  \sum_{k=1,2}  (\bB_k^\e)^2 \,\bPsi_j^\e \Big)
+\CR^\eps \widetilde{\boldsymbol R}_{\Psi^\eps_j}^\eps 
=
- \frac{\lambda^2}{2}  \sum_{k=1,2} 
P^\eps  \ast_\eps 
 \Big((B^\eps_k)^2 \,\Psi_j^\eps(\cdot+\be_k)
	+ B^\eps_k(\cdot- \be_k)^2 \,\Psi_j^\eps(\cdot-\be_k)  \Big)
\]}
recovering the cubic terms in \eqref{e:BPsi-poly2}.
\end{proof}

\section{Renormalization and bounds on the models}
\label{sec:renormalization}

\subsection{Renormalization} \label{sec:renorm}

The models defined in the previous section will not converge as $\eps\to 0$,
and to obtain a limit we  need to renormalize them. 
%
%Let $\CT_0\subset \CT$ be the span of $\{...\}$. 
For each $\e>0$ let $M_\e:\CT\to \CT$ be the map
%\[
%M_\e=\exp \Big(-\sum_{i=1}^2 C_i^{(\e)} L_i 
%-\sum_{i=3}^4 \sum_{j,k=1}^2 
%	(C_{i,j}^{k(\e)} L_{i,j}^{k} +C_{i,j}^{-k(\e)} L_{i,j}^{-k} ) 
%-\sum_{k=1}^2 
%	C_{3,-k}^{k(\e)}  L_{3,-k}^{k}
%\Big) \;,
%\]
\begin{equs}
{}& M_\e  =\exp \Big(-  C_1^{(\e)} L_1^{(k)}-C_2^{(\e)} L_2^{(k)}
- C_{3,j}^{k(\e)} L_{3,j}^{k(\ell)} 
-C_{3,-\ell}^{-\ell(\e)} L_{3,-\ell}^{-\ell(\ell)} 
 -C_{4,j}^{k(\e)} L_{4,j}^{k(\ell)} 
 	\label{e:def-Meps}
 \\
& -C_{4,-k}^{k(\e)}  L_{4,-k}^{k(\ell)}
 -C_{4,-\ell}^{\ell(\e)}    L_{4,-\ell}^{\ell(\ell)}
  - C_{5,j}^{-k(\e)}      L_{5,j}^{-k(\ell)}
   -C_{6,\ell}^{-\ell(\e)}         L_{6,\ell}^{-\ell(\ell)}
-C_{7,j}^{-k(\e)} L_{7,j}^{-k(\ell)}
-C_{7,-\ell}^{-\ell(\e)} L_{7,-\ell}^{-\ell(\ell)}
\Big) 
\end{equs}
 where Einstein's notation is used (all the indices are summed over $\{1,2\}$), and the renormalization constants are specified below, 
 and  the nilpotent linear operators on $\CT$ are given as follows: for $k,j,\ell\in\{1,2\}$. let
%\hao{Does this mean no matter what shift decoration,
%they'll use the same constant? This might not be really important,
%but, maybe making smart choice here would avoid some residue bits after  apply of Ward identity}
\begin{gather}
L_1^{(k)}: 	\<PsikPsik> \to \one \;, 
\qquad 
 L_2^{(j)}: 	\<BjBj> \to \one\;, 
\notag \\
L_{3,j}^{k(\ell)}:  \<dj-Psil-Idk-Psil>  \to \one\;, 
\qquad 
L_{3,-\ell}^{-\ell(\ell)}:  \<d-lBlI(d-eBe)>\to \one\;, 
% L_{3,j}^{-k(\ell)}:  \<dj-Psil-Id-k-Psil>  &\to \one\;, 
%\qquad 
%L_{3,-k}^{k(\ell)}: \<d-pm-kPsil-IdkPsil> \to \one \;,
 \notag \\
 L_{4,j}^{k(\ell)}: \<Psi-dIdPsi> \to \one \;,
 \qquad
  L_{4,-k}^{k(\ell)}: \<dk-Psi-IdkPsi> \to \one \;,
%& L_{4,j}^{-k(\ell)}: \<Psi-dId-Psi>  \to \one \;,
\qquad
  L_{4,\ell}^{-\ell(\ell)}: \<BldlI(d-eBe)>\to \one \;,
 	\label{e:def-Ls}  \\
   L_{5,j}^{-k(\ell)}:  \<Ped-kdeIPe>\to \one \;,
   \qquad
    L_{6,\ell}^{-\ell(\ell)}:  \<Bld-edlI(Be)>\to \one \;, 
    \notag  \\
L_{7,j}^{-k(\ell)}:  \<Ped-kI(dePe)>\to \one \;, 
\qquad
L_{7,-\ell}^{-\ell(\ell)}:  \<d-lBld-eI(Be)>\to \one \;. 
	\notag
\end{gather}
% and we then set
%$L_i \eqdef \sum_{k=1}^2 L_i^{(k)}$ for $i\in\{1,2\}$,
%set  $L_{i,j}^{k} 
%  \eqdef \sum_{\ell=1}^2 L_{i,j}^{k(\ell)}$  and
%$  L_{i,j}^{-k} 
% \eqdef \sum_{\ell=1}^2 L_{i,j}^{-k(\ell)} $
%for $ i\in\{3,4\}$,
%and set
%$L_{3,-k}^{k}
%  \eqdef\sum_{\ell=1}^2
% L_{3,-k}^{k(\ell)} $.

%\begin{equs}
%L_i &= \sum_{k=1}^2 L_i^{(k)} \quad (\mbox{for } i=1,2)\;, 
%\qquad
%  L_{3,-k}^{k}
%  =\sum_{\ell=1}^2
% L_{3,-k}^{k(\ell)} \;.
%\\
% L_{i,j}^{k} 
% = \sum_{\ell=1}^2 L_{i,j}^{k(\ell)}  \;,&
% \qquad
%  L_{i,j}^{-k} 
% = \sum_{\ell=1}^2 L_{i,j}^{-k(\ell)} 
% \qquad (\mbox{for } i=3,4)\;.
%\end{equs}
%
%
To define the renormalization constants 
we introduce some graphical notations. Each arrow $\tikz [baseline=-3] \draw[kernel] (0,0) to (1,0);$ represents 
the kernel $K^\e(y-x)$, and each arrow 
$\tikz [baseline=-3] \draw[kernel] (0,0) to node [midway,above,font=\scriptsize] {$j$}  (1,0);$
represents 
the kernel $\nabla^\e_{\be_j} K^\e(y-x)$,
and each arrow 
$\tikz [baseline=-3] \draw[kernel] (0,0) to node [midway,above,font=\scriptsize] {$-j$}  (1,0);$
represents 
the kernel $\nabla^\e_{\-\be_j} K^\e(y-x)$,
with $x$ and $y$ being the starting and end space-time points of the arrow respectively.
Each dot  $\;\tikz [baseline=-3] \node[dot]  at (0,0) {};\;$
represents a space-time point that is integrated out,
and each green dot $\;\tikz [baseline=-3] \node[root]   at (0,0) {};\;$
represents the origin of space-time which is not integrated.
Define the following renormalization constants: 
\begin{gather}  %\label{e:def-constants}
C_2^{(\eps)} \eqdef
\begin{tikzpicture}  [baseline=10]
\node[root]	(root) 	at (0,0) {};
\node[dot]		(top)  	at (0,1) {};			
\draw[kernel,bend left =60]   (top)  to  (root) ;
\draw[kernel,bend right =60]   (top)  to  (root) ;
\end{tikzpicture}
\qquad\qquad
C_{3,j}^{k(\eps)}  \eqdef
\begin{tikzpicture}  [baseline=10]
\node[root]	(root) 	at (0,0) {};
\node[dot]		(left)  	at (-0.7,1) {};
\node[dot]		(right)  	at (0.7,1) {};			
\draw[kernel,bend right=20] (left) to  (root);
\draw[kernel, bend left=20]   (right)  to 
	node [midway,right,font=\scriptsize] {$j$} (root) ;
\draw[kernel, bend right=20]  (right)  to 
	node [midway,above,font=\scriptsize] {$k$} (left);
\end{tikzpicture}
\qquad
C_{3,-k}^{-k(\eps)} \eqdef
\begin{tikzpicture}  [baseline=10]
\node[root]	(root) 	at (0,0) {};
\node[dot]		(left)  	at (-0.7,1) {};
\node[dot]		(right)  	at (0.7,1) {};			
\draw[kernel,bend right=20] (left) to  (root);
\draw[kernel, bend left=20]   (right)  to 
	node [midway,right,font=\scriptsize] {$-k$} (root) ;
\draw[kernel, bend right=20]  (right)  to 
	node [midway,above,font=\scriptsize] {$-k$} (left);
\end{tikzpicture}
	\notag
\\
C_{4,j}^{k(\eps)}  \eqdef 
\begin{tikzpicture}  [baseline=10]
\node[root]	(root) 	at (0,0) {};
\node[dot]		(left)  	at (-0.7,1) {};
\node[dot]		(right)  	at (0.7,1) {};			
\draw[kernel,bend right=20] (left) to 
	node [midway,left,font=\scriptsize] {$j$} (root);
\draw[kernel,bend left=20]   (right)  to  (root) ;
\draw[kernel,bend right=20]  (right)  to 
	node [midway,above,font=\scriptsize] {$k$} (left);
\end{tikzpicture}
\qquad
C_{4,-k}^{k(\eps)}  \eqdef
\begin{tikzpicture}  [baseline=10]
\node[root]	(root) 	at (0,0) {};
\node[dot]		(left)  	at (-0.7,1) {};
\node[dot]		(right)  	at (0.7,1) {};			
\draw[kernel,bend right=20] (left) to
	 node [midway,left,font=\scriptsize] {$-k$} (root);
\draw[kernel, bend left=20]   (right)  to  (root) ;
\draw[kernel, bend right=20]  (right)  to 
	node [midway,above,font=\scriptsize] {$k$} (left);
\end{tikzpicture}
\qquad
C_{4,k}^{-k(\eps)}  \eqdef
\begin{tikzpicture}  [baseline=10]
\node[root]	(root) 	at (0,0) {};
\node[dot]		(left)  	at (-0.7,1) {};
\node[dot]		(right)  	at (0.7,1) {};			
\draw[kernel,bend right=20] (left) to 
	node [midway,left,font=\scriptsize] {$k$}  (root);
\draw[kernel, bend left=20]   (right)  to  (root) ;
\draw[kernel, bend right=20]  (right)  to 
	node [midway,above,font=\scriptsize] {$-k$} (left);
\end{tikzpicture}
	\label{e:def-Cs}
\\
C_{5,j}^{-k(\eps)} =C_{6,j}^{-k(\eps)}  \eqdef 
\begin{tikzpicture}  [baseline=10]
\node[root]	(root) 	at (0,0) {};
\node[dot]		(left)  	at (-0.7,1) {};
\node[dot]		(right)  	at (0.7,1) {};			
\draw[kernel,bend right=20] (left) to 
	node [midway,left,font=\scriptsize] {$-k,j$} (root);
\draw[kernel,bend left=20]   (right)  to  (root) ;
\draw[kernel,bend right=20]  (right)  to  (left);
\end{tikzpicture}
\qquad
C_{7,j}^{-k(\eps)}  \eqdef 
\begin{tikzpicture}  [baseline=10]
\node[root]	(root) 	at (0,0) {};
\node[dot]		(left)  	at (-0.7,1) {};
\node[dot]		(right)  	at (0.7,1) {};			
\draw[kernel,bend right=20] (left) to 
	node [midway,left,font=\scriptsize] {$-k$} (root);
\draw[kernel,bend left=20]   (right)  to 
	node [midway,right,font=\scriptsize] {$j$} (root) ;
\draw[kernel,bend right=20]  (right)  to  (left);
\end{tikzpicture}
\qquad
C_{7,-k}^{-k(\eps)}  \eqdef 
\begin{tikzpicture}  [baseline=10]
\node[root]	(root) 	at (0,0) {};
\node[dot]		(left)  	at (-0.7,1) {};
\node[dot]		(right)  	at (0.7,1) {};			
\draw[kernel,bend right=20] (left) to 
	node [midway,left,font=\scriptsize] {$-k$} (root);
\draw[kernel,bend left=20]   (right)  to
	 node [midway,right,font=\scriptsize] {$-k$} (root) ;
\draw[kernel,bend right=20]  (right)  to  (left);
\end{tikzpicture}
	\notag
\end{gather}
%and define $C_{3,j}^{-k(\eps)}$
%and $C_{4,j}^{-k(\eps)}$
%in the same way as $C_{3,j}^{k(\eps)}$
%and $C_{4,j}^{k(\eps)}$ with $k$ in the graph replaced by $-k$,
%and define $C_{3,-k}^{k(\eps)}$
%in the same way as $C_{3,j}^{k(\eps)}$
%with $j$ in the graph replaced by $-k$.
Furthermore we define
\[
c_B^\e \eqdef
\eps\;
\begin{tikzpicture}  [baseline=10]
\node[root]	(root) 	at (0,0) {};
\node[dot]		(top)  	at (0,1) {};			
\draw[kernel,bend left =60]   (top)  to node [midway,right,font=\scriptsize] {$j$}  (root) ;
\draw[kernel,bend right =60]   (top)  to  (root) ;
\end{tikzpicture}
\;=\;
\eps \int_{\R\times \Lambda_\e} K^\e(-z) \nabla^\e_{\be_j} K^\e (-z) \,dz \;.
\]	
\begin{equs} [e:def-C1]
C_1^{(\eps)} & %= 2(C_1^{(\eps)} -C_2^{(\eps)})
\;\; \eqdef \;\;
\sum_{k=1,2}
\Big(  C_{3,j}^{k(\eps)}  -C_{4,j}^{k(\eps)}
+C_{5,j}^{-k(\eps)}
- C_{7,j}^{-k(\eps)} \Big) 
-  c_B^\e\;.
\end{equs}
\[
c_\Psi^\e \eqdef
\eps\;
\begin{tikzpicture}  [baseline=10]
\node[root]	(root) 	at (0,0) {};
\node[dot]		(top)  	at (0,1) {};			
\draw[kernel,bend left =60]   (top)  to node [midway,right,font=\scriptsize] {$-j$}  (root) ;
\draw[kernel,bend right =60]   (top)  to  (root) ;
\end{tikzpicture}
\;-\;
\eps\;
\begin{tikzpicture}  [baseline=10]
\node[root]	(root) 	at (0,0) {};
\node[dot]		(top)  	at (0,1) {};			
\draw[kernel,bend left =60]   (top)  to node [midway,right,font=\scriptsize] {$j$}  (root) ;
\draw[kernel,bend right =60]   (top)  to  (root) ;
\end{tikzpicture} \;.
\]
Note that the values of the last three constants  do not  depend on $j$ since the integrals
remain the same values when the spatial coordinates are switched.
The choice of $C_1^{(\eps)}$ is such that 
in the renormalized equation for $B^\e$ derived below,
all the renormalization constants will precisely cancel out
(at the same time we can ensure that the models converge).
These constants then determine for each $\eps>0$ an element 
$M_\eps$ of the renormalization group. 
%by specifying
%\[
%\ell_1= C_1^{(\eps)} \;,
%\qquad
%\ell_2 = C_2^{(\eps)}\;,
%\qquad
% \ell_{3,j}^{k} =  C_{3,j}^{k(\eps)} \;,
%\qquad
% \ell_{4,j}^{k} =  C_{4,j}^{k(\eps)}\;,
% \qquad
% \ell_{3,j}^{-k} =  C_{3,j}^{-k(\eps)} \;,
%\qquad
% \ell_{4,j}^{-k} =  C_{4,j}^{-k(\eps)}\;.
%\]
The maps $M_\eps$ acting on the canonical models $Z^\e$
then yield a sequence of models $\hat Z^\e$ which are called renormalized models. This follows from the general result
in \cite{bruned2016algebraic}.

In fact all the constants with two distinct indices $k\neq j$,  
converge to finite limits,
which follows from the following lemma (``parity''). 
The other renormalization constants defined in \eqref{e:def-Cs}, \eqref{e:def-C1} diverge logarithmically.
The constant $c_B^\e$ and $c_\Psi^\e$ converge to finite but non-zero limits by Lemma~\ref{lem:int-P-dP}.
%\footnote{
%$c_B^\e\neq 0$  for $\e>0$ because 
%the discrete derivative is not quite `symmetric';
%the ``parity'' arguments in proof of  
%Lemma~\ref{lem:some-are-finite} will yield
%$c_B^\e= \frac{\eps^2}{2}  \int_{\R\times \Lambda_\e} P^\e(-z) \nabla^\e_{-\be_\ell} \nabla^\e_{\be_\ell} P^\e (-z) \,dz $
%which  is not zero.
%}

\begin{lemma} \label{lem:some-are-finite}
For $k\neq j$, one has
\begin{equ}[e:even-odd-ha]
\lim_{\e \to 0}
\int_{(\R\times \Lambda^\e)^2} K^\e (x-y)\,
	\nabla^\e_{\pm \be_j} K^\e (x-z) \, 
	\nabla^\e_{\pm \be_k}  K^\e (z-y)\,dz\,dy
	<\infty \;.
\end{equ}
The inequality also holds with the last factor of the integrand replaced by $\nabla^\e_{\pm \be_k}  K^\e (y-z)$.
\end{lemma}
\begin{proof}
We only prove the case where the two discrete derivatives in 
 \eqref{e:even-odd-ha} are $\nabla^\e_{\be_j}$ and $\nabla^\e_{\be_k}$;
the other cases follow in the same way.
Since the truncated heat kernel $K^\e(x^{(0)},x^{(1)},x^{(2)})$ for $x\in \R\times \Lambda_\e$
is even in both spatial variables $x^{(1)}$ and $x^{(2)}$,
simultaneously flipping the signs of the $j$-th spatial variables of $x,y,z$  in \eqref{e:even-odd-ha} we see that the integral   in \eqref{e:even-odd-ha} is equal to
the same integral with $\nabla^\e_{\be_j}$
replaced by $\nabla^\e_{-\be_j}$. Therefore \eqref{e:even-odd-ha}
is equal to $1/2$ times
\[
-\e \int_{(\R\times \Lambda^\e)^2} K^\e (x-y)\,
	\nabla^\e_{-\be_j} \nabla^\e_{\be_j} K^\e (x-z) \, 
	  \nabla^\e_{\be_k}  K^\e (z-y)\,dz\,dy\;.
\]
It is easy to show that $\nabla^\e_{-\be_j} \nabla^\e_{\be_j}  K^\e$
is a kernel of order $-4$ (see \eqref{def:DSingKer} below). By Lemma~\ref{lem:OpDSingKer} below and a bound 
$\int_{\R\times \Lambda^\e } 
	\e \,(\|z\|_\s \vee \e)^{-5} dz =O(1)$
 we obtain \eqref{e:even-odd-ha}.
\end{proof}

In the previous section, we saw in Lemma~\ref{lem:unren-equ}  an ``un-renormalized''  equation by applying
the reconstruction operator $\CR^\eps$ for the model $Z^\e$
on 
the abstract equation \eqref{e:abs-fp}.
Its solution would not converge. 
In the sequel, we will 
consider the abstract equation \eqref{e:abs-fp}
with every incidence of the reconstruction operator $\CR^\eps$ in the remainder terms
\eqref{e:remainders} replaced by the reconstruction operator denoted by ${\hat\CR}^\e$
for the renormalized model $\hat Z^\e$.

\begin{lemma}\label{lem:renorm-equ}
Suppose that $(\bB^\e,\bPsi^\e)$ is a solution to the abstract fixed point problem \eqref{e:abs-fp}. Then
$(\hat\CR_t^\eps \bB^\eps,  \hat\CR_t^\eps \bPsi^\eps)$ satisfy 
\eqref{e:BPsi-poly}, with the constant $C^{(\e)}$  in \eqref{e:BPsi-poly}  given by
\begin{equ} [e:value-of-C]
C^{(\e)} = 
- 2 \lambda^2 
   C_2^{(\e)} 
 + \lambda^2 \sum_{k=1,2}
    \Big(
  C_{3,k}^{k(\eps)} +C_{4,-k}^{k(\eps)} 
   \Big)  + \lambda^2 c^\e_{\Psi}
   \;.
\end{equ}
\end{lemma}

\begin{proof}
In view of the abstract equation \eqref{e:abs-fp} (with $\CR^\eps$ in the remainder terms
\eqref{e:remainders} replaced by ${\hat\CR}^\e$),
the solution $(\bB^\e,\bPsi^\e)$ in  $\CD^{\gamma, \eta}_{\e,T}$ must have the following expansion truncated at homogeneity $1$
%(recall the convention after Table 1 for symbols without shifts specified):
\begin{equs} [e:expand-sol]
\bB^\eps_j(x) &= \<Ixij> + b_j(x) \one 
	+ \lambda \varphi_1(x) \;\<I-dPsi2>
	- \lambda \varphi_2(x) \;\<I-dPsi1>
	+ \lambda \<I-Psi1dPsi2> - \lambda \<I-Psi2dPsi1>
	+ \langle \nabla b_j(x), \mathbf X \rangle  \\
\bPsi^\eps_j(x) 
&= \<Izetaj> + \varphi_j(x) \one 	+ \langle\nabla\varphi_j(x),\mathbf X\rangle \\
&
	-(-1)^{j} \lambda
	   \sum_{k=1,2 \atop \ell\neq j} 
	 \Big[ 
		b_k(x)\<I-dPsil> 
		+\phi_\ell (x)\<I(I-k(Bk))> 
		-\phi_\ell  (x)\<d-kI(I(Bk))> 
		-b_k(x) \<d-kI(I(Pe))> 
	 +
		 \<I-BdPsi> +  \<I(d-kBP)> - \<d-kI(BP)>  \Big]
\end{equs}
%and thus 
%\begin{equ}
%\DD_i \bPsi^\eps_j(x) =\<Ii-zeta-j>   
%	-(-1)^{j} \lambda
% \sum_{k=1,2 \atop \ell\neq j} 
%   \Big[ 
%		b_k(x)  \<diI-dkPsi>-b_k(x)\<diI-d-kPsi>
%	 +
%		 \<diI-BdPsi>-\<diI-Bd-Psi> \Big]
%	+ \nabla_i \varphi_j(x) \one 
%\end{equ}
%and $\s\f$ sums over $\sfmatrix{0,0\\  \; 0 \;},
% 			\sfmatrix{\-\be_k,\-\be_k\\ \;0\;}$,
	 for functions  \footnote{We omit the dependence of $b_j ,\phi_j,\nabla b_j ,\nabla\phi_j$ on $\e$  in our notation.}
$b_j ,\phi_j:\Lambda_\e \times \R \to \R$ and  
$\nabla b_j ,\nabla\phi_j:\Lambda_\e \times \R \to \R^2$ and $\nabla_i\phi_j$ denotes the $i$-th component of $\nabla\phi_j$.
The modeled distributions 
$\DD_{i} \bPsi^\eps_j$ (resp.
 $\DD_{\- i} \bPsi^\eps_j$)
 then has expansion derived directly:
 namely, every symbol gets an extra ``derivative'' decoration
$i$ (resp. $\-i$) 
on the bottom line, and the abstract polynomial
part is given by $ \nabla_i \varphi_j(x) \one $
(resp. $-\nabla_{i} \varphi_j(x-\be_i) \one $). 
$\DD_{\-j}\bB^\eps_j$ is expanded in the same way.

% Commented sf stuff
%The expansions for $\mathbb S_{-\be_j} (\bB_{j}^\e)$ 
%and $\mathbb S_{\be_i}(\bPsi_{j}^{\eps})$ are then determined by
%\eqref{e:shift-Dga},
%for instance, 
%\begin{equs}
%(\mathbb S_{-\be_j}  \bB_{j}^\e)(x) 
%& =\big( \<Ixij>,(\-\be_j) \big) + b_j(x-\be_j) \one 
%+ \lambda \varphi_1(x-\be_j) \Big( \;\<I-dPsi2>,\sfmatrix{0\\ \-\be_j}\Big) 
%- \lambda \varphi_2(x-\be_j) \Big( \;\<I-dPsi1>,\sfmatrix{0\\ \-\be_j}\Big) 
%\\
% &\qquad + \lambda \Big(\<I-Psi1dPsi2>,\sfmatrix{0,0\\  \; \-\be_j \;}\Big)
% 	 - \lambda \Big(\<I-Psi2dPsi1>,\sfmatrix{0,0\\  \; \-\be_j \;}\Big)
%	+ \langle \nabla b_j(x-\be_j), \CS_{-\be_j} \mathbf X \rangle \\
%(\mathbb S_{\be_i} \bPsi^\eps_j ) (x) 
%&= \big( \<Izetaj> ,(\be_i) \big)   + \varphi_j(x+\be_i) \one 
%\\
% & \!\!\!\!\!\!\!\!
%	-(-1)^{j} \lambda \!\!\!\! 
%	  \!\!\!\! \sum_{k,\be\in\{\be_i,-\be_k+\be_i\} \atop \ell\neq j} 
%	 \!\!\!\!\!\!\!\Big[ 
%			b_k(x+\be_i) \Big(\, \<I-dPsil>,\sfmatrix{0\\\be}\Big) 
%	 + 
%		 \Big(\<I-BdPsi>,\sfmatrix{0,0\\  \; \be \;}\Big) \Big]
%	+ \langle\nabla\varphi_j(x+\be_i),\CS_{\be_i}\mathbf X\rangle \;.
%\end{equs}

It is then straightforward to check that
the nonlinear term $\hat F_{B_j^\e} = \hat F_{B_j^\e}(\bB^\eps,   \bPsi^\eps)$
defined in \eqref{e:FhatB-fields}  has the following expression,
truncated at homogeneity $0$:
\begin{equs}
\,&\hat F_{B_j^\e} (x)= \\
 &\sum_{k}  (-1)^k\lambda \Big( 
		 \varphi_{3-k}(x) \; \<Ij-zeta-k>  
		-   \nabla_j\varphi_{3-k}(x) \; \<Izetak>
		+ \langle \nabla\varphi_{3-k}(x), \<X-Ij-zeta-k> \rangle + \varphi_{3-k}(x)\nabla_j \varphi_k(x)\one\Big)
		\\
	&  + \lambda^2 \sum_{k,\ell} \Big[
		\<dj-Psi-IBdkPsi> 
		 -   \<Psi-dIj-BkdPsi>  
		 +\<djPeI(d-kBkPe)>
		 -\<PedjI(d-kBkPe)>
		+   \<Ped-kdjI(BkPe)> 
		-   \<djPed-kI(BkPe)> 
%		-\varphi_\ell(x) \<dI-BdPsi>
%		 +\varphi_\ell(x) 
%			\<dI-Bd-Psi> 
		\Big]  
		\\
	&+  \lambda^2 \sum_{k,\ell } b_k(x)  
	 \bigg[ 
	%- \varphi_\ell(x) \<dI-dPsi>
	  \<dj-Psil-Idk-Psil>
	-   \<Psi-dIdPsi>
	%+ \varphi_\ell(x) \<dI-d-Psi>
	+   \<Ped-kdeIPe>
	-  \<Ped-kI(dePe)>	 \bigg]
	+ \Big(\cdots\Big)
		\\
	& 
	-\lambda^2 \sum_{k} \bigg[
	 \<BjPsikPsik>   
	+ b_j(x) \,\<PsikPsik> + \varphi_k(x)^2 \; \<Ixij> 
	 +b_j(x)\varphi_k^2(x)\one
	 + 2 \varphi_k(x) \<BjPsik>
	+ 2 b_j(x) \<Izetak>
	  \bigg].
\end{equs}
Here, $k,\ell$ sum over $\{1,2\}$,
%$\s\f$ sum over all the possible shifts associated 
%with the corresponding symbol as listed in Table 1,
%and we used the convention after Table 1 for symbols without shifts specified.
and $(\cdots)$ denotes a linear combination of symbols 
 on which $M_\e$ acts trivially (and they arise from suitably modifying symbols in the line above ``$(\cdots)$'' by abstract polynomials);
 for instance, it includes a term $- \lambda^2\varphi_\ell(x) \<dI-BdPsi>$, which arises from replacing an incidence of $\<Izetal>$ in $- \lambda^2  \<Psi-dIj-BkdPsi>$ by $\varphi_\ell(x)\one$. We refrain from listing all these terms because they do not contribute any renormalization at all in the renormalized equation.

Applying the reconstruction operator $\hat\CR^\e$ associated to the renormalized model $\hat\Pi^\eps$
on both sides of \eqref{e:abs-fp},
using $(\hat\Pi^{\eps,t}_x \tau) (t,x)=(\Pi^{\eps,t}_x M_\eps \tau) (t,x) $, 
following the analogous arguments in proof of Lemma~\ref{lem:unren-equ} and noting that
\begin{equs}
\hat\CR^\e \bB_j^\eps = K^\eps \ast_\e \xi^\eps_j + b_j
\qquad
\hat\CR^\e \bPsi_j^\eps = K^\eps \ast_\e \zeta^\eps_j + \varphi_j
\end{equs}
 one has that 
 $(\hat\CR^\e \bB^\eps,\hat\CR^\e \bPsi^\eps)$ satisfy  \eqref{e:BPsi-poly1}
with the following extra terms on the right hand side:
\begin{equs}[e:to-be-zero]
% \lambda  \Big( \CR \bPsi_1^\eps (\Cdot) \nabla_j^\eps \CR \bPsi_2^\eps (\Cdot) 
%		-\CR \bPsi_2^\eps (\Cdot) \nabla_j^\eps  \CR \bPsi_1^\eps (\Cdot)  \Big) 
% + 
 2\lambda^2 
 \sum_{k=1,2}
\Big( - C_{3,j}^{k(\eps)}  + C_{4,j}^{k(\eps)}
-C_{5,j}^{-k(\eps)}
+ C_{7,j}^{-k(\eps)} 
\Big) &
\hat\CR^\e \bB_j^\eps  (\Cdot)
+ 2 \lambda^2 c_B^\e  \hat\CR^\e \bB_j^\eps
\\
%\\
%- \lambda^2  \CR \bB_j^\eps  (\Cdot)  \sum_{k=1,2}
%	 \CR \bPsi_k^\eps (\Cdot+\be_j) 
%	\CR \bPsi_k^\eps (\Cdot)
&+ 2\lambda^2 C_1^{(\eps)} \hat\CR^\e \bB_j^\eps  (\Cdot) 
\end{equs}
where the factor $2$ comes from summing over an index,
and the term
$2 \lambda^2 c_B^\e  \hat\CR^\e \bB_j^\eps$
arises from 
the ``remainder'' term $\widetilde{\boldsymbol R}_{B^\eps_j}^\eps $ given in
\eqref{e:remainders3}.

By definition of $C_1^{(\eps)}$ given in \eqref{e:def-C1} we have
\[
\eqref{e:to-be-zero}\; =\; 0\;.
\]

It is also straightforward to check that 
$\hat F_{\Psi_j^\e} (x)  =  \sum_{m=1}^4 \hat f^{(m)}_{\Psi_j^\e} (x)$ with
(indices sum over $\{1,2\}$ if not specified)
%$\hat F_{\Psi_j^\e}=\hat F_{\Psi_j^\e}(\bB^\eps,   \bPsi^\eps)$ 
\begin{equs}
 \hat f^{(1)}_{\Psi_j^\e} (x)  & \eqdef 
 -(-1)^j \lambda  \sum_{k=1,2 \atop \ell\neq j}   
\Big[
	 \nabla_k \varphi_{\ell}(x) \<Ixik>
	 +b_k(x)  \<Ik-zeta-l>
	 +\varphi_{\ell}(x) \<I-k-xi-k>
	 +\nabla_{\- k} b_k(x)  \<Izetal>	
	\\
	& \;
	+ (b_k(x) \nabla_k\phi_{\ell} (x) - \nabla_{ k} b_k(x-\be_k)\phi_{\ell} (x)  )\one
	+  \Big\langle 
		\nabla b_k(x),   \<X-Ik-zeta-l>      \Big\rangle 
	+  \Big\langle 
		\nabla \phi_\ell(x),   \<X-I-k-xi-k>      \Big\rangle 	
	\Big] 
\\
 \hat f^{(2)}_{\Psi_j^\e} (x) & \eqdef 
	 \lambda^2 \sum_{k,\ell}  
	\Big( \<B-dIBdPsi> + \<d-kPI(BdeP)>
	+\<PdkI(d-eBP)>
	+\<d-kPI(d-eBP)>
	-\<Pd-edkI(BP)> -\<d-kPd-eI(BP)>
	\Big)
	\\
&\qquad + \lambda^2 \sum_{k,\ell}   
	\phi_j(x) \Big(
	 \<BkdkI(d-eBe)> + \<d-kBkI(d-eBe)>
	 -\<Bkd-edkI(Be)> - \<d-kBkd-eI(Be)>
	\Big)
+ \Big(\cdots \Big)
\\
 \hat f^{(3)}_{\Psi_j^\e} (x) & \eqdef 
	  \lambda^2  \sum_{k=1,2 \atop \ell\neq j}  
	  \Big(
		 \<dkPsil-IPsijdkPsil> 
		 - \<dkPsil-IPsildkPsij>
		  +\<Ped-kI(PjdkPe)> 
		 - \<Ped-kI(PedkPj)>
		 \Big) 
		 	\\
& \qquad
	+ \lambda^2 \sum_{k=1,2 \atop \ell\neq j} 
	 \Big[
		 \phi_j(x) \<dkPsil-IdkPsil>
		 -\phi_\ell(x) \<dkPsil-IdkPsij>
		 +\phi_j(x) \<dk-Psi-IdkPsi>
		 -\phi_\ell(x) \<dk-Psi-IdkPsi2>
		 + \Big(\cdots \Big)
		 \Big]
\\
 \hat f^{(4)}_{\Psi_j^\e} (x) &\eqdef 
	- \lambda^2 
		 \sum_{ k}
	 \Big[
	\<BkBkPsij>
	+ \varphi_j(x)    \<BkBk>
	+2 b_k(x)    \<BkPsij>  
	\\
	&\qquad\qquad\qquad
	+ 2\phi_j(x) b_k(x)   \<Ixik>
	+ b_k^2(x)  \<Izetaj>
	+ \phi_j(x) b_k^2(x)\one\Big]
\end{equs}
where 
again $(\cdots)$ denotes linear combination of symbols 
 on which $M_\e$ acts trivially, which arise from suitably modifying symbols in the first line  by abstract polynomials,
 and do not contribute any renormalization at all in the renormalized equation.
One also has (recall  \eqref{e:FhatB-fields})
\begin{equs}
\hat F^{(k)}_{\Psi_j^\e} (x) =
 (-1)^j \lambda \sum_{\ell\neq j} 
 \Big( \<BPsi>+
  \varphi_{\ell}(x) \<Ixik>
	 + b_k(x)  \<Izetal>	
	+ b_k(x) \phi_{\ell} (x) \one
	\Big) \;.
\end{equs}
Applying the reconstruction operator $\hat\CR^\e$ on \eqref{e:abs-fp} as before,
we see that 
$(\hat\CR^\e \bB^\eps,\hat\CR^\e \bPsi^\eps)$ satisfy  \eqref{e:BPsi-poly2}
with the following extra terms on the right hand side
\begin{equs}
 \lambda^2 
 \sum_{k=1,2}
\Big(- C_{4,k}^{-k(\eps)}  -C_{3,-k}^{-k(\eps)}
+C_{6,k}^{-k(\eps)}
&+C_{7,-k}^{-k(\eps)} 
-C_{3,k}^{k(\eps)}
- C_{4,-k}^{k(\eps)} 
\Big) 
\hat\CR^\e \bPsi_j^\eps
- \lambda^2 c^\e_{\Psi}\hat\CR^\e \bPsi_j^\eps
\\
&+ 2\lambda^2 C_2^{(\eps)} \hat\CR^\e \bPsi_j^\eps . 
\end{equs}
By simple summation by parts $\sum_y \nabla^\e_\be f (x-y) g(y-z)=\sum_y f(x-y)\nabla^\e_{\be}g(y-z)$, one has 
\[
-C_{4,k}^{-k(\eps)}  
+C_{6,k}^{-k(\eps)}
=
-C_{3,-k}^{-k(\eps)}
+C_{7,-k}^{-k(\eps)} 
 =0 \;.
\]
Therefore the renormalization constant $C^{(\e)}$ on the right hand side
is given by \eqref{e:value-of-C}.
\end{proof}

\begin{remark}
By the argument in Remark~\ref{rem:fav-id} one can prove that $C^{(\e)}=-\lambda^2 C_2^{(\e)}+O(1) =O(| \log\e|)$
which indeed diverges logarithmically. We omit the detail of this calculation since it is straightforward, but only remind the reader that the mass renormalization $C^{(\e)} \Phi^\eps$ {\it does not} break gauge invariance.
\end{remark}

\subsection{Moment bounds}
\label{sec:mom}

%\begin{theorem} 
%\label{thm:tightness}
%Let $(\hat \Pi^{(\eps)}, \hat \Gamma^{(\eps)})$ and $ (\hat \Pi^{(\eps,\bar\eps)}, \hat \Gamma^{(\eps,\bar\eps)})$ be as above. 
% % with renormalisation constants given by \eqref{e:choiceell}.
%There exist $\kappa,\eta>0$   such that 
%for every $\tau\in\{\Xi,\<2>,\<21>,\<21a>,\<4>,\<211>\}$
% and every   $p>0$,
%???every $\tau\in\CF_- = \{\tau\in\CF : |\tau|_\s <0\}$ where $\CF$ is the set of all 
%formal expressions in the model space of the regularity structure,??? %$\mathscr T^{(r)}$,  
%one has
%\begin{equ}  [e:tightness]
%\E | (\hat \Pi^{(\eps)}_x \tau) (\varphi_x^\lambda) |^p  
%	\lesssim \lambda^{p (|\tau| + \eta)} \;,
%\qquad
%\E | (\hat\Pi^{(\eps)}_x \tau  -  \hat\Pi^{(\eps,\bar\eps)}_x \tau ) (\varphi_x^\lambda) |^p  
%	\lesssim
%	\bar\eps^\kappa
%	 \lambda^{p (|\tau| + \eta)} 
%\end{equ}
%uniformly in all $\eps,\bar\eps\in(0,1]$, all $\lambda\in(0,1]$, all test functions $\varphi\in\CB$, and all $x\in\R^2$.
%Then, ???provided that $\kappa p>|\s|$, ???
%the family of random models $Z_\eps$  is tight on the pseudo-metric 
%space $(\mathscr M, \$\cdot\$)$ of admissible models of $\mathscr T^{(r)}$
%(i.e. their laws are tight).
%\end{theorem}

%To obtain bound on the renormalized models which is uniform  in $\eps$,
%one needs to find the order of the two point correlation functions.
Our random models $\hat Z^\e$
 are stationary and Gaussian models 
 %because $\hat\Pi_x^{\e,t}\tau$  are stationary in space and time and are 
 (i.e. built from Gaussian noises).
They belong to a finite inhomogeneous Wiener chaos:
 there exist kernels $\CW^{(\eps; k)}\tau$ such that $\bigl(\CW^{(\eps; k)} \tau\bigr)(z) \in H^{\otimes k}_\eps$ for $z \in \R \times \Lambda_\eps$
 where $H_\eps\eqdef L^2([-T,T]\times \Lambda_\eps)$, and
\begin{equs}[e:PiWiener]
\langle
\hat\Pi^{\eps, t}_0 \tau, \varphi\rangle_\eps &= \sum_{k \leq \VERT \tau \VERT} I^\eps_k\Big(\int_{\Lambda_\eps} \varphi(y)\, \bigl(\CW^{(\eps; k)}\tau\bigr)(t,y)\, dy\Big)\;,
\end{equs}
where $ \VERT \tau \VERT$ is the number of total occurrences of $\Xi_{\xi_j}$ or $\Xi_{\zeta_j}$
for $j=1$ or $2$ in $\tau$, and
$I^\eps_k$ is the $k$-th order Wiener integrals with respect to $\xi^\eps_j$ or $\zeta^\e_j$
depending on the particular types of noise $\Xi$ occurred in $\tau$.
%
%For $z_1\neq z_2\in [-T,T]\times \Lambda_{\eps}$, we introduce
%\hao{did I define $\delta$? Do we really need $\bar\eps$?}
%\begin{equ}[e:DKDef]
% \CK^\eps(z_1, z_2) \eqdef \langle \mathcal{W}^\eps(z_1), \mathcal{W}^\eps(z_2) \rangle_{\eps}\;, \qquad \delta \CK^{\eps,\bar\eps}(z_1, z_2) \eqdef \langle \delta \mathcal{W}^{\eps,\bar \eps}(z_1), \delta \mathcal{W}^{\eps,\bar\eps}(z_2) \rangle_{\eps}\;.
%\end{equ}
%
Then we define
\begin{equs}[e:CovDef]
\bigl(\CK^{(\eps; k)}\tau\bigr)(z_1, z_2) \eqdef \langle \bigl(\CW^{(\eps; k)}\tau\bigr)(z_1), \bigl(\CW^{(\eps; k)}\tau\bigr)(z_2) \rangle_{H_\eps^{\otimes k}}\;,
\end{equs}
for $z_1 \neq z_2 \in \R \times \Lambda_\eps$.
The functions $\CK^{(\eps; k)}\tau$ will depend on the time variables $t_1$ and $t_2$ only via $t_1 - t_2$, i.e.
$\bigl(\CK^{(\e,k)}\tau\bigr)_{t_1 - t_2}(x_1, x_2) \eqdef \bigl(\CK^{(\e,k)}\tau\bigr)(z_1, z_2)$
where $z_i = (t_i, x_i)$.

We will then apply the following result with $d=2$.
% given in \cite[Thoerem~6.1]{hairer2015discrete}, where $d=2$ in our case. 
Recall that 
%$\hat{\mathscr{T}}^{(r)}$ is the regularity structure $\hat{\mathscr{T}}$ truncated at 
%homogeneity $r>0$, and
the norm $\VERT Z^\e \VERT_{\delta, \gamma; T}^{(\eps)}$
on discrete inhomogeneous models $Z$ is defined in Section~\ref{sec:Inhomo-models},
and the distance  $\VERT Z; Z^\e \VERT_{\delta, \gamma; T}^{(0,\eps)}$ defined in 
\eqref{e:ZeZ-distance}.
We will also use the following set in the statement of the following result
\begin{equ}[e:TMinus]
\hat\CF^{-} \eqdef \left(\{\tau \in \hat\CF: |\tau| < 0\} \cup \CF^{gen}\right) \setminus \poly{\CF}\;.
\end{equ}
%For the proof, we recall the discrete multiresolution analysis defined in 
%\cite[Section~4.1.2]{hairer2015discrete}; 
%in particular, we will need the wavelet functions $\phi_x^{N,n}$
%defined therein where $N=-\log_2 \eps$ and $0\le n\le N$.

\begin{proposition}
\label{p:CovarianceConvergence}
Let $Z^\eps$ be an admissible stationary Gaussian discrete model on the truncated regularity structure $\hat{\ST} = (\hat\CT, \CG)$. % be a truncated regularity structure and  
Let $\kappa > 0$ be such that the bounds
\begin{equs}[e:GaussModelBoundSigmaGamma]
\E \Vert \Gamma^{\eps, t}_{x y} \tau \Vert_{m}^2 \lesssim | x-y|^{2(|\tau| - m) + \kappa}\;,
\qquad
\E \Vert \Sigma^{\eps, s t}_{x} \tau \Vert_{m}^2 \lesssim \bigl(|t - s|^{\frac12} \vee \eps\bigr)^{2 (|\tau| - m) + \kappa}\;,
\end{equs}
hold for $\tau \in \CF^{gen} \setminus \poly{\CF}$, for all $s, t \in [-T, T]$, all $x, y \in \Lambda_\eps$, all $m \in \hat{\CA}$ such that $m < |\tau|$.
%and for some $T \geq c$, where $c > 0$ is from Definition~\ref{d:DKernel}.
Assume that for some $\delta > 0$ and for each $\tau \in \hat\CF^{-}$ the bounds
\begin{equs}[e:GaussModelBound]
\E |\langle \Pi^{\eps, t}_x \tau, \varphi_x^\lambda\rangle_\eps|^2 &\lesssim \lambda^{2 |\tau| + \kappa}\;, \\
\E |\langle \bigl(\Pi^{\eps, t}_x - \Pi^{\eps, s}_x\bigr) \tau, \varphi_x^\lambda\rangle_\eps|^2 &\lesssim \lambda^{2 (|\tau| - \delta) + \kappa} \bigl(|t-s|^{\frac12} \vee \eps\bigr)^{2 \delta}\;,
\end{equs}
hold uniformly in all $\lambda \in [\eps,1]$, all $s, t \in [-T, T]$, all $x \in \Lambda_\eps$ and all $\varphi \in \CB^r_0(\R^d)$ with $r > - \lfloor\min \hat{\CA}\rfloor$.
%
%Then, 
%the family of random models $Z_\eps$  is tight on the pseudo-metric 
%space $(\mathscr M, \$\cdot\$)$ of admissible models of $\hat{\mathscr{T}}^{(r)}$
%(i.e. their laws are tight).
Then, one has
$\E \big(\VERT Z^\eps \VERT_{\bar \delta, \gamma; T}^{(\eps)}\big)^p \lesssim 1$  for every $\gamma > 0$, $p \geq 1$ and $\bar \delta \in [0, \delta)$. % provided $\eta<\kappa$.

Furthermore, let $Z=(\Pi,\Gamma,\Sigma)$ be an admissible stationary Gaussian discrete model on $\hat{\ST}$ such that 
 for some $\theta>0$,
the maps $\Gamma^\e - \Gamma$ and $\Sigma^\e - \Sigma$
satisfy \eqref{e:GaussModelBoundSigmaGamma} with proportionality constants of order $\e^{2\theta}$ and
\begin{equs}[e:GaussModelBound-diff]
\E | \langle \Pi^{\eps, t}_x \tau, \varphi_x^\lambda\rangle_\eps
	-\langle \Pi^{t}_x \tau, \varphi_x^\lambda\rangle |^2 
&\lesssim \e^{2\theta} \lambda^{2 |\tau| + \kappa}\;,
 \\
\E |\langle \bigl(\Pi^{\eps, t}_x - \Pi^{\eps, s}_x\bigr) \tau, \varphi_x^\lambda\rangle_\eps
	-\langle \bigl(\Pi^{t}_x - \Pi^{s}_x\bigr) \tau, \varphi_x^\lambda\rangle
|^2 
&\lesssim \e^{2\theta} \lambda^{2 (|\tau| - \delta) + \kappa} \bigl(|t-s|^{\frac12} \vee \eps\bigr)^{2 \delta}\;,
\end{equs}
uniformly in all parameters as above.
Then, one has
$\E \big(\VERT Z ; Z^\eps  \VERT_{\bar \delta, \gamma; T}^{(0,\eps)}\big)^p \lesssim \e^{\theta p}$  for every $\gamma > 0$, $p \geq 1$ and $\bar \delta \in [0, \delta)$. 
\end{proposition}

\begin{proof}
The bound on $\VERT Z^\eps \VERT_{\bar \delta, \gamma; T}^{(\eps)}$ is precisely the content of \cite[Thoerem~6.1]{hairer2015discrete}.  The bound on $\VERT Z ; Z^\eps  \VERT_{\bar \delta, \gamma; T}^{(0,\eps)}$ follows in the same way as \cite[Thoerem~10.7]{Regularity}.
\end{proof}

One useful way to verify conditions %\eqref{e:GaussModelBound} 
in the above proposition is, by \cite[Prop~6.2]{hairer2015discrete}, that it holds once we have
 that for every $\tau \in \hat\CF^{-}$ there are values $\alpha > |\tau| \vee (-d/2)$ and $\delta \in (0, \alpha + d / 2)$ such that 
\begin{equs}[e:CovarianceBounds]
|\big(\CK^{(\eps; k)}\tau\big)_0\big(x_1, x_2\big)| 
&\lesssim \sum_{\zeta \geq 0} \big( |x_1| \vee |x_2|\big)^\zeta \bigl(|x_1- x_2| \vee \eps \bigr)^{2 \alpha - \zeta}\;,
\\
\frac{|\delta^{0, t} \big(\CK^{(\eps; k)}\tau\big)\big(x_1, x_2\big)|}{\bigl(|t|^{\frac12} \vee \eps \bigr)^{2\delta}} 
&\lesssim \sum_{\zeta \geq 0} \big( |x_1| \vee |x_2| \vee |t|^{\frac12}\big)^\zeta \bigl(|x_1- x_2| \vee \eps\bigr)^{2 \alpha - 2 \delta - \zeta}\;,
\end{equs}
 for $x_1, x_2 \in \Lambda_\eps$ and $k \leq \VERT \tau \VERT$, where the operator $\delta^{0, t}$ is defined in \eqref{e:deltaTime}, and the sums run over finitely many values of $\zeta \in [0, 2 \alpha - 2 \delta + d)$. 
To verify \eqref{e:CovarianceBounds},
recall that for a  function $\CK$ on $\R \times \Lambda_\eps$ supported in a ball around the origin, we say that it is of order $\alpha\in\R$ if for some $m\in\N$ the quantity
\begin{equation}\label{def:DSingKer}
\VERT \CK\VERT_{\alpha;m}^{(\eps)}\eqdef \max_{|k|_\s\leq m}\sup_{z\in \R \times \Lambda_\eps} \frac{\left| D^k_\eps \CK(z)\right|}{(\|z\|_\s\vee \eps)^{\alpha-|k|_\s}}%+\sup_{\|z\|_\s< c\eps}\eps^{-\zeta}|K(z)|
\end{equation}
is bounded uniformly in $\eps$, where $k=(k_0,k_1,k_2)\in\N^3$, 
 and  $D^k_\eps\eqdef \partial_t^{k_0} (\nabla^\eps_1)^{k_1}(\nabla^\eps_2)^{k_2}$. In particular, the truncated heat kernel $K^\e$ in our case is of order $-2$.
 Often the kernels $\CK^{(\e;k)}\tau$ will depend on the spatial variables $x_1,x_2$ via the difference $x_1-x_2$ and comes with an order of singularity i.e. $\CK^{(\e;k)}\tau$  is a function on $\R \times \Lambda_\eps$, and supported in a ball around the origin; in this case one can easily prove that if $\CK^{(\e;k)}\tau$  is order $2\alpha<0$ then \eqref{e:CovarianceBounds} holds with $\zeta=0$ and $\delta\in  (0, (\alpha + \frac{d}{2})\wedge 1)$.

To obtain the orders of the kernels we will frequently use the following results from \cite[Lemma~7.2 and 7.4]{hairer2015discrete}
with $|\s|=4$ and $d=2$.

\begin{lemma}\label{lem:OpDSingKer}
Let $K_1^\eps$ and $K_2^\eps$ be functions on $[-T,T]\times \Lambda_\eps$ of order $\zeta_1$ and $\zeta_2\in\R$ respectively. Let $\bar{\zeta}\eqdef \zeta_1+\zeta_2+|\s|$.
Then for any $m\in\N$ the following bounds hold uniformly in $\eps$ for $\e$ sufficiently small.
\begin{enumerate}
\item
%$K_1^\eps K_2^\eps$ is of order $\zeta_1+\zeta_2$ and
%\begin{equ}\label{e:ProdDSingKer}
$\VERT K_1^\eps K_2^\eps \VERT^{(\eps)}_{\zeta_1+\zeta_2;m}\lesssim \VERT K_1^\eps\VERT^{(\eps)}_{\zeta_1;m}\VERT K_2^\eps\VERT^{(\eps)}_{\zeta_2;m}$.
%\end{equ}
%
\item 			\label{item:convolution}
%\begin{equation}\label{e:ConvDSingKer}
$\VERT K_1^\eps\ast_\eps K_2^\eps\VERT^{(\eps)}_{\bar{\zeta};m}\lesssim \VERT K_1^\eps\VERT^{(\eps)}_{\zeta_1;m}\VERT K_2^\eps\VERT^{(\eps)}_{\zeta_2;m}$
provided that $\zeta_1\wedge \zeta_2>-|\s|$ and $\bar{\zeta}<0$.
%\end{equation}
\item 			\label{item:positive}
If $\bar{\zeta}\in (0,2) \setminus\N$, then
$\VERT \bar{K}^\eps\VERT^{(\eps)}_{\bar{\zeta};m}\lesssim \VERT K_1^\eps\VERT^{(\eps)}_{\zeta_1;\bar{m}}\VERT K_2^\eps\VERT^{(\eps)}_{\zeta_2;\bar{m}}$ where
$\bar{m}=m\vee(\lfloor\bar{\zeta}\rfloor +2)$ and
\[
\bar{K}^\eps(z)
\eqdef K_1^\eps\ast_\eps K_2^\eps(z)
	-\sum_{|k|_\s<\bar{\zeta}} 
	\frac{z^k}{k!} D_\eps^k (K_1^\eps\ast_\eps K_2^\eps) (0)\;.
%\quad (t,x)_{k,\eps} \eqdef t^{k_0} \prod_{i\neq 0}\prod_{0\le j<k_i} (x_i - \eps j)\;.
\]
%\begin{equation}\label{b:ConvDSingKer2}. 
\item				\label{item:renormalized}
If
$\zeta_1 \in \bigl(-|\s|-1, -|\s|\bigr]$ and $\zeta_2 \in \bigl(-2 |\s|-\zeta_1, 0\bigr]$, then 
\begin{equ} [e:RenorConv]
\VERT{\bigl(\mathscr{R} K^\e_1\bigr) \ast_\eps K^\e_2}\VERT_{\bar{\zeta}; m} \lesssim \VERT{K^\e_1}\VERT_{\zeta_1; m} \VERT{K^\e_2}\VERT_{\zeta_2; m + 2}
\end{equ}
where 
$\left(\mathscr{R} K^\e\right)(\varphi) \eqdef \int_{\R \times \Lambda_\eps} K^\e(z) \left( \varphi(z) - \varphi(0) \right) dz$
 for any compactly supported test function $\varphi$ on $\R^{d+1}$.
\end{enumerate}
\end{lemma}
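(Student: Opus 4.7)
\medskip
\noindent\textbf{Plan.} This lemma is essentially the discrete analogue of the classical estimates on singular kernels in Euclidean space, and is recorded in \cite{hairer2015discrete} as Lemmas~7.2 and~7.4. The plan is to prove each of the four items by explicit region decomposition of the convolution integrals, carrying through the discrete modifications uniformly by working with the ``floor'' $\|z\|_\s \vee \e$ in place of $\|z\|_\s$. The only non-classical input needed is a discrete Leibniz rule (with shift) and a discrete Taylor formula; both are straightforward.

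\medskip
\noindent Part (1) follows from the discrete Leibniz rule $D^k_\e (K_1 K_2)(z) = \sum_{k_1+k_2=k} D^{k_1}_\e K_1(z)\cdot D^{k_2}_\e K_2(z^{(k_1)})$, where $z^{(k_1)}$ differs from $z$ by a shift of size $O(\e|k_1|)$. Since the shift is absorbed into $\|z\|_\s \vee \e$ up to a multiplicative constant, bounding each factor by its defining seminorm gives $(\|z\|_\s \vee \e)^{\zeta_1-|k_1|_\s}(\|z\|_\s \vee \e)^{\zeta_2-|k_2|_\s}$, and summing over $k_1+k_2=k$ yields the claim. For Part~(2), I decompose $\{z'\}$ into the three regions $\{\|z'\|_\s \leq \tfrac12\|z-z'\|_\s\}$, $\{\|z-z'\|_\s \leq \tfrac12\|z'\|_\s\}$ and the complementary ``balanced'' region. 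In the first region $\|z-z'\|_\s \asymp \|z\|_\s$, so $K_1(z-z')$ extracts $\|z\|_\s^{\zeta_1}$ and the remaining integral of $K_2$ over a ball of radius $\asymp \|z\|_\s$ gives $\|z\|_\s^{\zeta_2+|\s|}$ (using $\zeta_2 > -|\s|$). The second region is symmetric, and in the balanced region both kernels are of size $\|z\|_\s^{\zeta_i}$ and integration over the ball gives the extra $\|z\|_\s^{|\s|}$. The condition $\bar\zeta < 0$ ensures the limit at the origin is harmless; derivative bounds follow from the identity $D^k_\e(K_1 \star_\e K_2) = (D^k_\e K_1) \star_\e K_2$ applied to the rougher factor.

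\medskip
\noindent Part (3) handles the case $\bar\zeta > 0$, where the raw convolution is finite but lacks any homogeneity; the Taylor-type subtraction around $0$ restores an order-$\bar\zeta$ scaling. I would write the discrete Taylor remainder $\bar K^\e(z)$ in an integral (or rather, sum-of-differences) form, then apply Part~(2) to the $\lfloor\bar\zeta\rfloor+1$-th discrete derivative of $K_1 \star_\e K_2$, which has negative homogeneity $\bar\zeta-(\lfloor\bar\zeta\rfloor+1)\in(-1,0)$ and therefore falls into the scope of Part~(2). Remainder identity then transfers this bound back to $\bar K^\e$ with the target order $\bar\zeta$. Part~(4) reduces to Part~(2) after using the cancellation built into $\mathscr{R} K_1$: one writes $((\mathscr{R}K_1)\star_\e K_2)(z) = \int K_1(z')\,(K_2(z-z')-K_2(z))\,dz'$ and estimates the difference $K_2(z-z')-K_2(z)$ via a discrete mean-value step by $\|z'\|_\s\cdot\sup_{|w|\leq \|z'\|_\s}|\nabla^\e K_2(z-w)|$, which replaces $\zeta_1$ with $\zeta_1+1>-|\s|$ at the cost of $\zeta_2-1$; since $\zeta_1+\zeta_2+1>-|\s|$ by hypothesis, Part~(2) applies and yields the claimed order $\bar\zeta$.

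\medskip
\noindent The main obstacle will be purely bookkeeping: ensuring that shifts introduced by discrete Leibniz and discrete Taylor never push a point out of the natural ball in which the kernel estimate holds, and that the extra factor $m+2$ derivatives needed in (3)--(4) is in fact enough to carry out the Taylor/mean-value steps with the prescribed $m$-fold derivative bounds on the output. Both issues are resolved uniformly by the observation that all shifts are of size $O(\e)$ and hence are dominated by $\|z\|_\s \vee \e$, so no loss of homogeneity occurs. Since all arguments parallel the continuum Schauder theory modulo this uniform $O(\e)$-bookkeeping, full details are omitted and we refer to \cite[Lemmas~7.2,~7.4]{hairer2015discrete}.
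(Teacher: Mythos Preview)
Your proposal is correct and in fact goes further than the paper: the paper does not prove this lemma at all but simply quotes it verbatim from \cite[Lemmas~7.2 and~7.4]{hairer2015discrete}, which is exactly the reference you defer to at the end. Your sketch of the four parts (discrete Leibniz for products, three-region decomposition for convolutions, Taylor subtraction for the positive-order case, and the mean-value reduction for the renormalized convolution) accurately captures the standard arguments behind that reference, so nothing is missing.
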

%\eqref{e:RenorConv} can be proved similarly to \cite[Lem.~10.16]{Regularity}.

\begin{proposition} \label{prop:moments}
Let $\hat Z^\e = (\hat \Pi^\e, \hat \Gamma^\e)$ be the renormalized inhomogeneous  model  defined above.
There exists a sufficiently small $\delta>0$ %and a model $\hat Z \in \MM_0$ 
such that
%one has
%$\lim_{\e\to 0} \VERT \hat Z; \hat Z^\eps \VERT^{(0,\eps)}_{\delta, \gamma; T}
%=0$.
$\E \big[\VERT Z^\eps \VERT_{\bar \delta, \gamma; T}^{(\eps)}\big]^p \lesssim 1$
 for every $\gamma > 0$, $p \geq 1$ and $\bar \delta \in [0, \delta)$,
 uniformly in $\eps\in(0,1]$.
\end{proposition}

\begin{proof} 
We will find the kernels $\CK^\e$ for each of the symbols and verify the condition \eqref{e:CovarianceBounds}.

We will denote by $\;\tikz [baseline=-3] \node[zeta] {}; \;$ a generic noise, and a thick line for a spatial derivative of heat kernel.
For the symbols $ \<Izeta> $ %and $ \<Ixij> $
the chaos expansion \eqref{e:PiWiener} only involves the first chaos (i.e. $k=1$),
and one has for both cases 
\[
(\CK^{(\eps,1)} \tau) (z_1,z_2)=\langle K^\e(z_1- \cdot ), K^\e(z_2- \cdot) \rangle_{H_\e}
\]
which is of order $0^-$ (i.e. order $\alpha$ for any 
 $\alpha<0$) as a function of $z_1-z_2$ by item \ref{item:convolution} of Lemma~\ref{lem:OpDSingKer}. 
By the discussion above Lemma~\ref{lem:OpDSingKer}, \eqref{e:CovarianceBounds} holds.
For symbol $\<I'zeta>$ %$\<Ij-zeta-k> $ % the above argument would lead to 
%\eqref{e:CovarianceBounds} with $\alpha<1$ thus violate the condition $\alpha>-d/2$ in Proposition~\ref{p:CovarianceConvergence}, but
since 
 $\hat\Pi_x^{\e,t} \<I'zeta> = \nabla_j^\e \hat\Pi_x^{\e,t}\<Izeta> $ and $\Gamma^\e,\Sigma^\e$ act on them trivially as they do for $\<Izeta>$, the bounds 
\eqref{e:PiGammaBound}, \eqref{e:SigmaBound}  and \eqref{e:PiTimeBound} still almost surely hold uniformly in $\e$. 
Regarding  the symbols $X\<I'zeta>$ %$\<X-Ij-zeta-k>$, 
again using item \ref{item:convolution} of Lemma~\ref{lem:OpDSingKer} % and the change of variable trick as in \eqref{e:to-change-var} for the case $\s\f=(-\be_j)$, 
one obtains a bound
\[
|\big(\CK^{(\eps; 1)}\tau\big)_0\big(x_1, x_2\big)| 
\lesssim \big( |x_1| \vee |x_2|\big)^2 \bigl(|x_1- x_2| \vee \eps \bigr)^{-2 }\;,
\]
as well as a similar bound on $\delta^{0,t} \big(\CK^{(\eps; 1)}\tau\big)$.
Successively applying item \ref{item:convolution} of Lemma~\ref{lem:OpDSingKer} and changes of variables
also yields kernels 
$\CK^{(\eps,1)} \tau$ for $\tau=\<dIdIzeta> $ %$\tau=\<dI-dPsi>$
which is of order $0^-$.

% For the objects $\tau=\<I-PsikdPsil>$ with $k\neq \ell$
%and $ \tau=\<I-BjdjPsik>$ 
For the symbols of the type $\<I-PsidPsi>$
in the list \eqref{e:F-gen-list},
by conspection for our models the two noises  are always independent (recall that 
$\xi^\e_1,\xi^\e_2,\zeta^\e_1,\zeta^\e_2$ are independent).
It is straightforward to check using Lemma~\ref{lem:OpDSingKer}
(especially item~\ref{item:positive}) that for both symbols we have
\begin{equs} 
|\big(\CK^{(\eps; 2)}\tau\big)_0\big(x_1, x_2\big)| 
\lesssim  \big( |x_1| \vee |x_2|\big)^{2-\kappa}\;,
\quad
\frac{|\delta^{0, t} \big(\CK^{(\eps; 2)}\tau\big)\big(x_1, x_2\big)|}{\bigl(|t|^{\frac12} \vee \eps \bigr)^{2\delta}} 
\lesssim  \big( |x_1| \vee |x_2| \vee |t|^{\frac12}\big)^{2-\kappa-2\delta} 
\end{equs}
for $\kappa>0$,
so \eqref{e:CovarianceBounds} holds with $\alpha=1-\frac{\kappa}{2}$. 
%The cases with nontrivial $\s\f$ can be again treated by changes of integration variables as above.
The desired bounds on $\<dIV>$  %$ \<dI-BdPsi> $
 then immediately follow.

We now focus on objects with two noises,
three heat kernels with two spatial derivatives. 

Let
 $\tau=\<Psi-dIdPsi>$.
Recall the graphical notation in \eqref{e:def-Cs}.
Moreover,
we will denote by
 $\tikz [baseline=-3] \node[vab]   at (0,0) {\tiny $j$};$  for the noise $\xi^\e_j$, and  $\tikz [baseline=-3] \node[var]   at (0,0) {\tiny $j$};$
 for the noise $\zeta^\e_j$,
and a green arrow $\tikz [baseline=-3] \draw[testfcn] (0.8,0) to (0,0);$ represents a rescaled test function $\phi^\lambda$.
%A unit vector $\be$
%near a tip of an arrow means that the kernel represented by the arrow is evaluated at the point $z+\be$ if $z$ is the end point of the arrow; 
%for instance  
%$\begin{tikzpicture} [baseline=-3] \draw[kernel] (0,0) to node [midway,above,font=\scriptsize] {$k$}  (1,0);
%\node[empt] at (1,0.1)  {\scriptsize $\be$};\end{tikzpicture}$
% represents 
%the kernel $\nabla^\e_k K^\e(y+\be-x)$,
%with $x$ and $y$ being the starting and end points of the arrow respectively.
The entire graph with, say, $m$ occurrences of   $\tikz [baseline=-3] \node[vab]   at (0,0) {};$ or  $\tikz [baseline=-3] \node[var]   at (0,0) {};$
then represents the $m$-th Wiener integral
against the noises represented by these 
$\tikz [baseline=-3] \node[vab]   at (0,0) {};$
or $\tikz [baseline=-3] \node[var]   at (0,0) {};$.
One then has
\begin{equ} [e:Psi-dIdPsi]
\langle
\hat\Pi^{\eps, t}_0 \tau, \varphi^\lambda \rangle_\eps
\quad =\quad
\begin{tikzpicture}[scale=0.8,baseline=-13]
\node[var] at (0.6,0.3) (top) {\tiny $\ell$};
\node[dot] at (0,-0.4)  (mid) {};
	%\node[empt,yshift=5,xshift=-3] at (mid)  {\scriptsize $\be$};
\node[dot] at (0.6,-1.1) (bot) {};
\node[var] at (1.2, -0.4) (botup) {\tiny $\ell$};
\node[root] at (0.6,-1.7) (root) {};
\draw[kernel] (top) to node[above] {\tiny $k$}  (mid);
\draw[kernel]  (mid) to node[left] {\tiny $j$} (bot); 
\draw[kernel]  (botup) to (bot); 
\draw[testfcn] (bot) to (root); 
\end{tikzpicture}
\quad 
+ 
\quad
\begin{tikzpicture}[scale=0.8,baseline=-13]
\node[dot] at (0,0)  (mid) {};
	%\node[empt,yshift=5] at (mid)  {\scriptsize $\be$};
\node[dot] at (0,-0.8) (bot) {};
\node[dot] at (1.2, -0.3) (botup) {};
\node[root] at (0,-1.7) (root) {};
\draw[kernel,bend right=40] (botup) to   node[above] {\tiny $k$} (mid);
\draw[kernel]  (mid) to node[left] {\tiny $j$} (bot); 
\draw[kernel,bend left=40]  (botup) to  (bot); 
\draw[testfcn] (bot) to (root); 
\end{tikzpicture}
\quad
- 
\quad
  C_{4,j}^{k(\e)} 
 \;
\begin{tikzpicture}[scale=0.8,baseline=-12]
\node[dot] at (0,0)  (mid) {};
\node[root] at (0,-0.8) (root) {};
\draw[testfcn] (mid) to (root); 
\end{tikzpicture}
\end{equ}
%where $\be\in\{0,-\be_k\}$.
%
For the first graph (i.e. second chaos), 
Lemma~\ref{lem:OpDSingKer} then yields a kernel 
$\CK^{(\eps,2)} $ of order $0^-$.
For the other two terms (i.e. zeroth chaos) 
%require some more careful analysis.
%When $k\neq j$ %and $\be=0$, 
%it is equal to
%\begin{equ}[e:even-odd-ha]
%\int_{(\R\times \Lambda^\e)^3} K^\e (x-y)\,
%	\nabla^\e_{\be_j} K^\e (x-z) \, \nabla^\e_{\be_k}  K^\e (z-y)\,\phi(x)\,dx\,dz\,dy\;.
%\end{equ}
%Since the truncated heat kernel $K^\e$
%is even in each of the two spatial variables,
%flipping the sign of the $j$-th spatial variable of $x,y,z$, the above integral  \eqref{e:even-odd-ha} is equal to
%the same expression with $\nabla^\e_{\be_j}$
%replaced by $\nabla^\e_{-\be_j}$. Therefore \eqref{e:even-odd-ha}
%is equal to $1/2$ times
%\[
%-\e \int_{(\R\times \Lambda^\e)^3} K^\e (x-y)\,
%	\nabla^\e_{-\be_j} \nabla^\e_{\be_j} K^\e (x-z) \, 
%	  \nabla^\e_{\be_k}  K^\e (z-y)\,\phi(x)\,dx\,dz\,dy\;.
%\]
%It is easy to show that $\nabla^\e_{-\be_j} \nabla^\e_{\be_j}  K^\e$
%is a kernel of order $-4$. By Lemma~\ref{lem:OpDSingKer} and a bound $\int_{\R\times \Lambda^\e } \e \,\|z\|^{-5}_\s dz =O(1)$ we conclude that the above integral converges to a finite limit.
%%The case $k\neq j$ and $\be=-\be_k$ can be proved analogously.
%Regarding the case $k= j$ %and  $\be=0$,
the renormalization constant $  C_{4,j}^{k(\e)}$
is precisely  defined  such that the two zeroth chaos terms  cancel.  The symbols $ \<dk-Psi-IdkPsi> , \<BldlI(d-eBe)>$ are treated analogously with renormalization constants $C_{4,-k}^{k(\e)}$
and $C_{4,\ell}^{-\ell(\e)}$ respectively.
%Finally for $k= j$ and  $\be=-\be_k$,
%the sum of the two zeroth chaos terms
%is equal to \eqref{e:even-odd-ha} with 
%$\nabla^\e_{\be_k} K^\e $ replaced by
%$\eps\nabla^\e_{-\be_k} \nabla^\e_{\be_k} K^\e $
%and again this integral converges to a finite limit.
%\hao{We can also choose $C_2^{(\e)}$ to depend on shifts so it always precisely cancel the divergence.}

The symbols $\<dj-Psii-Idk-Psil> $ and $ \<d-lBlI(d-eBe)>$
are treated in the same way so we only
consider the first one. We  introduce one more graphical notation: a barred arrow   $\tikz \draw[kernel1]  (0,0) to (1.2,0);$ represents $K^\e( z-w)-K^\e(-w)$ where
$w$ and $z$ are the coordinates of the starting and end point of the arrow respectively. One then has
\begin{equ} [e:boundingUb]
\langle
\hat\Pi^{\eps, t}_0 \tau, \varphi^\lambda
\rangle_\eps
\quad =\quad
\begin{tikzpicture}[scale=0.8,baseline=-13]
\node[var] at (0.6,0.3) (top) {\tiny $\ell$};
\node[dot] at (0,-0.4)  (mid) {};
	%\node[empt,yshift=5,xshift=-3] at (mid)  {\scriptsize $\bar\be$};
\node[dot] at (0.6,-1.1) (bot) {};
	%\node[empt,yshift=0,xshift=-6] at (bot)  {\scriptsize $\be$};
	%\node[empt,yshift=0,xshift=6] at (bot)  {\scriptsize $\be$};
\node[var] at (1.2, -0.4) (botup) {\tiny $i$};
\node[root] at (0.6,-1.7) (root) {};
\draw[kernel] (top) to node[above] {\tiny $k$}  (mid);
\draw[kernel1]  (mid) to  (bot); 
\draw[kernel]  (botup)  to node[right] {\tiny $j$} (bot); 
\draw[testfcn] (bot) to (root); 
\end{tikzpicture}
\quad 
+ 
\quad
 \delta_{i,\ell}\;\;
\begin{tikzpicture}[scale=0.8,baseline=-13]
\node[dot] at (0,0)  (mid) {};
	%\node[empt,yshift=5] at (mid)  {\scriptsize $\bar\be$};
\node[dot] at (0,-0.8) (bot) {};
	%\node[empt,yshift=0,xshift=-6] at (bot)  {\scriptsize $\be$};
	%\node[empt,yshift=-7,xshift=6] at (bot)  {\scriptsize $\be$};
\node[dot] at (1.2, -0.3) (botup) {};
\node[root] at (0,-1.7) (root) {};
\draw[kernel,bend right=40] (botup) to   node[above] {\tiny $k$} (mid);
\draw[kernel1]  (mid) to  (bot); 
\draw[kernel,bend left=40]  (botup) to node[below] {\tiny $j$}  (bot); 
\draw[testfcn] (bot) to (root); 
\end{tikzpicture}
\quad
- 
\quad
  \delta_{i,\ell}  C_{3,j}^{k(\e)} 
 \;
\begin{tikzpicture}[scale=0.8,baseline=-12]
\node[dot] at (0,0)  (mid) {};
\node[root] at (0,-0.8) (root) {};
\draw[testfcn] (mid) to (root); 
\end{tikzpicture}
\end{equ}
%where $\be\in \{0,-\be_j\}$ and $\bar\be\in\{0,-\be_k\}$.
For the first graph (i.e. second chaos)
%one can bound the function
%$|K^\e( z-w)-K^\e(-w)| $ that is represented by the barred arrow by
%$\lesssim |x|^{1-\delta} 
%((\| z-w\|_\s\vee \e)^{-3+\delta} +( \|w\|_\s\vee\e)^{-3+\delta})$ where $x$ is the spatial coordinate for $z$ for a sufficiently small $\delta>0$.
applying item \ref{item:convolution} and \ref{item:positive} of Lemma~\ref{lem:OpDSingKer}
one  obtains a bound
\[
|\big(\CK^{(\eps; 2)}\tau\big)_0\big(x_1, x_2\big)| 
\lesssim  \big( |x_1| \vee |x_2|\big)^{2-2\delta} \bigl(|x_1- x_2| \vee \eps \bigr)^{-2 -2\delta}
\]
and
a similar bound on $\delta^{0,t} \big(\CK^{(\eps; 2)}\tau\big)$.
Assuming now $i=\ell$ and considering the second graph,
we bound the function represented by the barred arrow as a sum of
two separate parts:
 a kernel $K^\e$ represented by an arrow pointing towards the origin and a kernel $K^\e$ represented by an arrow pointing towards the tip of the green arrow
 \begin{equ}
 %\delta_{i,\ell}\;\;
 \quad
-
\quad
\begin{tikzpicture}[scale=0.8,baseline=-13]
\node[dot] at (0,0)  (mid) {};
	%\node[empt,yshift=5] at (mid)  {\scriptsize $\bar\be$};
\node[dot] at (0,-0.8) (bot) {};
	%\node[empt,yshift=0,xshift=-6] at (bot)  {\scriptsize $\be$};
	%\node[empt,yshift=-7,xshift=6] at (bot)  {\scriptsize $\be$};
\node[dot] at (1.2, -0.3) (botup) {};
\node[root] at (0,-1.7) (root) {};
\draw[kernel,bend right=40] (botup) to   node[above] {\tiny $k$} (mid);
\draw[kernel, bend right=50]  (mid) to  (root); 
\draw[kernel,bend left=40]  (botup) to node[below] {\tiny $j$}  (bot); 
\draw[testfcn] (bot) to (root); 
\end{tikzpicture}
\quad
+
\quad
 %\delta_{i,\ell}\;\;
\begin{tikzpicture}[scale=0.8,baseline=-13]
\node[dot] at (0,0)  (mid) {};
	%\node[empt,yshift=5] at (mid)  {\scriptsize $\bar\be$};
\node[dot] at (0,-0.8) (bot) {};
	%\node[empt,yshift=0,xshift=-6] at (bot)  {\scriptsize $\be$};
	%\node[empt,yshift=-7,xshift=6] at (bot)  {\scriptsize $\be$};
\node[dot] at (1.2, -0.3) (botup) {};
\node[root] at (0,-1.7) (root) {};
\draw[kernel,bend right=40] (botup) to   node[above] {\tiny $k$} (mid);
\draw[kernel]  (mid) to  (bot); 
\draw[kernel,bend left=40]  (botup) to node[below] {\tiny $j$}  (bot); 
\draw[testfcn] (bot) to (root); 
\end{tikzpicture}
\quad
- 
\quad
 % \delta_{i,\ell}  
  C_{3,j}^{k(\e)}
 \;
\begin{tikzpicture}[scale=0.8,baseline=-12]
\node[dot] at (0,0)  (mid) {};
\node[root] at (0,-0.8) (root) {};
\draw[testfcn] (mid) to (root); 
\end{tikzpicture}
\end{equ}
By Lemma~\ref{lem:OpDSingKer}
the first term here is then deterministically bounded by $\lambda^{\alpha}$ for any $\alpha<0$.
Our choice of $C_{3,j}^{k(\e)}$
precisely cancel  the second term.

All the other objects with two {\it independent} noises,
three heat kernels with two spatial derivatives 
such as 
$ \<B-dIdPsi>$ give $\CK^{(\eps,2)} $ of order $0^-$
by Lemma~\ref{lem:OpDSingKer}, 
 thus the desired bounds  \eqref{e:CovarianceBounds} hold.

The symbol $\tau= \<Psi-dIBdPsi>$ is treated in a similar way as $\<Psi-dIdPsi>$ above. In fact we have
\begin{equ} [e:Psi-dIBdPsi]
\langle
\hat\Pi^{\eps, t}_0 \tau, \varphi^\lambda 
\rangle_\eps
\quad =\quad
\begin{tikzpicture}[scale=0.8,baseline=-13]
\node[var] at (0.6,0.3) (topright) {\tiny $\ell$};
\node[vab] at (-0.4,0.3) (topleft) {\tiny $j$};
\node[dot] at (0.1,-0.4)  (mid) {};
	%\node[empt,yshift=5,xshift=8] at (mid)  {\scriptsize $\be$};
\node[dot] at (0.6,-1.1) (bot) {};
\node[var] at (1.2, -0.4) (botup) {\tiny $\ell$};
\node[root] at (0.6,-1.7) (root) {};
\draw[kernel] (topright) to node[above] {\tiny $j$}  (mid);
\draw[kernel] (topleft) to   (mid);
\draw[kernel]  (mid) to node[left] {\tiny $k$} (bot); 
\draw[kernel]  (botup) to (bot); 
\draw[testfcn] (bot) to (root); 
\end{tikzpicture}
\quad 
+ 
\quad
\begin{tikzpicture}[scale=0.8,baseline=-13]
\node[vab] at (0,0.4) (top) {\tiny $j$};
\node[dot] at (0,-0.3)  (mid) {};
	%\node[empt,yshift=-2,xshift=5] at (mid)  {\scriptsize $\be$};
\node[dot] at (0,-1.1) (bot) {};
\node[dot] at (1.2, -0.6) (botup) {};
\node[root] at (0,-1.7) (root) {};
\draw[kernel] (top) to (mid);
\draw[kernel,bend right=40] (botup) to   node[above] {\tiny $j$} (mid);
\draw[kernel]  (mid) to node[left] {\tiny $k$} (bot); 
\draw[kernel,bend left=40]  (botup) to  (bot); 
\draw[testfcn] (bot) to (root); 
\end{tikzpicture}
\quad
- 
\quad
C_{4,k}^{j(\e)}
 \;
\begin{tikzpicture}[scale=0.8,baseline=-12]
\node[vab] at (0,0.4) (top) {\tiny $j$};
\node[dot] at (0,-0.5)  (mid) {};
\node[root] at (0,-1.3) (root) {};
\draw[kernel] (top) to (mid);
\draw[testfcn] (mid) to (root); 
\end{tikzpicture}
\end{equ}
%where $\be\in\{0,-\be_k\}$. 
Applying  Lemma~\ref{lem:OpDSingKer} to the first graph yields kernel $\CK^{(\eps,3)} $ of order $0^-$.
Denoting by $\tikz \draw[kernelBig]  (0,0) to (1,0);$  the renormalized kernel $\mathscr R K_3^\e$ where
\begin{equ} [e:def-K-3]
K_3^\e (x-z) \eqdef \int_{\R\times \Lambda^\e} K^\e (x-y)\,
	\nabla^\e_{k} K^\e (x-z) \, \nabla^\e_{j}  %K^\e (z+\be-y)\,dy\;,
	K^\e (z-y)\,dy\;,
\end{equ}
and $\mathscr R$ is defined below \eqref{e:RenorConv},
the   second  graph is then equal to
\begin{equ}
\begin{tikzpicture}[scale=0.8,baseline=-13]
\node[vab] at (0,0.4) (top) {\tiny $j$};
\node[dot] at (0,-0.4)  (mid) {};
\node[dot] at (0,-1.1) (bot) {};
\node[root] at (0,-1.8) (root) {};
\draw[kernel]  (top) to (mid); 
\draw[kernelBig]  (mid) to (bot); 
\draw[testfcn] (bot) to (root); 
\end{tikzpicture}
\quad
+
\quad
\begin{tikzpicture}[scale=0.8,baseline=-13]
\node[vab] at (0,0.4) (top) {\tiny $j$};
\node[dot] at (0,-0.3)  (mid) {};
	%\node[empt,yshift=-2,xshift=5] at (mid)  {\scriptsize $\be$};
\node[dot] at (0,-1.1) (bot) {};
\node[dot] at (1.2, -0.6) (botup) {};
\node[root] at (0,-1.8) (root) {};
\draw[kernel,bend right=40] (top) to (bot);
\draw[kernel,bend right=40] (botup) to   node[above] {\tiny $j$} (mid);
\draw[kernel]  (mid) to node[right] {\tiny $k$} (bot); 
\draw[kernel,bend left=40]  (botup) to  (bot); 
\draw[testfcn] (bot) to (root); 
\end{tikzpicture}
\end{equ}
By Lemma~\ref{lem:OpDSingKer} (item \ref{item:convolution} and item \ref{item:renormalized}),
the first graph yields a kernel 
$\CK^{(\eps,1)} $ of order $0^-$.
The other graph is then cancelled by the last term in \eqref{e:Psi-dIBdPsi}.

Consider now  $\tau=\<dPsi-IPsidPsi> $ where $i\neq \ell$. One has
\begin{equ}[e:dPsi-IPsidPsi]
\langle
\hat\Pi^{\eps, t}_0 \tau, \varphi^\lambda
\rangle_\eps
\quad 
=
\quad
\begin{tikzpicture}[scale=0.8,baseline=-10]
\node[var] at (0.5,0.6) (topright) {\tiny $\ell$};
\node[var] at (-0.5,0.6) (topleft) {\tiny $i$};
\node[dot] at (0,-0.4)  (mid) {};
%\node[dot,xshift=5] at (mid)  (mid-) {};
%\draw[very thick] (0.15,-0.4) -- (-0.15,-0.4);
\node[dot] at (0.6,-1.3) (bot) {};
	%\node[empt,yshift=0,xshift=-5] at (bot)  {\scriptsize $\be$};
	%\node[empt,yshift=0,xshift=5] at (bot)  {\scriptsize $\be$};
\node[var] at (1.2, -0.4) (botup) {\tiny $k$};
\node[root] at (0.6,-1.9) (root) {};
\draw[kernel] (topright) to node[right] {\tiny $j$}  (mid);
\draw[kernel]  (topleft) to (mid); 
\draw[kernel1]  (mid) to (bot); 
\draw[kernel]  (botup) to node[right] {\tiny $j$} (bot); 
\draw[testfcn] (bot) to (root); 
\end{tikzpicture}
\quad 
+ 
\quad
\delta_{i,k}
\begin{tikzpicture}[scale=0.8,baseline=-10]
%\node[var] at (0.5,0.6) (topright) {\tiny $i$};
\node[var] at (0,0.6) (topleft) {\tiny $\ell$};
\node[dot] at (0,-0.4)  (mid) {};
\node[dot] at (0,-1.3) (bot) {};
	%\node[empt,yshift=0,xshift=-5] at (bot)  {\scriptsize $\be$};
	%\node[empt,yshift=3,xshift=5] at (bot)  {\scriptsize $\be$};
\node[dot] at (1.2, -0.3) (botup) {};
\node[root] at (0,-1.9) (root) {};
\draw[kernel,bend right=40] (botup) to   (mid);
\draw[kernel]  (topleft) to node[left] {\tiny $j$} (mid); 
\draw[kernel1]  (mid) to (bot); 
\draw[kernel,bend left=40]  (botup) to node[right] {\tiny $j$} (bot); 
\draw[testfcn] (bot) to (root); 
\end{tikzpicture}
\quad 
+ 
\quad
\delta_{\ell,k}
\begin{tikzpicture}[scale=0.8,baseline=-10]
\node[var] at (0,0.6) (topleft) {\tiny $i$};
\node[dot] at (0,-0.4)  (mid) {};
\node[dot] at (0,-1.3) (bot) {};
	%\node[empt,yshift=0,xshift=-5] at (bot)  {\scriptsize $\be$};
	%\node[empt,yshift=3,xshift=5] at (bot)  {\scriptsize $\be$};
\node[dot] at (1.2, -0.3) (botup) {};
\node[root] at (0,-1.9) (root) {};
\draw[kernel,bend right=40] (botup) to   node[above] {\tiny $j$} (mid);
\draw[kernel]  (topleft) to (mid); 
\draw[kernel1]  (mid) to (bot); 
\draw[kernel,bend left=40]  (botup) to node[right] {\tiny $j$} (bot); 
\draw[testfcn] (bot) to (root); 
\end{tikzpicture}
\quad
- 
\quad
\delta_{\ell,k}
C_{3,j}^{j(\e)} 
\begin{tikzpicture}[scale=0.8,baseline=-10]
\node[var] at (0,0.3) (topleft) {\tiny $i$};
\node[dot] at (0,-0.8)  (mid) {};
\node[root] at (0,-1.6) (root) {};
%\node[dot] at (0,-1.3) (bot) {};
%\node[dot] at (1.2, -0.3) (botup) {};
%
%\draw[kernel,bend right=40] (botup) to   node[above] {\tiny $j$} (mid);
\draw[kernel]  (topleft) to (mid); 
%\draw[kernel1]  (mid) to (bot); 
%\draw[kernel,bend left=40]  (botup) to node[right] {\tiny $j$} (bot); 
\draw[testfcn] (mid) to (root); 
\end{tikzpicture}
\end{equ}
%where $\be\in\{0,-\be_j\}$.
The first term (i.e. the third chaos) results in a kernel with bound
\[
|\big(\CK^{(\eps; 3)}\tau\big)_0\big(x_1, x_2\big)| 
\lesssim  \big( |x_1| \vee |x_2|\big)^2 \bigl(|x_1- x_2| \vee \eps \bigr)^{-2 - \kappa}
\]
for $\kappa>0$ and a similar bound on $\delta^{0,t}(\CK^{(\eps; 3)}\tau)$.
For the second term (i.e. the first chaos), we bound its second moment as follows.
Denote by $y$ and $x$ the starting and end points of  
$\tikz \draw[kernel1]  (0,0) to (1.2,0);$ respectively, so that it 
represents $K^\e( x-y)-K^\e(-y)$. We bound the integral with $K^\e( x-y)$
and the integral with $K^\e(-y)$ separately. 
For the integral with $K^\e(x-y)$, one obtains a kernel $\CK^{(\eps; 1)}$ of order $\alpha$
for any $\alpha<0$.
For the integral with the function $K^\e( -y)$,
if $\|x-y\|_\s \le 2 \|y\|_\s$, we bound $|K^\e( -y)|\lesssim (\|x-y\|_\s \vee\e)^{-2}$
and the bound follows in the same way as that for the integral with $K^\e(x-y)$.
On the other hand, assume that $\|x-y\|_\s > 2 \|y\|_\s$, then one necessarily has $\|x\|_\s <  \frac32 \|x-y\|_\s$,
so we can multiply the integrand by $\|x\|_\s^{-\delta}\|x-y\|_\s^{\delta}$
for a sufficiently small $\delta>0$
and bound $|K^\e( x-y)|\|x-y\|_\s^{\delta}\lesssim (\|y\|_\s \vee\e)^{-2+\delta}$.
Note that now the integration over $y$ and the integration over $x$ factorize
and thus can be computed independently.
Applying Lemma~\ref{lem:OpDSingKer}, together with $\int\phi^\lambda(x)\|x\|_\s^{-\delta}dx\le \lambda^{-\delta}$ we obtain the desired bound for the second graph on the right hand side of \eqref{e:dPsi-IPsidPsi}.
The last two terms on the right hand side of  \eqref{e:dPsi-IPsidPsi} are equal to $\delta_{\ell,k}$ times
%\int_{(\R\times\Lambda_\e)^2} (\mathscr{R}K^\e)(x-y)\, K^\e(y-z)\, \zeta_i^\e(z)\,dydz
\begin{equ}
\begin{tikzpicture}[scale=0.8,baseline=-10]
\node[var] at (0,0.5) (top) {\tiny $i$};
\node[dot] at (0,-0.4)  (mid) {};
\node[dot] at (0,-1.3) (bot) {};
\node[root] at (0,-2) (root) {};
\draw[kernel]  (top) to (mid); 
\draw[kernelBig]  (mid) to (bot); 
\draw[testfcn] (bot) to (root); 
\end{tikzpicture}
\quad
-
\quad
\begin{tikzpicture}[scale=0.8,baseline=-10]
\node[var] at (0,0.5) (topleft) {\tiny $i$};
\node[dot] at (0,-0.4)  (mid) {};
\node[dot] at (0,-1.3) (bot) {};
\node[dot] at (1.2, -0.3) (botup) {};
\node[root] at (0,-2) (root) {};
	%\node[empt,yshift=0,xshift=-5] at (root)  {\scriptsize $\be$};
	%\node[empt,yshift=3,xshift=5] at (bot)  {\scriptsize $\be$};
%
\draw[kernel,bend right=40] (botup) to   node[above] {\tiny $j$} (mid);
\draw[kernel]  (topleft) to (mid); 
\draw[kernel,bend right=40]  (mid) to (0,-2); 
\draw[kernel,bend left=40]  (botup) to node[right] {\tiny $j$} (bot); 
\draw[testfcn] (bot) to (root); 
\end{tikzpicture}
\end{equ}
where  this time  $\tikz \draw[kernelBig]  (0,0) to (1,0);$
represents the renormalized kernel $\mathscr R \bar K_3^\e$ where
\begin{equ}
\bar K_3^\e (x-z) \eqdef \int_{\R\times \Lambda^\e}
% \nabla^\e_{j}  K^\e (x+\be-y)\,
%	K^\e (x-z) \, \nabla^\e_{j}  K^\e (z+\be-y)\,dy\;.
 \nabla^\e_{j}  K^\e (x-y)\,
	K^\e (x-z) \, \nabla^\e_{j}  K^\e (z-y)\,dy\;.
\end{equ}
Lemma~\ref{lem:OpDSingKer} (item \ref{item:convolution} and item \ref{item:renormalized})
immediately gives the desired bound for the first term by $\lambda^{\alpha}$ for any $\alpha<0$.
The second term can be treated in the same way as the second graph on the right hand side of \eqref{e:dPsi-IPsidPsi}.
The other similar symbols such as $\<dPsi-IBdPsi>$ are  bounded in the analogous  way. 
%as for \eqref{e:Psi-dIBdPsi},  
%with help of the ``positively renormalized'' kernel
%$\tikz \draw[kernel1]  (0,0) to (1.2,0);$
%as in \eqref{e:boundingUb}.
%%

Note that 
symbols of the form
$\<B-dIBdPsi> $  (the top edge with derivative carries a noise that is independent from all other noises)
have negative homogeneity but do not need renormalization. Indeed, 
the model  yield a process with the third chaos and first chaos:
\begin{equ}
\begin{tikzpicture}[scale=0.8,baseline=-10]
\node[var] at (0.5,0.6) (topright) {\tiny $j$};
\node[vab] at (-0.5,0.6) (topleft) {\tiny $\ell$};
\node[dot] at (0,-0.4)  (mid) {};
	%\node[empt,yshift=0,xshift=-5] at (mid)  {\scriptsize $\be$};
	%\node[empt,yshift=0,xshift=5] at (mid)  {\scriptsize $\be$};
\node[dot] at (0.6,-1.3) (bot) {};
	%\node[empt,yshift=0,xshift=-6] at (bot)  {\scriptsize $\bar\be$};
	%\node[empt,yshift=0,xshift=6] at (bot)  {\scriptsize $\bar\be$};
\node[vab] at (1.2, -0.4) (botup) {\tiny $k$};
\draw[kernel] (topright) to node[right] {\tiny $\ell$}  (mid);
\draw[kernel]  (topleft) to (mid); 
\draw[kernel]  (mid) to node[left] {\tiny $k$} (bot); 
\draw[kernel]  (botup) to  (bot); 
\end{tikzpicture}
\quad 
+ 
\quad
\delta_{\ell,k}
\begin{tikzpicture}[scale=0.8,baseline=-10]
\node[var] at (0,0.6) (topleft) {\tiny $j$};
\node[dot] at (0,-0.4)  (mid) {};
	%\node[empt,yshift=0,xshift=-5] at (mid)  {\scriptsize $\be$};
	%\node[empt,yshift=0,xshift=5] at (mid)  {\scriptsize $\be$};
\node[dot] at (0,-1.3) (bot) {};
	%\node[empt,yshift=0,xshift=-6] at (bot)  {\scriptsize $\bar\be$};
	%\node[empt,yshift=4,xshift=6] at (bot)  {\scriptsize $\bar\be$};
\node[dot] at (1.2, -0.3) (botup) {};
\draw[kernel,bend right=40] (botup) to   (mid);
\draw[kernel]  (topleft) to node[left] {\tiny $\ell$} (mid); 
\draw[kernel]  (mid) to node[left] {\tiny $k$} (bot); 
\draw[kernel,bend left=40]  (botup) to  (bot); 
\end{tikzpicture}
\end{equ}
%where $\be\in\{0,-\be_\ell\}$ and $\bar\be\in\{0,-\be_k\}$.
Using Lemma~\ref{lem:OpDSingKer} we obtain
kernels $\CK^{(\e,3)}$ and $\CK^{(\e,1)}$ both of order $\alpha$ for $\alpha<0$.

The model acting on 
 $ \<BjPsik>$    
yields processes that live in the second homogeneous  chaos
by independence of $\xi^\e$ and $\zeta^\e$.
%When $\s\f$ is trivial, 
By Lemma~\ref{lem:OpDSingKer}, $\CK^{(\eps,2)} $
is of order  $0^-$, thus the desired bounds  \eqref{e:CovarianceBounds} hold. %For nontrivial $\s\f$, changes of integration variables as above yield the same kernel $\CK^{(\eps,2)} $.
Turning to  $\tau= \<BjBj>$,  its second chaos component gives a kernel
$\CK^{(\e,2)}\tau$  of order $0^-$,
and the zeroth chaos component of $\langle
\hat\Pi^{\eps, t}_0 \tau, \varphi\rangle_\eps $
 %namely the term with $k=0$ in \eqref{e:PiWiener} 
(recall definition \eqref{e:def-Cs})
\[
\int\!\!\sum_{y,z\in\Lambda_\e} %K^\e_{t-s} (y-\be-z)^2 
K^\e (t-s,y-z)^2 
\phi(y) \,ds
 - C_2^{(\e)}\sum_{y\in\Lambda_\e}  \phi(y) =0\;.
\]
%where $\be\in\{0,\be_j\}$ depending on the $\s\f$  under consideration,
%but both cases give the equal expression by a change of summation variable $z$.

For the symbol $ \<PsikPsik>$,
the second chaos component again results in a kernel $\CK^{(\e,2)}$ of order $0^-$.
The zeroth component is now equal to
\begin{equ} [e:cherry-need-Ward]
\int\!\!\sum_{y,z\in\Lambda_\e} 
K^\e(t-s ,y-z)^2\, %K^\e_{t-s} (y+\be_j-z)\, 
\phi(y) \,ds
 - C_1^{(\e)}  \sum_{y\in\Lambda_\e}  \phi(y) 
\end{equ}
and recall that we have {\it defined} 
$C_1^{(\e)}$ by \eqref{e:def-C1}.
% = 2 \sum_{k}
%( C_{3,j}^{k(\eps)}
%- C_{4,j}^{k(\eps)})$.
In fact, the above expression would vanish
if $C_1^{(\eps)}$ was replaced by 
$C_2^{(\eps)}$;
thus it remains to show that $C_1^{(\eps)}-C_2^{(\eps)}$ converges to a finite limit.
Recall from \cite[Lemma~5.4]{hairer2015discrete} that   we have $P^\e=K^\e + R^\e$, where
$P^\e$ is the discrete heat kernel, and $R^\e$ is compactly supported with norm
$\|R^\e\|_{\CC^r}$ bounded uniformly in $\eps$.
Therefore, 
 denoting by a thick blue  arrow $\tikz \draw[kernelblue]  (0,0) to (1,0);$ for $P^\e$, we have that
$C_1^{(\eps)}-C_2^{(\eps)}$ equals 
\begin{equs}  [e:C1e-C2e]
 \sum_{k} \Big(
\begin{tikzpicture}  [baseline=10,scale=0.8]
\node[root]	(root) 	at (0,0) {};
\node[dot]		(left)  	at (-0.7,1) {};
\node[dot]		(right)  	at (0.7,1) {};			
\draw[kernelblue,bend right=20] (left) to  (root);
\draw[kernelblue, bend left=20]   (right)  to 
	node [midway,right,font=\scriptsize] {$j$} (root) ;
\draw[kernelblue, bend right=20]  (right)  to 
	node [midway,above,font=\scriptsize] {$k$} (left);
\end{tikzpicture}
\;
-
\;
\begin{tikzpicture}  [baseline=10,scale=0.8]
\node[root]	(root) 	at (0,0) {};
\node[dot]		(left)  	at (-0.7,1) {};
\node[dot]		(right)  	at (0.7,1) {};			
\draw[kernelblue,bend right=20] (left) to 
	node [midway,left,font=\scriptsize] {$j$} (root);
\draw[kernelblue,bend left=20]   (right)  to  (root) ;
\draw[kernelblue,bend right=20]  (right)  to 
	node [midway,above,font=\scriptsize] {$k$} (left);
\end{tikzpicture}
\; +\;
\begin{tikzpicture}  [baseline=10,scale=0.8]
\node[root]	(root) 	at (0,0) {};
\node[dot]		(left)  	at (-0.7,1) {};
\node[dot]		(right)  	at (0.7,1) {};			
\draw[kernelblue,bend right=20] (left) to 
	node [midway,left,font=\scriptsize] {$-k,j$} (root);
\draw[kernelblue,bend left=20]   (right)  to  (root) ;
\draw[kernelblue,bend right=20]  (right)  to  (left);
\end{tikzpicture}
\;-\;
\begin{tikzpicture}  [baseline=10,scale=0.8]
\node[root]	(root) 	at (0,0) {};
\node[dot]		(left)  	at (-0.7,1) {};
\node[dot]		(right)  	at (0.7,1) {};			
\draw[kernelblue,bend right=20] (left) to 
	node [midway,left,font=\scriptsize] {$-k$} (root);
\draw[kernelblue,bend left=20]   (right)  to 
	node [midway,right,font=\scriptsize] {$j$} (root) ;
\draw[kernelblue,bend right=20]  (right)  to  (left);
\end{tikzpicture}
\Big)
-
\;\;
\begin{tikzpicture}  [baseline=10,scale=0.8]
\node[root]	(root) 	at (0,0) {};
\node[dot]		(top)  	at (0,1) {};			
\draw[kernelblue,bend left =60]   (top)  to  (root) ;
\draw[kernelblue,bend right =60]   (top)  to  (root) ;
\end{tikzpicture}
\;-\;
%c_B^\e
\eps\;
\begin{tikzpicture}  [baseline=10,scale=0.8]
\node[root]	(root) 	at (0,0) {};
\node[dot]		(top)  	at (0,1) {};			
\draw[kernelblue,bend left =60]   (top)  to node [midway,right,font=\scriptsize] {$k$}  (root) ;
\draw[kernelblue,bend right =60]   (top)  to  (root) ;
\end{tikzpicture}
\qquad
\end{equs}
plus some terms  which converge to finite limits.
Note that \eqref{e:C1e-C2e} remains the same
with $\sum_k$ replaced by $\sum_j$: indeed
\eqref{e:C1e-C2e} are equal for $j\in\{1,2\}$,
so we can sum \eqref{e:C1e-C2e} over $j$ and 
then by the same reason drop the sum over $k$.
We can then rename $k$ by $\ell$ so that \eqref{e:C1e-C2e} equals
\begin{equs}  [e:C1e-C2e-rewrite]
 \sum_{j} \Big(
\begin{tikzpicture}  [baseline=10,scale=0.8]
\node[root]	(root) 	at (0,0) {};
\node[dot]		(left)  	at (-0.7,1) {};
\node[dot]		(right)  	at (0.7,1) {};			
\draw[kernelblue,bend right=20] (left) to  (root);
\draw[kernelblue, bend left=20]   (right)  to 
	node [midway,right,font=\scriptsize] {$j$} (root) ;
\draw[kernelblue, bend right=20]  (right)  to 
	node [midway,above,font=\scriptsize] {$\ell$} (left);
\end{tikzpicture}
\;
-
\;
\begin{tikzpicture}  [baseline=10,scale=0.8]
\node[root]	(root) 	at (0,0) {};
\node[dot]		(left)  	at (-0.7,1) {};
\node[dot]		(right)  	at (0.7,1) {};			
\draw[kernelblue,bend right=20] (left) to 
	node [midway,left,font=\scriptsize] {$j$} (root);
\draw[kernelblue,bend left=20]   (right)  to  (root) ;
\draw[kernelblue,bend right=20]  (right)  to 
	node [midway,above,font=\scriptsize] {$\ell$} (left);
\end{tikzpicture}
\; +\;
\begin{tikzpicture}  [baseline=10,scale=0.8]
\node[root]	(root) 	at (0,0) {};
\node[dot]		(left)  	at (-0.7,1) {};
\node[dot]		(right)  	at (0.7,1) {};			
\draw[kernelblue,bend right=20] (left) to 
	node [midway,left,font=\scriptsize] {$-\ell,j$} (root);
\draw[kernelblue,bend left=20]   (right)  to  (root) ;
\draw[kernelblue,bend right=20]  (right)  to  (left);
\end{tikzpicture}
\;-\;
\begin{tikzpicture}  [baseline=10,scale=0.8]
\node[root]	(root) 	at (0,0) {};
\node[dot]		(left)  	at (-0.7,1) {};
\node[dot]		(right)  	at (0.7,1) {};			
\draw[kernelblue,bend right=20] (left) to 
	node [midway,left,font=\scriptsize] {$-\ell$} (root);
\draw[kernelblue,bend left=20]   (right)  to 
	node [midway,right,font=\scriptsize] {$j$} (root) ;
\draw[kernelblue,bend right=20]  (right)  to  (left);
\end{tikzpicture}
\Big)
-
\;\;
\begin{tikzpicture}  [baseline=10,scale=0.8]
\node[root]	(root) 	at (0,0) {};
\node[dot]		(top)  	at (0,1) {};			
\draw[kernelblue,bend left =60]   (top)  to  (root) ;
\draw[kernelblue,bend right =60]   (top)  to  (root) ;
\end{tikzpicture}
\;-\;
%c_B^\e
\eps\;
\begin{tikzpicture}  [baseline=10,scale=0.8]
\node[root]	(root) 	at (0,0) {};
\node[dot]		(top)  	at (0,1) {};			
\draw[kernelblue,bend left =60]   (top)  to node [midway,right,font=\scriptsize] {$\ell$}  (root) ;
\draw[kernelblue,bend right =60]   (top)  to  (root) ;
\end{tikzpicture}
\qquad
\end{equs}
for each $\ell\in\{1,2\}$.
We prove that \eqref{e:C1e-C2e-rewrite} has a finite limit using the ``Ward identity'' derived in
Section~\ref{sec:Ward}. 
With our graphical notation, the  identity \eqref{e:Ward-use} with $t=x=0$ can be  represented as  %(recall the definition \eqref{e:def-psi} for $\psi_k^\e$)
%\hao{so we don't need to do the even-odd argument... just define all the constants..}
\begin{equ} [e:WardGraph]
-
\quad
\begin{tikzpicture}  [baseline=10]
\node[root]	(root) 	at (0,0) {};
	\node[empt,yshift=2,xshift=12] at (root)  {\scriptsize $\be_\ell$};
\node[dot]		(top)  	at (0,1) {};			
\draw[kernelblue,bend left =60]   (top)  to  (root) ;
\draw[kernelblue,bend right =60]   (top)  to  (root) ;
\end{tikzpicture}
\quad
-
\quad
\sum_{j,\be\in\{0,\be_\ell\}}
\;
\begin{tikzpicture}  [baseline=10]
\node[root]	(root) 	at (0,0) {};
	\node[empt,yshift=2,xshift=-12] at (root)  {\scriptsize $-\be$};
\node[dot]		(left)  	at (-0.7,1) {};
\node[dot]		(right)  	at (0.7,1) {};			
\draw[kernelblue,bend right=20] (left) to 
	node [midway,left,font=\scriptsize] {$j$} (root);
\draw[kernelblue,bend left=20]   (right)  to  (root) ;
\draw[kernelblue,bend right=20]  (right)  to 
	node [midway,above,font=\scriptsize] {$\ell$} (left);
\end{tikzpicture}
\quad
+
\quad
 \sum_{j,\be\in\{0,\be_\ell\}} \;
\begin{tikzpicture}  [baseline=10]
\node[root]	(root) 	at (0,0) {};
	\node[empt,yshift=2,xshift=-12] at (root)  {\scriptsize $-\be$};
\node[dot]		(left)  	at (-0.7,1) {};
\node[dot]		(right)  	at (0.7,1) {};			
\draw[kernelblue,bend right=20] (left) to  (root);
\draw[kernelblue, bend left=20]   (right)  to 
	node [midway,right,font=\scriptsize] {$j$} (root) ;
\draw[kernelblue, bend right=20]  (right)  to 
	node [midway,above,font=\scriptsize] {$\ell$} (left);
\end{tikzpicture}
\quad
=\quad 0
\end{equ}
for each $\ell\in\{1,2\}$,
where we dropped the summation over $k\in\{1,2\}$
in \eqref{e:def-vecfield-V} because  \eqref{e:def-vecfield-V}  only depends on $k$ through the noise $\zeta^\e_k$ and this dependence is gone upon taking expectation.
Here, a lattice vector $\be$ (of length $\e$)
near a tip of an arrow means that the kernel represented by the arrow is evaluated at the point $z+\be$ if $z$ is the end point of the arrow; 
for instance  
$\begin{tikzpicture} [baseline=-3] \draw[kernelblue] (0,0) to node [midway,above,font=\scriptsize] {$k$}  (1,0);
\node[empt] at (1,0.1)  {\scriptsize $\be$};\end{tikzpicture}$
 represents 
the kernel $\nabla^\e_k P^\e(y+\be-x)$,
with $x$ and $y$ being the starting and end points of the arrow respectively.

Now since $\sum_{y} \nabla^\e_{\-\be_\ell}P^\e(x-y) P^\e(y-z) = -\sum_{y}  P^\e(x-\be_\ell-y) \nabla^\e_{\be_\ell} P^\e(y-z)$ we see that  the four triangular graphs in \eqref{e:C1e-C2e-rewrite} are {\it exactly} the same as the four triangular graphs in \eqref{e:WardGraph}. 
The sum of the two last terms of \eqref{e:C1e-C2e-rewrite}
is then obviously equal to
 the first term of \eqref{e:WardGraph}.

%We then sum the above identity over $\ell\in\{1,2\}$.
%Noting that after summing over $\ell$ 
%the summands for $j=1$ and $j=2$
%the left hand side are equal, so we can drop the summation over $j\in\{1,2\}$ so that \eqref{e:WardGraph} still holds.
%We then obtain precisely the combination of the three graphic
%terms in \eqref{e:C1e-C2e}, except that 
%some kernels in \eqref{e:C1e-C2e} are shifted by a lattice vector $\be$, but it is easy to see that dropping these shiftings only cause 
%\hao{a bit detail about this error?}
%errors which also converge to finite limits.
%We have thus proved that \eqref{e:cherry-need-Ward} is finite as $\e\to 0$.

The symbols 
	$\<BjPsikPsik> $ and
	$\<BjBjPsik> $ %with various shifts
are then analyzed in the same way as $\<PsikPsik>$ and $ \<BjBj>$ so we omit the details.
\end{proof}

\begin{remark}\label{rem:fav-id}
In the previous proof we explained boundedness of $C_1^{(\eps)}-C_2^{(\eps)}$
by gauge symmetry,
but of course one can also prove it via the more elementary arguments.
%Let $P^\e_t(x) = (\partial_t - \Delta)^{-1}(t,x)$ be the time-continuous heat kernel on $\Z^d$,
%with the convention $P^\e_t(x)=0$ for $t<0$. 
Indeed,
one can start by ignoring  the terms in \eqref{e:C1e-C2e}
for $k\neq j$, because they converge to zero as $\e\to 0$
due to $x\to -x$ symmetry.
One then has (for instance see \cite[Lemma~4.2]{CorwinShenTsai}, or \cite[Section~6]{KPZJeremy} for one spatial dimension case in continuum)
\footnote{The constant in front of the left hand side of \cite[(4.9)]{CorwinShenTsai} is $\frac{1}{d}$ instead of $2$ as here, because
the heat kernel is defined as $(\partial_t - \frac{1}{2d}\Delta)^{-1} $ therein.}
\begin{equ} \label{e:d-dim-id}
2 \sum_{x\in \Z^d} \sum_{j=1}^d \int_{-\infty}^\infty
	 \nabla_j P^\e (t+s,x+y) \nabla_j P^\e (t+s',x+y') \,dt
= P^\e(|s-s'|,y-y') \;, %+p_{s'-s}(y-y') \;.
\end{equ}
for all $s,s'\in\R$ and $y,y'\in \Z^d$.
%where
%$\nabla_j f(x_1,\dots,x_d)\eqdef f(x_1,\dots,x_n+1,\dots,x_d)
%	-f(x_1,\dots,x_d)$.
Therefore
\begin{equ}
%C_1^{(\eps)} \; = \;
\begin{tikzpicture}  [baseline=10]
\node[root]	(root) 	at (0,0) {};
\node[dot]		(left)  	at (-0.7,1) {};
\node[dot]		(right)  	at (0.7,1) {};			
\draw[kernelblue,bend right=20] (left) to  (root);
\draw[kernelblue, bend left=20]   (right)  to 
	node [midway,right,font=\scriptsize] {$j$} (root) ;
\draw[kernelblue, bend right=20]  (right)  to 
	node [midway,above,font=\scriptsize] {$j$} (left);
\end{tikzpicture}
 \; = \; \frac12 \sum_{k=1}^2 \;
\begin{tikzpicture}  [baseline=10]
\node[root]	(root) 	at (0,0) {};
\node[dot]		(left)  	at (-0.7,1) {};
\node[dot]		(right)  	at (0.7,1) {};			
\draw[kernelblue,bend right=20] (left) to  (root);
\draw[kernelblue, bend left=20]   (right)  to 
	node [midway,right,font=\scriptsize] {$k$} (root) ;
\draw[kernelblue, bend right=20]  (right)  to 
	node [midway,above,font=\scriptsize] {$k$} (left);
\end{tikzpicture} 
 \; = \; \frac14
\begin{tikzpicture}  [baseline=10]
\node[root]	(root) 	at (0,0) {};
\node[dot]		(top)  	at (0,1) {};			
\draw[kernelblue,bend left =60]   (top)  to  (root) ;
\draw[kernelblue,bend right =60]   (top)  to  (root) ;
\end{tikzpicture}
\end{equ}
The first equality here is by the fact that the value of the first graph is the same for $j\in\{1,2\}$, and in the last equality we used the identity \eqref{e:d-dim-id} to compute the integration 
%(more precisely summation over space and integration over time) 
of the upper-right vertex.
The other graphs can be manipulated in a similar way, for instance,
\begin{equ}
%C_2^{(\eps)} = 
\begin{tikzpicture}  [baseline=10]
\node[root]	(root) 	at (0,0) {};
\node[dot]		(left)  	at (-0.7,1) {};
\node[dot]		(right)  	at (0.7,1) {};			
\draw[kernelblue,bend right=20] (left) to 
	node [midway,left,font=\scriptsize] {$j$} (root);
\draw[kernelblue,bend left=20]   (right)  to  (root) ;
\draw[kernelblue,bend right=20]  (right)  to 
	node [midway,above,font=\scriptsize] {$j$} (left);
\end{tikzpicture}
 \;=\; -
\begin{tikzpicture}  [baseline=10]
\node[root]	(root) 	at (0,0) {};
\node[dot]		(left)  	at (-0.7,1) {};
\node[dot]		(right)  	at (0.7,1) {};			
\draw[kernelblue,bend right=20] (left) to  (root);
\draw[kernelblue,bend left=20]   (right)  
	to node [midway,right,font=\scriptsize] {$j$} (root) ;
\draw[kernelblue,bend right=20]  (right)    
	to node [midway,above,font=\scriptsize] {$j$} (left);
\end{tikzpicture}
= -\frac14
\begin{tikzpicture}  [baseline=10]
\node[root]	(root) 	at (0,0) {};
\node[dot]		(top)  	at (0,1) {};			
\draw[kernelblue,bend left =60]   (top)  to  (root) ;
\draw[kernelblue,bend right =60]   (top)  to  (root) ;
\end{tikzpicture}
\end{equ}
where we applied integration by parts twice to shift the derivative in the $j$-th coordinate from the arrow on the left to the arrow on the top and then to the arrow on the right.
%So 
%\[
%2(C_1^{(\eps)} - C_2^{(\eps)} ) = \tilde C_0^{(\eps)} 
%\]
This also proves finiteness of \eqref{e:C1e-C2e}.
\end{remark}

\section{Convergence of solutions and observables}

%{\color{blue} new idea:  write $\widetilde{\boldsymbol R}_{\Psi^\eps_j}^\eps$ as $B^2 \nabla^+ \Psi $ and $\nabla^- (B^2\Psi)$}

In the following proposition we will assume that 
our models $\hat\Pi^{\eps} $
satisfy the following uniform bound 
(writing $\tau=\hat\Pi^{\e}\tau=\hat\Pi^{\eps,t}_x\tau$ for short here)
\minilab{e:assump-models}
\begin{equs}
| & \e^\kappa  \hat\Pi^{\e}   \, \<Ixij> (z) |   +
|\e^\kappa \hat\Pi^{\e} \, \<Izetak> (z) | \;  \lesssim 1 \;,
\qquad \mbox{and}
	\label{e:assump-models-1}
\\
 \e \, 	  \<Ixij>  \;
	 \<Izetak> \;
	\nabla^\e_j  \<Izetak> \;,
& \qquad
 \e \,  \<Ixik> (\cdot - \be)^2 \;
	\nabla^\e_k  \<Izetaj> \;,
 \qquad
 \e \, \<Ixik> (\cdot - \be) \;
	(\nabla^\e_{\-\be_k}  \<Ixik> ) \;
	\<Izetaj>
 	\label{e:assump-models-2}
\\
\e \, \<Izetak>  \;
	 \nabla^\e_j    \<Izetak> \;,
& \quad
\e \, \<Ixik> (\cdot - \be)  
	\; \nabla^\e_{\-\be_k}    \<Ixik> \;,
\quad
\e \, \<Ixik> (\cdot - \be)  \;  \nabla^\e_k  \<Izetaj>\;,
\quad
\e \, (\nabla^\e_{\-\be_k}  \<Ixik>)  \;   \<Izetaj>\;,
%&+
%\Big|
%\e^\kappa (  \hat\Pi^{\e} \<Ixij>  )
% 	( \hat\Pi^{\e}   \<Izetak>  )(z)
%\Big|
 	\label{e:assump-models-3}
\end{equs}
are all uniformly bounded in the $\| \Cdot \|^{(\e)}_{\hat\CC^{-\kappa}}$ norm
for some sufficiently small $\kappa>0$ and any $j,k\in\{1,2\}$ and $\be\in\{0,\be_k\}$.

\begin{proposition}  \label{prop:sol-abs}
Let $\alpha\in (-\frac43,-1)$, 
$\gamma\in (|\alpha|,2)$, $\eta\in (-\frac12,0)$,
and $\eps \in (0,1]$.
Then, for  final time $T>0$ sufficiently small and 
for any model $\hat\Pi^{\eps} $ in $\MM_\eps$ such that 
\eqref{e:assump-models} hold,
 there exists a unique solution to 
the fixed point problem \eqref{e:abs-fp} in 
$\CD^{\gamma,\eta}_{\eps,T}$. 
The existence time $T$ can be chosen uniformly over $\eps \in (0,1]$,
over bounded sets of initial conditions in  $\CC_\eps^{\gamma,\eta}$, 
and over bounded
sets in $\MM_\eps$ satisfying the above uniform bounds \eqref{e:assump-models}.
\end{proposition}

\begin{proof}
For the local polynomial nonlinear terms in fixed point problem \eqref{e:abs-fp}, 
we can verify the assumption of Theorem~3.9 and Assumption~5.5 in \cite{hairer2015discrete}.
Indeed,
the maps 
$ \CD^{\gamma, \eta}_{\e,T}(V)
\times
 \CD^{\gamma, \eta}_{\e,T}(\bar V)
 \to  \CD^{\gamma+\zeta-1, 2\eta-1}_{\e,T}(\bar{\bar V})$
 given by 
 \begin{equs}
 (\bPsi_k^\e,\bPsi_{\ell}^\e) \mapsto \bPsi_k^\e \DD_j \bPsi_{\ell}^\e
& \qquad
 (\bB_k^\e,\bPsi_\ell^\e) \mapsto \bB_k^\e \DD_k \bPsi_\ell^\e
 \\
  (\bB_k^\e,\bPsi_\ell^\e) \mapsto \DD_{\-k}\bB_k^\e  \bPsi_\ell^\e
  &\qquad
   (\bB_k^\e,\bPsi_\ell^\e) \mapsto \DD_{\-k} ( \bB_k^\e  \bPsi_\ell^\e)
 \end{equs}
 are locally Lipschitz where
 $V$ and $\bar V$ are sectors with homogeneity $\zeta\in(-\frac13,0)$ and $\bar{\bar V}$ is sector with homogeneity $2\zeta-1$.
Moreover, the maps 
$ \CD^{\gamma, \eta}_{\e,T}(V)
\times
 \CD^{\gamma, \eta}_{\e,T}(\bar V)
 \to  \CD^{\gamma+2\zeta, 3\eta}_{\e,T}(\bar{\bar V})$
 given by 
 $(\bB_j^\e,\bPsi_{k}^\e) \mapsto 
 \bB_j^\e (\bPsi_{k}^\e)^2$
 and
 $(\bB_j^\e,\bPsi_{k}^\e) \mapsto 
 (\bB_j^\e)^2 \bPsi_{k}^\e$
are also locally Lipschitz, uniformly in $\eps$,
where  $V$ and $\bar V$ are as above and 
$\bar{\bar V}$ is sector with homogeneity $3\zeta$.
The conditions in    \cite[Theorem~3.9]{hairer2015discrete}
\[
\eta<\bar\eta \wedge \alpha+2,
\quad
\gamma<\bar\gamma+2,
\quad
\bar\eta>-2
\]
then hold with $\alpha,\eta,\gamma$ as above,
$\bar\gamma\eqdef (\gamma+\zeta-1)\wedge (\gamma+2\zeta)$,
$\bar\eta \eqdef (2\eta-1)\wedge (3\eta)$ and
$\beta=2$ therein. 
%
%in particular using Lemma~\ref{lem:sf-is-Lip},
Therefore if there was not the other ``remainder'' terms \eqref{e:remainders} in \eqref{e:abs-fp},
 \cite[Theorem~5.7]{hairer2015discrete} would immediately apply
yielding unique solution to the equation \eqref{e:abs-fp} 
  over some interval $[0,T]$ that is jointly Lipschitz continuous
 in bounded sets of initial conditions and models  uniformly in $\eps$.
%the existence of $T_{\star} \in (0, +\infty]$ such that for every $T < T_{\star}$ the sequence of solution maps $\mathcal{S}^\eps_T : (\mathring A^\eps, \mathring \Phi^\eps, Z^\eps) \mapsto (B^\eps,\Psi^\eps) $ of the equation \eqref{e:abs-fp} is jointly Lipschitz continuous uniformly in $\eps$.

To control the other terms, note that if $\bB^\e,\bPsi^\e$ solve the fixed point problem \eqref{e:abs-fp} one necessarily has
\begin{equ}[e:BPsiDD]
\bB^\e_j = \<Ixij> + \bar\bB^\e_j
\qquad
\bPsi^\e_j = \<Izetaj> + \bar\bPsi^\e_j
\end{equ}
where $ \bar\bB^\e_j,\bar\bPsi^\e_j$ take values in the subspace of the regularity structure spanned by $\one$ and elements with strictly positive homogeneity.
Denote by $L^\infty_\e$ the space of  functions on  $[0,T]\times\Lambda_\e$ endowed with the supremum norm.

Consider the term
$\boldsymbol R_{B^\eps_j}^\eps$ as given by \eqref{e:remainders1}.
For any  $\kappa>0$, the maps
$
\bB^\e_j \mapsto \e^{\kappa} \CR^\e \bB^\e_j 
$ and
$
\bPsi^\e_k \mapsto \e^{\kappa} \CR^\e \bPsi^\e_k 
$
are locally Lipschitz continuous from $\CD^{\gamma, \eta}_{\e,T}$
to $L^\infty_\e$, uniformly over $\e\in (0,1]$, 
over bounded balls 
in $ \CD^{\gamma, \eta}_{\e,T}$, and
over models in $\MM_\e$ with bounded   
norm and with \eqref{e:assump-models-1}.
Moreover, regarding the function $\tilde F_1(z) = e^z - 1- z$ appearing in $R_{B^\eps_j}^\eps$
(defined below \eqref{e:defRBePsix})
 one has $ | \tilde F_1(u) - \tilde F_1(v) |
\lesssim |u-v| (|u|+|v|) $ for $u$ and $v$ bounded.
So provided that $\kappa>0$ is small enough ($\kappa<\tfrac14 $ suffices),
the map 
$
(\bB^\e_j,\bPsi^\e_1, \bPsi^\e_2) \mapsto R^\e_{B_j^\e} 
$ is locally Lipschitz continuous from $(\CD^{\gamma, \eta}_{\e,T})^3$
to $L^\infty_\e$, uniformly over the same data as above;
in fact, the norm of $R^\e_{B_j^\e}$
and the Lipschitz constant are bounded by $\e^{1-4\kappa}$.

Similarly,  regarding the term $\boldsymbol R_{\Psi^\eps_j}^\eps$ as given by \eqref{e:remainders2},
the map 
$
(\bB^\e_1,\bB^\e_2,\bPsi^\e_1, \bPsi^\e_2)
 \mapsto R^\e_{\Psi_j^\e} 
$ is  locally Lipschitz continuous from $(\CD^{\gamma, \eta}_{\e,T})^4$
to $L^\infty_\e$, uniformly over the aforementioned data, 
with the norm of $R^\e_{\Psi_j^\e}$
and the Lipschitz constant bounded by $\e^{1-4\kappa}$.
The argument for this follows in the same way as above except that
for the function $\tilde F_2$ appearing in $R_{\Psi^\eps_j}^\eps$
(defined below \eqref{e:defRBePsix})
 one has $ | \tilde F_2(u) - \tilde F_2(v) |
\lesssim |u-v| (|u|^2+|v|^2) $ for $u$ and $v$ bounded.
The maps $R^\e_{B_j^\e} \mapsto \CP^\e (R^\e_{B_j^\e} \one)$
and  $R^\e_{\Psi_j^\e} \mapsto \CP^\e (R^\e_{\Psi_j^\e} \one)$
on the right hand side of \eqref{e:abs-fp}
then map $L^\infty_\e$ into $\CD^{\gamma, \eta}_{\e,T}$
with norms bounded uniformly in $\e$ and behaves like $T^\theta$ for some $\theta>0$.

Next, consider the term $\widetilde{\boldsymbol R}_{B^\eps_j}^\eps$ 
as given by \eqref{e:remainders3}.
Write 
\begin{equ}[e:decomp-RBRPsi]
\CR^\e \bB^\e_j = b^\e_j + w^\e_j\,
\quad
\CR^\e \bPsi^\e_j = \psi^\e_j + v^\e_j \;,
\quad
\mbox{where}
\quad
b^\e_j = \hat\Pi^\e  \<Ixij>\;,
\quad
 \psi^\e_j  = \hat\Pi^\e  \<Izetak>\;.
\end{equ}
 Using \eqref{e:BPsiDD} we have the decomposition
\begin{equs}
		\e\,  (\CR^\eps \bB_j^\eps)
		&(\nabla_{\be_j}^\e \CR^\eps \bPsi_k^\eps)
		( \CR^\eps \bPsi^\eps_k)
=\e\, b^\e_j \, \psi^\e_k \, \nabla^\e_j\psi^\e_k
+\e\, w^\e_j \, \psi^\e_k \, \nabla^\e_j\psi^\e_k
+\e\, b^\e_j \, v^\e_k \, \nabla^\e_j\psi^\e_k
+\e\, b^\e_j \, \psi^\e_k \, \nabla^\e_j v^\e_k
\\
&
+\e\, \Big( b^\e_j \, v^\e_k \, \nabla^\e_j v^\e_k
+ w^\e_j \, v^\e_k \, \nabla^\e_j \psi^\e_k
+ w^\e_j \, \psi^\e_k \, \nabla^\e_j v^\e_k
+ w^\e_j \, v^\e_k \, \nabla^\e_j v^\e_k\Big)
\label{e:decom-tildeRB}
%= -\e \lambda^2 \sum_k P^\e \star_\e \Big( 
%	(\hat\Pi^\e  \<Ixij>  )
%	(\hat\Pi^\e  \<Izetak> )
%	(\nabla^\e_j \hat\Pi^\e \<Izetak>) 
%\\
%&
%+  ( \CR^\e \bar\bB^\e_j  )
%	(\hat\Pi^\e \<Izetak> ) 
%	(  \nabla^\e_j  \hat\Pi^\e \<Izetak>  )  
%+
%  ( \hat\Pi^\e\<Ixij>  )
%	(\CR^\e   \bar\bPsi^\e_k) 
%	( \nabla^\e_j \hat\Pi^\e \<Izetak>  ) 
%+
%  ( \hat\Pi^\e \<Ixij>  )
% 	(\hat\Pi^\e  \<Izetak>  )
%	(\nabla^\e_j \CR^\e   \bar\bPsi^\e_k)
%+ (\cdots)
% \Big)
% \,\one
\end{equs}
By the  bound \eqref{e:assump-models-2}
the convolution of the first term on the right hand side with heat kernel can be lifted to the subspace of 
$\CD^{\gamma,\eta}_{\eps,T}$  spanned by $\one$ and $\mathbf X$, with $\CD^{\gamma,\eta}_{\eps,T}$ norm bounded uniformly in $\e>0$.
The  same holds for the second and third terms in the first line using the bound \eqref{e:assump-models-3}.
All the terms on the second line have uniformly bounded  $\|\cdot\|_{\hat\CC^{-\kappa}}^{(\e)}$ norms, by the classical Young theorem,
because for any $u^\e$ with  $\|u^\e\|_{\hat\CC^{\alpha}}^{(\e)} \lesssim 1$ with $\alpha=1-\kappa$ or  $\alpha=-\kappa$ here,  one has the uniform bound   $\|\e\nabla^\e u^\e\|_{\hat\CC^{\alpha}}^{(\e)} \lesssim 1$ with the same $\alpha$.
The remaining term 
$\e\, b^\e_j \, \psi^\e_k \, \nabla^\e_j v^\e_k = (\e^{\kappa}\, b^\e_j )\,( \e^{\kappa}\psi^\e_k )\, (\e^{1-2\kappa}\nabla^\e_j v^\e_k)$ can be controlled again by   \eqref{e:assump-models-1} and Young theorem.

The map 
$(\bB^\e_j, \bPsi^\e_1,\bPsi^\e_2) \mapsto \widetilde{\boldsymbol R}_{B^\eps_j}^\eps$
is thus locally Lipschitz continuous 
from $(\CD^{\gamma, \eta}_{\e,T})^3$ to $\CD^{\gamma, \eta}_{\e,T}$,
uniformly over $\e\in (0,1]$, over bounded balls 
in $ (\CD^{\gamma, \eta}_{\e,T})^3$, and
over models in $\MM_\e$ with bounded  
%$\VERT \cdot \VERT^{(\eps)}_{\delta,\gamma,T}$ 
norm and satisfying the  uniform  bound
\eqref{e:assump-models}.

Finally, for the term $\widetilde{\boldsymbol R}_{\Psi^\eps_j}^\eps$ 
 given by \eqref{e:remainders4},
each of  the terms on the right hand side of  \eqref{e:remainders4}
is of the same form as the left hand side of \eqref{e:decom-tildeRB}, which is cubic with one derivative and one power of $\eps$. 
Decomposing it
 in the same way as \eqref{e:decom-tildeRB}, then
using the  bounds \eqref{e:assump-models} we have that 
its convolution  with heat kernel  can be lifted to the subspace of 
$\CD^{\gamma,\eta}_{\eps,T}$  spanned by $\one$ and $\mathbf X$, with $\CD^{\gamma,\eta}_{\eps,T}$ norm bounded uniformly in $\e>0$.

Summarizing the above bounds we then have $T>0$ and $\eps_0>0$ such that for all $\eps\in (0,\eps_0]$
the fixed point map is contractive in a ball of large enough radius provided the models are uniformly bounded as above and the initial conditions are  uniformly bounded. The solutions can be continued uniquely until the explosion time as in \cite[Sec.~7]{Regularity}.
\end{proof}

We will consider the following fixed point problem in continuum. 

\begin{proposition} 
There exists a unique solution
 $\bB_j,\bPsi_j \in \CD^{\gamma,\eta}_{0,T}$ (for $j\in\{1,2\}$) to the following fixed point problem
with an admissible model $\hat Z$
up to some terminal time $T > 0$

\begin{equs} [e:abs-fp-limit]
% B:
\bB_j  
	 & = \CP \hat F_{B_j}(\bB,\bPsi)
	 +S \mathring A_{j} + \<Ixij> 
+
%\lambda\Big(
%	\<I-Psi1dPsi2> - \<I-Psi2dPsi1>
%	\Big) 
\lambda\Big(
	\CY_{1j}- \CY_{2j}
	\Big)     \one 
%-2\lambda^2 \bar c_{B}  (P\ast \CR \bB_j ) \circ \langle\one,\mathbf X\rangle   
\\
% Psi:
\bPsi_j 
&=  \CP \hat F_{\Psi_j}(\bB,\bPsi)
-\sum_k  \CP' \hat F^{(k)}_{\Psi_j}(\bB,\bPsi)
	 +S \mathring \Phi_{j} 
+ \<Izetaj> 
-(-1)^j \lambda \sum_{ k=1,2 \atop \ell\neq j } 
	% \Big(\<I-BdPsi--1> -\<I-BdPsi--2> \Big)
\Big(\bar{\CY}_{k \ell} - \tilde{\CY}_{k \ell} \Big) \one
%+ \lambda^2 \bar c_{\Psi}   (P\ast \CR \bPsi_j)   \circ \langle\one,\mathbf X\rangle 
\end{equs}
where
the modeled distributions $\hat F_{B_j}$, $\hat F_{\Psi_j}$ and $\hat F^{(k)}_{\Psi_j}$ are defined in the same 
way as  $\hat F_{B_j^\e}$ and $\hat F_{\Psi_j^\e}$  in Section~\ref{sec:renorm} except that the spatial variable $x$ takes values in $\T^2$,
and $S$ is the  semi-group so that 
$S\mathring A_{j}, S \mathring \Phi_{j} $
are naturally lifted into $\CD^{\gamma, \eta}_{T}$.
\end{proposition}

\begin{proof}
Existence and uniqueness of the fixed point problem \eqref{e:abs-fp-limit}
follows in the same way as in the first part of proof to 
Proposition~\ref{prop:sol-abs}, 
with all the discrete spaces
$ \CD^{\gamma, \eta}_{\e,T}$ there replaced by their continuous counter-parts
 $\CD^{\gamma, \eta}_{T}$.
\end{proof}

\begin{proof}[of Proposition~\ref{prop:parabolic}]
We prove the limit by a diagonal argument as in 
\cite{MR3628883} (and as followed later by \cite{CLTKPZ,MR3698737,hairer2015discrete,CannizzaroMatetski} in regularity structures or \cite{ChoukGairingPerkowski} in the context of paracontrolled distributions).

We take a function $\psi : \R^3 \to \R$ which is smooth, compactly supported, symmetric under $x\to -x$ and integrates to $1$, and for some $\bar{\eps} \in [\eps,1]$ we define $\psi^{\bar{\eps}}(t, x) \eqdef \bar{\eps}^{-4} \psi\bigl(\bar{\eps}^{-2} t, \bar{\eps}^{-1} x\bigr)$ and the mollified noises $\xi^{\bar \eps, 0} \eqdef \xi \ast \psi^{\bar{\eps}}$ and $\zeta^{\bar \eps, 0} \eqdef \zeta \ast \psi^{\bar{\eps}}$.  

We define an inhomogeneous continuous model $Z^{\bar\e,0}$
for our truncated regularity structure $\hat{\ST}$,
in the same way as \eqref{e:def-Y-Ybar}--\eqref{e:model-compat-der}
except that we now use
 the kernel $K$ and the noises $(\xi^{\bar \eps, 0},\zeta^{\bar \eps, 0})$, and sum over $\Lambda_\e$ replaced by $\T^2$,
  $\ast_\e$ replaced by continuous convolution $\ast$,
 backward finite difference $\nabla^\e_{\-\be_k}$ replaced by $-\partial_k$.
 We then define the renormalized model 
 $\hat Z^{\bar\e,0}$ 
 in the same way as  \eqref{e:def-Meps} -- \eqref{e:def-C1},
 except that the constants  \eqref{e:def-Cs}, \eqref{e:def-C1} (now depending on $\bar\eps$) are defined by the singular kernels $K$ and its derivatives and then contracting 
 the $\psi^{\bar{\eps}}$-regularized noises in the symbols 
 \eqref{e:def-Ls}.
For instance, $C_{4,k}^{-k(\eps)}$ in \eqref{e:def-Cs} is replaced by
 \[
 C_{4,k}^{-k(\bar\eps)}  \eqdef
\int_{\R^9} \partial_k K(-z) (-\partial_k K)(z-x) K(-y) \,\psi^{\bar{\eps},(2)}(x-y)\, dxdydz
 \]
 where $\psi^{\bar{\eps},(2)}(x-y)\eqdef \int \psi^{\bar{\eps}}(x-w)\psi^{\bar{\eps}}(y-w)dw$.
 Also, we set $c_B^{\bar\e} \eqdef 0$ and $c_\Psi^{\bar\e}\eqdef 0$.

We then let $(B^{\bar\e,0},\Psi^{\bar\e,0}) = \hat\CR^{\bar\e,0} (\bB^{\bar\e,0},\bPsi^{\bar\e,0})$ 
where 
$ \hat\CR^{\bar\e,0}$ is the reconstruction
with the model  $\hat Z^{\bar\e,0}$
and
$(\bB^{\bar\e,0},\bPsi^{\bar\e,0})$ solves the fixed point problem
\eqref{e:abs-fp-limit}. As the derivation in Lemma~\ref{lem:renorm-equ}, they solve:
\begin{equs} [e:bar-eps-equ]
\partial_{t}  B^{\bar\e,0}_j  & 
	  = \Delta  B^{\bar\e,0}_j 
 + \lambda \Big( \Psi_1^{\bar\e,0} \partial_j\Psi_2^{\bar\e,0}
		-\Psi_2^{\bar\e,0} \partial_j \Psi_1^{\bar\e,0}  \Big) 
- \lambda^2   \sum_{k=1,2}B_j^{\bar\e,0} ( \Psi_k^{\bar\e,0})^2
	 +\xi^{\bar\eps}_j 
\\
\partial_t \Psi_j^{\bar\e,0} & 
 =  \Delta^{\bar\e,0}   \Psi_j^{\bar\e,0}
	  - 2 (-1)^{j} \lambda  \sum_{k=1,2 \atop \ell\neq j}
B_k^{\bar\e,0} \partial_k \Psi_{\ell}^{\bar\e,0} 
-  \lambda^2   \sum_{k=1,2}
	(B^{\bar\e,0}_k)^2 \,\Psi_j^{\bar\e,0} 
 - C^{(\bar\eps)} \Psi_j^{\bar\e,0}  +\zeta_j^{\bar\eps} \;.
\end{equs}
Here $C^{(\bar\eps)}$ is defined as in \eqref{e:value-of-C}
with the constants replaced by the respective $\bar\e$-dependent ones.

By retracing  the proof of 
Proposition~\ref{prop:moments}, (or alternatively using the blackbox theorem from  \cite{chandra2016analytic}),
we can immediately prove that
there exists a model $\hat Z \in \MM_0$ such that
one has
\begin{equ}[e:models-smooth-limit]
\lim_{\bar\e\to 0} \E \VERT \hat Z^{\bar\e,0}, \hat Z \VERT_{\delta, \gamma; T}
=0\;.
\end{equ}
Indeed, the kernels $K$ and $K\ast \psi^{\bar{\eps}}$
satisfy all the conditions required in the proof of 
Proposition~\ref{prop:moments} to yield the uniform moment bounds for the models $\hat Z^{\bar\e,0}$.
Moreover, it is easy to identify the limiting random variables 
$\langle
\hat\Pi^{t}_0 \tau, \varphi^\lambda \rangle$ for each relavant $\tau$, which we briefly explain now.
For symbols where the renormalization
constant exactly kills the zero-th chaos
as in \eqref{e:Psi-dIdPsi}, $\langle\hat\Pi^{t}_0 \tau, \varphi^\lambda \rangle$  is simply defined as the limit of the second chaos. 
For instance,
for $ \tau=\<Psi-dIdPsi>$
 one has, in view of \eqref{e:Psi-dIdPsi},
\begin{equ} % [e:Psi-dIdPsi-lim]
\langle
\hat\Pi^{t}_0 \tau, \varphi \rangle
\quad =\quad
\begin{tikzpicture}[scale=0.8,baseline=-13]
\node[var] at (0.6,0.3) (top) {\tiny $\ell$};
\node[dot] at (0,-0.4)  (mid) {};
	%\node[empt,yshift=5,xshift=-3] at (mid)  {\scriptsize $\be$};
\node[dot] at (0.6,-1.1) (bot) {};
\node[var] at (1.2, -0.4) (botup) {\tiny $\ell$};
\node[root] at (0.6,-1.7) (root) {};
\draw[kernel] (top) to node[above] {\tiny $k$}  (mid);
\draw[kernel]  (mid) to node[left] {\tiny $j$} (bot); 
\draw[kernel]  (botup) to (bot); 
\draw[testfcn] (bot) to (root); 
\end{tikzpicture}
\end{equ}
where now  all arrows represent the singular kernel $K$, decorations $k,j$ represent its respective derivatives, and noises are the white noises $\xi,\zeta$.
For
symbols $\tau=\<dj-Psii-Idk-Psil> $ (and similarly for $ \<d-lBlI(d-eBe)>$),
by the analysis below \eqref{e:boundingUb},
one has 
\begin{equ}
\langle
\hat\Pi^{ t}_0 \tau, \varphi^\lambda
\rangle
\quad =\quad
\begin{tikzpicture}[scale=0.8,baseline=-13]
\node[var] at (0.6,0.3) (top) {\tiny $\ell$};
\node[dot] at (0,-0.4)  (mid) {};
	%\node[empt,yshift=5,xshift=-3] at (mid)  {\scriptsize $\bar\be$};
\node[dot] at (0.6,-1.1) (bot) {};
	%\node[empt,yshift=0,xshift=-6] at (bot)  {\scriptsize $\be$};
	%\node[empt,yshift=0,xshift=6] at (bot)  {\scriptsize $\be$};
\node[var] at (1.2, -0.4) (botup) {\tiny $i$};
\node[root] at (0.6,-1.7) (root) {};
\draw[kernel] (top) to node[above] {\tiny $k$}  (mid);
\draw[kernel1]  (mid) to  (bot); 
\draw[kernel]  (botup)  to node[right] {\tiny $j$} (bot); 
\draw[testfcn] (bot) to (root); 
\end{tikzpicture}
\quad
-
\quad
\delta_{i,\ell}\,
\begin{tikzpicture}[scale=0.8,baseline=-13]
\node[dot] at (0,0)  (mid) {};
	%\node[empt,yshift=5] at (mid)  {\scriptsize $\bar\be$};
\node[dot] at (0,-0.8) (bot) {};
	%\node[empt,yshift=0,xshift=-6] at (bot)  {\scriptsize $\be$};
	%\node[empt,yshift=-7,xshift=6] at (bot)  {\scriptsize $\be$};
\node[dot] at (1.2, -0.3) (botup) {};
\node[root] at (0,-1.7) (root) {};
\draw[kernel,bend right=40] (botup) to   node[above] {\tiny $k$} (mid);
\draw[kernel, bend right=50]  (mid) to  (root); 
\draw[kernel,bend left=40]  (botup) to node[below] {\tiny $j$}  (bot); 
\draw[testfcn] (bot) to (root); 
\end{tikzpicture}
\end{equ}
For symbols with three noises such as $\tau= \<Psi-dIBdPsi>$ by the analysis below \eqref{e:Psi-dIBdPsi} one has
\begin{equ} 
\langle
\hat\Pi^{t}_0 \tau, \varphi^\lambda 
\rangle
\quad =\quad
\begin{tikzpicture}[scale=0.8,baseline=-13]
\node[var] at (0.6,0.3) (topright) {\tiny $\ell$};
\node[vab] at (-0.4,0.3) (topleft) {\tiny $j$};
\node[dot] at (0.1,-0.4)  (mid) {};
	%\node[empt,yshift=5,xshift=8] at (mid)  {\scriptsize $\be$};
\node[dot] at (0.6,-1.1) (bot) {};
\node[var] at (1.2, -0.4) (botup) {\tiny $\ell$};
\node[root] at (0.6,-1.7) (root) {};
\draw[kernel] (topright) to node[above] {\tiny $j$}  (mid);
\draw[kernel] (topleft) to   (mid);
\draw[kernel]  (mid) to node[left] {\tiny $k$} (bot); 
\draw[kernel]  (botup) to (bot); 
\draw[testfcn] (bot) to (root); 
\end{tikzpicture}
\quad 
+ 
\quad
\begin{tikzpicture}[scale=0.8,baseline=-13]
\node[vab] at (0,0.4) (top) {\tiny $j$};
\node[dot] at (0,-0.4)  (mid) {};
\node[dot] at (0,-1.1) (bot) {};
\node[root] at (0,-1.8) (root) {};
\draw[kernel]  (top) to (mid); 
\draw[kernelBig]  (mid) to (bot); 
\draw[testfcn] (bot) to (root); 
\end{tikzpicture}
\end{equ}
where
$\tikz \draw[kernelBig]  (0,0) to (1,0);$  is re-interpreted as the renormalized kernel $\mathscr R K_3$ where
$K_3$ is defined as in \eqref{e:def-K-3} with $K^\e$ replaced by $K$.
One  can then bound the moments
of $\langle \hat\Pi^{\bar\e,t}_0 \tau
- \hat\Pi^{t}_0 \tau, \varphi^\lambda  \rangle$
by $\bar\e^\kappa$ by proceeding as in \cite{Regularity}
and using the bound
\begin{equ}
\VERT K - K \ast \psi^{\bar\e} \VERT_{-2 - \kappa; m} \lesssim \bar{\eps}^\kappa \VERT K \VERT_{-2; m + 2}
\end{equ}
for some sufficiently small $\kappa>0$.

To conclude the proof of \eqref{e:models-smooth-limit},
the only subtlety lies in the zeroth chaos of $\hat\Pi^{ t}_0 \tau$ for $\tau= \<PsikPsik>$, which is given by
\begin{equ}[e:C2be-C1be]
\lim_{\bar\e\to 0} \big( C_2^{(\bar\e)}-C_1^{(\bar\e)} \big)\;.
\end{equ}
To show that it is finite, note that $C_2^{(\bar\e)}-C_1^{(\bar\e)}$ is, up to a finite part due to the truncation of heat kernel, given by \eqref{e:C1e-C2e} without the last term there and with the discrete kernel $P^\e$ replaced by the kernels in continuum.
Since the last term in \eqref{e:C1e-C2e} i.e. $-c^\e_B$ converges to a finite limit, arguing as in Remark~\ref{rem:fav-id} yields finiteness of the zeroth chaos of this object.
%it suffices to prove that the difference between each term of \eqref{e:C1e-C2e} and \eqref{e:C1e-C2e}${}_{\bar\e}$ converges to a finite limit.

As the next step, we 
discretize the noise $\xi^{\bar \eps, 0}$. Define the function
\begin{equ}
\psi^{\bar{\eps}, \eps}(t, x)
 \eqdef \eps^{-2} \int_{\R^2} \psi^{\bar{\eps}}(t, y)\, \one_{|y - x| \leq \eps/2}\, dy\;, \qquad (t,x) \in \R \times \Lambda_\eps^2\;,
\end{equ}
and the discrete noises 
\[
\xi^{\bar \eps, \eps} \eqdef \psi^{\bar{\eps}, \eps} \ast_\eps \xi^\eps \;,
\qquad
\zeta^{\bar \eps, \eps} \eqdef \psi^{\bar{\eps}, \eps} \ast_\eps \zeta^\eps \;.
\]
We define the discrete model $\hat Z^{\bar \eps, \eps}$ 
with the noises $\xi^{\bar \eps, \eps},\zeta^{\bar \eps, \eps}$,
and renormalization constants 
as in  \eqref{e:def-Cs}, \eqref{e:def-C1} using the discrete kernels $K^\e$, but then contracting 
 the $\psi^{\bar{\eps}}$-regularized discrete noises in the symbols 
 \eqref{e:def-Ls}.
For instance, $C_{4,k}^{-k(\bar\e,\eps)}$ in \eqref{e:def-Cs} is replaced by
 \[
 C_{4,k}^{-k(\bar\eps,\e)}  \eqdef
\int_{(\R\times \Lambda_\e)^3} \nabla^\e_{\be_k} K(-z) \nabla^\e_{\-\be_k} K(z-x) K(-y) \,\psi^{\bar{\eps},\e,(2)}(x-y)\, dxdydz
 \]
 where $\psi^{\bar{\eps},\e,(2)}(x-y)\eqdef \int_{\R\times \Lambda_\e} \psi^{\bar{\eps},\e}(x-w)\psi^{\bar{\eps},\e}(y-w)dw$.
Let $(B^{\bar\e,\e},\Psi^{\bar\e,\e}) = \hat\CR^{\bar\e,\e} (\bB^{\bar\e,\e},\bPsi^{\bar\e,\e})$ 
where 
$ \hat\CR^{\bar\e,\e}$ is the reconstruction
with the model  $\hat Z^{\bar\e,\e}$
and
$(\bB^{\bar\e,\e},\bPsi^{\bar\e,\e})$ solves the fixed point problem
\eqref{e:abs-fp}.
 
For any fixed $\bar\e>0$, one has that the ``remainder'' terms
$R_{B^\eps_j}^\eps$, $R_{\Psi^\eps_j}^\eps$
as well as 
$\hat\CR^{\bar\e,\e} \widetilde{\boldsymbol R}_{B^\eps_j}^\eps$, $\hat\CR^{\bar\e,\e} \widetilde{\boldsymbol R}_{\Psi^\eps_j}^\eps$ converge to zero as $\e \to 0$ in $\CC^\infty$ norms.
The convergence of $(B^{\bar\e,\e},\Psi^{\bar\e,\e}) $
to $(B^{\bar\e,0},\Psi^{\bar\e,0}) $ as $\e\to 0$
for fixed $\bar\e$ is then standard in numerical analysis \cite{MR2895081}.

It remains to bound the difference between
$(B^{\bar\e,\e},\Psi^{\bar\e,\e}) $
and $(B^{\e},\Psi^{\e})$.

It is standard to prove that for sufficiently small final time,
\begin{equ}[e:Bee-Be]
\VERT \bB_j^{\bar\e,\e};\bB_j^\e\VERT_{\gamma,\eta}^{(\e)} 
	\lesssim
	\VERT \hat Z^{\bar\e,\e} ; \hat Z^\e  \VERT_{\delta, \gamma; T}^{(\e)}
	+ \VERT \CP^\e  {\boldsymbol R}_{B^{\bar\e,\e}_j}^{\bar\e,\e} ;  \CP^\e {\boldsymbol R}_{B^{\e}_j}^{\e} \VERT_{ \gamma,\eta}^{(\e)}
	+ \VERT \widetilde{\boldsymbol R}_{B^{\bar\e,\e}_j}^{\bar\e,\e} ;  \widetilde{\boldsymbol R}_{B^{\e}_j}^{\e} \VERT_{ \gamma,\eta}^{(\e)}
\end{equ}
and similarly for $\|\bPsi_j^{\bar\e,\e};\bPsi_j^\e\|_{\gamma,\eta}^{(\e)}$.
By \cite[Lemma~7.5]{hairer2015discrete} one has the bound
uniformly in $\e\le \bar\e$
\begin{equ}
\VERT K^\e - K^\e \ast_\eps \psi^{\bar\e,\e} \VERT_{-2 - \kappa; m} \lesssim \bar{\eps}^\kappa \VERT K^\e \VERT_{-2; m + 2}
\end{equ}
From this one has the bound 
$ \E \VERT \hat Z^{\bar\e,\e} ; \hat Z^\e  \VERT_{\delta, \gamma; T}^{(\e)}
\lesssim \bar\e^\kappa$ for some $\kappa>0$ uniformly in $\e<\bar\e$.

Now consider
\[
\widetilde{\boldsymbol R}_{B^{\bar\e,\e}_j}^{\bar\e,\e}  
 \eqdef
-  \lambda^2 
		P^\e \ast_\e
		\Big(\sum_{k=1,2}\eps\,
		(B_j^{\bar\e,\e})
		(\nabla_{\be_j}^\e \Psi_k^{\bar\e,\e})
		( \Psi^{\bar\e,\e}_k)
		- c_B^{\bar\e,\e} B_j^{\bar\e,\e}  \Big) \circ \langle \one, \mathbf X\rangle   \;.
\]
Note that if $c_B^{\bar\e,\e} $ and $c_B^{\e} $ were zero,
the difference between $\widetilde{\boldsymbol R}_{B^{\bar\e,\e}_j}^{\bar\e,\e} $ and
$\widetilde{\boldsymbol R}_{B^{\e}_j}^{\e} $
would not be bounded by $O(\bar\e^\kappa)$ {\it uniformly
in} $\e$, because for fixed $\bar\e$ as $\e\to 0$,
$\widetilde{\boldsymbol R}_{B^{\bar\e,\e}_j}^{\bar\e,\e}  $
would vanish while $\widetilde{\boldsymbol R}_{B^{\e}_j}^{\e} $ would converge to a non-zero linear term.
 Now with $c_B^{\bar\e,\e} $ and $c_B^{\e} $ defined above,
we can actually show a stronger statement that for any fixed $\bar\e>0$, one has
$
\E\VERT \widetilde{\boldsymbol R}_{B^{\bar\e,\e}_j}^{\bar\e,\e}  \VERT_{\gamma,\eta}^{(\e)}
+
\E\VERT  \widetilde{\boldsymbol R}_{B^{\e}_j}^{\e}
\VERT_{\gamma,\eta}^{(\e)} 
\lesssim 
\bar\e^\kappa
$
uniformly in $\e \in (0,\e_0(\bar\e))$.
To this end, 
decompose $B^\e,\Psi^\e,B^{\bar\e,\e},\Psi^{\bar\e,\e}$ as  in \eqref{e:decomp-RBRPsi}.
It is easy to show that 
\begin{equ} [e:tilde-RB1]
 \e b_j^{\e}  \psi_k^{\e} \nabla^\e_j \psi_k^{\e}
-
 c_{B}^{\e} 
b_j^{\e}
\qquad
\mbox{and}
\qquad
 \e b_j^{\bar\e,\e}  \psi_k^{\bar\e,\e} \nabla^\e_j \psi_k^{\bar\e,\e}
-
 c_{B}^{\bar\e,\e} 
b_j^{\bar\e,\e}
\end{equ}
converge in $L^p$ to zero in $\hat\CC^\alpha$ for $\alpha<0$. Indeed, using similar arguments as in the proof of Proposition~\ref{prop:moments} and the `extra' $\eps$, the third chaos of the left hand side of \eqref{e:tilde-RB1}
vanishes in the limit;
and  the first chaos vanishes by definitions of $ c_{B}^{\e} $,
$ c_{B}^{\bar\e,\e} $.
One can also show that 
\begin{equ} 
 \e w_j^{\e} 
	\psi^{\e}_k 
	\nabla^\e_j \psi^{\e}_k 
		 -  c_{B}^{\e} w_j^{\e} 
\qquad
\mbox{and}
\qquad
 \e w_j^{\bar\e,\e} 
	\psi^{\bar\e,\e}_k 
	\nabla^\e_j \psi^{\bar\e,\e}_k 
		 -  c_{B}^{\bar\e,\e} w_j^{\bar\e,\e} 
\end{equ}
converge in $L^p$ to zero in $\hat\CC^\alpha$ for $\alpha<0$.
This follows from that 
$ \e \psi^{\e}_k 
	\nabla^\e_j \psi^{\e}_k 
		 -  c_{B}^{\e}  $ converge to zero
in $\hat\CC^\alpha$ with any $\alpha<0$
and  that $w_j^{\e} $ converges in  $\CC^\alpha$ with any $\alpha<1$,
together with  the classical Young's theorem;
same for the expression depending on $(\bar\e,\e)$.
One also has convergences of the other terms such as
$ \e \, b_j^\e  
	v_k^\e 
	 \nabla^\e_j  \psi_k^\e \to 0 $ 
and 
$ \e \, b_j^\e 
 	\psi_k^\e
	\nabla^\e_j v^\e_k \to 0 $ in $\hat\CC^\alpha$ for $\alpha<0$.
Given all these,
we conclude the claimed uniform bounds on
$
\VERT \widetilde{\boldsymbol R}_{B^{\bar\e,\e}_j}^{\bar\e,\e}  \VERT_{\gamma,\eta}^{(\e)}$ 
and 
$\VERT  \widetilde{\boldsymbol R}_{B^{\e}_j}^{\e}\VERT$.
The second term on the right hand side of \eqref{e:Bee-Be} can be shown
to be bounded by $\bar\e^\kappa$ uniformly in $\e \in (0,\e_0(\bar\e))$
by similar (even simpler) arguments.

The difference $\|\bPsi_j^{\bar\e,\e};\bPsi_j^\e\|_{\gamma,\eta}^{(\e)}$ can be bounded in the same way.
	
Summarizing the above estimates,
the convergence of the discrete solutions in probability \eqref{e:Convergence-Prob}
as well as the convergence of the stopping times 
then follow in the same way as the diagonal arguments in \cite[below~(7.16)]{hairer2015discrete}.
\end{proof}

\begin{remark}
Equations \eqref{e:value-of-C} where the noises are continuously mollified  do not enjoy the gauge invariance property, in particular Lemma~\ref{lem:Levy}.
Here we know {\it posteriorly} that \eqref{e:value-of-C} without any mass renormalization in the $B^{\bar\e,0}$ equation actually achieve the same limit; this seems to be due to the simplicity of this two-dimensional Abelian model.
In fact, if $c_B^\e$ were actually divergent (it is imaginable that this may happen  in the more sophisticated gauge theory models discussed in Section~\ref{sec:discussions}),
then the argument below \eqref{e:C2be-C1be} would not be valid anymore and thus one would have to define $C_1^{(\bar\e)}$ differently, which would lead to a nontrivial
mass renormalization for $B^{\bar\e,0}$ in the equations
 \eqref{e:value-of-C}.
\end{remark}

\subsection{Convergence of observables}
\label{sec:conv-obs}

We start to prove  Theorem~\ref{theo:observables}.
Since all the observables considered in the theorem
are gauge invariant, it suffices to show their convergence
with every incidence of $(A^\e,\Phi^\e)$ replaced
by $(B^\e,\Psi^\e)$.

\begin{lemma}\label{lem:FA-Wick}
Let $(A^\e,\Phi^\e)$ be the solutions to \eqref{e:DLangevin}.
The curvature $F^\e_{A^\e}$ and the Wick powers $\Wick{|\Phi^\eps|^{2n}}$ 
converge in distribution  with respect to the distance 
$\Vert \cdot ; \cdot \Vert_{\CC^{\delta, \alpha-1}_{\bar{\eta}, T_\eps}}^{(\eps)} $
and $\Vert \cdot ; \cdot \Vert_{\CC^{\delta, \alpha}_{\bar{\eta}, T_\eps}}^{(\eps)} $ respectively.
\end{lemma}

\begin{proof}
By definition \eqref{e:discFA}, $F^\eps_{A^\e}=F^\eps_{B^\e}$
is simply a finite difference discretization of the curl of $B^\e$.
Therefore, invoking the convergence of $B^\e$ to its distributional limit $B$ with respect to the distance 
$\Vert \cdot ; \cdot \Vert_{\CC^{\delta, \alpha}_{\bar{\eta}, T_\eps}}^{(\eps)} $,
 one immediately obtains 
 the convergence of $F^\eps_{B^\e}$ to the distributional limit
which is curl$B$,
 with respect to the distance 
$\Vert \cdot ; \cdot \Vert_{\CC^{\delta, \alpha-1}_{\bar{\eta}, T_\eps}}^{(\eps)} $.

Consider the observable 
$\Wick{|\Phi^\eps|^{2n}}=\Wick{|\Psi^\eps|^{2n}} = \Wick{(\Psi^\eps\bar\Psi^\eps)^n }$
for $n\ge 1$ where $\bar\Psi^\eps$ is the complex conjugate of $\Psi^\eps$.
%Let $\Psi^\eps_0 \eqdef K\star \zeta^\eps$ and from the analysis in Section~\ref{sec:renormalization}
One has  $\Psi^\eps =  \psi_\e + v_\e$
where $\psi_\e \eqdef K^\e \ast_\e \zeta^\e$ and
 $v_\e \to v\in \CC([0,T],\CC^\beta)$ for any $\beta<1$.
 From the definition of Wick powers \eqref{e:generating-Wick}
with respect to the Gaussian measure of $\psi_\e$,
one can also deduce 
\begin{equ}[e:Wick-product-expanded]
\Wick{(\Psi^\eps\bar\Psi^\eps)^n }
=
\sum_{p,q=0}^n {n \choose p}{n \choose q}
\Wick{  %(K^\e \star_\e \zeta^\e )^p (\overline{K^\e \star_\e \zeta^\e} )^q
\,\psi_\e^p \bar\psi_\e^q}
v_\e^{n-p}\bar v_\e^{n-q} \;.
\end{equ}
%Since $\Wick{(K^\e \star_\e \zeta^\e )^p (\overline{K^\e \star_\e \zeta^\e} )^q}$
Note that  $\Wick{ \,\psi_\e^p \bar\psi_\e^q}$
is simply a Wick product of Gaussian process $(\psi_\e,\bar\psi_\e)$ which
converge in probability in 
$\CC([0,T],\CC^\beta)$ for any $\beta<0$.
Applying  the classical Young's theorem to the products in \eqref{e:Wick-product-expanded}
 we obtain the convergence 
in probability of $\Wick{(\Psi^\eps\bar\Psi^\eps)^n }$
and thus the convergence 
in distribution  of $\Wick{(\Phi^\eps\bar\Phi^\eps)^n }$.
Note that the logarithmic renormalization constants in the Wick products, and thus the limit 
of $\Wick{(\Phi^\eps\bar\Phi^\eps)^n }$, do not depend on the choice of the truncated heat kernel $K^\e$;
see for instance \cite[Lemma~3.1]{MR3452276}.
\end{proof}

Let
\begin{equ}[e:defC-composite]
C^\e_{\bar\Phi D^A \Phi}
\eqdef 2 \int_{\R\times \Lambda_\e} P^\e(-z) \nabla_j^\e P^\e(-z) \,dz  \;.
\end{equ}
Note that the ``parity'' symmetry $x\to -x$ 
would render a zero limit if the above expression were in continuum, but it 
does not  apply here on the lattice. Instead:
\begin{lemma}\label{lem:int-P-dP}
One has $C^\e_{\bar\Phi D^A \Phi} = -1/(4\e)$.
\end{lemma}
\begin{proof}
As the argument  in Lemma~\ref{lem:some-are-finite} one has 
\begin{equ}
C^\e_{\bar\Phi D^A \Phi}
=2 \int_{\R\times \Lambda_\e} P^\e(-z)\nabla_{-\be_j}^\e  P^\e(-z) \,dz
\end{equ}
and adding this to \eqref{e:defC-composite} one has
\begin{equs}
C^\e_{\bar\Phi D^A \Phi}&=
\eps \int_{\R\times \Lambda_\e} \!\!\!\! P^\e(-z) 
\nabla_{-\be_j}^\e \nabla_{\be_j}^\e P(-z) \,dz 
=
\frac{\eps}{d} \int_{\R\times \Lambda_\e}  \!\!\!\! P^\e(t,x) 
\Delta^\e P(t,x) \,dtdx
\\
&=
\frac{\eps}{2d} \int_{\R\times \Lambda_\e}  \!\!\!\! \partial_t(P^\e(t,x)^2) \,dtdx
=
-\frac{\eps}{2d} \int_{ \Lambda_\e}  P^\e(0,x)^2 \,dx
= 
-\frac{1}{2d\e^{d-1}}
=
-\frac{1}{4\e}\;.
\end{equs}
\end{proof}

\begin{lemma}\label{lem:proof-PhiDAPhi}
The composite field observables 
$\bar\Phi^\eps (e_-) D_j^{A^\eps} \Phi^\eps (e)-C^\e_{\bar\Phi D^A \Phi}$ as
introduced in \eqref{e:def-obs-PhiDAPhi} with $e\in \CE^j_\e$ converge in distribution  with respect to the distance 
$\Vert \cdot ; \cdot \Vert_{\CC^{\delta, \alpha-1}_{\bar{\eta}, T_\eps}}^{(\eps)} $.
\end{lemma}

\begin{proof}
By its gauge invariance % (see \eqref{e:DCovariance})
it again suffices to consider
$\bar\Psi^\eps (e_-) D_j^{B^\eps} \Psi^\eps (e)$.
Recall that $\Psi^\e=\Psi^\e_1+ i\Psi^\e_2$.
By definition  for $e\in \CE^j_\e$ one has
\begin{equs} 
\bar\Psi^\eps & (e_-)   (D_j^{B^\eps} \Psi^\eps)(e)-
C^\e_{\bar\Phi D^A \Phi}
\\
&= \eps^{-1} \bar\Psi^\eps (e_-) 
	\Big(e^{-i\eps \lambda B_j^\eps(e)} \Psi^\eps(e_+) -\Psi^\eps(e_-)\Big) -C^\e_{\bar\Phi D^A \Phi}
\\
& = \sum\nolimits_{k} \Big( \Psi^\e_k (e_-)\nabla^\e_j \Psi^\e_k(e_-)\Big)
+ \lambda \Psi^\e_1 (e_-) B_j^\e (e)\Psi^\e_2 (e_+)
- \lambda \Psi^\e_2 (e_-) B_j^\e (e)\Psi^\e_1 (e_+)
\\
 & \quad
+ i \Big( \Psi^\e_1 (e_-)\nabla^\e_j \Psi^\e_2(e_-) -\Psi^\e_2 (e_-)\nabla^\e_j \Psi^\e_1(e_-)
- \lambda   B_j^\e (e) \sum\nolimits_{k}  \Psi^\e_k (e_-) \Psi^\e_k (e_+)
\Big) 
\\
&\quad -C^\e_{\bar\Phi D^A \Phi} + R_j^\e(e)
		\label{e:expansion-PsiDBPsi}
%-\Psi^\eps D_j^{A^\eps} \bar\Psi^\eps
%=2(\partial_t B_j^\eps-\Delta^\eps B_j^\eps - \xi_j^\eps )
\end{equs}
%by convergence of $B^\eps_j$ one obtains the convergence 
%of $\bar\Psi^\eps D_j^{A^\eps} \Psi^\eps$ with respect to the distance $\|\cdot,\cdot\|^{(\eps)}_{\hat \CC^\gamma}$ for any $\gamma<-2$.
%
%\hao{Say $B^\eps_j$ converges in $\|\cdot,\cdot\|^{(\eps)}_{\hat \CC^\gamma}$ for any $\gamma<0$ and why this implies the claim.}
where we have separated the real and imaginary parts of the process, and
$R_j^\e$ is a remainder of ``order $O(\e B^2\Psi^2)$''
\[
R_j^\e(e) \eqdef \eps^{-1} \bar\Psi^\eps (e_-) 
	\Big(e^{-i\eps \lambda B_j^\eps(e)} - 1+i\e\lambda B_j^\e(e)\Big)
	 \Psi^\eps(e_+) \;.
\]
Since 
\begin{equs}
\Big|e^{-i \eps \lambda B_j^\eps(e)} - 1+i\e\lambda B_j^\e(e)\Big|
&\le
 \Big|\cos(\eps \lambda B_j^\eps(e))-1\Big|
+ \Big|\eps \lambda B_j^\eps(e)-\sin(\eps \lambda B_j^\eps(e))\Big|
\\
& \lesssim \e^2 B_j^\eps(e)^2
\end{equs}
and $B^\e,\Psi^\e$  converge in $\CC([0,T],\CC^\beta)$ for any $\beta<0$,
we have $\sup_e |R_j^\e(e)| \to 0$.

We will now prove convergence of the real and imaginary parts of \eqref{e:expansion-PsiDBPsi} separately,
using the knowledge that 
the solution $(B^\e,\Psi^\e)$ is given by the reconstruction of the abstract solution which has the expansion \eqref{e:expand-sol}, namely, with $b_j^\e=\hat\Pi^{\e} \, \<Ixij>$ and $\psi_j^\e=\hat\Pi^{\e} \, \<Izetaj>  $ as before,
and $\hat\Pi^{\e}=\hat\Pi^{\eps,t}_x$ for short
\minilab{e:2nd-order-exp}
\begin{equs}
B_j^\eps (e) &= B_j^\eps (e_-)= b_j^\e(e_-) + w_j^\e(e_-)	\label{e:2nd-order-exp1}
\\
\Psi_j^\eps (x)&=  \psi_j^\e(x) -(-1)^{j} \lambda
	   \sum_{k=1,2 \atop \ell\neq j} 
	 \Big[\hat\Pi \<I-BdPsi> + \hat\Pi \<I(d-kBP)> - \hat\Pi\<d-kI(BP)> \Big](x)+  \tilde{v}_j^\e(x)\;.
	 	\label{e:2nd-order-exp2}
\end{equs}

Consider the real part. Let $x=e_-$ and consider $\Psi^\e_k (x)\nabla^\e_j \Psi^\e_k(x)$. 
By standard moment analysis as in Section~\ref{sec:mom},
we get convergence of the following processes with respect to the distance 
$\Vert \cdot ; \cdot \Vert_{\CC^{\delta, \alpha-1}_{\bar{\eta}, T_\eps}}^{(\eps)} $
\[
\psi^\e_k (x) \nabla^\e_j \psi^\e_k(x)
-\frac12 C^\e_{\bar\Phi D^A \Phi} \;,
\]
noting that $C^\e_{\bar\Phi D^A \Phi}$ in \eqref{e:defC-composite} is exactly such that the above renormalized process is mean zero.
One also needs to control
the ``cross terms'' between $\psi^\e_k $ and $O(\lambda)$-terms of \eqref{e:2nd-order-exp2},
namely:
%\[
%\nabla^\e_j \psi^\e_k (x)\,\hat\Pi \<I-BdPsi>(x)
%\]
\begin{equs}
\begin{tikzpicture}[scale=0.8,baseline=-13]
\node[var] at (0.6,0.3) (topright) {\tiny $\ell$};
\node[vab] at (-0.4,0.3) (topleft) {\tiny $m$};
\node[dot] at (0.1,-0.4)  (mid) {};
\node[dot] at (0.6,-1.1) (bot) {};
\node[var] at (1.2, -0.4) (botup) {\tiny $k$};
%\node[root] at (0.6,-1.7) (root) {};
%
\draw[kernel] (topright) to node[right] {\tiny $m$}  (mid);
\draw[kernel] (topleft) to   (mid);
\draw[kernel]  (mid) to  (bot); 
\draw[kernel]  (botup) to node[right] {\tiny $j$} (bot); 
%\draw[testfcn] (bot) to (root); 
\end{tikzpicture}
\qquad
\begin{tikzpicture}[scale=0.8,baseline=-13]
\node[var] at (0.6,0.3) (topright) {\tiny $\ell$};
\node[vab] at (-0.4,0.3) (topleft) {\tiny $m$};
\node[dot] at (0.1,-0.4)  (mid) {};
\node[dot] at (0.6,-1.1) (bot) {};
\node[var] at (1.2, -0.4) (botup) {\tiny $k$};
%\node[root] at (0.6,-1.7) (root) {};
%
\draw[kernel] (topright) to  (mid);
\draw[kernel] (topleft) to node[left] {\tiny $-m$}   (mid);
\draw[kernel]  (mid) to  (bot); 
\draw[kernel]  (botup) to node[right] {\tiny $j$} (bot); 
%\draw[testfcn] (bot) to (root); 
\end{tikzpicture}
\qquad
\begin{tikzpicture}[scale=0.8,baseline=-13]
\node[var] at (0.6,0.3) (topright) {\tiny $\ell$};
\node[vab] at (-0.4,0.3) (topleft) {\tiny $m$};
\node[dot] at (0.1,-0.4)  (mid) {};
\node[dot] at (0.6,-1.1) (bot) {};
\node[var] at (1.2, -0.4) (botup) {\tiny $k$};
%\node[root] at (0.6,-1.7) (root) {};
%
\draw[kernel] (topright) to  (mid);
\draw[kernel] (topleft) to   (mid);
\draw[kernel]  (mid) to node[left] {\tiny $-m$}  (bot); 
\draw[kernel]  (botup) to node[right] {\tiny $j$} (bot); 
%\draw[testfcn] (bot) to (root); 
\end{tikzpicture}
\qquad
\begin{tikzpicture}[scale=0.8,baseline=-13]
\node[var] at (0.6,0.3) (topright) {\tiny $\ell$};
\node[vab] at (-0.4,0.3) (topleft) {\tiny $m$};
\node[dot] at (0.1,-0.4)  (mid) {};
\node[dot] at (0.6,-1.1) (bot) {};
\node[var] at (1.2, -0.4) (botup) {\tiny $k$};
%\node[root] at (0.6,-1.7) (root) {};
%
\draw[kernel] (topright) to node[right] {\tiny $m$}  (mid);
\draw[kernel] (topleft) to   (mid);
\draw[kernel]  (mid) to node[left] {\tiny $j$} (bot); 
\draw[kernel]  (botup) to  (bot); 
%\draw[testfcn] (bot) to (root); 
\end{tikzpicture}
\qquad
\begin{tikzpicture}[scale=0.8,baseline=-13]
\node[var] at (0.6,0.3) (topright) {\tiny $\ell$};
\node[vab] at (-0.4,0.3) (topleft) {\tiny $m$};
\node[dot] at (0.1,-0.4)  (mid) {};
\node[dot] at (0.6,-1.1) (bot) {};
\node[var] at (1.2, -0.4) (botup) {\tiny $k$};
%\node[root] at (0.6,-1.7) (root) {};
%
\draw[kernel] (topright) to  (mid);
\draw[kernel] (topleft) to node[left] {\tiny $-m$}   (mid);
\draw[kernel]  (mid) to node[left] {\tiny $j$} (bot); 
\draw[kernel]  (botup) to  (bot); 
%\draw[testfcn] (bot) to (root); 
\end{tikzpicture}
\qquad
\begin{tikzpicture}[scale=0.8,baseline=-13]
\node[var] at (0.6,0.3) (topright) {\tiny $\ell$};
\node[vab] at (-0.4,0.3) (topleft) {\tiny $m$};
\node[dot] at (0.1,-0.4)  (mid) {};
\node[dot] at (0.6,-1.1) (bot) {};
\node[var] at (1.2, -0.4) (botup) {\tiny $k$};
%\node[root] at (0.6,-1.7) (root) {};
%
\draw[kernel] (topright) to  (mid);
\draw[kernel] (topleft) to   (mid);
\draw[kernel]  (mid) to node[left] {\tiny $-m,j$}  (bot); 
\draw[kernel]  (botup) to  (bot); 
%\draw[testfcn] (bot) to (root); 
\end{tikzpicture}
\end{equs}
where, it is important to notice that in  all these terms, the indices $\ell\neq k$. Thus by standard moment analysis and independence
of $\psi_k^\e$ and $\psi_\ell^\e$ one obtains convergences of each of these processes.
All the other terms in $\Psi^\e_k (x)\nabla^\e_j \Psi^\e_k(x)$ then fall into the scope of Young's theorem
and thus converge in the desired topologies.
The convergence of  the term $\Psi^\e_k (e_-) B_j^\e (e)\Psi^\e_{3-k} (e_+)$ 
is even simpler since one only needs the first order expansion in \eqref{e:2nd-order-exp},
the independence of $\psi^\e_k$,  $b_j^\e $ and $\psi^\e_{3-k}$, and Young's theorem.

Regarding the imaginary part on the right side of \eqref{e:expansion-PsiDBPsi}, 
it turns out that we must treat all the terms in the parenthesis in the last line of \eqref{e:expansion-PsiDBPsi} {\it together}, since each individual term {\it does not} converge!

Indeed, it is easy to see by independence
of $\psi_1^\e$ and $\psi_2^\e$ that 
to the ``leading order'',
one has convergences of   $\psi_1^\e \nabla^\e_j\psi_2^\e$ and $\psi_2^\e \nabla^\e_j\psi_1^\e$ (individually).
%\[
%	\hat\Pi^{\eps,t}_x \, \<Izeta1>  
%	\nabla^\e_j \hat\Pi^{\eps,t}_x \, \<Izeta2>  
%\qquad
%\mbox{and}
%\qquad
%	\hat\Pi^{\eps,t}_x \, \<Izeta2>  
%	\nabla^\e_j \hat\Pi^{\eps,t}_x \, \<Izeta1>  \;.
%\]
However, the ``next order''  requires more careful analysis.
Considering the ``next order'' objects from  
$ \Psi^\e_1 (e_-)\nabla^\e_j \Psi^\e_2(e_-) $ and $\Psi^\e_2 (e_-)\nabla^\e_j \Psi^\e_1(e_-)$,
we note that all terms of the form 
\[
\nabla^\e_j \psi^\e \hat\Pi \<I(d-kBP)>
\qquad
\mbox{or}
\qquad
 \psi^\e  \nabla^\e_j \hat\Pi \<I(d-kBP)>
\]
will (individually) converge,
because in the symbol $\<I(d-kBP)>$
the derivative lives on the edge $\<Ixik>$ so the first chaos has no divergence.
 It remains to consider the following objects 
\begin{equs}[e:imag-terms]
\sum_{k=1}^2 \quad &
\begin{tikzpicture}[scale=0.8,baseline=-13]
\node[var] at (0.6,0.3) (topright) {\tiny $2$};
\node[vab] at (-0.4,0.3) (topleft) {\tiny $k$};
\node[dot] at (0.1,-0.4)  (mid) {};
\node[dot] at (0.6,-1.1) (bot) {};
\node[var] at (1.2, -0.4) (botup) {\tiny $2$};
%\node[root] at (0.6,-1.7) (root) {};
%
\draw[kernel] (topright) to node[above] {\tiny $k$}  (mid);
\draw[kernel] (topleft) to   (mid);
\draw[kernel]  (mid) to  (bot); 
\draw[kernel]  (botup) to node[right] {\tiny $j$} (bot); 
%\draw[testfcn] (bot) to (root); 
\end{tikzpicture}
-
\begin{tikzpicture}[scale=0.8,baseline=-13]
\node[var] at (0.6,0.3) (topright) {\tiny $2$};
\node[vab] at (-0.4,0.3) (topleft) {\tiny $k$};
\node[dot] at (0.1,-0.4)  (mid) {};
\node[dot] at (0.6,-1.1) (bot) {};
\node[var] at (1.2, -0.4) (botup) {\tiny $2$};
%\node[root] at (0.6,-1.7) (root) {};
%
\draw[kernel] (topright) to  (mid);
\draw[kernel] (topleft) to   (mid);
\draw[kernel]  (mid) to node[left] {\tiny $-k$} (bot); 
\draw[kernel]  (botup) to node[right] {\tiny $j$}  (bot); 
%\draw[testfcn] (bot) to (root); 
\end{tikzpicture}
-
\begin{tikzpicture}[scale=0.8,baseline=-13]
\node[var] at (0.6,0.3) (topright) {\tiny $1$};
\node[vab] at (-0.4,0.3) (topleft) {\tiny $k$};
\node[dot] at (0.1,-0.4)  (mid) {};
\node[dot] at (0.6,-1.1) (bot) {};
\node[var] at (1.2, -0.4) (botup) {\tiny $1$};
%\node[root] at (0.6,-1.7) (root) {};
%
\draw[kernel] (topright) to node[right] {\tiny $k$}  (mid);
\draw[kernel] (topleft) to   (mid);
\draw[kernel]  (mid) to node[left] {\tiny $j$} (bot); 
\draw[kernel]  (botup) to  (bot); 
%\draw[testfcn] (bot) to (root); 
\end{tikzpicture}
+
\begin{tikzpicture}[scale=0.8,baseline=-13]
\node[var] at (0.6,0.3) (topright) {\tiny $1$};
\node[vab] at (-0.4,0.3) (topleft) {\tiny $k$};
\node[dot] at (0.1,-0.4)  (mid) {};
\node[dot] at (0.6,-1.1) (bot) {};
\node[var] at (1.2, -0.4) (botup) {\tiny $1$};
%\node[root] at (0.6,-1.7) (root) {};
%
\draw[kernel] (topright) to  (mid);
\draw[kernel] (topleft) to   (mid);
\draw[kernel]  (mid) to node[left] {\tiny $-k,j$} (bot); 
\draw[kernel]  (botup) to (bot); 
%\draw[testfcn] (bot) to (root); 
\end{tikzpicture}
\\
&
+
%1\leftrightarrow 2
\begin{tikzpicture}[scale=0.8,baseline=-13]
\node[var] at (0.6,0.3) (topright) {\tiny $1$};
\node[vab] at (-0.4,0.3) (topleft) {\tiny $k$};
\node[dot] at (0.1,-0.4)  (mid) {};
\node[dot] at (0.6,-1.1) (bot) {};
\node[var] at (1.2, -0.4) (botup) {\tiny $1$};
%\node[root] at (0.6,-1.7) (root) {};
%
\draw[kernel] (topright) to node[above] {\tiny $k$}  (mid);
\draw[kernel] (topleft) to   (mid);
\draw[kernel]  (mid) to  (bot); 
\draw[kernel]  (botup) to node[right] {\tiny $j$} (bot); 
%\draw[testfcn] (bot) to (root); 
\end{tikzpicture}
-
\begin{tikzpicture}[scale=0.8,baseline=-13]
\node[var] at (0.6,0.3) (topright) {\tiny $1$};
\node[vab] at (-0.4,0.3) (topleft) {\tiny $k$};
\node[dot] at (0.1,-0.4)  (mid) {};
\node[dot] at (0.6,-1.1) (bot) {};
\node[var] at (1.2, -0.4) (botup) {\tiny $1$};
%\node[root] at (0.6,-1.7) (root) {};
%
\draw[kernel] (topright) to  (mid);
\draw[kernel] (topleft) to   (mid);
\draw[kernel]  (mid) to node[left] {\tiny $-k$} (bot); 
\draw[kernel]  (botup) to node[right] {\tiny $j$}  (bot); 
%\draw[testfcn] (bot) to (root); 
\end{tikzpicture}
-
\begin{tikzpicture}[scale=0.8,baseline=-13]
\node[var] at (0.6,0.3) (topright) {\tiny $2$};
\node[vab] at (-0.4,0.3) (topleft) {\tiny $k$};
\node[dot] at (0.1,-0.4)  (mid) {};
\node[dot] at (0.6,-1.1) (bot) {};
\node[var] at (1.2, -0.4) (botup) {\tiny $2$};
%\node[root] at (0.6,-1.7) (root) {};
%
\draw[kernel] (topright) to node[right] {\tiny $k$}  (mid);
\draw[kernel] (topleft) to   (mid);
\draw[kernel]  (mid) to node[left] {\tiny $j$} (bot); 
\draw[kernel]  (botup) to  (bot); 
%\draw[testfcn] (bot) to (root); 
\end{tikzpicture}
+
\begin{tikzpicture}[scale=0.8,baseline=-13]
\node[var] at (0.6,0.3) (topright) {\tiny $2$};
\node[vab] at (-0.4,0.3) (topleft) {\tiny $k$};
\node[dot] at (0.1,-0.4)  (mid) {};
\node[dot] at (0.6,-1.1) (bot) {};
\node[var] at (1.2, -0.4) (botup) {\tiny $2$};
%\node[root] at (0.6,-1.7) (root) {};
%
\draw[kernel] (topright) to  (mid);
\draw[kernel] (topleft) to   (mid);
\draw[kernel]  (mid) to node[left] {\tiny $-k,j$} (bot); 
\draw[kernel]  (botup) to (bot); 
%\draw[testfcn] (bot) to (root); 
\end{tikzpicture}
\end{equs}
where the  graphs on the first line arise from $ \Psi^\e_1\nabla^\e_j \Psi^\e_2$,
and the   graphs on the second line 
arise from $ -\Psi^\e_2\nabla^\e_j \Psi^\e_1$.
Standard moment analysis as in Section~\ref{sec:renormalization}
shows that the third chaos of each graph converges. 
Apparently, the first chaos of each graph diverges
when $k=j$ and thus
one needs a logarithmic renormalization constant for each graph in order to obtain a finite limit. 
However, the renormalizations needed for the terms in the first line of \eqref{e:imag-terms} are 
\begin{equs}  
 \sum_{k} 
\begin{tikzpicture}  [baseline=10]
\node[root]	(root) 	at (0,0) {};
\node[dot]		(left)  	at (-0.7,1) {};
\node[dot]		(right)  	at (0.7,1) {};			
\draw[kernelblue,bend right=20] (left) to  (root);
\draw[kernelblue, bend left=20]   (right)  to 
	node [midway,right,font=\scriptsize] {$j$} (root) ;
\draw[kernelblue, bend right=20]  (right)  to 
	node [midway,above,font=\scriptsize] {$k$} (left);
\end{tikzpicture}
\;-\;
\begin{tikzpicture}  [baseline=10]
\node[root]	(root) 	at (0,0) {};
\node[dot]		(left)  	at (-0.7,1) {};
\node[dot]		(right)  	at (0.7,1) {};			
\draw[kernelblue,bend right=20] (left) to 
	node [midway,left,font=\scriptsize] {$-k$} (root);
\draw[kernelblue,bend left=20]   (right)  to 
	node [midway,right,font=\scriptsize] {$j$} (root) ;
\draw[kernelblue,bend right=20]  (right)  to  (left);
\end{tikzpicture}
\;-\;
\begin{tikzpicture}  [baseline=10]
\node[root]	(root) 	at (0,0) {};
\node[dot]		(left)  	at (-0.7,1) {};
\node[dot]		(right)  	at (0.7,1) {};			
\draw[kernelblue,bend right=20] (left) to 
	node [midway,left,font=\scriptsize] {$j$} (root);
\draw[kernelblue,bend left=20]   (right)  to  (root) ;
\draw[kernelblue,bend right=20]  (right)  to 
	node [midway,above,font=\scriptsize] {$k$} (left);
\end{tikzpicture}
\; +\;
\begin{tikzpicture}  [baseline=10]
\node[root]	(root) 	at (0,0) {};
\node[dot]		(left)  	at (-0.7,1) {};
\node[dot]		(right)  	at (0.7,1) {};			
\draw[kernelblue,bend right=20] (left) to 
	node [midway,left,font=\scriptsize] {$-k,j$} (root);
\draw[kernelblue,bend left=20]   (right)  to  (root) ;
\draw[kernelblue,bend right=20]  (right)  to  (left);
\end{tikzpicture}
\end{equs}
and
the renormalizations needed for the terms in the second line of \eqref{e:imag-terms}  are the same; and, these are {\it exactly} the first four terms
 in \eqref{e:C1e-C2e}.
These together with  a  constant
\[
\sum_{k} \E\big( \psi^\e_k (e_-) \psi^\e_k (e_+)\big)
\]
which renormalizes 
 the term
$ - B_j^\e (e) \sum_{k}  \Psi^\e_k (e_-) \Psi^\e_k (e_+) $ in \eqref{e:expansion-PsiDBPsi}
yields a finite limit,
which is true due to the arguments 
 \eqref{e:C1e-C2e}, \eqref{e:C1e-C2e-rewrite}
and the graphic ``Ward identity'' \eqref{e:WardGraph}.
%
% one has convergences of 
%the models acting on 
%\[
% \<dPsi-IBdPsi> 
%\qquad	
%	\<Psi-dIBdPsi> 
%\qquad
%\mbox{and}
%\qquad	
%	\<BjPsikPsik>
%\]
%and the combination of the required renormalization constants converges to a finite limit,
%again {\bf thanks to the Ward identity}. 
\end{proof}

\begin{remark}
Note that any  renormalization to the observable
$\bar\Phi^\eps (e_-) D_j^{A^\eps} \Phi^\eps (e)$
other than subtracting a constant $C^\e_{\bar\Phi D^A \Phi}$
would violate its gauge invariance; thanks to the above cancellation this is the only renormalization.
There is however nothing mysterious, because 
the imaginary part of this observable
is ``more or less'' the nonlinearity in the $A^\e$ equation of 
\eqref{e:APhi-long} and the ``Ward identity''
was also proved by expanding more or less the same observable,
see \eqref{e:Ward-observ}.
\end{remark}

Turning to the loop observables \eqref{e:def-loop-obs},
let $C$ be a $\CC^2$ loop with $\CC^2$ parametrization $\br :[0,1]\to \T^2$
such that 
\[
|\br(\si)-\br(\bar\si) | \asymp |\si-\bar\si|
\]
for any $\si,\bar\si\in [0,1]$ such that $|\si-\bar\si|$ is sufficiently small,
where $\asymp$
denotes being bounded from above and below up to multiplicative constants.
Let $C^\e$ be a regular approximation of $C$,
and $P^C_\e$ be the partition of $[0,1]$ as in Section~\ref{sec:observables}.
Since the solution $B^\e= K^\e\ast_\e \xi^\e + w^\e$ 
where $w^\e \to w$ in $\CC^\delta([0,T],\CC^{\alpha-\delta})$ for any $\alpha<1$ and $\delta>0$ sufficiently small,
we show convergence of $\e \sum_{e\in C^\e} (K^\e\ast_\e \xi^\e)(e) $ and  
$\e \sum_{e\in C^\e} w^\e(e) $ separately.
 
 \begin{lemma} \label{lem:loop-conv-w}
 For each $t\in (0,T]$,
\[
\lim_{\e\to 0} 
\e \sum_{e\in C^\e} w^\e(t,e) = \int_C w(t) \;.
\]
Moreover this limit holds as processes in the space $\CC([0,T],\R)$.
 \end{lemma}
 This seems to be a classical result of approximating line integrals, but we did not find a reference for exactly such a statement, 
 so we give a self-contained proof which also gives some hint on the next proof.
 \begin{proof}
 Assuming $\e$ sufficiently small, we divide the interval $[0,1]$ into subintervals 
 $0=\si_0<\si_1<\cdots<\si_M=1$ 
 such that each of $[\si_i,\si_{i+1}]$ is a union of  intervals in $P^C_\e$,
and  $\frac67 \sqrt\e \le |\si_{i+1}-\si_i|  \le \frac87 \sqrt\e$ (so $M=O(\e^{-\frac12})$):

\begin{figure}[h]
\begin{center}
\includegraphics[scale=0.4]{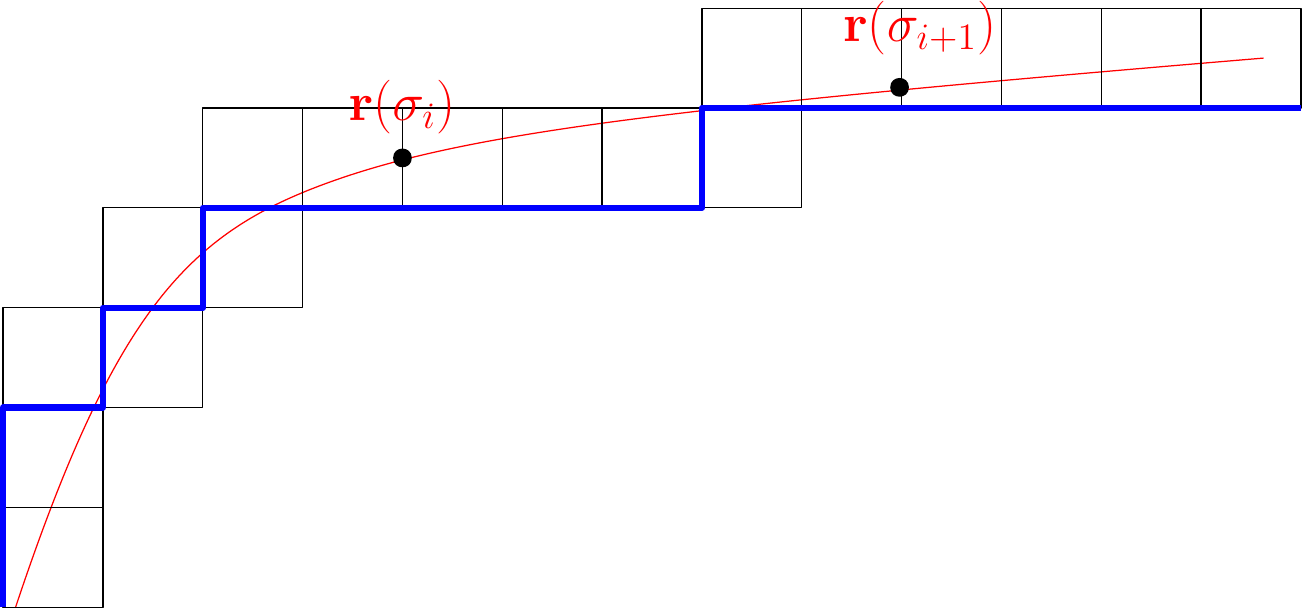}
\end{center}
\caption{A segment $[\si_i,\si_{i+1}]$.
There are $6$ distinct edges $e\in \CE_\e$
such that $e\in [\si_i,\si_{i+1}]$,
with $n_1^{[\si_i,\si_{i+1}]}=5$ and $n_2^{[\si_i,\si_{i+1}]}=1$.}
\label{fig:segment}
\end{figure}

Fixing a time $t$ and omitting it in our notation, one has
\begin{equs} [e:line-integral-cont]
\Big| & \int_C   w - \e \sum_{e\in C^\e} w^\e(e)  \Big|
\\
&\le \sum_{i=1}^M
\Big|
\int_{\si_i}^{\si_{i+1}} \!\! w(\br(\sigma))\cdot \dot\br(\si) \,d\si 
-\int_{\si_i}^{\si_{i+1}} \!\! w(\br(\si_i))\cdot \dot\br(\si_i) \, d \si 
\Big|
\\
&\qquad
+\Big| \int_{\si_i}^{\si_{i+1}} \!\! w(\br(\si_i))\cdot \dot\br(\si_i) \,d\si 
- \e \!\! \sum_{e\subset [\si_i,\si_{i+1}]} \!\!
	\Big( w_1(\br (\si_i)) \one_{e\in\CE^1_\e}  +w_2(\br (\si_i)) \one_{e\in\CE^2_\e} \Big)\Big|
\\
&\qquad
+ \Big|\e\!\! \sum_{e\subset [\si_i,\si_{i+1}]} \!\!
	\Big( w_1 (\br (\si_i)) \one_{e\in\CE^1_\e}  +w_2 (\br (\si_i)) \one_{e\in\CE^2_\e} \Big)
- \e \!\!\sum_{e\subset [\si_i,\si_{i+1}]} \!\! w^\e (e)\Big| \;,
\end{equs}
where $\dot\br$ is the derivative w.r.t. the parameter $\sigma$,
and $e\subset [\si_i,\si_{i+1}]$ means 
that  the edge $e\in\CE_\e$ corresponds to a subinterval 
of $[\si_i,\si_{i+1}]$.
For the first term on RHS of \eqref{e:line-integral-cont},
since $w\in \CC^{1-}$ and $\dot\br \in \CC^2$, 
one has 
\[
 |\dot\br(\si) -  \dot\br(\si_i) |
 \lesssim  |\si -  \si_i |
 \lesssim \e^{\frac12}
 \qquad
 \mbox{for } \si \in [\si_i, \si_{i+1}]
 \]
\[
 |w(\br(\sigma))  - w(\br(\si_i))  |
 \lesssim  |\br(\sigma)  - \br(\si_i) |^\alpha
 \lesssim \e^{\alpha/2}
 \]
  for any $\alpha<1$,
thus $\sum_{i=1}^M \int_{\si_i}^{\si_{i+1}} O(\e^{\alpha/2})\,d\si = O(\e^{-\frac12})O(\e^{\frac12})O(\e^{\alpha})\to 0$.

Consider the second term on RHS of \eqref{e:line-integral-cont}. One has that for $j\in\{1,2\}$
\[
\sum_{e\subset [\si_i,\si_{i+1}]} \!\!
	\Big( w_1(\br (\si_i)) \one_{e\in\CE^1_\e}+w_2(\br (\si_i)) \one_{e\in\CE^2_\e}   \Big)
= (w_1(\br (\si_i)) , w_2(\br (\si_i))) \cdot  (n_1^{[\si_i,\si_{i+1}]},n_2^{[\si_i,\si_{i+1}]})
%\sharp \{e\in[\si_i,\si_{i+1}]:e\in \CE^j_\e\} \;.
\]
where $n_1^{[\si_i,\si_{i+1}]}$ (resp. $n_2^{[\si_i,\si_{i+1}]}$)
is the number of horizontal (resp. vertical) edges 
in the discrete curve $C^\e$ that correspond to subintervals of $[\si_i,\si_{i+1}]$, see Fig.~\ref{fig:segment}.

Note that $(n_1^{[\si_i,\si_{i+1}]},n_2^{[\si_i,\si_{i+1}]})$ 
 approximately gives  the tangent direction of $C$ at $\br(\si_i)$.
To be more precise, since $\br\in\CC^2$ one has
%\[
%\frac{\br(\si_{i+1})-\br(\si_i) }{|\br(\si_{i+1})-\br(\si_i)|}
% = \frac{\dot\br (\si_{i}) (\si_{i+1}-\si_i)+O(\e)}{|\dot\br (\si_{i})(\si_{i+1}-\si_i)+O(\e)|}
% = \frac{\dot\br (\si_{i})}{|\dot\br (\si_{i})|} + O(\e^{\frac12}) \;.
%\]
\[
\br(\si_{i+1})-\br(\si_i)
 =\dot\br (\si_{i}) (\si_{i+1}-\si_i)+O(\e) \;.
\]
On the other hand when $\e$ is sufficiently small
one has
%\[
%\frac{\br(\si_{i+1})-\br(\si_i) }{|\br(\si_{i+1})-\br(\si_i)|}
% = \frac{(n_1^{[\si_i,\si_{i+1}]},n_2^{[\si_i,\si_{i+1}]})}{|(n_1^{[\si_i,\si_{i+1}]},n_2^{[\si_i,\si_{i+1}]})|}+ O(\e^{\frac12}) \;.
%\]
\[
\br(\si_{i+1})-\br(\si_i) 
 =\e \,(n_1^{[\si_i,\si_{i+1}]},n_2^{[\si_i,\si_{i+1}]})+ O(\e) \;.
\]
These show that 
$\sum_{i=1}^M$ of the second term on RHS of \eqref{e:line-integral-cont} equals $\sum_{i=1}^M O(\eps) = O(\e^{-\frac12}) O(\e) $ which vanishes in the limit.

Finally since $w^\e$ converges to $w$  in $\CC^{1-}$,
and $|\br(\sigma_i) - e| \lesssim \e^{\frac12}$,
we immediately have that 
$\sum_{i=1}^M$ of the last term on RHS of \eqref{e:line-integral-cont} vanishes in the limit.
%equals 
%\[
%O(\e^{-\frac12})\cdot \e \cdot O(\e^{-\frac12}) 
%\Big(|w(\br(\sigma_i)) - w(e)|+ |w(e) -w^\e(e)| \Big) \to 0 \;.
%\]

Finally, to prove continuity in time of this observable,
we can bound 
$\Big| \int_C  \delta^{s,t} w - \e \sum_{e\in C^\e} \delta^{s,t} w^\e(e)  \Big|$ in the same way as above,
replacing $w$ and $w^\e$ by $\delta^{s,t} w$ and $\delta^{s,t} w^\e$ respectively, and using  $w^\e \to w$ in $\CC^\delta([0,T],\CC^{\alpha-\delta})$.
\end{proof}

We now turn to convergence of
$\e \sum_{e\in C^\e}  K^\e\ast_\e \xi^\e (e)$ and
$e^{i\e\sum_{e\in C^\e}  K^\e\ast_\e \xi^\e (e)}$.
Fixing $t\in [0,T]$,  %and omitting the dependence of $t$ in the notation,
let $\mathbf r: [0,1]\to \T^2$ be a parametrization of the loop $C$. Write $\mathbf r = (\mathbf r_1,\mathbf r_2)$.

We define $\int_C K\ast \xi$ to be the centered Gaussian random variable with variance
\[
\int_0^1 \!\! \int_0^1 \int_{\R\times \T^2}
K (t-s,\br(\si)-y) \, K (t-s,\br(\bar\si)-y)
\,\left( \dot{\br}(\si) \cdot  \dot{\br}(\bar\si) \right)  \, ds dy\,d\si d \bar\si
\]
which is finite since the integral over $(s,y)$ yields a function which behaves as 
$\log |\br(\si)-\br(\bar\si)  |$ for $|\si-\bar\si|$ small,
and $|\br(\si)-\br(\bar\si) | $ is bounded from above and below by  $|\si-\bar\si|$.

\begin{lemma} \label{lem:loop-conv-sing}
As $\eps\to 0$, $\e \sum_{e\in C^\e}  K^\e\ast_\e \xi^\e (e)$
converge in law to  $\int_C K\ast \xi$ in $\CC([0,T],\R)$.
%and $e^{i\e\sum_{e\in C^\e}  K^\e\ast_\e \xi^\e (e)}$ 
%converge in law
% to  $e^{i\int_C K\ast \xi}$
% in $\CC([0,T],\C)$.
\end{lemma}

\begin{proof}
Since with the fixed loop $C$ and fixed $t$, the sequence of random variables in consideration 
are centered Gaussians, to prove convergence for  each  fixed $t$ we only need to prove convergence of the variances.
Omitting $t$ in our notation, for $x\neq y\in \T^2$ let
\[
\hat K(x,y)=\hat K(x-y)
\eqdef
 \int_{\R\times \T^2}
K (t-s,x-z) \, K (t-s,y-z)
 \, ds dz
\]
and for $e,\bar e\in \CE_j^\e$ for $j\in \{1,2 \}$ (meaning $e,\bar e$ are {\it both} horizontal or {\it both} vertical)
\begin{equ}[e:def-hateK]
\hat K^\e(e,\bar e)=\hat K^\e(e-\bar e)
\eqdef
 \int_{\R} \e^2 \sum_{e'\in \CE_j^\e}
K^\e (t-s,e-e') \, K^\e (t-s,\bar e-e')
 \, ds \;.
\end{equ}
The variance of $\int_C K\ast \xi$ is then equal to
\begin{equ}[e:var-limit]
\int_0^1 \!\! \int_0^1
\hat K (\br(\si)-\br(\bar \si)) 
\,\left( \dot{\br}(\si) \cdot  \dot{\br}(\bar\si) \right)\,d\si d \bar\si
\end{equ}
and the variance of $\e \sum_{e\in C^\e}  K^\e\ast_\e \xi^\e (e)$ is equal to
\begin{equ}[e:var-eps]
\e^2 \sum_{e,\bar e \in C^\e}
\hat K^\e (e-\bar e) \,
(\one_{e,\bar e \in \CE^\e_1}+\one_{e,\bar e \in \CE^\e_2})  \;.
\end{equ}
As in the proof above we divide the interval $[0,1]$ into subintervals 
 $0=\si_0<\si_1<\cdots<\si_M=1$ 
 such that each of $[\si_i,\si_{i+1}]$ is a union of  intervals in $P^C_\e$,
so that   $ |\si_{i+1}-\si_i|  = O(\sqrt\e)$ and $M=O(\e^{-\frac12})$.
The difference between \eqref{e:var-limit} and \eqref{e:var-eps} is equal to $\mathbb S_1+\mathbb S_2+\mathbb S_3+\mathbb S_4$ where 
\begin{equs}
\mathbb S_1 
&\eqdef \sum_{|i-j|>1} \int_{\si_i}^{\si_{i+1}}\!\!\!\!\int_{\si_j}^{\si_{j+1}} 
\!\!\!\!
\Big(
\hat K(\br (\si)-\br(\bar\si)) \, \dot{\br}(\si) \cdot  \dot{\br}(\bar\si)
-
 \hat K(\br (\si_i)-\br(\si_j)) \, \dot{\br}(\si_i) \cdot  \dot{\br}(\si_j)
 \Big)
 d\si d \bar\si
\\
\mathbb S_2
&\eqdef
\sum_{|i-j|>1} \Big[ \int_{\si_i}^{\si_{i+1}}\!\!\!\int_{\si_j}^{\si_{j+1}} 
 \hat K(\br (\si_i)-\br(\si_j)) \, \dot{\br}(\si_i) \cdot  \dot{\br}(\si_j)
 \,d\si d \bar\si
 \\
 & \qquad \qquad  \qquad  \qquad \qquad\qquad
- \e^2 \!\!\!\!\! \sum_{e\subset [\si_i,\si_{i+1}] \atop \bar e\subset [\si_j,\si_{j+1}] }\!\!\!\!
\hat K (\br(\si_i)-\br (\si_j)) (\one_{e,\bar e \in \CE^\e_1}+\one_{e,\bar e \in \CE^\e_2}) \Big]
\\
\mathbb S_3
&\eqdef
\sum_{|i-j|>1} 
\e^2 \!\!\!\!\! \sum_{e\subset [\si_i,\si_{i+1}] \atop \bar e\subset [\si_j,\si_{j+1}] } \!\!\!\!
\Big( \hat K (\br(\si_i)-\br (\si_j))
-  \hat K^\e(e-\bar e)
\Big)
 (\one_{e,\bar e \in \CE^\e_1}+\one_{e,\bar e \in \CE^\e_2})
 \\
 \mathbb S_4 
&\eqdef \sum_{|i-j|\le 1}
\Big[
 \int_{\si_i}^{\si_{i+1}}\!\!\!\!\int_{\si_j}^{\si_{j+1}} 
\!\!\!
\hat K(\br (\si)-\br(\bar\si)) \, \dot{\br}(\si) \cdot  \dot{\br}(\bar\si)
 \,d\si d \bar\si
 \\
 &\qquad\qquad\qquad\qquad\qquad\qquad
 -
 \e^2 \!\!\!\!\! \sum_{e\subset [\si_i,\si_{i+1}] \atop \bar e\subset [\si_j,\si_{j+1}] } \!\!\!\!
 \hat K^\e(e-\bar e)
 (\one_{e,\bar e \in \CE^\e_1}+\one_{e,\bar e \in \CE^\e_2})
 \Big]
\end{equs}

Consider $ \mathbb S_1$.
By definition of $\hat K$ one has $|\hat K'(x)|\lesssim |x|^{-1}$. One also has $|\br(\si) -\br(\si_i) | \lesssim \e^{\frac12}$ and $|\br(\bar\si) -\br(\si_j) | \lesssim \e^{\frac12}$ as well as
\[
|\br(\si) -\br(\bar\si) |^{-1} \lesssim \frac{1}{|i-j| \sqrt{\e}}
\qquad \mbox{and} \qquad
|\br(\si_i) -\br(\si_j) |^{-1} \lesssim \frac{1}{|i-j| \sqrt{\e}}
\qquad \mbox{for } |i-j|>1 \;.
\] 
So by a Taylor remainder theorem
\begin{equ}[e:i-j-far-decay]
|\hat K(\br (\si)-\br(\bar\si))
-
 \hat K(\br (\si_i)-\br(\si_j))|
 \lesssim \frac{1}{|i-j|} \;.
\end{equ}
Noting  that $\br \in \CC^2$ one only needs to bound 
 $\mathbb S_1' $
which is the same as $\mathbb S_1$
except that
$\dot{\br}(\si) \cdot  \dot{\br}(\bar\si)$
is replaced by $\dot{\br}(\si_i) \cdot  \dot{\br}(\si_j)$.
Since $|\dot{\br}(\si) \cdot  \dot{\br}(\bar\si)-\dot{\br}(\si_i) \cdot  \dot{\br}(\si_j)|\lesssim \e^{\frac12}$
it is easy to see that $|\mathbb S_1-\mathbb S_1'|\to 0$.
 Invoking the decay \eqref{e:i-j-far-decay} one can  bound
  $\mathbb S_1' $ by
\[
|\mathbb S_1'| \lesssim \sum_{1\le i,j\le M\atop |i-j|>1}\frac{(\si_{i+1}-\si_{i}) (\si_{j+1}-\si_{j}) }{|i-j|}
\lesssim
\e^{\frac12}\e^{\frac12}  M\log M
\lesssim 
\e\cdot \e^{-\frac12}|\log\e| \to 0 \;.
\]

Consider $ \mathbb S_2$.
As in the  proof of Lemma~\ref{lem:loop-conv-w},
one has
\[
\dot \br(\si_i) (\si_{i+1}-\si_{i})=  
\e \,(n_1^{[\si_i,\si_{i+1}]},n_2^{[\si_i,\si_{i+1}]})+ O(\e)
\]
and the same holds with $i$ replaced by $j$;
 thus
\[
\int_{\si_i}^{\si_{i+1}}\!\!\!\int_{\si_j}^{\si_{j+1}} 
\, \dot{\br}(\si_i) \cdot  \dot{\br}(\si_j)
 \,d\si d \bar\si
=
\e^2 \,(n_1^{[\si_i,\si_{i+1}]},n_2^{[\si_i,\si_{i+1}]})\cdot \,(n_1^{[\si_j,\si_{j+1}]},n_2^{[\si_j,\si_{j+1}]})+O(\e^{\frac32})\;.
\]
Since $|\hat K(x)|\lesssim |x|^{-\frac18}$
and
$|\br(\si_i) -\br(\si_j) | \ge \frac{1}{\sqrt{\e}}$
for $|i-j|>1$, one has
\[
|\mathbb S_2| \lesssim \sum_{i,j} \e^{-\frac{1}{16}} \e^{\frac32}
=\e^{-1}  \e^{-\frac{1}{16}}  \e^{\frac32} \to 0 \;.
\]

Turning to $ \mathbb S_3$, as in the analysis of  $ \mathbb S_1$ one has
(recall from Section~\ref{sec:Lattice gauge theory} that in the notation $ \hat K (e-\bar e)$, $e,\bar e$ denote the mid-points of the edges)
\begin{equs}
\sum_{|i-j|>1} 
\e^2 \!\!\!\!\! \sum_{e\subset [\si_i,\si_{i+1}] \atop \bar e\subset [\si_j,\si_{j+1}] } \!\!\!\!
&
\Big|\hat K (\br(\si_i)-\br (\si_j))
- \hat K (e-\bar e)\Big|(\one_{e,\bar e \in \CE^\e_1}+\one_{e,\bar e \in \CE^\e_2})
\\
&\lesssim
 \e^2 \e^{-1} 
\sum_{|i-j|>1} \frac{|\br(\si_i) - e| +|\br(\si_j) - \bar e|}{|i-j| \sqrt{\e}}
\lesssim
\e\cdot \e^{-\frac12}|\log\e| \to 0 \;.
\end{equs}
To finish the estimate of $ \mathbb S_3$ we claim that
for $|e-\bar e|>\sqrt{\e}$,
\begin{equ} [e:Khat-Kepshat]
|\hat K(e-\bar e) -\hat K^\e(e-\bar e) | \lesssim \e^{\kappa} |e-\bar e|^{-\kappa}
\end{equ}
 for some small $\kappa>0$ so that
\begin{equs}
\sum_{|i-j|>1}  &
\e^2 \!\!\!\!\! \sum_{e\subset [\si_i,\si_{i+1}] \atop \bar e\subset [\si_j,\si_{j+1}] } \!\!\!\!
\Big|\hat K^\e (e-\bar e)
- \hat K (e-\bar e)\Big|
(\one_{e,\bar e \in \CE^\e_1}+\one_{e,\bar e \in \CE^\e_2})
\lesssim
\sum_{|i-j|>1} 
\e^2 \!\!\!\!\! \sum_{e\subset [\si_i,\si_{i+1}] \atop \bar e\subset [\si_j,\si_{j+1}] } \!\!\!\! \e^{\kappa} \e^{-2\kappa}
\\
& \lesssim 
\e^{-1}\e^2\e^{-1}\e^{\kappa} \e^{-2\kappa} \to 0 \;,
\end{equs}
where we used the fact that as long as $|i-j|>1$, $|e-\bar e|^{-\kappa} \lesssim \eps^{-2\kappa}$. To prove \eqref{e:Khat-Kepshat}, we assume $|e-\bar e|>\sqrt{\e}$ and note that for any constant $c>0$
\begin{equ}
\Big| \int_{t-c\e}^t  \int_{\T^2}
K (t-s,x-z) \, K (t-s,y-z) 
 \, ds dz \Big|
\lesssim \e^\kappa |x-y|^{-\kappa}\;,
%\\
%\Big|\int_{t-\e}^t \e^2 \sum_{e'\in \CE_j^\e}
%K^\e (t-s,e-e') \, K^\e (t-s,\bar e-e')
% \, ds \Big|&\lesssim \e^\kappa |e-\bar e|^{-\kappa}\;,
\end{equ}
and the same bound holds in the discrete case with 
$K$ replaced by $K^\e$ and spatial integral replaced by spatial summation times $\e^2$. Indeed, $K$ coincides with $P$ in an $O(1)$ neighborhood of the origin, and one has 
$\int_{\T^2}
P (t-s,x-z) \, P (t-s,y-z) 
 \, dz = P(2t-2s, x-y)$; then it suffices to realize that 
 $f(x)\eqdef \int_0^{c\e} P(s,x) ds$ satisfies the equation $\Delta f (x)= P(c\e,x)$ because $P$ satisfies the heat equation, namely $f=P(c\e,\cdot)\ast (-\Delta)^{-1}$ from which the above bound follows.

Now to prove \eqref{e:Khat-Kepshat}, using Lemma~\ref{lem:OpDSingKer}, item 2,  
it suffices to prove that 
for  $t>c\e$ for some constant $c>0$ one has 
\[
|K^\e (z) -K (z) | \lesssim  \e^\kappa \|z\|_\s^{-\kappa}
\]
for some small $\kappa>0$.
Again it suffices to consider $|P^\e (z) -P (z) |$.
We have a local central limit theorem for continuous time random walk:
by \cite[Theorem~2.5.6]{MR2677157} 
\footnote{\cite[Theorem~2.5.6]{MR2677157}  is a statement in one spatial dimension, but it holds in higher dimensions too as remarked in the beginning of \cite[Section~2.5]{MR2677157}.}
translating into our notation, one has that as long as $\e |x| \le t/2$  (which follows from $t>c\e$ for some constant $c>0$ since $|x|=O(1)$)
\[
|P^\e (t,x) -P (t,x) | \lesssim  t^{-1} e^{-\frac{|x|^2}{4t}} \Big(\frac{\e}{\sqrt{t}} + \frac{\e |x|^3}{t^2}\Big) \;.
\]
When $|x| \le \sqrt{t}$, the dominant term in the parenthesis is $\frac{\e}{\sqrt{t}} $, and using $e^{-\frac{|x|^2}{4t}} \le 1$
we obtain the desired bound.
When $|x| > \sqrt{t}$,  the dominant term in the parenthesis is $ \frac{\e |x|^3}{t^2}$, using $e^{-\frac{|x|^2}{4t}} \lesssim t^3/|x|^6$ we again obtain the desired bound.

Finally, consider $ \mathbb S_4$.
Now $\sum_{|i-j|\le 1}$ is only a sum of $O(\e^{-\frac12})$ terms. We can estimate the two terms in $ \mathbb S_4$ separately without using the minus sign. For the first term
one can bound
\[
\Big| \hat K(\br (\si)-\br(\bar\si)) \, \dot{\br}(\si) \cdot  \dot{\br}(\bar\si)\Big| \lesssim |\si-\bar\si|^{-\frac18} \;.
\]
So
$\sum_{|i-j|\le 1} \int_{\si_i}^{\si_{i+1}}\!\!\int_{\si_j}^{\si_{j+1}} $ of the above quantity is bounded by 
$\e^{-\frac12} (\sqrt{\e})^{2-\frac18}$ which vanishes in the limit. Since $| \hat K^\e(e-\bar e)| \lesssim \e^{-\frac18}$
the same argument shows that the other term also vanishes in the limit.

The continuity in time essentially follows in the same way. To bound 
$ \E (\e \sum_{ C^\e}  K^\e\ast_\e \xi^\e (t)-\e \sum_{ C^\e}  K^\e\ast_\e \xi^\e (\bar t))^2$
it boils down to 
\eqref{e:var-eps} except that on the right hand side of \eqref{e:def-hateK}
one of the kernel $K^\e$ is replaced by $\delta^{t \bar t} K^\e (z)$ which is bounded by $|t-\bar t|^\kappa \|z\|_\s^{-\kappa}$  for some sufficiently small $\kappa>0$. Proceeding as the arguments above one gets a uniform bound by $|t-\bar t|^\kappa$; then by Gaussianity this yields higher moments bounds and the continuity in time then follows from Kolmogorov continuity theorem.
\end{proof}

\begin{proof}[of Theorem~\ref{theo:observables}]
The statements follow from Lemma~\ref{lem:FA-Wick} -- Lemma~\ref{lem:loop-conv-sing}.
The convergence of loop observables $\tilde{\mathcal O}_{C,\e}$ defined in \eqref{e:def-loop-obs}  
simply follows by continuous map theorem.
\end{proof}

\section{Discussions}\label{sec:discussions}

We conclude our paper with some informal discussions on possible extensions of our results.

\subsection{The case of three spatial dimensions}
The model under consideration is also defined in three spatial dimensions, with 
fields
$A=(A_1,A_2,A_3)= \sum_{j=1}^3 A_j \d x_j$
and $\Phi: \T^3 \to \C$:
\begin{equ} 
\mathcal H(A,\Phi) \eqdef
\frac12 \int_{\T^3} \Big( \sum_{j=1}^3 F_{A,j}(x)^2 + \sum_{j=1}^3 |D^A_j \Phi(x)|^2 \Big) \, \d^3 x  \;.
\end{equ}
Here, the curvature field $F_{A,j}$ is defined by
\begin{equs}
F_A  & = dA \\
&
= ( \partial_1 A_2  -\partial_2 A_1) \, \d x_1 \wedge \d x_2
+( \partial_2 A_3  -\partial_3 A_2) \, \d x_2 \wedge \d x_3
+( \partial_1 A_3  -\partial_3 A_1) \, \d x_1 \wedge \d x_3
\\
& =: F_{A,3}\, \d x_1 \wedge \d x_2
+  F_{A,1} \, \d x_2 \wedge \d x_3
+ F_{A,2} \, \d x_1 \wedge \d x_3
\end{equs}
and
$D^A_j \Phi$ is defined the same way except that $j\in\{1,2,3\}$.
The stochastic PDE is a system for $(A_1,A_2,A_3,\Phi)$ driven by  independent white noises $\xi_1,\xi_2,\xi_3$ as well as a complex
valued white noise $\zeta$. The system is again not parabolic, for instance, the equation for $A_1$ is of the form
\[
\partial_t A_1 = \partial_2^2 A_1 -\partial_1\partial_2 A_2
+ \partial_3^2 A_1 -\partial_1\partial_3 A_3+ \big( \cdots\big) + \xi_1
\]
where $\big( \cdots\big)$ denotes the nonlinearities.

Using the same gauge tuning trick, one obtains a parabolic system,
and the regularity of its solutions is expected to be $\alpha$ for $
\alpha < -\tfrac12$. Since its more singular than the situation in two spatial dimensions, 
to obtain a local solution theory  via lattice approximations, 
certain general tools should then be in place.
For instance, since there would be much more symbols in the regularity structure,
it  would require  a discrete version of the BPHZ theorem as in \cite{chandra2016analytic} in order to prove convergence of the models,
together with the ``blackbox'' theorems in \cite{bruned2016algebraic,bruned2017renormalising},
and presumably also a machinery to deal with renormalizations caused by symbols with ``epsilons'' as in \cite{KPZJeremy}.
The loop or string observables would be more difficult (if possible) to construct since one has to integrate a more singular gauge field along a curve.
%
%The discrete loop or string observables
%can be defined as in   \eqref{e:def-loop-obs}
%and \eqref{e:def-string-obs},
%but obtaining their limits in three spatial dimensions seems to be a challenge,
% because the two point correlation
% of the gauge field diverges on diagonal
% at rate of power $-1$, which is not integrable along 
% a one-dimensional curve. 
% For the string observable defined as in  \eqref{e:def-string-obs},
%even one pretended that $\int_x^y A$ is $C^\alpha$ in $y\in C$
%for any $\alpha<{1\over 2}$, multiplying it with $\Phi\in C^\beta$
%for $\alpha<{-1\over 2}$,

\begin{remark}
It would also be interesting to construct long time solutions as done by  \cite{MourratWeberComesDown,moinat2018space,moinat2018local} for the dynamical $\Phi^4$ equation.
To obtain some bound uniform in time it would be helpful to have some damping terms,
for instance adding more gauge invariant terms 
$\mu |\Phi^2| - |\Phi|^4$ to \eqref{e:potential}.
Let's mention a very simple trick that the equation for $B$ in for instance \eqref{e:bar-eps-equ} can obtain a term  $-\lambda^2 c^2 B$
by considering the equations for $B$ and the shifted field $\Psi\mapsto \Psi+c$.
This would be reminiscent to the ``Higgs mechanism'' for the gauge field to acquire a mass
by shifting the scalar field to a point in the bottom of a mexican-hat potential.
\end{remark}

\subsection{Non-Abelian case} \label{sec:nonAbelian}
As hinted in Remark~\ref{rem:Helmholtz},
the gauge tuning method being exploited in the present paper is very likely to be  generalized into the non-Abelian case (whereas a gauge fixing with a linear decomposition by \eqref{e:Helmholtz} seems not generalizable). 
We would like to have some further discussion on this point (informally), and
also make a comparison between gauge fixing in  stochastic PDE formulation and that in functional integral formulation of quantum gauge theories. 

In general gauge theories,  with only gauge field $A$, one has $F_A \eqdef dA + A\wedge A$, where $A$ is a 1-form taking values in an Lie algebra - for instance $\mathfrak u(N)$ for some $N\ge 1$.  In the Abelian case, $N=1$, and $A\wedge A=0$ which is studied in this paper. 
In the functional integral approach, one is interested in the formal measure $e^{-\mathcal H(A)} \CD A$ where $\mathcal H(A)=
\frac12 \int_{\T^d} \tr(F\wedge *F) $
is invariant under the gauge transformation $A\to g^*A \eqdef g^{-1}Ag+g^{-1}dg$ for $g: \T^d\to U(N)$.
Note that in Abelian case $g=e^{i f}$ and this is precisely the first transformation in \eqref{e:gauge-trans}.
To make the ``measure'' normalizable, a Fadeev-Popov trick  based on the  identity
$
1= \int \delta (g(x)) \,\mbox{det}(\frac{\partial g}{\partial x})\, dx
$
 is often used.
In Abelian lattice gauge theory, writing $A_f = A+df$ so that $d^* A_f = d^* A + \Delta f$, one then writes the formal partition function as 
\[
\int e^{-\mathcal H(A)} \CD A =
\int e^{-\mathcal H(A)} \delta(d^* A_f - \omega)\, 
\mbox{det}(\frac{\delta(d^* A_f-\omega)}{\delta f}) \, 
\CD f\CD A \;.
\]
The Jacobian factor $\mbox{det}(\frac{\delta(d^* A_f-\omega)}{\delta f})=\mbox{det}(\Delta)$ can be factorized out, and using gauge invariance one can also replace $A_f$ by $A$ and factor out the ``infinite integral'' $\int \CD f$.
If $\omega=0$ this simply amounts to imposing the divergence free condition as mentioned in Remark~\ref{rem:Helmholtz}. 
(The only reason one usually introduces the field $\omega$ is that upon integrating it against a Gaussian weight one obtains a factor $e^{-\frac{(d^* A)^2}{2}}$ that is slightly more convenient to analyze.)

The  Fadeev-Popov trick gets much more complicated in non-Abelian case,
because the determinant will generally depend on $A$ and thus does not factor out.
This determinant however can be expressed by an integral over anti-commuting variables, which are called ghost fields. One then studies the model for gauge field $A$ coupled with ghost fields. 
Furthermore, fixing the gauge {\it globally} is not always possible in non-Abelian theories, due to topological obstructions, a phenomena usually referred to as Gribov ambiguity.

The   ghost fields would not show up in the stochastic PDE approach with gauge fixed by DeTurck trick; this seems to be already observed by physicists \cite{Zwanziger1981,sadun1987continuum,BernHalpernSadunTaubes1987}. In fact, for the corresponding stochastic  quantization equation
\begin{equ} [e:YM]
\frac{\partial A}{\partial t} + d_{A}^*F_{A} =  \xi 
\end{equ}
with a Lie algebra valued $d$-component space-time white noise $\xi$,
where $d_A$ is the gauge covariant derivative,
one can check by straightforward computation that $B\eqdef g^* A$ satisifes
$\frac{\partial B}{\partial t}
= g^{-1}\frac{\partial A}{\partial t}g
+ d_{B}\left(g^{-1} \frac{\partial g}{\partial t}\right)$,
so by solving $g$ from the ODE
$
g^{-1}\frac{\partial g}{\partial t} = -d_B^*B
$ and invoking gauge invariance of $d_{A}^*F_{A}$
one obtains a parabolic equation $\frac{\partial B}{\partial t}
= -d_{B}^*F_{B}
- d_{B}d_B^*B + g^{-1}\xi g$ 
where $g^{-1}\xi g \stackrel{law}{=} \xi$.
This is well-know when $\xi=0$, see  \cite[Section~6.3]{Donaldson};
and in the presence of the noise $\xi$, 
the aforementioned physics literature simply put a term $- d_{A}d_A^*A$ into the equation \eqref{e:YM} and call this a gauge-fixing term.

To obtain a local solution theory to \eqref{e:YM} via lattice approximations, one again needs some general tools as discussed in the three dimensional Abelian case.

\bibliographystyle{./Martin}
\bibliography{./refs}

\def\cprime{$'$} \def\polhk#1{\setbox0=\hbox{#1}{\ooalign{\hidewidth
  \lower1.5ex\hbox{`}\hidewidth\crcr\unhbox0}}}
\begin{thebibliography}{BCCH17}
\expandafter\ifx\csname url\endcsname\relax
  \def\url#1{\texttt{#1}}\fi
\expandafter\ifx\csname urlprefix\endcsname\relax\def\urlprefix{URL }\fi

\bibitem[Bal82a]{MR677999}
\textsc{T.~Balaban}.
\newblock ({H}iggs){$_{2,3}$}\ quantum fields in a finite volume. {I}. {A}
  lower bound.
\newblock \emph{Comm. Math. Phys.} \textbf{85}, no.~4, (1982), 603--626.

\bibitem[Bal82b]{MR679203}
\textsc{T.~Balaban}.
\newblock ({H}iggs){$_{2,\,3}$}\ quantum fields in a finite volume. {II}. {A}n
  upper bound.
\newblock \emph{Comm. Math. Phys.} \textbf{86}, no.~4, (1982), 555--594.

\bibitem[Bal83]{MR701926}
\textsc{T.~Balaban}.
\newblock ({H}iggs){$_{2,3}$}\ quantum fields in a finite volume. {III}.
  {R}enormalization.
\newblock \emph{Comm. Math. Phys.} \textbf{88}, no.~3, (1983), 411--445.

\bibitem[Bal89]{MR998658}
\textsc{T.~Balaban}.
\newblock Large field renormalization. {II}.\ {L}ocalization, exponentiation,
  and bounds for the {$\bf R$} operation.
\newblock \emph{Comm. Math. Phys.} \textbf{122}, no.~3, (1989), 355--392.

\bibitem[BB16]{BailleulBernicot}
\textsc{I.~Bailleul} and \textsc{F.~Bernicot}.
\newblock Higher order paracontrolled calculus.
\newblock \emph{arXiv preprint arXiv:1609.06966} (2016).

\bibitem[BBIJ84]{MR772615}
\textsc{T.~Balaban}, \textsc{D.~Brydges}, \textsc{J.~Imbrie}, and
  \textsc{A.~Jaffe}.
\newblock The mass gap for {H}iggs models on a unit lattice.
\newblock \emph{Ann. Physics} \textbf{158}, no.~2, (1984), 281--319.
\newblock \ifx\href\undefined
  \texttt{doi:10.1016/0003-4916(84)90121-0}\else\href{http://dx.doi.org/10.1016/0003-4916(84)90121-0}{\texttt{doi:10.1016/0003-4916(84)90121-0}}\fi.

\bibitem[BCCH17]{bruned2017renormalising}
\textsc{Y.~Bruned}, \textsc{A.~Chandra}, \textsc{I.~Chevyrev}, and
  \textsc{M.~Hairer}.
\newblock Renormalising {SPDE}s in regularity structures.
\newblock \emph{arXiv preprint arXiv:1711.10239} (2017).

\bibitem[BFS79a]{BrydgesDiamag}
\textsc{D.~Brydges}, \textsc{J.~Fr\"ohlich}, and \textsc{E.~Seiler}.
\newblock Diamagnetic and critical properties of {H}iggs lattice gauge
  theories.
\newblock \emph{Nuclear Phys. B} \textbf{152}, no. 3-4, (1979), 521--532.
\newblock \ifx\href\undefined
  \texttt{doi:10.1016/0550-3213(79)90095-6}\else\href{http://dx.doi.org/10.1016/0550-3213(79)90095-6}{\texttt{doi:10.1016/0550-3213(79)90095-6}}\fi.

\bibitem[BFS79b]{BrydgesGauge1}
\textsc{D.~Brydges}, \textsc{J.~Fr\"ohlich}, and \textsc{E.~Seiler}.
\newblock On the construction of quantized gauge fields. {I}. {G}eneral
  results.
\newblock \emph{Ann. Physics} \textbf{121}, no. 1-2, (1979), 227--284.
\newblock \ifx\href\undefined
  \texttt{doi:10.1016/0003-4916(79)90098-8}\else\href{http://dx.doi.org/10.1016/0003-4916(79)90098-8}{\texttt{doi:10.1016/0003-4916(79)90098-8}}\fi.

\bibitem[BFS80]{BrydgesGauge2}
\textsc{D.~C. Brydges}, \textsc{J.~Fr\"ohlich}, and \textsc{E.~Seiler}.
\newblock Construction of quantised gauge fields. {II}. {C}onvergence of the
  lattice approximation.
\newblock \emph{Comm. Math. Phys.} \textbf{71}, no.~2, (1980), 159--205.

\bibitem[BFS81]{BrydgesGauge3}
\textsc{D.~C. Brydges}, \textsc{J.~Fr\"ohlich}, and \textsc{E.~Seiler}.
\newblock On the construction of quantized gauge fields. {III}. {T}he
  two-dimensional abelian {H}iggs model without cutoffs.
\newblock \emph{Comm. Math. Phys.} \textbf{79}, no.~3, (1981), 353--399.

\bibitem[BGHZ19]{bruned2019geometric}
\textsc{Y.~Bruned}, \textsc{F.~Gabriel}, \textsc{M.~Hairer}, and
  \textsc{L.~Zambotti}.
\newblock Geometric stochastic heat equations.
\newblock \emph{arXiv preprint arXiv:1902.02884} (2019).

\bibitem[BHST87]{BernHalpernSadunTaubes1987}
\textsc{Z.~Bern}, \textsc{M.~Halpern}, \textsc{L.~Sadun}, and
  \textsc{C.~Taubes}.
\newblock Continuum regularization of quantum field theory (ii).: Gauge theory.
\newblock \emph{Nuclear Physics B} \textbf{284}, (1987), 35--91.

\bibitem[BHZ19]{bruned2016algebraic}
\textsc{Y.~Bruned}, \textsc{M.~Hairer}, and \textsc{L.~Zambotti}.
\newblock Algebraic renormalisation of regularity structures.
\newblock \emph{Invent. Math.} \textbf{215}, no.~3, (2019), 1039--1156.
\newblock \ifx\href\undefined
  \texttt{doi:10.1007/s00222-018-0841-x}\else\href{http://dx.doi.org/10.1007/s00222-018-0841-x}{\texttt{doi:10.1007/s00222-018-0841-x}}\fi.

\bibitem[BIJ85]{MR782971}
\textsc{T.~Balaban}, \textsc{J.~Imbrie}, and \textsc{A.~Jaffe}.
\newblock Renormalization of the {H}iggs model: minimizers, propagators and the
  stability of mean field theory.
\newblock \emph{Comm. Math. Phys.} \textbf{97}, no. 1-2, (1985), 299--329.

\bibitem[BIJ88]{MR928228}
\textsc{T.~Balaban}, \textsc{J.~Z. Imbrie}, and \textsc{A.~Jaffe}.
\newblock Effective action and cluster properties of the abelian {H}iggs model.
\newblock \emph{Comm. Math. Phys.} \textbf{114}, no.~2, (1988), 257--315.

\bibitem[CG13]{CharalambousGrossCMP}
\textsc{N.~Charalambous} and \textsc{L.~Gross}.
\newblock The {Y}ang-{M}ills heat semigroup on three-manifolds with boundary.
\newblock \emph{Comm. Math. Phys.} \textbf{317}, no.~3, (2013), 727--785.

\bibitem[CG15]{CharalambousGrossJMP}
\textsc{N.~Charalambous} and \textsc{L.~Gross}.
\newblock Neumann domination for the {Y}ang-{M}ills heat equation.
\newblock \emph{J. Math. Phys.} \textbf{56}, no.~7, (2015), 073505, 21.

\bibitem[CGP17]{ChoukGairingPerkowski}
\textsc{K.~Chouk}, \textsc{J.~Gairing}, and \textsc{N.~Perkowski}.
\newblock An invariance principle for the two-dimensional parabolic {A}nderson
  model with small potential.
\newblock \emph{Stoch. Partial Differ. Equ. Anal. Comput.} \textbf{5}, no.~4,
  (2017), 520--558.

\bibitem[CH16]{chandra2016analytic}
\textsc{A.~Chandra} and \textsc{M.~Hairer}.
\newblock An analytic {BPHZ} theorem for regularity structures.
\newblock \emph{arXiv preprint arXiv:1612.08138} (2016).

\bibitem[CHS18]{SineGordon8pi}
\textsc{A.~Chandra}, \textsc{M.~Hairer}, and \textsc{H.~Shen}.
\newblock The dynamical sine-{G}ordon model in the full subcritical regime.
\newblock \emph{arXiv preprint arXiv:1808.02594} (2018).

\bibitem[CM18]{CannizzaroMatetski}
\textsc{G.~Cannizzaro} and \textsc{K.~Matetski}.
\newblock Space-time discrete {KPZ} equation.
\newblock \emph{Comm. Math. Phys.} \textbf{358}, no.~2, (2018), 521--588.
\newblock \ifx\href\undefined
  \texttt{doi:10.1007/s00220-018-3089-9}\else\href{http://dx.doi.org/10.1007/s00220-018-3089-9}{\texttt{doi:10.1007/s00220-018-3089-9}}\fi.

\bibitem[CS17]{MR3698737}
\textsc{A.~Chandra} and \textsc{H.~Shen}.
\newblock Moment bounds for {SPDE}s with non-{G}aussian fields and application
  to the {W}ong-{Z}akai problem.
\newblock \emph{Electron. J. Probab.} \textbf{22}, (2017), Paper No. 68, 32.
\newblock \ifx\href\undefined
  \texttt{doi:10.1214/17-EJP84}\else\href{http://dx.doi.org/10.1214/17-EJP84}{\texttt{doi:10.1214/17-EJP84}}\fi.

\bibitem[CST18]{CorwinShenTsai}
\textsc{I.~Corwin}, \textsc{H.~Shen}, and \textsc{L.-C. Tsai}.
\newblock {${\rm ASEP}(q,j)$} converges to the {KPZ} equation.
\newblock \emph{Ann. Inst. Henri Poincar\'{e} Probab. Stat.} \textbf{54},
  no.~2, (2018), 995--1012.
\newblock \ifx\href\undefined
  \texttt{doi:10.1214/17-AIHP829}\else\href{http://dx.doi.org/10.1214/17-AIHP829}{\texttt{doi:10.1214/17-AIHP829}}\fi.

\bibitem[DeT83]{DeTurck}
\textsc{D.~M. DeTurck}.
\newblock Deforming metrics in the direction of their {R}icci tensors.
\newblock \emph{J. Differential Geom.} \textbf{18}, no.~1, (1983), 157--162.

\bibitem[DH87]{damgaard1987}
\textsc{P.~H. Damgaard} and \textsc{H.~H{\"u}ffel}.
\newblock Stochastic quantization.
\newblock \emph{Physics Reports} \textbf{152}, no. 5-6, (1987), 227--398.

\bibitem[DK90]{Donaldson}
\textsc{S.~K. Donaldson} and \textsc{P.~B. Kronheimer}.
\newblock \emph{The geometry of four-manifolds}.
\newblock Oxford Mathematical Monographs. The Clarendon Press, Oxford
  University Press, New York, 1990.
\newblock Oxford Science Publications.

\bibitem[DPD02]{MR1941997}
\textsc{G.~Da~Prato} and \textsc{A.~Debussche}.
\newblock Two-dimensional {N}avier-{S}tokes equations driven by a space-time
  white noise.
\newblock \emph{J. Funct. Anal.} \textbf{196}, no.~1, (2002), 180--210.
\newblock \ifx\href\undefined
  \texttt{doi:10.1006/jfan.2002.3919}\else\href{http://dx.doi.org/10.1006/jfan.2002.3919}{\texttt{doi:10.1006/jfan.2002.3919}}\fi.

\bibitem[DPD03]{MR2016604}
\textsc{G.~Da~Prato} and \textsc{A.~Debussche}.
\newblock Strong solutions to the stochastic quantization equations.
\newblock \emph{Ann. Probab.} \textbf{31}, no.~4, (2003), 1900--1916.

\bibitem[Dri87]{MR891949}
\textsc{B.~K. Driver}.
\newblock Convergence of the {${\rm U}(1)_4$} lattice gauge theory to its
  continuum limit.
\newblock \emph{Comm. Math. Phys.} \textbf{110}, no.~3, (1987), 479--501.

\bibitem[Dri89]{MR1006295}
\textsc{B.~K. Driver}.
\newblock Y{M{${}_2$}}: continuum expectations, lattice convergence, and
  lassos.
\newblock \emph{Comm. Math. Phys.} \textbf{123}, no.~4, (1989), 575--616.

\bibitem[EH17]{ErhardHairer}
\textsc{D.~Erhard} and \textsc{M.~Hairer}.
\newblock Discretisation of regularity structures.
\newblock \emph{arXiv preprint arXiv:1705.02836} (2017).

\bibitem[Fee14]{FeehanBook}
\textsc{P.~Feehan}.
\newblock Global existence and convergence of solutions to gradient systems and
  applications to {Yang-Mills} gradient flow.
\newblock \emph{arXiv preprint arXiv:1409.1525} (2014).

\bibitem[GIP15]{MR3406823}
\textsc{M.~Gubinelli}, \textsc{P.~Imkeller}, and \textsc{N.~Perkowski}.
\newblock Paracontrolled distributions and singular {PDE}s.
\newblock \emph{Forum Math. Pi} \textbf{3}, (2015), e6, 75.
\newblock \ifx\href\undefined
  \texttt{doi:10.1017/fmp.2015.2}\else\href{http://dx.doi.org/10.1017/fmp.2015.2}{\texttt{doi:10.1017/fmp.2015.2}}\fi.

\bibitem[GP17]{gubinelli2015kpz}
\textsc{M.~Gubinelli} and \textsc{N.~Perkowski}.
\newblock K{PZ} reloaded.
\newblock \emph{Comm. Math. Phys.} \textbf{349}, no.~1, (2017), 165--269.
\newblock \ifx\href\undefined
  \texttt{doi:10.1007/s00220-016-2788-3}\else\href{http://dx.doi.org/10.1007/s00220-016-2788-3}{\texttt{doi:10.1007/s00220-016-2788-3}}\fi.

\bibitem[Gro83]{MR728862}
\textsc{L.~Gross}.
\newblock Convergence of {${\rm U}(1)_{3}$}\ lattice gauge theory to its
  continuum limit.
\newblock \emph{Comm. Math. Phys.} \textbf{92}, no.~2, (1983), 137--162.

\bibitem[Gro16]{Gross2016YM}
\textsc{L.~Gross}.
\newblock The {Y}ang-{M}ills heat equation with finite action.
\newblock \emph{arXiv preprint arXiv:1606.04151} (2016).

\bibitem[Hai13]{KPZ}
\textsc{M.~Hairer}.
\newblock Solving the {KPZ} equation.
\newblock \emph{Ann. of Math. (2)} \textbf{178}, no.~2, (2013), 559--664.
\newblock \ifx\href\undefined
  \texttt{doi:10.4007/annals.2013.178.2.4}\else\href{http://dx.doi.org/10.4007/annals.2013.178.2.4}{\texttt{doi:10.4007/annals.2013.178.2.4}}\fi.

\bibitem[Hai14]{Regularity}
\textsc{M.~Hairer}.
\newblock A theory of regularity structures.
\newblock \emph{Invent. Math.} \textbf{198}, no.~2, (2014), 269--504.
\newblock \ifx\href\undefined
  \texttt{arXiv:1303.5113}\else\href{http://arxiv.org/abs/1303.5113}{\texttt{arXiv:1303.5113}}\fi.
\newblock \ifx\href\undefined
  \texttt{doi:10.1007/s00222-014-0505-4}\else\href{http://dx.doi.org/10.1007/s00222-014-0505-4}{\texttt{doi:10.1007/s00222-014-0505-4}}\fi.

\bibitem[HM18]{hairer2015discrete}
\textsc{M.~Hairer} and \textsc{K.~Matetski}.
\newblock Discretisations of rough stochastic {PDE}s.
\newblock \emph{Ann. Probab.} \textbf{46}, no.~3, (2018), 1651--1709.
\newblock \ifx\href\undefined
  \texttt{doi:10.1214/17-AOP1212}\else\href{http://dx.doi.org/10.1214/17-AOP1212}{\texttt{doi:10.1214/17-AOP1212}}\fi.

\bibitem[HQ18]{KPZJeremy}
\textsc{M.~Hairer} and \textsc{J.~Quastel}.
\newblock A class of growth models rescaling to {KPZ}.
\newblock \emph{Forum Math. Pi} \textbf{6}, (2018), e3, 112.
\newblock \ifx\href\undefined
  \texttt{doi:10.1017/fmp.2018.2}\else\href{http://dx.doi.org/10.1017/fmp.2018.2}{\texttt{doi:10.1017/fmp.2018.2}}\fi.

\bibitem[HS16]{MR3452276}
\textsc{M.~Hairer} and \textsc{H.~Shen}.
\newblock The dynamical sine-{G}ordon model.
\newblock \emph{Comm. Math. Phys.} \textbf{341}, no.~3, (2016), 933--989.
\newblock \ifx\href\undefined
  \texttt{doi:10.1007/s00220-015-2525-3}\else\href{http://dx.doi.org/10.1007/s00220-015-2525-3}{\texttt{doi:10.1007/s00220-015-2525-3}}\fi.

\bibitem[HS17]{CLTKPZ}
\textsc{M.~Hairer} and \textsc{H.~Shen}.
\newblock A central limit theorem for the {KPZ} equation.
\newblock \emph{Ann. Probab.} \textbf{45}, no.~6B, (2017), 4167--4221.

\bibitem[IS17]{iyer2016model}
\textsc{G.~Iyer} and \textsc{D.~Spirn}.
\newblock A model for vortex nucleation in the {G}inzburg-{L}andau equations.
\newblock \emph{J. Nonlinear Sci.} \textbf{27}, no.~6, (2017), 1933--1956.

\bibitem[Kin86a]{MR824095}
\textsc{C.~King}.
\newblock The {${\rm U}(1)$} {H}iggs model. {I}. {T}he continuum limit.
\newblock \emph{Comm. Math. Phys.} \textbf{102}, no.~4, (1986), 649--677.

\bibitem[Kin86b]{MR826868}
\textsc{C.~King}.
\newblock The {${\rm U}(1)$} {H}iggs model. {II}. {T}he infinite volume limit.
\newblock \emph{Comm. Math. Phys.} \textbf{103}, no.~2, (1986), 323--349.

\bibitem[KK86]{MR836009}
\textsc{T.~Kennedy} and \textsc{C.~King}.
\newblock Spontaneous symmetry breakdown in the abelian {H}iggs model.
\newblock \emph{Comm. Math. Phys.} \textbf{104}, no.~2, (1986), 327--347.

\bibitem[Kup16]{Kupiainen}
\textsc{A.~Kupiainen}.
\newblock Renormalization group and stochastic {PDE}s.
\newblock \emph{Ann. Henri Poincar\'e} \textbf{17}, no.~3, (2016), 497--535.

\bibitem[L\'03]{MR2006374}
\textsc{T.~L\'evy}.
\newblock Yang-{M}ills measure on compact surfaces.
\newblock \emph{Mem. Amer. Math. Soc.} \textbf{166}, no. 790, (2003), xiv+122.
\newblock \ifx\href\undefined
  \texttt{doi:10.1090/memo/0790}\else\href{http://dx.doi.org/10.1090/memo/0790}{\texttt{doi:10.1090/memo/0790}}\fi.

\bibitem[LL10]{MR2677157}
\textsc{G.~F. Lawler} and \textsc{V.~Limic}.
\newblock \emph{Random walk: a modern introduction}, vol. 123 of
  \emph{Cambridge Studies in Advanced Mathematics}.
\newblock Cambridge University Press, Cambridge, 2010.

\bibitem[Lui11]{MR2895081}
\textsc{S.~H. Lui}.
\newblock \emph{Numerical analysis of partial differential equations}.
\newblock Pure and Applied Mathematics (Hoboken). John Wiley \& Sons, Inc.,
  Hoboken, NJ, 2011.

\bibitem[MP17]{martin2017paracontrolled}
\textsc{J.~Martin} and \textsc{N.~Perkowski}.
\newblock Paracontrolled distributions on {B}ravais lattices and weak
  universality of the 2d parabolic {A}nderson model.
\newblock \emph{arXiv preprint arXiv:1704.08653} (2017).

\bibitem[MRS93]{MR1230032}
\textsc{J.~Magnen}, \textsc{V.~Rivasseau}, and \textsc{R.~S\'en\'eor}.
\newblock Construction of {${\rm YM}_4$} with an infrared cutoff.
\newblock \emph{Comm. Math. Phys.} \textbf{155}, no.~2, (1993), 325--383.

\bibitem[MW17a]{MR3628883}
\textsc{J.-C. Mourrat} and \textsc{H.~Weber}.
\newblock Convergence of the two-dimensional dynamic {I}sing-{K}ac model to
  {$\Phi^4_2$}.
\newblock \emph{Comm. Pure Appl. Math.} \textbf{70}, no.~4, (2017), 717--812.
\newblock \ifx\href\undefined
  \texttt{doi:10.1002/cpa.21655}\else\href{http://dx.doi.org/10.1002/cpa.21655}{\texttt{doi:10.1002/cpa.21655}}\fi.

\bibitem[MW17b]{MourratWeberComesDown}
\textsc{J.-C. Mourrat} and \textsc{H.~Weber}.
\newblock The dynamic {$\Phi^4_3$} model comes down from infinity.
\newblock \emph{Comm. Math. Phys.} \textbf{356}, no.~3, (2017), 673--753.

\bibitem[MW18a]{moinat2018local}
\textsc{A.~Moinat} and \textsc{H.~Weber}.
\newblock Local bounds for stochastic reaction diffusion equations.
\newblock \emph{arXiv preprint arXiv:1808.10401} (2018).

\bibitem[MW18b]{moinat2018space}
\textsc{A.~Moinat} and \textsc{H.~Weber}.
\newblock Space-time localisation for the dynamic $\phi^4_3$ model.
\newblock \emph{arXiv preprint arXiv:1811.05764} (2018).

\bibitem[PW81]{ParisiWu}
\textsc{G.~Parisi} and \textsc{Y.~S. Wu}.
\newblock Perturbation theory without gauge fixing.
\newblock \emph{Sci. Sinica} \textbf{24}, no.~4, (1981), 483--496.

\bibitem[Rot12]{Rothe2012Book}
\textsc{H.~J. Rothe}.
\newblock \emph{Lattice gauge theories: an introduction}, vol.~82.
\newblock World Scientific Publishing Co Inc, 2012.

\bibitem[Sad87]{sadun1987continuum}
\textsc{L.~A. Sadun}.
\newblock Continuum regularized {Y}ang-{M}ills theory.
\newblock Tech. rep., California Univ., Berkeley, CA (USA), 1987.

\bibitem[Sen94]{MR1337973}
\textsc{A.~Sengupta}.
\newblock Quantum {Y}ang-{M}ills theory on compact surfaces.
\newblock In \emph{Stochastic analysis and applications in physics ({F}unchal,
  1993)}, vol. 449 of \emph{NATO Adv. Sci. Inst. Ser. C Math. Phys. Sci.},
  389--403. Kluwer Acad. Publ., Dordrecht, 1994.

\bibitem[Wil74]{Wilson1974}
\textsc{K.~G. Wilson}.
\newblock Confinement of quarks.
\newblock \emph{Physical Review D} \textbf{10}, no.~8, (1974), 2445.

\bibitem[Zwa81]{Zwanziger1981}
\textsc{D.~Zwanziger}.
\newblock Covariant quantization of gauge fields without {G}ribov ambiguity.
\newblock \emph{Nuclear Physics B} \textbf{192}, no.~1, (1981), 259--269.

\bibitem[ZZ15]{Zhu2014NS}
\textsc{R.~Zhu} and \textsc{X.~Zhu}.
\newblock Three-dimensional {N}avier-{S}tokes equations driven by space-time
  white noise.
\newblock \emph{J. Differential Equations} \textbf{259}, no.~9, (2015),
  4443--4508.

\bibitem[ZZ18]{ZhuZhuLatticePhi4}
\textsc{R.~Zhu} and \textsc{X.~Zhu}.
\newblock Lattice approximation to the dynamical {$\Phi_3^4$} model.
\newblock \emph{Ann. Probab.} \textbf{46}, no.~1, (2018), 397--455.
\newblock \ifx\href\undefined
  \texttt{doi:10.1214/17-AOP1188}\else\href{http://dx.doi.org/10.1214/17-AOP1188}{\texttt{doi:10.1214/17-AOP1188}}\fi.

\end{thebibliography}

\message{^^Jtimer: \the\numexpr\the\pdfelapsedtime*1000/65536\relax}

\end{document}